\newtheorem{theorem}{Theorem}[chapter]
\newtheorem{proposition}[theorem]{Proposition}
\newtheorem{lemma}[theorem]{Lemma}
\newtheorem{corollary}[theorem]{Corollary}
\theoremstyle{definition}
\newtheorem{definition}[theorem]{Definition}
\newtheorem{example}[theorem]{Example}
\newtheorem{remark}[theorem]{Remark}
\newcommand{\lV}{\left\Vert}
\newcommand{\rV}{\right\Vert}
\newcommand{\norm}{\left\Vert\,\cdot\,\right\Vert}
\newcommand{\N}{\mathbb N}
\newcommand{\ip}[2]{\langle #1, #2 \rangle}
\newcommand{\proten}{\,\widehat\otimes\,}
\newcommand{\widecheck}[1]{\stackrel{\textrm{\tiny \rotatebox[origin=c]{90}{$\!\!\!\langle\!$}}}{#1}}
\newcommand{\projectivetensor}{\,\widehat{\otimes}\,}
\newcommand{\injectivetensor}{\!\widecheck{\otimes}\!}
\newcommand{\lp}{\left(} 
\newcommand{\rp}{\right)} 
\newcommand{\italic}[1]{{\it #1\/}}
\newcommand{\Imath}{{\rm i}}
\newcommand{\tnorm}[1]{
  \left\vert\kern-0.9pt\left\vert\kern-0.9pt\left\vert #1
    \right\vert\kern-0.9pt\right\vert\kern-0.9pt\right\vert}
\newcommand{\dd}{\,{\rm d}}
\newcommand{\ex}{\mathop{\rm ex}}
\def\#{\flat}
\newcommand{\s}{\smallskip}
\newcommand{\nsubset}{\not\subset}
\newcommand{\enproof}{\hspace*{\stretch{1}}\qed}  
\newcommand{\flexiblenorm}[1]{\left\|#1\right\|}
\newcommand{\Lspace}{\mathit{L}}
\newcommand{\Lone}{\Lspace^1}
\newcommand{\lspace}{\ell\,}
\newcommand{\lone}{\lspace^1}
\newcommand{\linfty}{\lspace^\infty}
\newcommand{\co}{\mathit{c}_{\,0}}
\newcommand{\set}[1]{\left\{#1\right\}}  
\newcommand{\closure}[1]{\overline{#1}}  
\newcommand{\C}{\mathbb{C}}    
\newcommand{\conjugate}[1]{\overline {#1}} 
\newcommand{\D}{\mathbb{D}}
\newcommand{\T}{\mathbb{T}}
\newcommand{\R}{\mathbb{R}}    
\newcommand{\naturals}{\mathbb{N}}    
\newcommand{\integers}{\mathbb{Z}}    
\newcommand{\tuple}[1]{\boldsymbol{#1}}
\newcommand {\B}{{\mathcal B}}
\newcommand{\abs}[1]{\left|#1\right|}
\newcommand{\dual}[1]{#1^\prime}
\newcommand{\bidual}[1]{#1^{\prime\prime}}
\newcommand{\duality}[2]{\left\langle #1,#2 \right\rangle}
\newcommand{\operators}{\mathcal{B}}
\newcommand{\compactoperators}{\mathcal{K}}
\newcommand{\id}{I}
\newcommand{\inner}[1]{\left[#1\right]}
\newcommand{\extremepoints}{\mathop{\rm ex}}
\begin{document}

\title{Equivalence of multi-norms}

\author{H.\ G.\ Dales}
\address{Fylde College\\
University of Lancaster\\
Lancaster LA1 4YF\\
United Kingdom}
\email{g.dales@lancaster.ac.uk}

\author{M.\ Daws}
\address{School of Mathematics\\
University of Leeds \\ Leeds LS2 9JT,  UK}
\email{mdaws@maths.leeds.ac.uk}

\author{H.\ L.\ Pham}
\address{School of Mathematics, Statistics, and Operations Research\\
Victoria University of Wellington \\ Wellington 6140, New Zealand}
\email{hung.pham@vuw.ac.nz}

\author{P.\ Ramsden}
\address{5 Brookhill Crescent\\
Leeds LS17 8QB, UK}
\email{paul@virtualdiagonal.co.uk}

\begin{abstract} 
 The theory of multi-norms was developed by H.\ G.\ Dales and M.\ E.\ Polyakov in a memoir  that was published in \emph{Dissertationes Mathematicae}. 
In that memoir, the notion of `equivalence' of multi-norms was defined. In the present  memoir, we make a systematic study of when various pairs of
 multi-norms are mutually equivalent.

In particular, we study when $(p,q)$-multi-norms defined on spaces $\Lspace^r(\Omega)$ are equivalent, resolving most cases; we have stronger results
 in the case where $r=2$. We also show that the standard $[t]$-multi-norm defined on $\Lspace^r(\Omega)$ is not equivalent to a $(p,q)$-multi-norm 
in most cases, leaving some cases open.
 We discuss the equivalence of the Hilbert space multi-norm, the $(p,q)$-multi-norm, and the maximum multi-norm based on a Hilbert space. We calculate the value of some constants that arise.

Several results depend on the classical theory of $(q,p)$-summing operators.

\bigskip
\bigskip
\bigskip
\bigskip

\noindent\textbf{Acknowledgements}
We would like to acknowledge the financial support of EPSRC under grant EP/H019405 awarded to H.\ G.\ Dales, of the London Mathematical Society under 
grant 2901 (Scheme 2) awarded  to M. Daws, and of the Victoria University of Wellington and the  Marsden Fund (the Royal Society of New Zealand)  awarded to H.\ L.\ Pham. 

 We are also grateful to Nico Spronk for arranging a room and facilities for us to work together at  the University of Waterloo in August 2011. 
\end{abstract}

\subjclass[2010]{Primary 46B28; Secondary 46M05, 47L05} 

 
 \maketitle

\tableofcontents

\bigskip

\bigskip

\bigskip

 \bigskip

\newpage

\chapter{Introduction}

\noindent The theory of multi-norms was developed by H. G. Dales and M. E. Polyakov in a memoir \cite{DP}, which was published in {\it Dissertationes Mathematicae}. 
One motivation for the development of this theory was to resolve a question on the injectivity of the Banach left modules $\Lspace^p(G)$ 
over the group algebra $\Lone(G)$ of a locally compact group $G$: indeed, for $p>1$, $\Lspace^p(G)$ is injective if and only if $G$ is amenable \cite{DDPR1}.

However, the theory of multi-norms developed a life of its own: it is shown in \cite{DP} that the theory has connections with tensor norms 
on the spaces $\co\otimes E$, with the theory of $(q,p)$-summing operators, and with Banach algebras of operators, through the concept of a `multi-bounded' operator.

In \cite{DP}, there are many examples of multi-norms based on a normed space. For example, this memoir introduced the maximum and minimum multi-norms, 
the $(p,q)$-multi-norm based on a normed space (for $1\le p\le q<\infty$), the standard $t$-multi-norm based on a space $\Lspace^r(\Omega)$
 (for $1\le r\le t<\infty$), and the Hilbert multi-norm based on a Hilbert space.

There is a natural notion of `equivalence' of two multi-norms based on the same normed space, and we find it of interest to establish 
when various pairs of the known examples are indeed mutually equivalent. This often leads to questions of the equality of various classes 
of summing operators on certain Banach spaces. However, this relationship to summing operators is not entirely straightforward: results 
on such operators in the literature seem to give only partial indications. For example, in the case of $(p,q)$-multi-norms on a Hilbert 
space $H$, we would like information about $\Pi_{q,p}(H,\co)$, but classical results determine $\Pi_{q,p}(H)$.

Some easy results on the equivalences of pairs of multi-norms were given in \cite{DP} and in \cite{DDPR1}. In the present paper, we shall present 
a more systematic study of these equivalences.\s

In Chapter 1, we shall recall some background in functional analysis, including the theory of summing norms and tensor norms.  In particular, 
we shall define the Banach space  $(\Pi_{q,p}(E,F), \pi_{q,p})$ of $(q,p)$-summing  operators between Banach spaces $E$ and $F$.

In  Chapter 2, we shall give the  definition of a multi-norm, and introduce the notions of the  rate of growth   $(\varphi_n(E))$ of a multi-norm based on a space $E$ 
 and our notion of the mutual equivalence of two multi-norms based on the same normed space.   Two equivalent multi-norms have similar rates of growth, 
but the converse is, in  general,  not true.  We shall recall the definitions
of the maximum and minimum multi-norms, $(\norm_n^{\max}: n\in\N)$ and $(\norm_n^{\min}: n\in\N)$, based on a normed space.

We shall define the $(p,q)$-multi-norm $(\norm_n^{(p,q)}: n\in\N)$ based on such a space $E$ in the case where $1\leq p\leq q< \infty$, and we shall 
related these multi-norms to certain $\co$-norms on the algebraic tensor product $\co\otimes E$; for example, it is shown in Theorem 
 \ref{connection with Chevet--Saphar norm}   that the $(p,p)$-multi-norm corresponds to the Chevet--Saphar norm on $\co\otimes E$.  We shall show  in Corollary
\ref{2.9} that the multi-norms corresponding to points $(p_1,q_1)$ and $(p_2,q_2)$  are mutually equivalent if and only if  the Banach spaces 
  $\Pi_{q_1,p_1}(\dual{E},\co)$ and $ \Pi_{q_2,p_2}(\dual{E},\co) $  are the same.
  
  We shall begin to study the relations between $(p,q)$-multi-norms in $\S2.5$, giving first indications in a diagram on page \pageref{PICTURE1};
 this   diagram follows from standard results on $(q,p)$-summing  operators given by Diestel, Jarchow, and Tonge in the fine text \cite{DJT}.  In Examples 
 \ref{a calculation of (p,q)-norm on lr space} and \ref{another calculation of (p,q)-norm on lr space}, we shall calculate some explicit $(p,q)$-multi-norms; these results
will be used later to show that certain $(p,q)$-multi-norms are not mutually equivalent.  It was already known that the $(1,1)$-multi-norm 
is the maximum multi-norm on each normed space.

In $\S2.6$, we shall describe the standard $t$-multi-norm on a Banach space $L^r(\Omega, \mu)$, where $(\Omega, \mu)$ 
is a measure space; these multi-norms  played an important role in \cite{DP}, especially in connection with the theory of multi-bounded operators between Banach  
lattices.  In $\S2.7$, we shall describe the Hilbert multi-norm based on a Hilbert space; in fact, this is equal to the $(2,2)$-multi-norm based on the same space.

Our first aim in Chapter 3 is to determine when two $(p,q)$-multi-norms based on a space $L^r(\Omega, \mu)$ are mutually equivalent;
 here $1\leq p\leq q < \infty$ and $r\geq 1$.  In the case where $r=1$, complete results are given in $\S3.1$. 
 The case where $r>1$ is more difficult, and there is a clear distinction between the cases where $r<2$ and $r\geq 2$.  
To discuss the question, it is helpful to consider certain curves ${\mathcal C}_c$  and   ${\mathcal D}_c$, defined for  for $0\le c<1$; the union of 
these curves fills out the `triangle' ${\mathcal T} =\{(p,q): 1\leq p\leq q\}$.  A picture of these curves in the case where $r>2$ is given on page \pageref{PICTURE2}.

We say that two points $P_1=(p_1,q_1)$ and $P_2 = (p_2,q_2)$ in ${\mathcal T}$ are equivalent if the corresponding  $(p,q)$-multi-norms are equivalent on $L^r(\Omega)$.
In Theorem \ref{non equivalent of (p,q) mns on Lr, on different curves}, we shall show that in the `upper-left' of our diagram, $P_1$ and $P_2$ are 
mutually equivalent, and that the corresponding multi-norms are equivalent to the minimum multi-norm.  It is also shown that, otherwise,  
 $P_1$ and $P_2$ are  not equivalent whenever they lie on distinct curves  ${\mathcal D}_c$.   Thus we must turn to consideration of points on 
the same curve ${\mathcal D}_c$  (for $c < 1/\bar{r}$, where $\bar{r} = \min\{2,r\}$).  In $\S3.6$,  we shall use Khintchine's inequalities
 to show that $P_1$ and $P_2$ are not equivalent on the space $\ell^{\,r}$ whenever they are not equivalent on $\ell^{\,2}$, and hence whenever the  spaces 
  $\Pi_{q_1,p_1}(\lspace^2) $ and $\Pi_{q_2,p_2}(\lspace^2)$ are distinct; the latter question is classical, and full results are given in \cite{DJT}. 
Thus we are able to resolve most questions of mutual equivalence of $(p,q)$-multi-norms on   $L^r(\Omega, \mu)$. Results in the case where $r\in (1,2)$   are given in
Theorem  \ref{equivalent of (p,q) mns on Lr when r<2}, and those in the case where $r\geq 2$    are given in
Theorem  \ref{equivalent of (p,q) mns on Lr when r >=2}. Some cases are left open in Theorems  \ref{equivalent of (p,q) mns on Lr when r<2}
 and \ref{equivalent of (p,q) mns on Lr when r >=2}, but a full solution is given in the case where $r=2$. Some of the remaining cases will be resolved in \cite{BDP}.

Let $\Omega$ be a measure space, and take $r\ge 1$.  
In $\S3.8$, we shall consider the conjecture that the multi-norms   $(\norm^{[t]}_n)$ and $(\norm^{(p,q)}_n)$ are not mutually equivalent 
whenever $r>1$ and $\Lspace^r(\Omega)$ is infinite dimensional.  (By Theorem   \ref{2.13}, $(\norm_n^{[q]})=(\norm^{(1,q)}_n)$ on
 $\Lspace^1(\Omega)$ for   $q\ge 1$.)  We shall prove this conjecture for many, but not all,  values of $p$, $q$, and $r$ in
 Theorem \ref{equivalence of (p,q) and [q] multi-norms}.
 
 Let $H$ be a complex Hilbert space. Then the Hilbert multi-norm, the $(p,p)$-multi-norms for $p\in [1,2]$, and the maximum multi-norm based on
 $H$ are all pairwise equivalent.  In  Chapter 4, we shall discuss these norms in more detail.  For example, we know that, for each $p\in [1,2]$, 
there is a constant $C_p$ such that  $\left\Vert  \tuple{x} \right\Vert^{\max}_n = \left\Vert  \tuple{x} \right\Vert^{(1,1)}_n\le
 C_p\left\Vert  \tuple{x} \right\Vert^{(p,p)}_n$ for  all  $ \tuple{x} \in H^n$ and all $n\in\N$. In $\S4.1$, we shall show that $2/\sqrt{\pi}$ is the
 best value of $C_2$; this is a consequence of the `Little Grothendieck Theorem'.
 
In the remainder of Chapter 4, we shall  consider the best constant $c_n$, defined for each fixed $n\in\naturals$, such that 
$\left\Vert{\tuple{x}}\right\Vert^{\max}_n\le c_n\left\Vert{\tuple{x}}\right\Vert^{(2,2)}_n$ for   $\tuple{x}\in H^n$.  
We shall show that $c_2 =1$, but that $c_3>1$ in the real case; however, a rather long calculation will show that $c_3=1$ in the complex case; 
 finally, we shall show in $\S4.5$ that $c_4>1$ even in the complex case.
\medskip

Two points left open in the present work will be resolve in \cite{BDP}; see Remarks \ref{remark 3.17}  and \ref{remark 3.19}.

\medskip

We first give some background to the material of this paper, and recall some definitions from earlier works.

\section{Basic notation} The natural numbers and the integers are $\naturals$ and $\integers$, respectively. For $n \in \naturals$,
 we set $\naturals_n=\set{1,\ldots,n}$.  The complex field is $\C$; the unit circle and open unit disc in $\C$ are $\T$ and $\D$, respectively. 

Let $1\leq p\leq \infty$. Then the conjugate to $p\,$ is denoted by $p'$, so that
 $1\leq p'\leq \infty$ and satisfies $1/p+1/p'=1$.
 
 Let $(\alpha_n)$ and $(\beta_n)$ be two sequences of complex numbers.  Then $(\alpha_n)$ and $(\beta_n)$  are {\it similar\/}, written 
$\alpha_n\sim \beta_n$, if there are constants $C_1,C_2>0$ such that 
\[
	C_1 \left\vert\alpha_n\right\vert \leq \left\vert \beta_n \right\vert\leq C_2\left\vert \alpha_n\right\vert\quad(n\in\naturals)\,.
\]

\smallskip

\section{Linear  and Banach spaces}

Let $E$ be a linear space (always taken  to be over the complex field, $\C$, unless otherwise stated). 

Let $C$ be a convex set in $E$. An element $x \in C$ is an \emph{extreme   point} if $C\setminus \{x\}$ is also convex; the set of extreme points of $C$ is denoted by $\ex C$.  Let $x \in C$. Then, to show that $x \in \ex C$,
 it suffices to show that $u=0$ whenever $u\in E$ and $x\pm u \in C$.

For a linear space $E$ and $n \in \naturals$, we denote by $E^n$ the linear space direct product of $n$ copies of $E$. Let $F$ be another linear space. 
Then the linear space of all linear operators from $E$ to $F$ is denoted by ${\mathcal L}(E,F)$. The identity operator on $E$ is $\id_E$, or just $I$ when the
space is obvious.

Let $E$ be a normed space. The closed unit ball and unit sphere of $E$ are denoted by $E_{[1]}$ and $S_E$, respectively, so that $\ex E_{[1]}\subset S_E$.
 We denote the dual space of $E$ by $\dual{E}$; the action of
 $\lambda \in \dual{E}$ on an element $x \in E$ is written as $\duality{x}{\lambda}$, and the  canonical embedding of $E$ into its bidual
 $\bidual{E}$ is $\kappa_E : E \to E''$.

Let $E$ and $F$ be normed  spaces. Then $\operators(E, F)$ is the normed  space of all bounded linear operators from $E$ to $F$; it is a Banach space
 whenever $F$ is complete.  The ideal of finite-rank operators in $\operators(E,F)$ is denoted by ${\mathcal F}(E,F)$. We set  $\operators(E)=\operators(E,E)$, so that $\operators(E)$ is a unital normed algebra; it is a Banach algebra whenever 
$E$ is complete. The {\it dual\/} of  $T\in \operators(E, F)$ is $T'\in \operators(F', E')$, so that $\lV T'\rV=\lV T \rV$.  The closed ideal 
 of $\operators(E)$ consisting of the  compact operators  is denoted by $\compactoperators(E)$. 

A closed subspace $F$   of a normed space $E$ is {\it $\lambda$-complemented\/} if there exists $P\in  \operators(E)$ with $P^2= P$, with $P(E)=F$, 
and with $\lV P \rV\leq \lambda$.

We write $E\cong F$ when two Banach spaces $(E,\norm)$ and $(F,\norm)$ are {\it isometrically isomorphic}.

Let $(\Omega,\mu)$ be a measure space, and take  $p\geq 1$. Then we denote by $\Lspace^p(\Omega)=\Lspace^p(\Omega, \mu)$ (or $\Lspace^p(\mu)$) the Banach space
 of (equivalence classes of) complex-valued, $p-$integrable functions on $\Omega$, equipped with the norm $\norm_p$, which is given by
\[
\lV f \rV_p= \lp \int_{\Omega} \abs{f(x)}^p \dd \mu(x)\rp^{1/p}=\lp \int_{\Omega} \abs{f}^p \dd \mu\rp^{1/p}\quad (f \in \Lspace^p(\Omega))\,.
\]
We also define the related space $\Lspace^{\infty}(\Omega)=\Lspace^{\infty}(\Omega, \mu)$. All these spaces are Dedekind complete (complex) Banach lattices in the standard way. For some background on Banach lattices that is sufficient for our purposes, see \cite[\S1.3]{DP}.

Let $\co$ and $\lspace^p$ be the usual Banach spaces of sequences, where $1\leq p\leq \infty$.  We shall write $(\delta_n)_{n=1}^\infty$ for the standard
 unit Schauder  basis for $\co$ and $\lspace^p$ (when $p\geq 1$). 
 For $n\in\naturals$, we write $\linfty_n$ and $\lspace_n^p$ for the linear space $\C^n$ with the supremum and $\lspace^p$ norms, 
respectively; we regard each $\linfty_n$ as a subspace of $\co$, and hence regard $(\delta_i)_{i=1}^n$ as a basis for $\linfty_n$.
The space of all continuous functions on a compact Hausdorff space $K$ is denoted by $C(K)$. 

We shall several times use the following two results.

\begin{proposition}\label{1.1}  Take $p\geq1$, and let $\Omega$ be a measure space  such that  $\Lspace^p(\Omega)$ is infinite dimensional.  
Then there is an isometric lattice homomorphism $J: \ell^{\,p} \to \Lspace^p(\Omega)$ and a positive contraction of $\Lspace^p(\Omega)$ onto $J(\ell^{\,p})$,
 so that $J(\ell^{\,p})$ is $1$-complemented in $\Lspace^p(\Omega)$.
\end{proposition}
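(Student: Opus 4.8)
The plan is to realise $\ell^{\,p}$ concretely inside $\Lspace^p(\Omega)$ as the closed sublattice of functions that are supported on, and constant on each of, a fixed sequence of pairwise disjoint sets of finite positive measure, and then to project back onto it by averaging over those sets. The only genuinely measure-theoretic input is the extraction of such a sequence, and I regard this as the main obstacle.

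First I would produce an infinite sequence $(A_n)$ of pairwise disjoint measurable subsets of $\Omega$, each of finite and strictly positive measure. I would first reduce to a $\sigma$-finite situation: since $\Lspace^p(\Omega)$ is infinite dimensional it contains countably many linearly independent functions, and the union of their supports is $\sigma$-finite (as $\set{\abs{f}>1/k}$ has finite measure for each $f\in\Lspace^p(\Omega)$), so we may work over a $\sigma$-finite part on which $\Lspace^p$ is still infinite dimensional. On such a part I would split $\mu$ into its purely atomic and non-atomic components. If there are infinitely many atoms, these (each of finite measure, by $\sigma$-finiteness) serve as the $(A_n)$; if the non-atomic component has positive measure, then a set of finite positive measure in it may be subdivided repeatedly, using non-atomicity, into infinitely many disjoint pieces of positive measure. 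Were neither to occur, $\Lspace^p(\Omega)$ would be spanned by the indicators of finitely many atoms, hence finite dimensional, contrary to hypothesis.

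Given $(A_n)$, I would set $e_n=\mu(A_n)^{-1/p}\chi_{A_n}$, so that $\lV e_n\rV_p=1$, and define $J\colon\ell^{\,p}\to\Lspace^p(\Omega)$ by $J((\alpha_n))=\sum_n\alpha_n e_n$. Because the $A_n$ are pairwise disjoint, the $e_n$ have disjoint supports, and so $\lV J((\alpha_n))\rV_p^p=\sum_n\abs{\alpha_n}^p\mu(A_n)^{-1}\mu(A_n)=\sum_n\abs{\alpha_n}^p$; thus $J$ is an isometry, and its range $J(\ell^{\,p})$ is complete, hence closed. Disjointness of supports also gives $\abs{J((\alpha_n))}=\sum_n\abs{\alpha_n}e_n=J((\abs{\alpha_n}))$ pointwise almost everywhere, which is precisely the statement that $J$ commutes with the modulus; hence $J$ is an isometric lattice homomorphism, with $J(\delta_n)=e_n$ for each $n$.

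Finally, to exhibit a positive projection of norm one, I would define $P\colon\Lspace^p(\Omega)\to\Lspace^p(\Omega)$ by the conditional-expectation-type formula $P(f)=\sum_n\mu(A_n)^{-1}\lp\int_{A_n}f\dd\mu\rp\chi_{A_n}$. Plainly $P$ is linear and positive, and, using that $\int_{A_m}e_n\dd\mu$ equals $\mu(A_m)^{1/p'}$ when $m=n$ and is $0$ otherwise, one checks $P(e_n)=e_n$; hence $P$ restricts to the identity on $J(\ell^{\,p})$, its range is exactly $J(\ell^{\,p})$, and $P^2=P$. It remains to verify that $\lV P\rV\le 1$. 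Applying Hölder's inequality on each $A_n$ gives $\abs{\int_{A_n}f\dd\mu}^p\le\mu(A_n)^{p-1}\int_{A_n}\abs{f}^p\dd\mu$, so that, by disjointness,
\[
\lV P(f)\rV_p^p=\sum_n\mu(A_n)^{1-p}\abs{\int_{A_n}f\dd\mu}^p\le\sum_n\int_{A_n}\abs{f}^p\dd\mu\le\lV f\rV_p^p\,.
\]
This shows that $P$ is a positive contraction of $\Lspace^p(\Omega)$ onto $J(\ell^{\,p})$, so that $J(\ell^{\,p})$ is $1$-complemented, completing the proof.
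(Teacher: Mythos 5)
Your argument is correct, and every step checks out: the extraction of a pairwise disjoint sequence $(A_n)$ of sets of finite positive measure (via reduction to a $\sigma$-finite part using the Chebyshev sets $\{|f|>1/k\}$, then the atomic/non-atomic trichotomy), the isometric lattice embedding $J(\delta_n)=\mu(A_n)^{-1/p}\chi_{A_n}$, and the averaging projection $P$, whose contractivity follows from exactly the H\"older estimate you give (including the boundary case $p=1$). The one point of comparison worth making is that the paper does not prove this proposition at all: it simply cites Arendt and Voigt \cite{AV} and treats the statement as known. So your write-up is not so much a different route as a self-contained, elementary substitute for an external reference; what it buys is transparency (the projection is visibly a conditional expectation onto the $\sigma$-algebra generated by the $A_n$, which makes positivity and the norm-one bound immediate), at the cost of having to carry out the routine but slightly fiddly measure-theoretic bookkeeping that the citation sweeps away. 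Two micro-points you may wish to make explicit if this were to be written out in full: distinct atoms should be taken pairwise disjoint (any two atoms are either essentially disjoint or essentially equal, so this is harmless), and the identification of the range of $P$ with $J(\ell^{\,p})$ uses the same H\"older estimate to see that the coefficient sequence $\bigl(\mu(A_n)^{1/p-1}\int_{A_n}f\dd\mu\bigr)_n$ lies in $\ell^{\,p}$ — which you do in effect verify when bounding $\lV P(f)\rV_p$.
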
\s

 \begin{proof}  This is \cite[4.1]{AV}, for example.
 \end{proof}\s

\begin{proposition}\label{1.2a} Let $E$ be an infinite-dimensional Banach space, and take $\varepsilon >0$ and $n\in\naturals$.  Then   there exist
 $x_1,\dots,x_n \in E$ such that 
 \[1-\varepsilon \leq \|x_n\| \leq 1\quad(n\in\naturals)\] and
\[ 
 \left\Vert\sum_{i=1}^n \alpha_i x_i \right\Vert \leq \left( \sum_{i=1}^n |\alpha_i|^2 \right)^{1/2} \quad  (\alpha_1,\dots,\alpha_n\in \C )\,. 
\]
\end{proposition}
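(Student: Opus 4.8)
The plan is to reduce the problem to a finite-dimensional subspace of $E$ and then to invoke the classical Dvoretzky--Rogers lemma, in the form based on John's maximal-volume ellipsoid. Since only finitely many vectors are required, the infinite-dimensionality of $E$ is used solely to guarantee a subspace of sufficiently large finite dimension.

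First I would fix $n$ and $\varepsilon$, choose an integer $N$ so large that $\sqrt{(N-n+1)/N}\ge 1-\varepsilon$ (any $N\ge (n-1)/(2\varepsilon-\varepsilon^2)$ will serve), and pass to a subspace $X\subseteq E$ with $\dim X=N$; such a subspace exists because $E$ is infinite-dimensional. All of the construction then takes place inside $X$, and the resulting vectors automatically satisfy the required estimate in $E$, since the $E$-norm restricts to the $X$-norm and the right-hand side of the desired inequality does not involve $E$.

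Next I would equip $X$ with the Euclidean structure coming from John's theorem: the maximal-volume ellipsoid $\mathcal E$ contained in the unit ball $X_{[1]}$ has the form $\set{x:q(x)\le 1}$ for a Hilbertian norm $q$ (Hermitian in the complex case), and the inclusion $\mathcal E\subseteq X_{[1]}$ yields $\lV x\rV \le q(x)$ for all $x\in X$. If $x_1,\dots,x_n$ are any $q$-orthonormal vectors, then $q\big(\sum_{i=1}^n\alpha_i x_i\big)=(\sum_{i=1}^n|\alpha_i|^2)^{1/2}$, so that $\lV\sum_i \alpha_i x_i\rV\le (\sum_i|\alpha_i|^2)^{1/2}$ and, in particular, $\lV x_i\rV\le q(x_i)=1$. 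Thus the contraction estimate and the upper bound on $\lV x_i\rV$ are immediate consequences of orthonormality together with John's norm comparison; the only remaining point is to arrange that the $x_i$ are also nearly norming for $\lV\cdot\rV$, that is, $\lV x_i\rV\ge 1-\varepsilon$.

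This last point is exactly the content of the Dvoretzky--Rogers lemma, which I would quote from \cite{DJT}: the contact points of $\mathcal E$ with the boundary of $X_{[1]}$ are abundant, and one may extract a $q$-orthonormal system $x_1,\dots,x_N$ with $\lV x_k\rV\ge\sqrt{(N-k+1)/N}$ for each $k$. Taking the first $n$ of these and using the choice of $N$ gives $\lV x_i\rV\ge \sqrt{(N-n+1)/N}\ge 1-\varepsilon$, which completes the construction. The main, indeed the only substantial, step is this Dvoretzky--Rogers extraction; everything else is a formal consequence of the inequality $\lV\cdot\rV\le q$ and of orthonormality. In the complex case I would either realify $X$ and run the real argument, taking care that $q$-orthonormality be understood in the Hermitian sense, or work directly in the complex John position; it is precisely the identity $q\big(\sum_i\alpha_i x_i\big)^2=\sum_i|\alpha_i|^2$ that forces the underlying inner product to be Hermitian.
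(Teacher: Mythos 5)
Your argument is correct and is essentially the paper's own proof: the paper simply cites Dvoretzky's theorem and then notes that the weaker claim actually needed follows from \cite[Lemma~1.3]{DJT}, which is precisely the Dvoretzky--Rogers lemma via John's ellipsoid that you spell out. Your write-up merely makes explicit the reduction to an $N$-dimensional subspace and the choice of $N$, so there is nothing to add.
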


\begin{proof}  By Dvoretzky's theorem, $E$ contains near-isometric copies of $\ell^{\,2}_n$, and this gives the result.  Actually, our claim is somewhat weaker, and follows from more elementary arguments, given in  \cite[Lemma~1.3]{DJT},  for example.
\end{proof}\s

 We shall refer to   Lorentz sequence spaces.  Suppose that $1\leq p\leq q< \infty$. Then the {\it Lorentz sequence space\/} $\ell^{\,p,q}$
 consists of the sequences $x=(x_n)\in \co$ such that 
\[
\lV x \rV_{p,q} = \left(\sum_{n=1}^\infty n^{(q/p)-1}(x_n^*)^q   \right)^{1/q}< \infty\,,
\]
where $x^*$ is the  decreasing re-arrangement  of $\left\vert x\right\vert $; 
the version based on $\naturals_n$  is $\ell^{\,p,q}_n$. For this definition, see \cite[p.~207]{DJT}, for example.  The spaces $(\ell^{\,p,q},\norm_{p,q})$ are Banach spaces. 
In the case where $q=p$, we obtain the usual spaces $\ell^{\,p}$ and $\ell^{\,p}_n$.\s

We shall also refer to Schatten classes.  Let $H$ be a Hilbert space.  For $p\geq 1$, the $p\,$-th {\it Schatten class\/}  ${\mathcal S}_p(H)$  
consists  of the compact operators $T\in{\mathcal K}(H)$ such that the positive operator $ (T^*T)^{p/2}$ has finite trace; the norm 
$\norm_{{\mathcal S}_p}$ on ${\mathcal S}_p(H)$  is given by 
\[
\lV T \rV_{{\mathcal S}_p} = \left({\rm tr}((T^*T)^{p/2})\right)^{1/p}\quad(T\in {\mathcal S}_p(H))\,.
\]
Equivalently, $T \in {\mathcal S}_p(H)$ if and only if the operator $\left\vert T \right\vert = (T^*T)^{1/2}$ is compact and
$\lambda = (\lambda_n) \in \ell^{\,p}$, where $(\lambda_n) $ is the (decreasing) sequence of non-zero eigenvalues of $\left\vert T \right\vert$, counted according 
to their multiplicities; now  $\lV T \rV_{{\mathcal S}_p}= \lV \lambda\rV_p$.
 The space  $({\mathcal S}_p(H), \norm_{{\mathcal S}_p})$  is a Banach operator ideal in ${\B}(H)$; the ideal ${\mathcal S}_2(H)$   coincides with the space  of 
{\it Hilbert--Schmidt operators\/} on $H$, and the corresponding norm is the {\it Hilbert--Schmidt norm\/}.

In the case where $2<p<q< \infty$, the space ${\mathcal S}_{2q/p,q}(H)$  consists of the operators $T \in {\B}(H)$ such that the above sequence of 
eigenvalues belongs to the Lorentz sequence space $\ell^{\,2q/p,q}$, and so satisfies the condition that 
\[
\lV T \rV_{{\mathcal S}_{2q/p,q}} = \left(\sum_{n=1}^\infty n^{(p/2)-1}\lambda_n^q\right)^{1/q}< \infty\,.
\]

Suppose that $H$ is an  infinite-dimensional Hilbert space, and let $(e_n)$   be an ortho\-normal sequence in $H$. For $\alpha>0$, set 
$T_\alpha e_n = n^{-\alpha}e_n\,\;(n\in \N)$, so that $T_\alpha$ extends  to an operator in ${\B}(H)$ in an obvious way. Then $T_\alpha \in {\mathcal S}_p(H)$  if and only 
if $\alpha p> 1$.   Thus ${\mathcal S}_p(H)\neq {\mathcal S}_q(H)$ whenever $p,q\geq 1$ with $p\neq q$.  Further,
$T_\alpha \in {\mathcal S}_{2q/p,q}(H)$  if and only if $\alpha > p/2q$, and so ${\mathcal S}_r(H) \neq {\mathcal S}_{2q/p,q}(H)$  whenever $r\neq 2q/p$. 

\label{Remark on Schatten class}

Now suppose that $r= 2q/p$.  We take an infinite subset $X$ of $\N$, and define $T\in  {\B}(H)$ by setting $Te_n= n^{-\alpha}e_n\,\;(n\in X)$  and   $Te_n=0\,\;(n\in\N \setminus X)$, where $q\alpha = 1- p/2$, and again extending $T$ to belong to $\operators(H)$.  Then $T  \in {\mathcal S}_r(H)$  if and only if 
  \[
  \sum_{n\in X} n^{(2/p) - 1} < \infty\,,
\]
and so $T  \in {\mathcal S}_r(H)$ for a suitably  `sparse' set $X$, noting that $(2/p)-1<0$. However, $T\in {\mathcal S}_{2q/p,q}(H)$  if and only 
if $\sum_{n\in X}1 < \infty$, and this is never the case for infinite $X$.  Thus it is always true that the spaces ${\mathcal S}_r(H)$ and
 ${\mathcal S}_{2q/p,q}(H)$  are distinct.  

  Similarly, the spaces ${\mathcal S}_{2q/p,q}(H)$ corresponding to  pairs $(p_1,q_1)$ and $(p_2,q_2)$  are distinct whenever  $(p_1,q_1)\neq (p_2,q_2)$. 
 \medskip

\section{Summing norms and summing operators} Let $E$ be a normed space, and let $n\in\N$. Following the notation of \cite{DP, DDPR1,  Jameson}, we define the 
\italic{weak $p\,$--summing norm} (for $1\leq p<\infty$) on $E^n$ by
\[
\mu_{p,n}(\tuple{x})=\sup \set{\left(\sum_{i=1}^{n}\abs{\duality{x_{i}}{\lambda}}^{p}\right)^{1/p}: \lambda \in \dual{E}_{[1]}}\,,
\]
where $\tuple{x}=(x_1,\ldots,x_n) \in E^n$. We set $\lspace^{\,p}_n(E)^{w}=(E^n, \mu_{p,n})$. It follows from \cite[p.~26]{Jameson}
 that, for each $\tuple{x}=(x_1,\ldots,x_n) \in E^n$, we have
\begin{equation}\label{weak--summing-norm 1}
\mu_{p,n}(\tuple{x})=\sup\set{\flexiblenorm{\sum_{i=1}^{n}\zeta_{i}x_i}\colon \zeta_1,\dots,\zeta_n \in \C,\ \sum_{i=1}^{n}\abs{\zeta_i}^{p'}\leq 1 }\,.
\end{equation}
We also have
\begin{align}\label{weak--summing-norm 2a}
	\mu_{p,n}(\tuple{x})&=\|T_{\tuple{x}}:\lspace_n^{p'}\to E\|\,,
\end{align}
where $T_{\tuple{x}}:(\beta_1,\ldots,\beta_n)\mapsto \sum_{i=1}^n\beta_ix_i$ belongs to $\operators(\lspace_n^{p'}, E)$. 
Thus the map $\tuple{x}\mapsto T_{\tuple{x}}$ is an isometric isomorphism from $(E^n,\mu_{p,n})$ onto $\operators(\lspace_n^{p'}, E)$. Also, let $F$ be another normed space, and take $T \in {\B}(E,F)$. Then clearly
\[
\mu_{p,n}(Tx_1,\dots,Tx_n)\leq \lV T \rV\mu_{p,n}(x_1,\dots,x_n)\quad (x_1,\dots,x_n \in E,\,n\in\N)\,.
\]
 
We note that 
\[
\mu_{p_1,n}(\tuple{x}) \geq \mu_{p_2,n}(\tuple{x}) \quad (\tuple{x} \in E^n,\,n\in\N)
\]
whenever $1\leq p_1\leq p_2<\infty$.

We also define the \italic{weak $p\,$--summing norm} of a sequence $\tuple{x}=(x_i)$ of elements in $E$ by
\begin{align*}
\mu_{p}(\tuple{x})=&\sup \set{ \lp\sum_{i=1}^{\infty}\abs{\duality{ x_{i}}{ \lambda}}^{p}\rp^{1/p}: \lambda \in \dual{E}_{[1]}}=
\lim_{n\to\infty}\mu_{p,n}(x_1,\ldots,x_n)\,;
\end{align*}
thus $\mu_p(\tuple{x})$ takes values in $[0,\infty]$. The sequences $\tuple{x}$ such that $\mu_p(\tuple{x})<\infty$ are the
 \emph{weakly $p\,$-summable sequences} in $E$, and the space of these sequences is $\lspace^p(E)^w$; see \cite[p.~32]{DJT} and \cite[p.~134]{Ryan},
 where $\mu_p(\,\cdot\,)$ is denoted by $\norm^{\textrm{weak}}_p$ and $\norm^{w}_p$, respectively. It follows from \cite[p.~26]{Jameson}
 that, for each sequence $\tuple{x}=(x_i)$  in $E$, we have
\begin{align}\label{weak--summing-norm 2}
\mu_{p}(\tuple{x})&=\sup\set{\flexiblenorm{\sum_{i=1}^{\infty}\zeta_{i}x_i}\colon (\zeta_i)\in(\lspace^{p'})_{[1]}}\,.
\end{align}

Suppose that  $1\le p\le q<\infty$. We recall from \cite[Chapter 10]{DJT} that an operator $T$ from a normed space $E$ into another normed space
 $F$ is  \emph{$(q,p)$--summing} if  there exists a constant $C$ such that
\[
	\left(\sum_{i=1}^n\lV Tx_i\rV^q\right)^{1/q}\le C\,\mu_{p,n}(x_1,\ldots, x_n)\quad (x_1,\ldots, x_n\in E,\ n\in\naturals)\,.
\]
The smallest such constant $C$ is denoted by $\pi_{q,p}(T)$. The set of these $(q,p)$--summing operators, which is denoted by $\Pi_{q,p}(E,F)$, 
 is a linear subspace of ${\B}(E,F)$ and a normed space when equipped with the norm $\pi_{q,p}$\,; $(\Pi_{q,p}(E,F), \pi_{q,p}) $ is a Banach space when $E$ and $F$
 are Banach spaces. In the case where  $p=q$, we shall write $\Pi_p$  and $\pi_p$ instead of $\Pi_{p,p}$ and $\pi_{p,p}$, respectively. The space $(\Pi_p,\pi_p)$ of all \emph{$p\,$--summing operators} has been studied by many authors; see \cite{DF, DJT, Gar3, Jameson, Ryan},  
for example. In the case where $E=F$, we shall write $\Pi_{q,p}(E)$ instead of $\Pi_{q,p}(E,E)$, $\pi_{q,p}(E)$  instead of $\pi_{q,p}(E,E)$, \ldots etc.

A basic inclusion theorem \cite[Theorem 2.8]{DJT} shows that $\Pi_p(E,F)\subset \Pi_q(E,F)$ whenever $1\le p\leq q<\infty$. 
A more complicated inclusion theorem \cite[Theorem 10.4]{DJT} will be used in Theorem \ref{comparing (p,q) multi-norms}, given below.

 Let us make some obvious remarks about summing operators. Let $E$, $F$, and $G$ be Banach spaces, and take $T\in {\B}(E,F)$ and  $1\leq p\leq q<\infty$. Then:  
\begin{itemize}
\item  $T\in \Pi_{q,p}(E,F)$ if and only if $S\,\circ \,T\in \Pi_{q,p}(E,G)$,
  with equal norm, for any isometry $S:F\rightarrow G\,$;
\item   $T\in \Pi_{q,p}(E,F)$ if and only if $T\,\circ\,  P \in \Pi_{q,p}(G,F)$,   with equal norm, for any contractive projection $P:G\rightarrow E$.
\end{itemize}
These remarks will be used implicitly at some future points. 

  The Pietsch domination theorem can be stated in the following way ({\it cf.}  the discussion
 after \cite[Theorem~6.18]{Ryan}).  Take $p\geq 1$.
 A map $T\in\mathcal B(E,F)$ is $p\,$--summing if and only if we can find a non-empty, compact Hausdorff space $K$ and a
probability measure $\mu$ on $K$, together with operators $V\in\mathcal B(E,C(K))$ and $U\in\mathcal B(L^p(\mu),\ell^{\,\infty}(I))$ such that the following diagram
commutes:
\[ \xymatrix{ E \ar[r]^-{T} \ar[d]^V & F \ar@{^{(}->}[r] & \ell^{\,\infty}(I) \\
C(K) \ar[rr] && L^p(\mu) \ar[u]^U\,.} \]
Here the map $C(K)\rightarrow L^p(\mu)$ is the canonical inclusion map, $I$ is a suitable index set, and $\ell^{\,\infty}(I)$ can be replaced by
 any injective Banach space $G$ such that $F$ is isometric to a subspace of $G$.\s 
 
Let $E$ and $F$ be   normed spaces. Take  $n\in\N$,  and  suppose that $1\le p\le q<\infty$. Then  the {\it $(q,p)$-summing constants} of the operator
 $T\in {\B}(E, F)$  are the numbers
\[
\pi_{q,p}^{(n)}(T) : = \sup\!\left\{\left(\sum_{i=1}^n \lV Tx_i\rV^{\,q}\right)^{1/q}\! :\ x_1,\dots, x_n\in E,\, \mu_{p,n}(x_1,\dots, x_n)\leq 1\right\}.
\]
Further,  $\pi_{q,p}^{(n)}(E)= \pi_{p,q}^{(n)}(I_{E})$; these are the {\it $(q,p)$-summing constants} of the normed space $E$.  
We write $\pi_p^{(n)}(T)$ for  $\pi_{p,p}^{(n)}(T)$ and $\pi_p^{(n)}(E)$ for $\pi_{p,p}^{(n)}(E)$.  It follows that   
\begin{equation}\label{(3.10a)}
 \pi_{q,p}^{(n)}(E )= \sup\left\{\left(\sum_{i=1}^n \left\Vert x_i\right\Vert^{\,q}\right)^{1/q} :\ x_1,\dots, x_n\in E,\,  \mu_{p,n}(x_1,\dots, x_n)\leq 1\right\}\,.
\end{equation}

\medskip

\begin{proposition}\label{1.10}  Suppose that $1 \leq p\leq q< \infty$ and that $n\in\N$.  Then:
\begin{enumerate}
\item[{\rm (i)}] $\pi_{q,p}^{(n)}(E)\leq n^{1/q}$ for each normed space $E$;\s

\item[{\rm (ii)}]   $\pi_{q,p}^{(n)}(E)=  n^{1/q}$ for each infinite-dimensional normed space $E$  whenever $p\geq 2$\,;\s

\item[{\rm (iii)}]   $\pi_{q,p}^{(n)}(E)\ge  n^{1/2-1/p+1/q}$ for each infinite-dimensional normed space $E$  whenever $p\leq 2$\,;\s

\item[{\rm (iv)}] $\pi_{q,p}^{(n)}(\ell^{\,s}) =n^{1/q}$ whenever $s\in[1,\infty]$ and $p\geq \min\set{s',2}$\,.\s
\end{enumerate}
\end{proposition}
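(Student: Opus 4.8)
The plan is to treat (i) as a universal upper bound, to obtain the lower bounds in (ii) and (iii) from the almost-Euclidean systems supplied by Proposition~\ref{1.2a}, and then to assemble (iv) from (i), (ii), and an explicit computation with the unit-vector basis. Throughout I shall use the reformulation \eqref{(3.10a)} of $\pi_{q,p}^{(n)}(E)$ as a supremum of $\left(\sum_i\|x_i\|^q\right)^{1/q}$ over systems $\tuple{x}$ with $\mu_{p,n}(\tuple{x})\le 1$, together with the alternative description \eqref{weak--summing-norm 1} of $\mu_{p,n}$, namely $\mu_{p,n}(\tuple{x})=\sup\left\{\|\sum_i\zeta_ix_i\|:\|\zeta\|_{p'}\le 1\right\}$ for $\zeta=(\zeta_i)\in\C^n$, which lets me estimate $\mu_{p,n}$ without reference to the dual space.

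For (i), the single-term estimate $\|x_i\|=\sup_{\lambda\in\dual{E}_{[1]}}\abs{\duality{x_i}{\lambda}}\le\mu_{p,n}(\tuple{x})$ shows that $\mu_{p,n}(\tuple{x})\le 1$ forces $\|x_i\|\le 1$, whence $\sum_{i=1}^n\|x_i\|^q\le n$ and $\pi_{q,p}^{(n)}(E)\le n^{1/q}$. For (ii), fix $\varepsilon>0$ and take $x_1,\dots,x_n$ as in Proposition~\ref{1.2a}, so that $\|x_i\|\ge 1-\varepsilon$ and $\|\sum_i\alpha_ix_i\|\le\|(\alpha_i)\|_2$. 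Since $p\ge 2$ gives $p'\le 2$, the monotonicity of $\ell^{\,r}$-norms yields $\|\zeta\|_2\le\|\zeta\|_{p'}$, so by \eqref{weak--summing-norm 1} any $\zeta$ with $\|\zeta\|_{p'}\le 1$ gives $\|\sum_i\zeta_ix_i\|\le\|\zeta\|_2\le 1$; hence $\mu_{p,n}(\tuple{x})\le 1$. This system is admissible in \eqref{(3.10a)} and gives $\pi_{q,p}^{(n)}(E)\ge(1-\varepsilon)n^{1/q}$; letting $\varepsilon\to 0$ and combining with (i) yields equality. (If $E$ is merely normed, one finds the $x_i$ in its completion and approximates, the constants being unchanged.)

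For (iii) I again start from the system of Proposition~\ref{1.2a}, but now $p\le 2$ gives $p'\ge 2$, so the comparison runs the other way: for $\zeta\in\C^n$ one has $\|\zeta\|_2\le n^{1/2-1/p'}\|\zeta\|_{p'}$, and since $1/2-1/p'=1/p-1/2$, the estimate $\|\sum_i\zeta_ix_i\|\le\|\zeta\|_2$ gives $\mu_{p,n}(\tuple{x})\le n^{1/p-1/2}$. Rescaling the system by this factor produces an admissible system whose terms have norm at least $(1-\varepsilon)n^{1/2-1/p}$, and then \eqref{(3.10a)} yields $\pi_{q,p}^{(n)}(E)\ge(1-\varepsilon)n^{1/q-1/p+1/2}$; letting $\varepsilon\to 0$ gives the claim.

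For (iv) the upper bound is (i), and for the lower bound I distinguish two cases according to the position of $p$ relative to $s'$. If $p\ge s'$ (equivalently $s\ge p'$), I take $x_i=\delta_i$ $(1\le i\le n)$ in $\ell^{\,s}$; then $\|\sum_i\zeta_i\delta_i\|_s=\|\zeta\|_s\le\|\zeta\|_{p'}$ when $s\ge p'$, so \eqref{weak--summing-norm 1} gives $\mu_{p,n}(\delta_1,\dots,\delta_n)\le 1$, while $\sum_i\|\delta_i\|_s^q=n$, whence $\pi_{q,p}^{(n)}(\ell^{\,s})\ge n^{1/q}$. If instead $p<s'$, then the hypothesis $p\ge\min\set{s',2}$ forces $\min\set{s',2}=2$ and $p\ge 2$, and since $\ell^{\,s}$ is infinite-dimensional, (ii) already gives $\pi_{q,p}^{(n)}(\ell^{\,s})=n^{1/q}$. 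Either way the lower bound matches (i). The step I expect to be the main obstacle is precisely this bookkeeping in (iv): one must check that the two cases exactly exhaust the hypothesis $p\ge\min\set{s',2}$, including the extreme values $s=1$ (where $s'=\infty$, so one is always in the second case) and $s=\infty$ (where $s'=1$, so one is always in the first case). The computations in (ii) and (iii) are otherwise routine once the correct direction of the $\ell^{\,r}$-norm comparison has been identified.
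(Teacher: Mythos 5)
Your proposal is correct and follows essentially the same route as the paper: (i) from the termwise bound $\|x_i\|\le\mu_{p,n}(\tuple{x})$, (ii) and (iii) from the almost-Euclidean system of Proposition~\ref{1.2a} together with the comparison of $\norm_2$ and $\norm_{p'}$ in the appropriate direction, and (iv) by splitting into the cases $p\ge s'$ (unit vectors) and $p\ge 2$ (clause (ii)). The only differences are cosmetic — you make the rescaling in (iii) and the exhaustiveness of the case split in (iv) explicit, which the paper leaves implicit.
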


\begin{proof} (i) This is immediate.\s

(ii)  Take $\varepsilon >0$, and choose $x_1,\dots,x_n\in E$ to be as specified in Proposition \ref{1.2a}. 
 For each $\zeta_1,\dots,\zeta_n\in \C$  with $\sum_{i=1}^n\left\vert\zeta_i\right\vert^{p'} \leq 1$, we have
\[
\left\Vert \sum_{i=1}^n\zeta_ix_i\right\Vert \le
 \left(\sum_{i=1}^n\left\vert\zeta_i\right\vert^{2}\right)^{1/2}\leq \left(\sum_{i=1}^n\left\vert\zeta_i\right\vert^{p'}\right)^{1/p'}\,
\]
because $p'\le 2$. Thus, by equation \eqref{weak--summing-norm 1},  $\mu_{p,n}(x_1,\dots, x_n)\leq 1$, and so
\[
	\pi_{q,p}^{(n)}(E)\geq (1-\varepsilon)n^{1/q}\,. 
\]
This holds true for each $\varepsilon >0$, and so $\pi_{q,p}^{(n)}(E)\geq n^{1/q}$. By (i), $\pi_{q,p}^{(n)}(E)= n^{1/q}$.\s

(iii) Take $\varepsilon >0$ and choose $x_1,\dots,x_n\in E$ as in (ii). Now, since $p'\ge 2$, the argument in (ii) shows that 
$\mu_{p,n}(x_1,\dots, x_n)\leq n^{1/2-1/p'}$, and so
\[
	\pi_{q,p}^{(n)}(E)\geq (1-\varepsilon)n^{1/2-1/p+1/q}
\] 
for every $\varepsilon>0$. Hence $\pi_{q,p}^{(n)}(E)\ge  n^{1/2-1/p+1/q}$.\s

(iv) In the case where $p\ge 2$, this follows from (ii). Now suppose that $p\ge s'$. Take $x_j=\delta_j\,\;(j\in\N_n)$. 
As in the proof of (ii), we see that $\mu_{p,n}(x_1,\dots, x_n)\leq 1$, and so  $\pi_{q,p}^{(n)}(\ell^{\,s}) \geq n^{1/q}$.\s
\end{proof}\s

We shall also need the following simple interpolation result.

\begin{proposition}\label{interpolating pi sequence}
Let $E$ be a normed space. Suppose that $1\le p\le q_1<q<q_2<\infty$, so that
\[
	\frac{1}{q}=\frac{1-\theta}{q_1}+\frac{\theta}{q_2}
\]
for some $\theta\in (0,1)$. Then
\[
	\pi_{q,p}^{(n)}(E)\le \left(\pi_{q_1,p}^{(n)}(E)\right)^{1-\theta}\,\cdot\,\left(\pi_{q_2,p}^{(n)}(E)\right)^\theta\quad(n\in\naturals)\,.
\]
\end{proposition}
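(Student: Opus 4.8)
The plan is to fix a single tuple $x_1,\dots,x_n\in E$ with $\mu_{p,n}(x_1,\dots,x_n)\le 1$, prove the asserted bound for $\left(\sum_{i=1}^n\|x_i\|^q\right)^{1/q}$ for this fixed tuple, and then pass to the supremum over all such tuples, invoking the characterization of $\pi_{q,p}^{(n)}(E)$ recorded in equation \eqref{(3.10a)}. The engine of the argument is the classical three-exponent interpolation inequality for finite sequences: for any $a=(a_1,\dots,a_n)$ of non-negative reals and exponents satisfying $1/q=(1-\theta)/q_1+\theta/q_2$ with $q_1<q<q_2$, one has $\|a\|_q\le\|a\|_{q_1}^{1-\theta}\,\|a\|_{q_2}^{\theta}$, where $\|\cdot\|_s$ denotes the $\ell^{\,s}_n$ norm.

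First I would establish this interpolation inequality directly from H\"older's inequality applied to the factorization $a_i^{\,q}=a_i^{\,q(1-\theta)}\cdot a_i^{\,q\theta}$, using the conjugate exponents $s=q_1/(q(1-\theta))$ and $s'=q_2/(q\theta)$. The relation $1/q=(1-\theta)/q_1+\theta/q_2$ is precisely what forces $1/s+1/s'=1$, so that H\"older applies and the two resulting factors collapse to $\|a\|_{q_1}^{\,q(1-\theta)}$ and $\|a\|_{q_2}^{\,q\theta}$; extracting $q$-th roots then yields the stated inequality. (As a sanity check, the constant tuple $a=(1,\dots,1)$ gives equality, since both sides equal $n^{1/q}$.)

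Applying this with $a_i=\|x_i\|$, I would bound $\|a\|_{q_1}=\left(\sum_{i=1}^n\|x_i\|^{q_1}\right)^{1/q_1}$ by $\pi_{q_1,p}^{(n)}(E)$ and $\|a\|_{q_2}$ by $\pi_{q_2,p}^{(n)}(E)$. This step is legitimate because the \emph{same} tuple satisfies $\mu_{p,n}(x_1,\dots,x_n)\le 1$ and is therefore admissible in each of the two defining suprema. This produces
$\left(\sum_{i=1}^n\|x_i\|^q\right)^{1/q}\le\bigl(\pi_{q_1,p}^{(n)}(E)\bigr)^{1-\theta}\bigl(\pi_{q_2,p}^{(n)}(E)\bigr)^{\theta}$ for the fixed tuple, and taking the supremum over all admissible tuples $x_1,\dots,x_n$ finishes the proof.

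I do not anticipate a genuine obstacle: the proposition is essentially H\"older's inequality repackaged, and the only point demanding care is the bookkeeping of the conjugate exponents $s,s'$, whose validity is forced by the interpolation relation among $q,q_1,q_2$. The single conceptual observation that makes the argument clean is that one constraint, $\mu_{p,n}(x_1,\dots,x_n)\le 1$, simultaneously controls both summing constants, so no separate rescaling or optimization over the tuple is required.
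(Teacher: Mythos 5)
Your proposal is correct and is essentially identical to the paper's proof: fix a tuple with $\mu_{p,n}(x_1,\dots,x_n)\le 1$, apply the three-exponent H\"older (log-convexity of $\ell^{\,s}_n$ norms) to the sequence $(\lV x_i\rV)$, bound the two factors by $\pi_{q_1,p}^{(n)}(E)$ and $\pi_{q_2,p}^{(n)}(E)$ using the same admissible tuple, and take the supremum via equation \eqref{(3.10a)}. The only difference is that you spell out the conjugate exponents $s=q_1/(q(1-\theta))$ and $s'=q_2/(q\theta)$, which the paper leaves implicit under the phrase ``a version of H\"older's inequality.''
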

\begin{proof}
Take $x_1,\ldots,x_n\in E$ with $\mu_{p,n}(x_1,\dots, x_n)\leq 1$. Using a version of H\"{o}lder's inequality, we see that
\begin{align*}
	\left(\sum_{i=1}^n \left\Vert x_i\right\Vert^{\,q}\right)^{1/q}&\le \left(\sum_{i=1}^n \left\Vert x_i\right\Vert^{\,(1-\theta)\,[{q_1}/{(1-\theta)}]}\right)^{(1-\theta)/q_1}\,\cdot\, \left(\sum_{i=1}^n \left\Vert x_i\right\Vert^{\,\theta\,[{q_2}/{\theta}]}\right)^{\theta/q_2}\\
	&\le \left(\pi_{q_1,p}^{(n)}(E)\right)^{1-\theta}\,\cdot\,\left(\pi_{q_2,p}^{(n)}(E)\right)^\theta\,,
\end{align*}
which implies the result.
\end{proof}

\medskip

\section{Tensor norms}  Let $E$ and $F$ be linear spaces. Then $E\otimes F$ denotes the algebraic tensor product of $E$ and $F$.

 Let $E_1,E_2,F_1,F_2$ be linear spaces, and take $S\in {\mathcal L}(E_1,E_2)$ and $T\in {\mathcal L}(F_1,F_2)$. Then $S\otimes T$ denotes the unique linear operator from $E_1\otimes F_1$ to 
$E_2\otimes F_2$   such that 
\[
(S\otimes T)(x\otimes y)= Sx\otimes Ty\quad(x\in E_1,\,y\in F_1)\,.
\]
In particular, we have defined $\lambda\otimes \mu$ whenever $\lambda$ and $\mu$ are linear  functionals on $E_1$ and $F_1$, respectively.

Now suppose that $E$ and $F$ are normed spaces.  We shall discuss various norms on the space $E\otimes F$. For the definitions  and  properties stated below,  
see \cite[Chapter I]{DF}, \cite{DJT}, and \cite[Section~6.1]{Ryan}, for example.

We shall often regard $E\otimes F$ as a linear subspace of ${\B}(F',E)$ by setting 
\[
(x\otimes y)(\lambda) = \langle y,\,\lambda\rangle x\quad
(x\in E,\,y\in F,\,\lambda \in F')\,;
\]
in this way, we identify  $E\otimes F$ with ${\mathcal F}(F',E)\subset {\B}(F',E)$.  Similarly, we can identify $E\otimes F$ with ${\mathcal F}(E',F)\subset {\B}(E',F)$.

\smallskip

The injective and projective tensor norms on $E\otimes F$ are denoted by $\norm_\varepsilon$ and $\norm_\pi$, respectively; 
the completions of $E\otimes F$ with  respect to these norms are denoted by   
\[
	(E\injectivetensor F, \norm_\varepsilon)\quad \textrm{and}\quad (E\projectivetensor F, \norm_\pi)\,,
\] 
respectively.  

For $\mu \in (E\projectivetensor F)'$, define $T_\mu $ by
\[
	\langle y,\,T_\mu x\rangle = \langle x\otimes y,\,\mu\rangle\quad (x\in E,\,y\in F)\,.
\] 
Then $T_\mu x \in F'\,\;(x\in E)$, $T_\mu \in \B(E,F')$, and the map  
\begin{equation}\label{(1.3)}
\mu\mapsto T_\mu\,,\quad(E\projectivetensor F)'\to  \B(E,F')\,,
\end{equation}
is an isometric isomorphism, and so $(E\projectivetensor F)'\cong  \B(E,F')$.

 \smallskip
 
A norm $\norm$ on $E\otimes F$  is a {\it sub-cross-norm\/}  if 
\[
\left\Vert x\otimes y\right\Vert \leq \left\Vert x \right\Vert \left\Vert y\right\Vert\quad(x\in E,\,y\in F)
\]
 and a  {\it  cross-norm\/} if
\[
\left\Vert x\otimes y\right\Vert = \left\Vert x \right\Vert \left\Vert y\right\Vert\quad(x\in E,\,y\in F)\,.
\]
Further, a sub-cross-norm $\norm$ on $E\otimes F$ is a {\it reasonable cross-norm} if the linear functional 
$\lambda \otimes \mu$ is bounded and $\left\Vert \lambda \otimes \mu\right\Vert \leq \left\Vert \lambda \right\Vert\left\Vert \mu\right\Vert$ 
 for each $\lambda \in E'$ and $\mu\in F'$. In fact, a sub-cross-norm  is reasonable if and only if 
\[
	\left\Vert{z}\right\Vert_\varepsilon\le\left\Vert{z}\right\Vert\le \left\Vert{z}\right\Vert_\pi \quad (z\in E\otimes F)\,.
\] 

\smallskip

Let $\alpha$ be a reasonable cross-norm on $E\otimes F$.  Then the completion of the normed space $(E\otimes F, \alpha)$ is denoted
 by $E\,\widehat{\otimes}_\alpha\,F$.  The map in (\ref{(1.3)}) identifies the dual of $E\,\widehat{\otimes}_\alpha\,F$ with a linear subspace of $\B(E,F')$.
 
 \smallskip

A \emph{uniform cross-norm} is an assignment of a cross-norm  to $E\otimes F$ for all pairs of Banach spaces $(E,F)$, with the property
that, for each  operator $S\in\mathcal B(E_1,E_2)$ and $T\in\mathcal B(F_1,F_2)$, the linear map $S\otimes T:E_1\otimes F_1 \rightarrow E_2\otimes F_2$  
 is bounded, with norm at most $\|S\|\|T\|$, with respect to the assigned norms  on $E_1\otimes F_1$ and $E_2\otimes F_2$.    
The projective and injective tensor norms are uniform cross-norms.  For further details, see
  \cite[$\S12.1$]{DF} and \cite[$\S6.1$]{Ryan}.
  
  \smallskip

For Banach spaces $E$ and $F$, the (right) \italic{Chevet--Saphar norm}
$d_p$ on $E\otimes F$ is defined as 
\[
d_p(z)=\inf_{n\in\N} \set{ \mu_{p',n}(x_1,\ldots,x_n)\lp \sum_{i=1}^n\left\Vert y_i\right\Vert^p\rp^{1/p}: z=\sum_{i=1}^nx_i\otimes y_i\in E\otimes F}\,;
\]
see, for example, \cite[Chapter 12]{DF} and \cite[p. 135]{Ryan}. This norm is a reasonable cross-norm; in fact, it is a uniform cross-norm.

Given a tensor $z\in E\otimes F$, let $z^t$ be the `flipped' tensor in $F\otimes E$.  We define the left Chevet--Sapher norm $g_p$ by
$g_p(z) = d_p(z^t)$ \cite[p.\ 135]{Ryan}.

\smallskip

Let $\alpha$ be a uniform cross-norm.  Following \cite[Chapter~7]{Ryan},
we define the \emph{Schatten dual tensor norm} $\alpha^s$ by
\[ \alpha^s(z) = \sup\{ |\ip{z}{\lambda}| : \lambda\in E'\otimes F',\,
\alpha(\lambda)\leq 1\} \quad (z\in E\otimes F)\,, \]
using the obvious dual pairing between $E\otimes F$ and $E'\otimes F'$. In general, this does not lead to a satisfactory duality theory, as it may
happen that $(\alpha^s)^s \not= \alpha$.  To correct this, we define the \emph{dual tensor norm} $\alpha'$ by first setting $\alpha'=\alpha^s$ on
$E\otimes F$ whenever $E$ and $F$ are finite-dimensional spaces, and then extend $\alpha'$ to all Banach spaces by finite generation.  The details are
technical, and we refer the reader to \cite[Chapter~II]{DF} and \cite[Chapter~7]{Ryan}   for further information.

\smallskip

We say that a uniform cross-norm $\alpha$ is \emph{totally accessible} if the embedding of $E \otimes F$ into $(E' \proten_{\alpha'} F')'$ induces the norm $\alpha$
on $E\otimes F$ for all Banach spaces $E$ and $F$.  That is, $\alpha$ is totally accessible if $(\alpha')^s = \alpha$.
In the  case where this is true under the additional hypothesis that at least one of the
two spaces $E$ or $F$ has the metric approximation property, $\alpha$ is said to
be \emph{accessible}.  For us, it is important that $\co$ has the metric
approximation property and that many norms $\alpha$ on spaces $E\otimes F$ are accessible.
For example, by \cite[Proposition~7.21]{Ryan},  $g_p$ is an accessible norm for any $p$
(and hence the same is true of $d_p$).

\smallskip

Let $E$ and $F$ be normed spaces. A bounded  operator $T:E\rightarrow F$ is \emph{$p\,$-integral} if it gives a bounded linear functional on the space 
$E\, \proten_{g_p'}\, F'$, and the $p\,$-{\it integral norm\/} of $T$, denoted by $i_p(T)$, is defined to be the norm of this functional;
see \cite[$\S7.3$]{Ryan}. Such maps have a representation theory which is  analogous to the Pietsch
representation theorem for $p\,$-summing operators; see \cite[Theorem~7.22]{Ryan}, for example.   Indeed, we can factor such an operator $T$  as 
\[
 \xymatrix{ E \ar[r]^T \ar[d] & F \ar[r]^{\kappa_F} & F'' \\
C(K) \ar[rr] && L_p(\mu) \ar[u]  \,.}
\] 
Comparing this to the factorisation result  above for $p\,$-summing maps, we see that the only difference is that here we embed $F$ into its bidual $F''$, but for a
$p\,$-summing map, we embed $F$ into an injective space.  Thus every $p\,$-integral map is $p\,$-summing.  In the special case where $F=\co$,
 we know that $F''=\ell^{\,\infty}$, and so we conclude with the following proposition.

\begin{proposition}\label{1.10c} Let $E$ be a normed space.  Then  the classes of $p\,$-summing and $p\,$-integral maps from $E$ to $\co$ coincide, 
with equal norms.\qed
\end{proposition}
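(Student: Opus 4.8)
The plan is to exploit the structural comparison already drawn above between the Pietsch factorisation of a $p\,$-summing map and the factorisation of a $p\,$-integral map, and to observe that, for the specific range space $\co$, the two factorisations coincide because $\co''=\linfty$ is itself injective.

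First I would dispose of the easy inequality. The discussion preceding the statement has already noted that every $p\,$-integral map is $p\,$-summing; keeping track of norms, a $p\,$-integral factorisation presents $\kappa_{\co}\circ T$ as $U\circ j\circ V$, where $j\colon C(K)\to L^p(\mu)$ is the canonical inclusion, $V\in\B(E,C(K))$, and $U\colon L^p(\mu)\to\co''=\linfty$. Since $\linfty$ is injective and contains $\co$ isometrically via $\kappa_{\co}$, this is exactly a Pietsch factorisation of $T$ through an injective space, so that $\pi_p(T)\le\|U\|\,\|V\|$; taking the infimum over all $p\,$-integral factorisations gives $\pi_p(T)\le i_p(T)$.

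For the reverse inequality I would begin with a $p\,$-summing map $T\colon E\to\co$ and invoke the Pietsch factorisation theorem in the form stated above, using the stated freedom to replace $\linfty(I)$ by any injective Banach space into which $\co$ embeds isometrically. The decisive move is to take this injective space to be $\linfty=\co''$, with the embedding the canonical map $\kappa_{\co}\colon\co\to\linfty$; this is permissible precisely because $\linfty$ is injective and $\co$ sits isometrically inside it. Because operators into $\linfty$ extend without increase of norm, the operator $U$ obtained by extending off the range of $j\circ V$ satisfies $\|U\|\le\pi_p(T)$, while $\|V\|=1$. The resulting diagram now exhibits $\kappa_{\co}\circ T$ as $U\circ j\circ V$ with $U\colon L^p(\mu)\to\co''$, which is precisely the shape of a $p\,$-integral factorisation; hence $T$ is $p\,$-integral and $i_p(T)\le\|U\|\,\|V\|\le\pi_p(T)$. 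Combining the two inequalities yields $i_p(T)=\pi_p(T)$.

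The only point requiring care --- and the crux of the whole argument --- is the bookkeeping of norms, namely checking that the infimum defining $i_p$ ranges over factorisations of exactly the same type as those produced by Pietsch's theorem (the same $C(K)$, the same probability space $L^p(\mu)$, the same normalisation of $j$), the sole difference being whether the final space is described as an injective overspace of $\co$ or as the bidual $\co''$. For $\co$ these two descriptions name the identical space $\linfty$, so no constant is lost in either passage and the norms agree on the nose. It is worth flagging that this coincidence is special to $\co$: for a general range space $F$ the bidual $F''$ need not be injective, and the argument then provides only $\pi_p(T)\le i_p(T)$.
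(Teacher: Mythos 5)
Your proof is correct and takes essentially the same route as the paper: the paper's (implicit) argument is precisely the observation that for the range space $\co$ the bidual $\co''=\ell^{\,\infty}$ is injective, so the Pietsch factorisation of a $p\,$-summing map and the representation of a $p\,$-integral map become the identical diagram, with no loss of constants in either direction. Your explicit bookkeeping of the two inequalities $\pi_p(T)\le i_p(T)$ and $i_p(T)\le\pi_p(T)$ merely spells out what the paper leaves to the reader in the discussion preceding the statement.
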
 \medskip
 
\newpage

\chapter{Basic facts on multi-normed spaces}

\section{Multi-normed spaces}\label{Multi-normed spaces}
 The following definition is due to Dales and Polyakov. For a full account of the theory of multi-normed spaces, see \cite{DP}, and, 
for further work, see \cite{DDPR1}. The main definition is taken from \cite[Definition 2.1]{DP}.

\begin{definition} 
Let $(E, \norm)$ be a normed space, and let $(\norm_n: n \in \naturals)$ be a sequence such that $\norm_n$ is
 a norm on $E^n$ for each $n \in \naturals$, with $\norm_1=\norm$ on $E=E^1$. Then the sequence $(\norm_n: n \in \naturals)$
 is a \italic{multi-norm}  if the following axioms hold (where in each case the axiom is required to hold for all $n\in\N$ and all $x_1,\ldots, x_n \in E$):\smallskip
 
 \noindent (A1)  $\left\Vert(x_{\sigma(1)},\dots,x_{\sigma(n)})\right\Vert_n=\left\Vert(x_1,\dots,x_n)\right\Vert_n$ for each 
permutation $\sigma$ of $\naturals_n$;\smallskip

\noindent(A2) $\left\Vert(\alpha_1x_{1},\ldots,\alpha_nx_{n})\right\Vert_n\leq  \max_{i \in \naturals_n}\abs{\alpha_i} \left\Vert(x_{1},\ldots,x_{n})\right\Vert_n \;
(\alpha_1,\ldots,\alpha_n \in \C^n)$;\smallskip

\noindent(A3) $\left\Vert(x_1,\ldots,x_{n},0)\rV_{n+1}=\left\Vert(x_1,\ldots,x_{n})\right\Vert_{n}$;\smallskip

\noindent(A4) $\left\Vert(x_1,\ldots,x_{n-1},x_{n},x_{n})\right\Vert_{n+1}=\left\Vert(x_1,\ldots,x_{n-1},x_{n})\right\Vert_{n}$.\smallskip

\noindent The space $E$ equipped with a multi-norm is a \italic{multi-normed space}, written in full as $((E^n,\norm_n):\ n\in\naturals)$; we say that 
the multi-norm is \emph{based} on $E$.
\end{definition}\smallskip

In the case where $E$ is a Banach space, $(E^n, \norm_n)$ is a Banach space for each $n\in\naturals$, and we refer to a {\it multi-Banach space\/}.

Let $(\norm^1_n: n \in \naturals)$ and $(\norm^2_n: n \in \naturals)$ be two multi-norms based on a normed space $E$. Then, following \cite[Definition 2.23]{DP},
 we write
\[
	(\norm^1_n)\le (\norm^2_n)\,
\]
if $\left\Vert{\tuple{x}}\right\Vert^1_n\le \left\Vert\tuple{x}\right\Vert^2_n$ for each $\tuple{x}\in E^n$ and $n\in\naturals$, and write 
\[
	(\norm^1_n) =  (\norm^2_n)
\]
if  
$\left\Vert{\tuple{x}}\right\Vert^1_n= \left\Vert\tuple{x}\right\Vert^2_n$ for each $\tuple{x}\in E^n$ and $n\in\naturals$. The multi-norm $(\norm^2_n: n \in \naturals)$  \emph{dominates}
 a multi-norm $(\norm^1_n: n \in \naturals)$ if there is a constant $C>0$ such that 
\begin{align}\label{Eq: Multi-normed spaces 1}
	\left\Vert \tuple{x}\right\Vert^1_n  \le C\left\Vert \tuple{x}\right\Vert^2_n\quad(\tuple{x}\in E^n,\ n\in\naturals)\,,
\end{align}
and, in this case, we write 
\[
	(\norm^1_n)\preccurlyeq (\norm^2_n)\,. 
\]
The two multi-norms are \emph{equivalent}, written
\[
	(\norm^1_n)\cong (\norm^2_n)\,,
\] if each dominates the other; if the two multi-norms are not equivalent, we shall write $(\norm^1_n)\not\cong (\norm^2_n)$. 

We shall be interested in determining when one multi-norm dominates the other (and, in this case, in the best value of the constant $C$ in equation
 \eqref{Eq: Multi-normed spaces 1}) and when two multi-norms are equivalent.  
 
 Let $((E^n,\norm_n): n\in\N)$ be a multi-normed space.  For $n\in\N$, define
 \[
 \varphi_n(E) =\sup\{ \left\Vert (x_1,\dots,x_n)\right\Vert_n : x_1,\dots,x_n \in E_{[1]}\}.
 \]
 Then the sequence $(\varphi_n(E))$ is the {\it rate of growth\/} corresponding to the multi-norm \cite[Definition 3.1]{DP}.  
This sequence depends on both $E$ and the specific multi-norm.\medskip

\section{Multi-norms as a tensor norms} 
 In \cite{DDPR1}, we explained how multi-norms corres\-pond to certain tensor norms. We recall this briefly; details are given in \cite[\S 3]{DDPR1}.

\begin{definition} 
Let $E$ be a normed space. Then a norm $\norm$ on $\co\otimes E$ is a \emph{$\co$-norm} if  $\lV{\delta_1\otimes x}\rV=\lV{x}\rV$
 for each $x\in E$  and if the linear operator $T\otimes \id_E$ is bounded on $(\co\otimes E,\norm)$, with norm at
most $\|T\|$, for each $T\in\compactoperators(\co)$.

Similarly, a norm $\norm$ on $\ell^{\,\infty}\otimes E$ is an \emph{$\ell^{\,\infty}$-norm} if $\lV{\delta_1\otimes x}\rV=\lV{x}\rV$  for each $x\in E$  and if 
$T\otimes \id_E$ is bounded on $(\ell^{\,\infty}\otimes E,\,\norm)$, with norm at most $\|T\|$, for each $T\in\compactoperators(\ell^{\,\infty})$.
\end{definition}\smallskip

By \cite[Lemma 3.3]{DDPR1}, each $\co$-norm on $\co\otimes E$  and each $\ell^{\,\infty}$-norm on $\ell^{\,\infty}\otimes E$ is a  reasonable cross-norm.

 Suppose that $\norm$ is a $\co$-norm on $\co\otimes E$, and set
\[
	\lV{(x_1,\ldots,x_n)}\rV_n=\flexiblenorm{\sum_{i=1}^n\delta_i\otimes x_i}\quad(x_1,\ldots, x_n\in E,\ n\in\naturals)\,.
\]
Then $(\norm_n:\ n\in\naturals)$ is a multi-norm based on $E$. 

A more general and detailed version of the following theorem is given as \cite[Theorem 3.4]{DDPR1}.

\begin{theorem}\label{multi-norm as tensor} 
Let $E$ be a normed space. Then the above construction defines a bijection from the family of $\co$-norms on $\co\otimes E$ to the family of multi-norms 
based on $E$. \enproof
\end{theorem}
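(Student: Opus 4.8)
The plan is to prove the two halves of bijectivity separately, taking for granted (as recorded just before the statement) that the construction carries every $\co$-norm to a multi-norm based on $E$. Throughout, let $P_n\in\K(\co)$ denote the norm-one projection onto the span of $\delta_1,\dots,\delta_n$, so that $P_n\to\id$ strongly and $P_n\otimes\id_E$ is a contraction for every reasonable cross-norm. For injectivity, suppose that $\norm$ and $\norm'$ are $\co$-norms inducing the same multi-norm. By \cite[Lemma 3.3]{DDPR1} both are reasonable cross-norms, so $\norm\le\norm_\pi$ and $\norm'\le\norm_\pi$ on $\co\otimes E$. Given $z=\sum_{j=1}^N c_j\otimes x_j\in\co\otimes E$, one has $\lV((\id-P_n)\otimes\id_E)z\rV_\pi\le\sum_j\lV(\id-P_n)c_j\rV_{\co}\lV x_j\rV\to 0$, so $(P_n\otimes\id_E)z\to z$ in $\norm_\pi$, and hence also in $\norm$ and in $\norm'$. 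Since $(P_n\otimes\id_E)z=\sum_{i=1}^n\delta_i\otimes y_i$ for suitable $y_i\in E$, its value under either $\co$-norm is exactly $\lV(y_1,\dots,y_n)\rV_n$, which is fixed by the common multi-norm; passing to the limit gives $\lV z\rV=\lV z\rV'$. Thus the construction is injective.

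For surjectivity, start from a multi-norm $(\norm_n)$ and define $\lV\cdot\rV$ on $\coo\otimes E$ by $\lV\sum_i\delta_i\otimes x_i\rV=\lV(x_1,\dots,x_m)\rV_m$, where $m$ bounds the finite support; axiom (A3) makes this independent of the choice of $m$, and the coefficients $x_i$ are uniquely determined by the tensor, so $\lV\cdot\rV$ is well defined. It is a norm: homogeneity is clear, positivity follows from the elementary monotonicity property $\lV(x_1,\dots,x_n)\rV_n\le\lV(x_1,\dots,x_{n+1})\rV_{n+1}$ (itself a consequence of (A2) and (A3)), and the triangle inequality reduces, after padding two supports to a common finite set via (A3), to the triangle inequality for the single norm $\norm_m$ on $E^m$. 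Moreover $\lV\cdot\rV$ is a cross-norm: for $c=\sum_i c_i\delta_i\in\coo$, axioms (A2) and (A4) give $\lV c\otimes x\rV\le\lV c\rV_{\co}\lV x\rV$, while monotonicity gives the reverse. The same monotonicity yields $\lV z\rV\ge\max_i\lV x_i\rV=\lV z\rV_\varepsilon$, and since $\norm_\pi$ is the largest cross-norm we get $\lV z\rV\le\lV z\rV_\pi$; hence $\lV\cdot\rV$ is a reasonable cross-norm on $\coo\otimes E$.

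The crux is the remaining $\co$-norm axiom, namely $\lV T\otimes\id_E\rV\le\lV T\rV$ for $T\in\K(\co)$; I first establish its finite-dimensional heart, the inequality $\lV(\sum_j a_{1j}x_j,\dots,\sum_j a_{nj}x_j)\rV_n\le\lV A\rV\,\lV(x_1,\dots,x_m)\rV_m$ for each matrix $A=(a_{ij})$ viewed in $\B(\linfty_m,\linfty_n)$. The key observation is that the operator-norm unit ball of $\B(\linfty_m,\linfty_n)$ is the product of the $\ell^1_m$-balls of its rows, so its extreme points are exactly the matrices whose every row is a unimodular multiple of a single coordinate vector; for such an extreme matrix the left-hand side is $\lV(\omega_i x_{\sigma(i)})_i\rV_n=\lV(x_{\sigma(i)})_i\rV_n$ by (A2), and this is at most $\lV(x_1,\dots,x_m)\rV_m$ because a selection-with-repetition of coordinates is reached from $(x_1,\dots,x_m)$ by duplicating (A4), permuting (A1), and deleting entries (monotonicity). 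A general contraction $A$ lies in the closed convex hull of these extreme points by Krein--Milman, and the triangle inequality for the norm $\norm_n$ on $E^n$ then propagates the bound to $A$; rescaling removes the normalisation. I expect this extreme-point reduction to be the main obstacle, since it is where all four axioms are used simultaneously and where the proof really does its work.

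Finally I extend $\lV\cdot\rV$ to $\co\otimes E$. For $z\in\co\otimes E$ the sequence $\lV(P_n\otimes\id_E)z\rV$ is increasing (monotonicity) and bounded by $\lV z\rV_\pi$ (as $P_n$ is contractive and $\lV\cdot\rV\le\norm_\pi$), so I set $\lV z\rV:=\lim_n\lV(P_n\otimes\id_E)z\rV$; this agrees with the earlier definition on $\coo\otimes E$, and the bounds $\norm_\varepsilon\le\lV\cdot\rV\le\norm_\pi$ persist in the limit, so $\lV\cdot\rV$ is a reasonable cross-norm on $\co\otimes E$ which, by construction, induces the given multi-norm. The operator axiom extends by two routine approximations: the matrix inequality of the previous paragraph gives $\lV T\otimes\id_E\rV\le\lV T\rV$ for all finite-rank $T$, compact $T$ are handled by approximating $T$ by $P_nTP_n$ in operator norm, and general $z$ by $(P_n\otimes\id_E)z\to z$, using continuity of $\lV\cdot\rV$ with respect to $\norm_\pi$. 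This produces a $\co$-norm mapping to $(\norm_n)$, proving surjectivity and completing the proof.
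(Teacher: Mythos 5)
The paper does not actually prove this theorem itself --- it defers to \cite[Theorem 3.4]{DDPR1} --- but your argument is correct and complete, and it is essentially the standard route taken there: injectivity via density of $\bigcup_n \ell^\infty_n\otimes E$ in the $\pi$-norm (which dominates every $\co$-norm), and surjectivity via the key matrix inequality $\lV A\tuple{x}\rV_n\le\lV A\rV\,\lV\tuple{x}\rV_m$ for $A\in\operators(\linfty_m,\linfty_n)$, proved by identifying the extreme points of the unit ball of $\operators(\linfty_m,\linfty_n)$ as the matrices each of whose rows is a unimodular multiple of a coordinate vector and then invoking axioms (A1), (A2), (A4) and monotonicity. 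All the supporting steps you use (monotonicity from (A2)--(A3), $P_nTP_n\to T$ in norm for compact $T$, and continuity of the constructed norm with respect to $\norm_\pi$) are valid, so I see no gap.
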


We shall be interested in uniform cross-norms, restricted to tensor products of
the form $\co\otimes E$.  This motivates us to give the following definition.\s

\begin{definition} 
A \emph{uniform $\co$-norm} is an assignment of a $\co$-norm $\norm$ to $\co\otimes E$ for all Banach spaces $E$ such that  
  the operator $I\otimes T:\co\otimes E\rightarrow \co\otimes F$ is bounded with respect to the two corresponding norms, with norm $\|T\|$, for each normed spaces $E$ and $F$ and each   $T\in\mathcal B(E,F)$.
\end{definition}\s

Let $E$ be a normed space. As in \cite{DP} and \cite{DDPR1}, there is a \emph{maximum multi-norm} based on $E$; it is denoted by
 $(\norm_n^{\max}: n \in \naturals)$, and is defined by the property that 
\[
	\left\Vert{\tuple{x}}\right\Vert_n\le \left\Vert{\tuple{x}}\right\Vert_n^{\max}\quad(\tuple{x}\in E^n,\ n \in \naturals) 
\] 
for every multi-norm $(\norm_n: n \in \naturals)$ based on $E$. This multi-norm corres\-ponds to the projective tensor norm 
$\norm_\pi$ on $\co\otimes E$ via the above corres\-pondence. By \cite[Theorem 3.33]{DP}, for each $n\in\naturals$ and $\tuple{x}=(x_1,\ldots,x_n)\in E^n$, we have
\begin{equation}\label{max-mn}
\left\Vert{\tuple{x}}\right\Vert^{\max}_n=\sup \set{ \abs{\sum_{i=1}^n\duality{ x_i}{\lambda_i}}: 
\lambda_1,\ldots, \lambda_n \in \dual{E},\, \mu_{1,n}(\lambda_1,\ldots, \lambda_n)\leq 1}\,.
\end{equation}

The rate of growth sequence corresponding to the maximum multi-norm based on $E$ is intrinsic to $E$; it is denoted  
 by $(\varphi^{\max}_n(E))$. The value of this sequence for various examples is calculated in \cite[$\S3.6$]{DP}.  For example, for $n\in\N$, we have 
$\varphi^{\max}_n(\ell^{\,p}) = n^{1/p}$ for $p\in [1,2]$ and  $\varphi^{\max}_n(\ell^{\,p}) = n^{1/2}$ for $p\in [2,\infty]$  \cite[Theorem 3.54]{DP}. It is shown in \cite[Theorem 3.58]{DP} that $\sqrt{n}\le\varphi_n^{\max}(E)\le n$ ($n\in\N$) for  each infinite-dimensional Banach space $E$. \s

Similarly, there is a  \emph{minimum multi-norm} $(\norm^{\min}_n: n \in \naturals)$ based on a normed space $E$. As in \cite[Definition 3.2]{DP}, it is defined by 
\[
	\left\Vert{(x_1,\ldots,x_n)}\right\Vert^{\min}_n=\max_{i\in\naturals}\left\Vert{x_i}\right\Vert\quad(x_1,\ldots, x_n\in E)\,.
\]
The minimum multi-norm based on $E$ corresponds to the injective tensor norm $\norm_\varepsilon$ on $\co\otimes E$ in the above correspondence, 
and so the minimum multi-norm on $\co\otimes E$  is the relative norm on ${\mathcal F}(E',\co)$ from $({\B}(E',\co), \norm)$. Of course, the rate of 
growth sequence of the minimum multi-norm is the constant sequence $1$.

\medskip

\section{The $(p,q)$-multi-norm}
We recall the definition of the $(p,q)$-multi-norm based on a normed space $E$. 

 Let $E$ be a normed space, and take  $p,q\in [1,\infty)$. Following \cite[Definition 4.1.1]{DP} and \cite[\S 1]{DDPR1}, for each $n \in \naturals$
 and each $\tuple{x}=(x_1,\ldots, x_n) \in E^n$, we define
\[
\left\Vert{\tuple{x}}\right\Vert^{(p,q)}_n=\sup \left\{\lp\sum_{i=1}^n\abs{\duality{x_i}{\lambda_i}}^{q}\rp^{1/q}: \mu_{p,n}(\tuple{\lambda})\leq 1 \right\}\,,
\]
where the supremum is take over all $\tuple{\lambda}=(\lambda_1,\ldots,\lambda_n) \in (\dual{E})^n$.  
 It is clear that $\norm^{(p,q)}_n$ is a norm on $E^n$. As noted in \cite[Theorem 4.1]{DP},  $( {\norm^{(p,q)}_n}: n \in \naturals)$
 is a multi-norm based on $E$ whenever $1\le p\le q<\infty$.\smallskip

\begin{definition}
Let $E$ be a normed space, and suppose that $1 \leq p\leq q < \infty$. Then the multi-norm $({\norm^{(p,q)}_n}: n \in \naturals)$
 described above is the \italic{$(p,q)$-multi-norm} over $E$. The corresponding $c_0$-norm on $c_0\otimes E$ is $\norm^{(p,q)}$.
 
 The rate of growth sequence corresponding to the above $(p,q)$-multi-norm is denoted by $(\varphi_n^{(p,q)}(E))$,
 as in \cite[Definition 4.2]{DP}.  
\end{definition}\s

We shall use the following remark, from \cite[Proposition 4.3]{DP}.  Let $F$ be a $1$-comp\-lemented subspace of a Banach space $E$, and take $x_1,\dots,x_n\in F$.
Then the value of $\lV (x_1,\dots,x_n) \rV^{(p,q)}_n$ is independent of whether it be calculated with respect to $F$ or $E$.

Let $E$ and $F$ be normed spaces, and take $T \in {\B}(E,F)$. Then clearly
\[
\lV( Tx_1,\dots,Tx_n)\rV^{(p,q)}_n\leq \lV T \rV\lV (x_1,\dots,x_n)\rV^{(p,q)}_n\quad (x_1,\dots,x_n \in E,\,n\in\N)\,.
\]

The following theorem refers to {\it multi-bounded sets\/} in and {\it multi-bounded operators} on multi-normed spaces; for background  information, see \cite{DDPR1}, and, 
in more  detail, \cite[Chapter 6]{DP}.  For example, the {\it multi-bound\/} of a multi-bounded set $B$ is defined by
\[
c_B =\sup_{n\in\N}\{\left\Vert (x_1,\dots,x_n)\right\Vert_n : x_1,\dots,x_n \in B\}\,.
\]

\begin{theorem}\label{summing}  Let $E$ be a normed space, and suppose that $1\le p\le q<\infty$. Then the $(p,q)$-multi-norm induces the norm on $\co\otimes E$ given by
embedding $\co\otimes E$ into $\Pi_{q,p}(E',\co)$. This norm is a uniform $\co$-norm on $\co\otimes E$.
\end{theorem}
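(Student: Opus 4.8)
The plan is to realise the $\co$-norm corresponding to the $(p,q)$-multi-norm as the norm inherited from $\Pi_{q,p}(\dual{E},\co)$, and then to read off uniformity from the ideal property of $(q,p)$-summing operators. First I would fix a finite tensor $z=\sum_{i=1}^n\delta_i\otimes x_i\in\co\otimes E$ and let $T_z\in\B(\dual{E},\co)$ be the associated operator $T_z\lambda=\sum_{i=1}^n\duality{x_i}{\lambda}\delta_i$ $(\lambda\in\dual{E})$; being of finite rank, $T_z$ certainly lies in $\Pi_{q,p}(\dual{E},\co)$. The substance of the first assertion is the identity $\pi_{q,p}(T_z)=\left\Vert(x_1,\dots,x_n)\right\Vert^{(p,q)}_n$, which I would establish by two inequalities, noting throughout that $\left\Vert T_z\lambda\right\Vert_{\co}=\max_{1\le j\le n}\abs{\duality{x_j}{\lambda}}$.

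The inequality $\pi_{q,p}(T_z)\ge\left\Vert(x_1,\dots,x_n)\right\Vert^{(p,q)}_n$ is the easy one: given $\lambda_1,\dots,\lambda_n\in\dual{E}$ with $\mu_{p,n}(\lambda_1,\dots,\lambda_n)\le 1$, I would feed this very tuple into the definition of $\pi_{q,p}(T_z)$ and retain only the diagonal terms, using $\left\Vert T_z\lambda_i\right\Vert\ge\abs{\duality{x_i}{\lambda_i}}$, so that $\sum_i\abs{\duality{x_i}{\lambda_i}}^q\le\pi_{q,p}(T_z)^q$; a supremum then gives the claim. The reverse inequality is the crux. Starting from any $\lambda_1,\dots,\lambda_m\in\dual{E}$ with $\mu_{p,m}(\lambda_1,\dots,\lambda_m)\le 1$, for each $i$ I would pick an index $j(i)$ attaining the maximum defining $\left\Vert T_z\lambda_i\right\Vert$, and partition $\{1,\dots,m\}=\bigsqcup_{j=1}^n I_j$ with $I_j=\{i:j(i)=j\}$, so that $\sum_{i=1}^m\left\Vert T_z\lambda_i\right\Vert^q=\sum_{j=1}^n\sum_{i\in I_j}\abs{\duality{x_j}{\lambda_i}}^q$. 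For each $j$, $\ell^q$--$\ell^{q'}$ duality produces scalars $(c_i)_{i\in I_j}$ with $\sum_{i\in I_j}\abs{c_i}^{q'}\le 1$ such that $\mu_j:=\sum_{i\in I_j}c_i\lambda_i$ satisfies $\abs{\duality{x_j}{\mu_j}}=\bigl(\sum_{i\in I_j}\abs{\duality{x_j}{\lambda_i}}^q\bigr)^{1/q}$. The key point, and where the hypothesis $p\le q$ enters, is that $\mu_{p,n}(\mu_1,\dots,\mu_n)\le 1$: by formula \eqref{weak--summing-norm 1}, for scalars $\zeta_1,\dots,\zeta_n$ with $\sum_j\abs{\zeta_j}^{p'}\le 1$ one has $\sum_j\zeta_j\mu_j=\sum_i d_i\lambda_i$ with $d_i=\zeta_{j(i)}c_i$, and since $p\le q$ forces $p'\ge q'$ while each $\abs{c_i}\le 1$, it follows that $\sum_i\abs{d_i}^{p'}=\sum_j\abs{\zeta_j}^{p'}\sum_{i\in I_j}\abs{c_i}^{p'}\le\sum_j\abs{\zeta_j}^{p'}\le 1$, whence $\left\Vert\sum_j\zeta_j\mu_j\right\Vert\le\mu_{p,m}(\lambda_1,\dots,\lambda_m)\le 1$. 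Consequently $\sum_{i=1}^m\left\Vert T_z\lambda_i\right\Vert^q=\sum_{j=1}^n\abs{\duality{x_j}{\mu_j}}^q\le\bigl(\left\Vert(x_1,\dots,x_n)\right\Vert^{(p,q)}_n\bigr)^q$, and a supremum over the $\lambda_i$ completes the identity.

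Having matched the two norms on the finite tensors $\sum_{i=1}^n\delta_i\otimes x_i$, I would extend the identification to all of $\co\otimes E$ by truncation: writing $Q_n$ for the projection onto the first $n$ coordinates of $\co$, the truncation $(Q_n\otimes\id_E)z$ corresponds to $Q_n\circ T_z$, and both the summing norm $\pi_{q,p}(T_z)$ and the $\co$-norm attached to the $(p,q)$-multi-norm are recovered as the supremum over $n$ of their values on these truncations. Since those values have already been matched, the $\co$-norm associated by Theorem~\ref{multi-norm as tensor} to the $(p,q)$-multi-norm coincides with $z\mapsto\pi_{q,p}(T_z)$, that is, with the norm induced by the embedding $\co\otimes E\hookrightarrow\Pi_{q,p}(\dual{E},\co)$.

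Finally, for uniformity, I would take $T\in\B(E,F)$ and observe that if $z=\sum_i\delta_i\otimes x_i$ then $(I\otimes T)z=\sum_i\delta_i\otimes Tx_i$ corresponds to $T_z\circ T':\dual{F}\to\co$, where $T'$ is the dual of $T$ and $\left\Vert T'\right\Vert=\left\Vert T\right\Vert$. Because the $(q,p)$-summing operators form a Banach operator ideal, $\pi_{q,p}(T_z\circ T')\le\pi_{q,p}(T_z)\left\Vert T'\right\Vert=\left\Vert T\right\Vert\,\pi_{q,p}(T_z)$, so $\left\Vert I\otimes T\right\Vert\le\left\Vert T\right\Vert$; the reverse inequality is immediate from $\left\Vert\delta_1\otimes x\right\Vert=\left\Vert x\right\Vert$ applied to $x$ and to $Tx$, giving $\left\Vert I\otimes T\right\Vert=\left\Vert T\right\Vert$ and hence a uniform $\co$-norm. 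The main obstacle is the reverse inequality in the norm computation: combining the $m$ functionals $\lambda_i$ into the $n$ functionals $\mu_j$ without increasing the weak $p$-summing norm, which is exactly what the constraint $p\le q$ (equivalently $p'\ge q'$) makes possible.
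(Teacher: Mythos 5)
Your proof is correct, and it takes a genuinely different route from the paper's for the main identification. The paper obtains the isometry of $(\co\otimes E,\norm^{(p,q)})$ into $\Pi_{q,p}(\dual{E},\co)$ by chaining together results from the companion paper \cite{DDPR1}: the duality between the $(p,q)$-multi-norm and the dual multi-norm on $\ell^{\,1}\otimes\dual{E}$, the identification of the induced norm with the multi-bound norm $\alpha_{p,q}$ on $\B_{p,q}(\ell^{\,1},E)$, and the theorem that $T$ is multi-bounded exactly when $T'$ is $(q,p)$-summing with equal norms. You instead prove the key identity $\pi_{q,p}(T_z)=\lV(x_1,\dots,x_n)\rV^{(p,q)}_n$ directly: the regrouping of the test functionals $\lambda_1,\dots,\lambda_m$ according to the coordinate attaining $\lV T_z\lambda_i\rV$, the $\ell^{\,q}$--$\ell^{\,q'}$ duality step producing the combined functionals $\mu_j$, and the observation that $p'\ge q'$ together with $\abs{c_i}\le 1$ keeps $\mu_{p,n}(\mu_1,\dots,\mu_n)\le 1$, are all sound, and amount to a self-contained proof of the special case of \cite[Proposition~5.5]{DDPR1} that the theorem actually needs. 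What your approach buys is independence from the multi-bounded-operator machinery and an explicit view of exactly where the hypothesis $p\le q$ enters; what the paper's approach buys is brevity and reuse of a statement needed elsewhere. Your truncation argument for passing from $\linfty_n\otimes E$ to all of $\co\otimes E$, and your uniformity argument via the ideal property of $(q,p)$-summing operators, coincide with the paper's treatment (the latter is precisely the appeal to \cite[Proposition~10.2]{DJT}). One cosmetic point: when $q=1$ the condition $\sum_{i\in I_j}\abs{c_i}^{q'}\le 1$ should be read as $\max_{i\in I_j}\abs{c_i}\le 1$, which is what your argument in fact uses.
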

\begin{proof} We start by observing that \cite[Theorem~4.2]{DDPR1} shows that the $\ell^{\,1}$-norm (that is, the dual multi-norm) on 
$\ell^{\,1}\otimes E'$ norms $\co\otimes E$. The converse is also true, so that we have $\ell^{\,1}\otimes E' \subset (\co\otimes E)'$, and the embedding is an 
isometry.

In \cite[Definition~5.4]{DDPR1},  we defined $\B_{p,q}(\ell^{\,1},E)$ to be the set of  those  $T\in{\B}(\ell^{\,1}, E)$ which are multi-bounded 
  when  we take  the minimum multi-norm based on $\ell^{\,1}$ and the   $(p,q)$-multi-norm based on $E$. 
The norm on the space $\B_{p,q}(\ell^{\,1},E)$ is denoted by $\alpha_{p,q}$, so that $\alpha_{p,q}(T)$ is equal to the multi-bound $c_B$ of the set 
$B:= \{T(\delta_k) : k\in\naturals \}$. 
 Thus the natural inclusion of $\co\otimes E$ into $\B_{p,q}(\ell^{\,1},E)$ (where we identify $\co\otimes E$  with ${\mathcal F}(\ell^{\,1}, E)$) induces the $(p,q)$-multi-norm on $\co\otimes E$.   
It follows from \cite[Proposition~5.5]{DDPR1}   that $T$ belongs to  $\B_{p,q}(\ell^{\,1},E)$ if and only if the dual operator  $T'$ belongs to 
$\Pi_{q,p}(E',\ell^{\,\infty})$, with equal norms.  The combination of these two results
 immediately gives the result.

It remains to show that the resulting norm is a uniform $\co$-norm.  Let $T\in\mathcal B(E,F)$, and consider the operator $I\otimes T:\co\otimes E
\rightarrow \co\otimes F$.  It is easy to see that we have the following commutative diagram:
\[ \xymatrix{ \co\otimes E \ar[r]^-{I\otimes T} \ar[d] & \co\otimes F \ar[d] \\
\Pi_{q,p}(E',\co) \ar[r]^-{\varphi} & \Pi_{q,p}(F',\co) \,.} \]
Here $\varphi$ is the map $S\mapsto S\circ T'$.  Since  the vertical arrows are isometries, it suffices to show that $\|\varphi\| \leq \|T\| = \|T'\|$. 
 But this follows immediately from properties of $(q,p)$-summing maps; see \cite[Proposition~10.2]{DJT}.
\end{proof}

\begin{remark}\label{remark1}
A refinement of the above argument shows that, for each normed space $E$, the $(p,q)$-multi-norm based on $\dual{E}$ induces the norm on
 $\co\otimes \dual{E}$ given by embedding $\co\otimes \dual{E}$ into $\Pi_{q,p}(E,\co)$.
\end{remark}\s

It follows immediately from Theorem \ref{summing} and the closed graph theorem that   the $(p_1,q_1)$- and $(p_2,q_2)$-multi-norms are equivalent on $E$
whenever 
\[
	\Pi_{q_1,p_1}(\dual{E},\co) = \Pi_{q_2,p_2}(\dual{E},\co)\,;
\] 
moreover, $\co$ can be replaced by any infinite-dimensional $C(K)$-space. The converse is also true; this is a special case of the following theorem.\s

\begin{theorem}\label{tool for general spaces}
Let $E$ be a Banach space, and take $p_1,q_1,p_2,q_2$ such  that $1\leq p_1\leq q_1<\infty$ and $1\leq p_2\leq q_2<\infty$. Suppose that the $(p_1,q_1)$- and
 $(p_2,q_2)$-multi-norms are mutually equivalent on $E$. Then
\[
	\Pi_{q_1,p_1}(\dual{E},F) = \Pi_{q_2,p_2}(\dual{E},F)  
\]
for every Banach space $F$.
\end{theorem}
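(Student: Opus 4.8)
The plan is to bootstrap from the already-established case $F = \co$ (equivalently any infinite-dimensional $C(K)$-space) to an arbitrary Banach space $F$. By Theorem \ref{summing} and the closed graph theorem, the hypothesis that the two multi-norms are equivalent on $E$ is exactly the statement that
\[
	\Pi_{q_1,p_1}(\dual{E},\co) = \Pi_{q_2,p_2}(\dual{E},\co)
\]
as sets, with equivalent norms. So the work is to upgrade the target space from $\co$ to a general $F$. First I would reduce to the case where $F$ is contained isometrically in an injective space $G$ (for instance $G = \ell^{\,\infty}(I)$ for a suitable index set $I$), using the first bullet point in the ``obvious remarks about summing operators'': composing with an isometry $S : F \to G$ neither creates nor destroys membership in $\Pi_{q,p}(\dual{E}, \,\cdot\,)$ and preserves norms. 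Thus it suffices to prove the equality $\Pi_{q_1,p_1}(\dual{E},G) = \Pi_{q_2,p_2}(\dual{E},G)$ for $G$ injective.

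The heart of the argument should be a factorisation through finite-dimensional $\ell^{\,\infty}$-pieces combined with the Pietsch factorisation. Given $T \in \Pi_{q_1,p_1}(\dual{E},G)$, the Pietsch diagram for a $(q,p)$-summing map (the generalisation of the $p$-summing factorisation recalled before Proposition \ref{1.10c}) factors $T$ through a canonical inclusion $C(K) \hookrightarrow L^{p_1}(\mu)$ followed by a map into an injective space containing $G$. The key point is that the relevant summing behaviour is detected on finite sets of vectors, and on any finite set the image lands in a finite-dimensional subspace, which is a $\linfty_m$-type space. So I would approximate $T$ by finite-rank-target compressions, i.e. cut down to finite subsets $J \subset I$ and consider the coordinate projections $P_J : G \to \linfty_{|J|}$. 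For each such compression $P_J \circ T : \dual{E} \to \linfty_{|J|}$, the finite-dimensional codomain $\linfty_{|J|}$ embeds isometrically into $\co$, so the established case $F = \co$ applies and gives
\[
	\pi_{q_2,p_2}(P_J \circ T) \le C\,\pi_{q_1,p_1}(P_J \circ T) \le C\,\|T\|_{q_1,p_1}\,,
\]
with $C$ the equivalence constant coming from the $\co$ case and independent of $J$. Taking a supremum over all finite $J$ then recovers a uniform bound on $\pi_{q_2,p_2}(T)$; here I rely on the fact that for operators into an injective space the $(q,p)$-summing norm is the supremum of the summing norms of its finite-dimensional compressions, which follows because the defining inequality only involves finite sums $\sum_{i} \|T x_i\|^{q}$ and the sup-norm on $G$ is achieved, up to $\varepsilon$, on finitely many coordinates.

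The main obstacle I anticipate is making rigorous the claim that the $(q,p)$-summing norm of a map into a general injective $G$ is controlled by the supremum over its finite-coordinate compressions $P_J \circ T$ into the $\linfty_{|J|}$-spaces, uniformly in $J$. For $G = \ell^{\,\infty}(I)$ this is essentially the statement that the norm $\|\sum_i \delta_i \otimes T x_i\|$ in the relevant tensor/summing structure is a supremum of its restrictions to finite coordinate blocks, which one should extract from the Pietsch factorisation and the fact that $\ell^{\,\infty}(I)$ is the inductive-type limit of the $\linfty_{|J|}$; some care is needed because $\Pi_{q,p}$ is not in general ``injective'' as an operator ideal in the target variable, so I cannot simply quote a black-box injectivity property. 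Once the uniform-finite-compression estimate is in place, the symmetric roles of $(p_1,q_1)$ and $(p_2,q_2)$ give the reverse inclusion by the same computation, yielding $\Pi_{q_1,p_1}(\dual{E},F) = \Pi_{q_2,p_2}(\dual{E},F)$ with equivalent norms, and the closed graph theorem upgrades set equality to norm equivalence. Thus the final statement follows.
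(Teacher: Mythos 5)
Your reduction to an injective superspace $G=\ell^{\,\infty}(I)$ and the finite-coordinate compression argument are indeed how the paper proceeds: the identity $\pi_{q,p}(T)=\sup_A\pi_{q,p}(P_A\circ\varphi\circ T)$ is elementary from the definition, since the defining inequality involves only finite sums and the supremum norm on $\ell^{\,\infty}(I)$ is nearly attained on finitely many coordinates; and your worry about injectivity of $\Pi_{q,p}$ in the target variable is unfounded, since composing with an isometry preserves the $(q,p)$-summing norm (this is the first of the paper's ``obvious remarks''). The genuine gap is at your very first step. You take as ``established'' that equivalence of the two multi-norms on $E$ already gives $\Pi_{q_1,p_1}(\dual{E},\co)=\Pi_{q_2,p_2}(\dual{E},\co)$. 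That implication is not available before this theorem: the paper records only the converse as immediate, and explicitly says that the direction you need ``is a special case of the following theorem'' --- it is Corollary \ref{2.9}, which is \emph{deduced from} the present theorem. What Theorem \ref{summing} plus the hypothesis actually gives you is equivalence of $\pi_{q_1,p_1}$ and $\pi_{q_2,p_2}$ on the subspace $\co\otimes E$ of $\Pi_{q_i,p_i}(\dual{E},\co)$, i.e.\ on the weak-$*$ continuous finite-rank operators. But the compressions $P_J\circ T$ of a general $T\in\Pi_{q_1,p_1}(\dual{E},F)$ are finite-rank operators $\dual{E}\to\linfty_{|J|}$ whose coefficient functionals lie in $\bidual{E}$; they correspond to tensors in $\co\otimes\bidual{E}$, not $\co\otimes E$, so the hypothesis says nothing directly about them and your uniform constant $C$ has no source.

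The missing idea is the passage from $E$ to its bidual: one must show that equivalence of the $(p_1,q_1)$- and $(p_2,q_2)$-multi-norms on $E$ forces the same equivalence, with the same constant, on $\bidual{E}$. The paper does this by identifying each compression with a tuple in $(\bidual{E})^{m}$ and invoking \cite[Corollary 4.14]{DP}, which bounds the norm of the identity map
\[
\left((\bidual{E})^{m},\norm^{(p_1,q_1)}_{m}\right)\to \left((\bidual{E})^{m},\norm^{(p_2,q_2)}_{m}\right)
\]
by that of the corresponding map on $E^{m}$ (a local-reflexivity type statement). Once that step is supplied, your supremum over $J$ closes the argument exactly as in the paper; without it, the proof does not go through, and in particular the ``$F=\co$'' case is not a cheaper base case but the full content of the theorem.
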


\begin{proof}
Let  $F$ be a Banach space. It is standard that there is an isometry $\varphi : F\to \ell^{\,\infty}(I)$ for some index set $I$.  For each finite subset $A\subset I$,
let $P_A:\ell^{\,\infty}(I)\rightarrow \co$ be the projection map 
\[
	\ell^{\,\infty}(I) \rightarrow \ell^{\,\infty}(A)\subset \co\,.
\] 

Assume towards a contradiction that we have $\Pi_{q_1,p_1}(\dual{E},F) \nsubset \Pi_{q_2,p_2}(\dual{E},F)$, and take
 $T\in\Pi_{q_1,p_1}(\dual{E},F) \setminus \Pi_{q_2,p_2}(\dual{E},F)$. From the definition of the $(q,p)$-summing norm, we see that
\[ 
\pi_{q,p}(T) = \pi_{q,p}(\varphi\,\circ\,T) =
\sup_{A} \pi_{q,p}(P_A\,\circ\,\varphi\,\circ\, T)\,; 
\]
here we take the supremum over all finite subsets $A\subset I$. Hence  there exists a sequence $(A_n)$ of finite subsets of $I$ such that
\[
	n\cdot \pi_{q_1,p_1}(T_n)<\pi_{q_2,p_2}(T_n)\quad(n\in\N)\,,
\]
where $T_n:=P_{A_n}\,\circ\,\varphi\,\circ\, T:\dual{E}\to\linfty(A_n)\subset\co\,$.

Take $n\in\naturals$. Since 
$T_n\in {\mathcal F}(E', \co)$,  the operator $T_n$ is induced by a tensor $\tau_n\in \co\otimes \bidual{E}$.   
Remark \ref{remark1} and the previous paragraph then show that
\[
	n\cdot\big\|\tau_n\big\|^{(p_1,q_1)}_{\co\otimes \bidual{E}}=n\cdot \pi_{q_1,p_1}(T_n)<\pi_{q_2,p_2}(T_n)=\big\|\tau_n\big\|^{(p_2,q_2)}_{\co\otimes \bidual{E}}\,.
\]
In fact, since $A_n$ is finite, the tensor $\tau_n$ can be identified with an element $\tuple{x}_n\in (\bidual{E})^{m(n)}$ for some $m(n)\in\naturals$. Thus, this shows that the identity operator
\[
	\left((\bidual{E})^{m(n)},\norm^{(p_1,q_1)}_{m(n)}\right)\to \left((\bidual{E})^{m(n)},\norm^{(p_2,q_2)}_{m(n)}\right)
\]
has norm at least $n$. By \cite[Corollary 4.14]{DP}, it follows that  the identity operator
\[
	\left(E^{m(n)},\norm^{(p_1,q_1)}_{m(n)}\right)\to \left(E^{m(n)},\norm^{(p_2,q_2)}_{m(n)}\right)
\]
has norm at least $n$. This is true for every $n\in\naturals$. But this contradicts the assumption that the $(p_1,q_1)$- and the $(p_2,q_2)$-multi-norms 
are equivalent on $E$.\end{proof}\s

\begin{corollary}\label{2.9}
Let $E$ be a Banach space, and suppose that $1\leq p_1\leq q_1<\infty$ and $1\leq p_2\leq q_2<\infty$.  Then the following are equivalent:\s

{\rm (a)}  $(\norm_n^{(p_1,q_1)}: n\in\N)\cong (\norm_n^{(p_2,q_2)}: n\in\N)$ on $E$;\s

{\rm (b)}  $\Pi_{q_1,p_1}(\dual{E},\co) = \Pi_{q_2,p_2}(\dual{E},\co)\,.$
\qed \end{corollary}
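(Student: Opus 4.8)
The plan is to prove the two implications separately, regarding the equivalence in (a) as the conjunction of the two domination statements. The implication (a) $\Rightarrow$ (b) is essentially free: it is the special case $F=\co$ of Theorem \ref{tool for general spaces}, which already gives $\Pi_{q_1,p_1}(\dual{E},F) = \Pi_{q_2,p_2}(\dual{E},F)$ for every Banach space $F$ once the two multi-norms are assumed mutually equivalent on $E$. Thus all of the work lies in the converse, (b) $\Rightarrow$ (a).

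For (b) $\Rightarrow$ (a), I would first use Theorem \ref{summing} to transfer the question from multi-norms to summing norms. By that theorem, for each $i\in\{1,2\}$ the $(p_i,q_i)$-multi-norm based on $E$ is precisely the $\co$-norm on $\co\otimes E$ obtained by identifying $\co\otimes E$ with the finite-rank operators $\mathcal{F}(\dual{E},\co)$ and restricting $\pi_{q_i,p_i}$ under the isometric embedding $\co\otimes E\hookrightarrow\Pi_{q_i,p_i}(\dual{E},\co)$. Via the correspondence of Theorem \ref{multi-norm as tensor}, and the fact that every $z\in\co\otimes E$ is approximated in any reasonable cross-norm by finite tensors of the form $\sum_{i=1}^{N}\delta_i\otimes x_i$ (apply the truncation maps $P_N\otimes\id_E$), equivalence of the two multi-norms based on $E$ with a uniform constant is the same thing as equivalence of these two $\co$-norms on $\co\otimes E$ with the same constant. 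So it suffices to show that $\pi_{q_1,p_1}$ and $\pi_{q_2,p_2}$ are equivalent norms on $\mathcal{F}(\dual{E},\co)$.

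Here the closed graph theorem enters. Since $\dual{E}$ and $\co$ are Banach spaces, both $(\Pi_{q_1,p_1}(\dual{E},\co),\pi_{q_1,p_1})$ and $(\Pi_{q_2,p_2}(\dual{E},\co),\pi_{q_2,p_2})$ are complete, and hypothesis (b) says they coincide as sets. The identity map between them has closed graph: if a sequence converges in $\pi_{q_1,p_1}$ to $T$ and in $\pi_{q_2,p_2}$ to $S$, then, because $\pi_{q,p}(R)\ge\lV R\rV$ for every $(q,p)$-summing operator $R$, both limits hold in operator norm as well, forcing $T=S$. Applying the closed graph theorem to the identity and to its inverse yields constants $C_1,C_2>0$ with $\pi_{q_1,p_1}(T)\le C_1\pi_{q_2,p_2}(T)$ and $\pi_{q_2,p_2}(T)\le C_2\pi_{q_1,p_1}(T)$ for all $T$ in the common space. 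Restricting to the subspace $\mathcal{F}(\dual{E},\co)$ gives equivalence of the two $\co$-norms, hence of the two multi-norms, establishing (a).

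The main obstacle is conceptual rather than computational: hypothesis (b) asserts only set-theoretic equality of two spaces of summing operators, whereas (a) demands a norm inequality that is \emph{uniform} across all levels $n$. The closed graph theorem is exactly the device that performs this upgrade, but its use rests on two points one must verify, namely completeness of the ambient spaces and the domination $\pi_{q,p}\ge\lV\,\cdot\,\rV$ that makes the graph of the identity closed. Once the equivalence is secured on the ambient Banach space, its descent to the finite-rank subspace $\co\otimes E$ is automatic, and Theorem \ref{summing} converts it back into a uniform equivalence of the two multi-norms.
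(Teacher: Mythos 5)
Your proposal is correct and follows exactly the route the paper takes: direction (a)$\Rightarrow$(b) is the case $F=\co$ of Theorem \ref{tool for general spaces}, and direction (b)$\Rightarrow$(a) is the paper's remark that the claim ``follows immediately from Theorem \ref{summing} and the closed graph theorem,'' for which you have supplied the standard details (completeness of the summing-operator spaces, closedness of the graph via $\pi_{q,p}\ge\norm$, and restriction to $\mathcal{F}(\dual{E},\co)\cong\co\otimes E$).
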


\medskip

\section{The $(p,p)$-multi-norm}  We now give another description of the $(p,p)$-multi-norm.\s

\begin{theorem}\label{connection with Chevet--Saphar norm}
 Let $E$ be a normed space. Then the  tensor norm on $\co\otimes E$ induced from the $(p,p)$-multi-norm is  the Chevet--Saphar norm $d_p$  on $\co\otimes E$.
\end{theorem}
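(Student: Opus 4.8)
The plan is to avoid attacking the infimum defining $d_p$ head-on and instead to chain together the operator-ideal description of the $(p,p)$-multi-norm with the duality theory of $p\,$-integral operators. By Theorem~\ref{summing}, taken with $q=p$, the $\co$-norm on $\co\otimes E$ induced from the $(p,p)$-multi-norm is $z\mapsto\pi_p(T_z)$, where $T_z\in\Pi_p(\dual E,\co)$ is the finite-rank operator corresponding to $z$ under the identification $\co\otimes E={\mathcal F}(\dual E,\co)$. Since the range space is $\co$, Proposition~\ref{1.10c} gives $\pi_p(T_z)=i_p(T_z)$, so it suffices to prove that $i_p(T_z)=d_p(z)$ for every $z\in\co\otimes E$.

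Next I would unwind the definition of the $p\,$-integral norm. By definition, $i_p(T_z)$ is the norm of $T_z$, regarded as a bounded linear functional, on $\dual E\,\widehat{\otimes}_{(g_p)'}\,\co'=\dual E\,\widehat{\otimes}_{(g_p)'}\,\ell^{\,1}$, where $(g_p)'$ is the tensor norm dual to the left Chevet--Saphar norm $g_p$. Writing out the trace pairing $\duality{T_z}{\lambda\otimes\eta}=\duality{T_z\lambda}{\eta}$ for $\lambda\in\dual E$ and $\eta\in\ell^{\,1}$, I would identify $T_z$, as a functional, with the flipped tensor $z^t\in E\otimes\co\subset\bidual E\otimes\ell^{\,\infty}$. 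The finite-rank functionals on $\dual E\,\widehat{\otimes}_{(g_p)'}\,\ell^{\,1}$ are exactly $\bidual E\otimes\ell^{\,\infty}$ carrying the dual tensor norm $((g_p)')'=g_p$, so that $i_p(T_z)=g_p(z^t)$, the latter computed intrinsically on $E\otimes\co$; finally $g_p(z^t)=d_p(z)$ by the very definition $g_p(\,\cdot\,)=d_p((\,\cdot\,)^t)$. Combining the displayed equalities yields $i_p(T_z)=d_p(z)$, as required.

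The step needing the most care is the tensor-norm bookkeeping in the middle paragraph: one must track the flip $z\mapsto z^t$ correctly, verify the biduality $((g_p)')'=g_p$ (valid since the Chevet--Saphar norms are finitely generated uniform cross-norms), and, crucially, justify that the finite-rank functionals on $\dual E\,\widehat{\otimes}_{(g_p)'}\,\ell^{\,1}$ are normed by the genuine dual tensor norm rather than by the a priori larger Schatten dual $((g_p)')^s$, and that the value of $g_p$ on the subspace $E\otimes\co\subset\bidual E\otimes\ell^{\,\infty}$ is intrinsic. This is exactly where the accessibility of $g_p$ and $d_p$—valid for every $p$, as recorded above via \cite[Proposition~7.21]{Ryan}—combines with the metric approximation property of $\co$ to force the Schatten dual and the genuine dual tensor norm to coincide on the tensor products in question. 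An alternative, more computational route would be to establish the two inequalities $d_p(z)\le\pi_p(T_z)$ and $\pi_p(T_z)\le d_p(z)$ by hand, using the infimum representation of $d_p$ together with the Pietsch factorisation of $p\,$-summing maps; but the duality argument above is cleaner given the machinery already assembled in Chapter~1.
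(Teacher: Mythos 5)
Your proposal is correct and follows essentially the same route as the paper's proof: Theorem~\ref{summing} to identify the $(p,p)$-norm with $\pi_p(T_z)$, Proposition~\ref{1.10c} to pass to $i_p(T_z)$, the definition of the $p\,$-integral norm as a functional on $\dual{E}\proten_{g_p'}\ell^{\,1}$, and then accessibility of the Chevet--Saphar norm together with the metric approximation property of $\co$ to conclude that the induced norm is $d_p$. The only cosmetic difference is that you perform the flip $z\mapsto z^t$ at the end (working with $g_p$ on $E\otimes\co$) whereas the paper flips inside the predual, writing $\dual{E}\proten_{g_p'}\ell^{\,1}=\ell^{\,1}\proten_{d_p'}\dual{E}$ and applying accessibility of $d_p$ directly.
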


\begin{proof} By Theorem~\ref{summing}, the embedding of $\co\otimes E$ into $\Pi_p(E',\co)$ induces the $(p,p)$-multi-norm.  
By Proposition \ref{1.10c}, $\Pi_p(E',\co)$ agrees isometrically with the class of $p\,$-integral maps from $E'$ to $\co$.  By definition,
the $p\,$-integral norm, $i_p(T)$, of a map $T:E'\rightarrow \co$ is the norm of the induced functional on $E'\proten_{g_p'}\ell^{\,1} = \ell^{\,1}\proten_{d_p'}E'$.
Hence the natural map
\[ \big( \co\otimes E, \norm^{(p,p)} \big)\rightarrow \big( \ell^{\,1}\proten_{d_p'}E' \big)' \]
is an isometry.  Since $\co$ has the metric approximation property and $d_p$ is an accessible tensor norm, as explained in the introduction,
it follows that the $\norm^{(p,p)}$-norm on $\co\otimes E$ is just the $d_p$ norm, as claimed.
\end{proof}\s

Thus we have another description of the $(p,p)$-multi-norm based on a normed space $E$. The value  of this result is that it
 gives an excellent description of the dual space to $(\co\otimes E, \norm^{(p,p)})$, namely  as 
\[
	(\co \proten_{d_p} E)' \cong \Pi_{p'}(\co,E')\,,
\] 
the collection of $p'\,$-summing maps from $\co$ to $E'$; see \cite[Proposition 6.11]{Ryan}.
The maps in $\Pi_{p'}(\co,\dual{E})$ are usually rather well understood. 

In the general case where $q\geq p$,   we can give an abstract description of the dual space of 
$	(\co\otimes E, \norm^{(p,q)})$, as \cite[\S 4.1.4]{DP}, but we lack a good concrete description of this  dual space, and this  means that
we are unable to adapt the arguments of this section to the more general case. 

\section{Relations between $(p,q)$-multi-norms}

Let $E$ be a normed space, and consider the above $(p,q)$-multi-norms based on $E$, where $1\le p\le q<\infty$. It is clear that,
 for each fixed $p\ge 1$ and $q_1\ge q_2\ge p$, we have
$(\norm^{(p,q_1)}_n)\le (\norm^{(p,q_2)}_n)$, and, for each fixed $q\ge 1$ and $p_1\le p_2\le q$, we have
$(\norm^{(p_1,q)}_n)\le (\norm^{(p_2,q)}_n)$. Further, it is proved in \cite[Theorem 4.4]{DP} that 
$(\norm^{(p,p)}_n)\ge (\norm^{(q,q)}_n)$
whenever $1\le p\le q<\infty$, and so $(\displaystyle{\norm^{(1,1)}_n})$ is the maximum among these multi-norms; by (\ref{max-mn}), it is
 the maximum multi-norm. 
 
 The following theorem, which follows immediately from Theorem~\ref{summing} and the analogous result  for $(q,p)$-summing
operators that is given in  \cite[Theorem~10.4]{DJT}, for example, gives more information about the relations between $(p,q)$-multinorms.

To picture the theorem, consider the following.   We write $\mathcal T$ for the extended `triangle'  given by
\[
\mathcal T = \{(p,q) \in {\R}^2 : 1\leq p\leq q\}\,,
\]
 and, for $c\in [0,1)$, we consider the curve
\[
{\mathcal C}_c =  \left\{(p,q) \in {\mathcal T} : \frac{1}{p}-  \frac{1}{q}=c\right\}\,;
\]
 we have 
 \[
 {\mathcal T}=\bigcup\{{\mathcal C}_c : c\in [0,1)\}\,.
 \]  
 Then  the multi-norm $(\norm_n^{(p,q)})$ increases as we move down a fixed curve 
${\mathcal C}_c$, and it increases when we move to a lower point on a curve to the right. 

In the diagram, arrows indicate increasing multi-norms in the ordering $\leq$.\label{PICTURE1}
 \medskip\medskip \medskip

\unitlength 1pt
\begin{picture}(204.8598,188.4995)( 36.9886,-258.9201)
%
\special{pn 13}%
\special{pa 788 3544}%
\special{pa 788 985}%
\special{fp}%
%
\special{pn 13}%
\special{pa 788 3544}%
\special{pa 3347 985}%
\special{fp}%
%
\special{pn 8}%
\special{pa 788 3544}%
\special{pa 3347 3544}%
\special{da 0.070}%
%
\special{pn 8}%
\special{pa 1575 1772}%
\special{pa 2560 1772}%
\special{fp}%
%
\special{pn 8}%
\special{pa 1575 1772}%
\special{pa 1575 2756}%
\special{fp}%
%
\special{pn 13}%
\special{pa 1999 1772}%
\special{pa 2038 1772}%
\special{fp}%
\special{sh 1}%
\special{pa 2038 1772}%
\special{pa 1972 1752}%
\special{pa 1986 1772}%
\special{pa 1972 1792}%
\special{pa 2038 1772}%
\special{fp}%
%
\special{pn 13}%
\special{pa 1575 2215}%
\special{pa 1575 2274}%
\special{fp}%
\special{sh 1}%
\special{pa 1575 2274}%
\special{pa 1595 2208}%
\special{pa 1575 2222}%
\special{pa 1556 2208}%
\special{pa 1575 2274}%
\special{fp}%
%
\special{pn 13}%
\special{pa 2067 2264}%
\special{pa 2018 2304}%
\special{fp}%
\special{sh 1}%
\special{pa 2018 2304}%
\special{pa 2082 2278}%
\special{pa 2060 2271}%
\special{pa 2058 2248}%
\special{pa 2018 2304}%
\special{fp}%
%
\special{pn 8}%
\special{pa 788 2766}%
\special{pa 812 2746}%
\special{pa 836 2725}%
\special{pa 859 2704}%
\special{pa 881 2683}%
\special{pa 904 2661}%
\special{pa 925 2638}%
\special{pa 945 2616}%
\special{pa 966 2592}%
\special{pa 985 2568}%
\special{pa 1003 2545}%
\special{pa 1021 2520}%
\special{pa 1039 2496}%
\special{pa 1056 2470}%
\special{pa 1071 2444}%
\special{pa 1086 2418}%
\special{pa 1101 2392}%
\special{pa 1116 2365}%
\special{pa 1128 2338}%
\special{pa 1142 2311}%
\special{pa 1154 2283}%
\special{pa 1167 2254}%
\special{pa 1178 2226}%
\special{pa 1188 2197}%
\special{pa 1199 2168}%
\special{pa 1209 2138}%
\special{pa 1219 2109}%
\special{pa 1228 2078}%
\special{pa 1237 2049}%
\special{pa 1245 2018}%
\special{pa 1252 1987}%
\special{pa 1259 1956}%
\special{pa 1266 1925}%
\special{pa 1273 1893}%
\special{pa 1279 1862}%
\special{pa 1285 1829}%
\special{pa 1291 1798}%
\special{pa 1296 1765}%
\special{pa 1301 1733}%
\special{pa 1306 1699}%
\special{pa 1310 1667}%
\special{pa 1314 1633}%
\special{pa 1317 1600}%
\special{pa 1321 1566}%
\special{pa 1325 1533}%
\special{pa 1328 1500}%
\special{pa 1331 1466}%
\special{pa 1334 1432}%
\special{pa 1336 1398}%
\special{pa 1339 1364}%
\special{pa 1341 1329}%
\special{pa 1343 1295}%
\special{pa 1345 1260}%
\special{pa 1347 1226}%
\special{pa 1349 1191}%
\special{pa 1351 1157}%
\special{pa 1352 1123}%
\special{pa 1354 1088}%
\special{pa 1356 1053}%
\special{pa 1357 1018}%
\special{pa 1359 984}%
\special{pa 1359 975}%
\special{sp}%
%
\special{pn 13}%
\special{pa 1280 1831}%
\special{pa 1270 1880}%
\special{fp}%
\special{sh 1}%
\special{pa 1270 1880}%
\special{pa 1303 1820}%
\special{pa 1280 1829}%
\special{pa 1263 1813}%
\special{pa 1270 1880}%
\special{fp}%
\put(231.3334,-267.7238){\makebox(0,0)[lb]{$p$}}%
\put(36.9886,-272.4351){\makebox(0,0)[lb]{$(1,1)$}}%
\put(48.4112,-83.9356){\makebox(0,0)[lb]{$q$}}%
\put(130.9959,-113.8110){\makebox(0,0)[rt]{$(p,q)$}}%
\put(208.3957,-140.8204){\makebox(0,0)[rb]{$(q,q)$}}%
\put(140.6656,-210.8183){\makebox(0,0)[rb]{$(p,p)$}}%
\put(105.2752,-80.4922){\makebox(0,0){$\mathcal{C}_c$}}%
\put(242.2752,-80.4922){\makebox(0,0){$\mathcal{C}_0$}}%
%
\special{pn 8}%
\special{pa 3229 1556}%
\special{pa 3222 1525}%
\special{pa 3214 1495}%
\special{pa 3206 1464}%
\special{pa 3197 1435}%
\special{pa 3187 1404}%
\special{pa 3174 1375}%
\special{pa 3159 1345}%
\special{pa 3142 1317}%
\special{pa 3124 1290}%
\special{pa 3103 1265}%
\special{pa 3080 1243}%
\special{pa 3056 1222}%
\special{pa 3030 1205}%
\special{pa 3002 1192}%
\special{pa 2974 1183}%
\special{pa 2944 1176}%
\special{pa 2913 1172}%
\special{pa 2881 1170}%
\special{pa 2849 1169}%
\special{pa 2815 1170}%
\special{pa 2782 1171}%
\special{pa 2766 1172}%
\special{sp}%
%
\special{pn 8}%
\special{pa 2782 1171}%
\special{pa 2766 1172}%
\special{fp}%
\special{sh 1}%
\special{pa 2766 1172}%
\special{pa 2833 1188}%
\special{pa 2818 1169}%
\special{pa 2830 1148}%
\special{pa 2766 1172}%
\special{fp}%
\put(236.1579,-119.5016){\makebox(0,0){$\mathcal{T}$}}%
\end{picture} \medskip\medskip\medskip\medskip

\begin{theorem} \label{comparing (p,q) multi-norms}
Let $E$ be a normed space, and take $(p_1,q_1)$ and $(p_2,q_2)$ in $\mathcal T $.
Then  $(\norm_n^{(p_1,q_1)}) \leq (\norm_n^{(p_2,q_2)})$  whenever both $1/p_2 - 1/q_2 \leq 1/p_1-1/q_1$ and $q_2\leq q_1$.    \enproof
\end{theorem}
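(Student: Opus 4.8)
The plan is to transfer the asserted inequality between the two multi-norms into an inequality between the corresponding $(q,p)$-summing norms of a single fixed operator, and then to read off the conclusion from the classical inclusion theorem for summing operators.

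First I would use Theorem~\ref{summing} to reinterpret both sides. Fix $n\in\N$ and $\tuple{x}=(x_1,\dots,x_n)\in E^n$, and consider the tensor $\tau=\sum_{i=1}^n\delta_i\otimes x_i\in\co\otimes E$. Under the identification $\co\otimes E\cong{\mathcal F}(\dual{E},\co)$, this $\tau$ corresponds to one finite-rank operator $S_{\tuple{x}}\in{\mathcal F}(\dual{E},\co)$, independent of any choice of exponents. Since, by Theorem~\ref{summing}, the $(p,q)$-multi-norm is exactly the norm on $\co\otimes E$ induced by the embedding into $\Pi_{q,p}(\dual{E},\co)$, we have
\[
	\left\Vert\tuple{x}\right\Vert^{(p,q)}_n=\pi_{q,p}(S_{\tuple{x}})
\]
for every admissible pair $(p,q)$. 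The key point is that only the summing norm applied to $S_{\tuple{x}}$ changes with $(p,q)$; the operator itself is the same. Hence the claim $(\norm^{(p_1,q_1)}_n)\le(\norm^{(p_2,q_2)}_n)$ reduces to showing that $\pi_{q_1,p_1}(S)\le\pi_{q_2,p_2}(S)$ for every $S\in{\mathcal F}(\dual{E},\co)$.

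Next I would invoke the inclusion theorem for $(q,p)$-summing operators, \cite[Theorem~10.4]{DJT}. Matching indices, the hypotheses $q_2\le q_1$ and $1/p_2-1/q_2\le 1/p_1-1/q_1$ yield the continuous inclusion $\Pi_{q_2,p_2}(\dual{E},\co)\subseteq\Pi_{q_1,p_1}(\dual{E},\co)$ together with the norm comparison $\pi_{q_1,p_1}(S)\le\pi_{q_2,p_2}(S)$. Applying this to $S=S_{\tuple{x}}$ and substituting back gives
\[
	\left\Vert\tuple{x}\right\Vert^{(p_1,q_1)}_n\le\left\Vert\tuple{x}\right\Vert^{(p_2,q_2)}_n
\]
for all $n\in\N$ and all $\tuple{x}\in E^n$, which is exactly the assertion.

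The only point requiring care is the bookkeeping of indices and the direction of the inclusion: one must verify that the pair carrying the larger multi-norm, namely $(p_2,q_2)$, is the one producing the smaller summing class $\Pi_{q_2,p_2}$ and hence the larger summing norm, so that $\pi_{q_1,p_1}\le\pi_{q_2,p_2}$ points the correct way. Translating the two monotonicity conditions of \cite[Theorem~10.4]{DJT} into the present labelling reproduces precisely $q_2\le q_1$ and $1/p_2-1/q_2\le 1/p_1-1/q_1$, so no genuine obstacle remains; the entire content is supplied by Theorem~\ref{summing} and the cited inclusion theorem.
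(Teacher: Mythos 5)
Your argument is exactly the paper's: the authors state that the theorem ``follows immediately from Theorem~\ref{summing} and the analogous result for $(q,p)$-summing operators given in \cite[Theorem~10.4]{DJT}'', which is precisely the reduction you carry out --- identify $\left\Vert\tuple{x}\right\Vert^{(p,q)}_n$ with $\pi_{q,p}(S_{\tuple{x}})$ for the fixed finite-rank operator $S_{\tuple{x}}\in{\mathcal F}(\dual{E},\co)$ and then apply the inclusion theorem for summing operators. The index bookkeeping you flag is handled correctly, so the proof is sound and coincides with the paper's.
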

\medskip

 It is easy to see that $ \varphi^{(p,q)}_n(E)  = \pi_{q,p}^{(n)}(E')$ for each normed space $E$ and each
 $n\in\N$ \cite[Theorem 4.4]{DP}, and so the following result is immediate from Proposition \ref{1.10}.\s
 
 \begin{proposition}\label{1.11}  
Suppose that  $(p,q)$ is in $\mathcal T $  and that $n\in\N$.  Then:\s
\begin{enumerate}
\item[{\rm (i)}] $\varphi^{(p,q)}_n(E)\leq n^{1/q}$ for each normed space $E$;\s

\item[{\rm (ii)}]   $\varphi^{(p,q)}_n(E)=  n^{1/q}$ for each infinite-dimensional normed space $E$  whenever $p\geq 2$\,;\s

\item[{\rm (iii)}]   $\varphi^{(p,q)}_n(E)\ge  n^{1/2-1/p+1/q}$ for each infinite-dimensional normed space $E$  whenever $p\le 2$\,;\s

\item[{\rm (iv)}] $\varphi^{(p,q)}_n(\ell^{\,r}) =n^{1/q}$ whenever $r\geq 1$ and $p\geq \min\set{r,2}$\,.\enproof
\end{enumerate}
\end{proposition}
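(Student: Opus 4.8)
The plan is to derive Proposition~\ref{1.11} directly from Proposition~\ref{1.10} by exploiting the identity $\varphi^{(p,q)}_n(E) = \pi_{q,p}^{(n)}(E')$, which is recalled immediately before the statement and attributed to \cite[Theorem 4.4]{DP}. Once this identity is in hand, each of the four items should follow by applying the corresponding item of Proposition~\ref{1.10} to the dual space $E'$ rather than to $E$ itself. The only work, therefore, is to check that the hypotheses of Proposition~\ref{1.10} transfer appropriately through the passage from $E$ to $E'$.

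For items (i)--(iii) the transfer is essentially automatic. First I would note that (i) is the inequality $\pi_{q,p}^{(n)}(E')\le n^{1/q}$, which holds for \emph{every} normed space by Proposition~\ref{1.10}(i), so no hypothesis on $E$ is needed. For (ii) and (iii), the key observation is that if $E$ is an infinite-dimensional normed space, then its dual $E'$ is also infinite dimensional; hence Proposition~\ref{1.10}(ii) gives $\pi_{q,p}^{(n)}(E')=n^{1/q}$ when $p\ge 2$, yielding (ii), and Proposition~\ref{1.10}(iii) gives $\pi_{q,p}^{(n)}(E')\ge n^{1/2-1/p+1/q}$ when $p\le 2$, yielding (iii). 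The only subtlety worth flagging is that Proposition~\ref{1.10}(iii) as stated requires $p\le 2$, matching the hypothesis in \ref{1.11}(iii), so there is a clean match.

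Item (iv) is the one requiring genuine care, and I expect it to be the main obstacle, since here we must identify the dual space concretely. For $E=\ell^{\,r}$ with $r\in(1,\infty)$ we have $E'=\ell^{\,r'}$ isometrically, so Proposition~\ref{1.10}(iv) should be applied with $s=r'$; its hypothesis $p\ge\min\{s',2\}=\min\{r,2\}$ is exactly the hypothesis stated in \ref{1.11}(iv), and its conclusion $\pi_{q,p}^{(n)}(\ell^{\,r'})=n^{1/q}$ gives the claim. The delicate points are the boundary cases $r=1$ and $r=\infty$: when $r=1$ the dual is $\ell^{\,\infty}$, and when $r=\infty$ (not covered by the $\ell^{\,r}$ notation with $r\ge 1$ finite, but worth confirming) the dual is more complicated. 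Since the statement is restricted to $r\ge 1$ and the relevant $\ell^{\,s}$ result in Proposition~\ref{1.10}(iv) allows $s\in[1,\infty]$, the case $r=1$, giving $s=\infty$ with $s'=1$ and $\min\{s',2\}=1$, is covered and the hypothesis $p\ge\min\{r,2\}=1$ is automatic. I would therefore present (iv) by setting $s=r'$, verifying $\min\{s',2\}=\min\{r,2\}$, and invoking \ref{1.10}(iv).

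In summary, the proof is a short transcription: state the identity $\varphi^{(p,q)}_n(E)=\pi_{q,p}^{(n)}(E')$, then read off (i)--(iii) by applying the matching items of Proposition~\ref{1.10} to $E'$ (using that $E'$ is infinite dimensional when $E$ is), and finally handle (iv) by the dual identification $(\ell^{\,r})'=\ell^{\,r'}$ together with the substitution $s=r'$. The entire argument is immediate once the identity is granted, which is precisely why the paper labels the result as ``immediate''; the one place demanding attention is the correct bookkeeping of conjugate exponents in (iv).
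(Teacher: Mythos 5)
Your proposal is correct and is exactly the paper's intended argument: the paper derives Proposition \ref{1.11} immediately from Proposition \ref{1.10} via the identity $\varphi^{(p,q)}_n(E)=\pi^{(n)}_{q,p}(E')$, with the same bookkeeping $s=r'$, $\min\{s',2\}=\min\{r,2\}$ in clause (iv). No gaps.
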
\s

In Theorem \ref{values of phi sequence for Lr}, we shall improve clause (iv) of the above proposition by giving (asymptotic)
 values of $\varphi^{(p,q)}_n(\lspace^r)$ for all values of $p$ and $q$ with $1\le p\le q<\infty$ in the case where $r>1$.

\begin{corollary}\label{3.2}  
Let $E$ be an infinite-dimensional Banach space, and take $(p_1,q_1)$ and $(p_2,q_2)$ in $\mathcal T $.
Then the $(p_1,q_1)$- and $(p_2,q_2)$-multi-norms based on $E$ are not equivalent  whenever $p_1,p_2\geq 2$ and $q_1\neq q_2$. \enproof 
\end{corollary}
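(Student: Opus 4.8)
The plan is to show that the two multi-norms have genuinely different rates of growth, and then to invoke the elementary principle that mutually equivalent multi-norms must have similar rates of growth; the resulting contradiction yields the non-equivalence.

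First I would record the rates of growth. Since $E$ is infinite-dimensional and $p_1,p_2\geq 2$, Proposition~\ref{1.11}(ii) gives
\[
\varphi^{(p_1,q_1)}_n(E)=n^{1/q_1}\quad\text{and}\quad\varphi^{(p_2,q_2)}_n(E)=n^{1/q_2}\qquad(n\in\N)\,.
\]
Next I would verify that equivalence of the multi-norms forces these sequences to be similar. Indeed, if $(\norm^{(p_1,q_1)}_n)\cong(\norm^{(p_2,q_2)}_n)$, then by definition there is a constant $C>0$ with
\[
C^{-1}\left\Vert\tuple{x}\right\Vert^{(p_2,q_2)}_n\leq\left\Vert\tuple{x}\right\Vert^{(p_1,q_1)}_n\leq C\left\Vert\tuple{x}\right\Vert^{(p_2,q_2)}_n\qquad(\tuple{x}\in E^n,\ n\in\N)\,.
\]
Taking the supremum over all $\tuple{x}=(x_1,\ldots,x_n)$ with $x_1,\dots,x_n\in E_{[1]}$ and using the definition of the rate of growth, I would obtain $C^{-1}\varphi^{(p_2,q_2)}_n(E)\leq\varphi^{(p_1,q_1)}_n(E)\leq C\varphi^{(p_2,q_2)}_n(E)$, that is, $\varphi^{(p_1,q_1)}_n(E)\sim\varphi^{(p_2,q_2)}_n(E)$.

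Finally I would derive the contradiction. Assuming without loss of generality that $q_1<q_2$, we have $n^{1/q_1}/n^{1/q_2}=n^{1/q_1-1/q_2}\to\infty$ as $n\to\infty$, so the sequences $(n^{1/q_1})$ and $(n^{1/q_2})$ are not similar. This contradicts the similarity established above, and hence the two multi-norms cannot be equivalent. There is no genuine obstacle here: the computation of the rates of growth is already supplied by Proposition~\ref{1.11}(ii), and the only conceptual step is the routine observation that equivalence implies similarity of rates of growth, obtained by taking suprema in the domination inequalities.
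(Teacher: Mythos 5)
Your proposal is correct and is exactly the argument the paper intends: the corollary is stated with no written proof precisely because it follows immediately from Proposition~\ref{1.11}(ii) together with the standard observation that equivalent multi-norms have similar rates of growth, which you verify correctly by taking suprema in the domination inequalities.
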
 \s

Combining the previous proposition with Theorem \ref{comparing (p,q) multi-norms}, we obtain the following.

\begin{corollary}\label{3.2a}
Let $E$ be an infinite-dimensional Banach space. Suppose that  $(p,q)$ is in $\mathcal T $. \s

{\rm (i)} The  $(p,q)$- and the maximum multi-norms based  on $E$ are not equivalent whenever $q>2$. \s

{\rm (ii )} The $(p,q)$- and  the minimum multi-norms based on $E$ are not equivalent whenever $1/p-1/q < 1/2$.  
\end{corollary}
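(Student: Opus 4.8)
The plan is to argue entirely through the rate-of-growth sequences, using the principle recorded in Chapter~2 that two mutually equivalent multi-norms have similar rates of growth. Thus, to prove non-equivalence in each case, it suffices to exhibit rates of growth that are not similar. The inputs I would use are the estimates of Proposition~\ref{1.11} for $\varphi_n^{(p,q)}(E)$, the lower bound $\varphi_n^{\max}(E)\ge n^{1/2}$ valid for every infinite-dimensional Banach space (\cite[Theorem~3.58]{DP}), and the fact that the minimum multi-norm has the constant rate of growth $1$. Note that one direction of domination is in each case automatic, since the maximum and minimum multi-norms are respectively the largest and smallest multi-norms based on $E$; the only question is whether the reverse domination can hold, and the rates of growth rule this out.

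For (i), I would combine the universal upper estimate $\varphi_n^{(p,q)}(E)\le n^{1/q}$ of Proposition~\ref{1.11}(i) with $\varphi_n^{\max}(E)\ge n^{1/2}$. Were the $(p,q)$- and maximum multi-norms equivalent, there would be a constant $C$ with $n^{1/2}\le\varphi_n^{\max}(E)\le C\,\varphi_n^{(p,q)}(E)\le C\,n^{1/q}$ for all $n\in\N$, whence $n^{1/2-1/q}\le C$. Since $q>2$ forces $1/2-1/q>0$, the left-hand side is unbounded, a contradiction.

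For (ii), the minimum multi-norm has bounded rate of growth, so it is enough to show that $\varphi_n^{(p,q)}(E)$ is unbounded whenever $1/p-1/q<1/2$, and I would split according to the clauses of Proposition~\ref{1.11}. If $p\ge 2$ --- a case in which $1/p-1/q<1/2$ holds automatically --- then clause (ii) gives $\varphi_n^{(p,q)}(E)=n^{1/q}$, which is unbounded. If $p\le 2$, then clause (iii) gives $\varphi_n^{(p,q)}(E)\ge n^{1/2-1/p+1/q}$, and here the hypothesis $1/p-1/q<1/2$ says precisely that the exponent $1/2-1/p+1/q$ is strictly positive, so the rate of growth is again unbounded. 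In either case $(\varphi_n^{(p,q)}(E))$ cannot be similar to the constant sequence $1$, and the two multi-norms are not equivalent.

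Once Proposition~\ref{1.11} is available the argument is essentially bookkeeping, and I do not anticipate a genuine obstacle. The only points requiring care are the exact translation of each hypothesis into the sign of an exponent --- that $q>2$ yields $1/2-1/q>0$ in (i), and that $1/p-1/q<1/2$ yields $1/2-1/p+1/q>0$ in (ii) --- together with the check that the two subcases of (ii) match consistently at $p=2$, where both clauses reduce to the exponent $1/q$.
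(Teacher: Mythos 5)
Your proof is correct. Part (ii) is exactly the paper's argument: the paper's proof of (ii) is the one-line ``This follows from Proposition \ref{1.11}'', and your case split ($p\ge 2$ via clause (ii), $p\le 2$ via clause (iii), with the hypothesis $1/p-1/q<1/2$ translating to a strictly positive exponent) is precisely the intended bookkeeping; the matching of the two clauses at $p=2$ is also as you say.

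For part (i) you take a genuinely more direct route than the paper. The paper does not invoke the lower bound $\varphi_n^{\max}(E)\ge n^{1/2}$ at this point; instead it picks $p_1\in(2,q)$, sandwiches $(\norm^{(p,q)}_n)\le(\norm^{(p_1,p_1)}_n)\le(\norm^{(2,2)}_n)\le(\norm^{\max}_n)$ using Theorem \ref{comparing (p,q) multi-norms} and the monotonicity of the $(p,p)$-norms, and then observes that equivalence of the two ends of the chain would force $(\norm^{(p_1,p_1)}_n)\cong(\norm^{(2,2)}_n)$, contradicting Proposition \ref{1.11}(ii) since $n^{1/p_1}\not\sim n^{1/2}$. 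Your argument instead compares $\varphi_n^{\max}(E)\ge n^{1/2}$ (which is \cite[Theorem 3.58]{DP}, quoted in $\S 2.2$) directly against $\varphi_n^{(p,q)}(E)\le n^{1/q}$ from Proposition \ref{1.11}(i). Both are legitimate; yours is shorter and needs only clause (i) of Proposition \ref{1.11} plus the external bound on $\varphi_n^{\max}$, while the paper's version stays entirely within the ordering structure of the $(p,q)$-family and the growth rates computed in Proposition \ref{1.11}. Since both inputs are quoted results from \cite{DP}, there is no real economy either way; your version is, if anything, the cleaner one to read.
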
 

\begin{proof} (i) Take $p_1\in (2,q)$. Then 
\[
	(\norm^{(p,q)}_n)\le (\norm^{(p_1,p_1)}_n)\le (\norm^{(2,2)}_n)\,.
\]
However, $(\norm^{(p_1,p_1)}_n)\not\cong (\norm^{(2,2)}_n)$ on $E$ by Proposition \ref{1.11}(ii), and so this implies that 
$(\norm^{(p,q)}_n)\not\cong (\norm^{\max}_n)$ on $E$.\s

(ii) This follows from Proposition \ref{1.11}.
\end{proof}\s

We shall compare the $(p,q)$-multi-norms on $\Lspace^r(\Omega)$, and, when $r>1$, we shall compute $\varphi^{(p,q)}_{n}(\ell^{\,r})$ 
asymptotically for all other values of $p$ and $q$ later. 
For these calculations, we shall need to use the following proposition, which is an immediate consequence of Proposition \ref{interpolating pi sequence}.

\begin{proposition}\label{interpolating phi sequence}
Let $E$ be a normed space. Suppose that   $1\leq p\le q_1<q<q_2<\infty$, so that
\[
	\frac{1}{q}=\frac{1-\theta}{q_1}+\frac{\theta}{q_2}
\]
for some $\theta\in (0,1)$. Then 
\[ 
\vspace{-\baselineskip}{\varphi^{(p,q)}_{n}(E)\le \left(\varphi^{(p,q_1)}_{n}(E)\right)^{1-\theta}\,\cdot\,\left(\varphi^{(p,q_2)}_{n}(E)\right)^\theta}\quad
(n\in\N)\,.
\]\enproof
\end{proposition}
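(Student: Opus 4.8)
The plan is to reduce the claimed inequality for the rate-of-growth sequence $(\varphi^{(p,q)}_n(E))$ to the corresponding inequality for the $(q,p)$-summing constants, which is precisely the content of Proposition \ref{interpolating pi sequence}. The bridge is the identity
\[
\varphi^{(p,q)}_n(E) = \pi_{q,p}^{(n)}(E') \quad (n \in \N)\,,
\]
recorded just before Proposition \ref{1.11} and attributed to \cite[Theorem 4.4]{DP}, valid for every normed space $E$ and every $(p,q)$ in $\mathcal T$. The one point to keep straight is the swap of indices: the growth of the $(p,q)$-multi-norm based on $E$ corresponds to the $(q,p)$-summing constants of the \emph{dual} space $E'$.

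First I would fix $n \in \N$ and rewrite each of the three quantities in the desired bound via this identity, so that $\varphi^{(p,q)}_n(E)$, $\varphi^{(p,q_1)}_n(E)$, and $\varphi^{(p,q_2)}_n(E)$ become $\pi_{q,p}^{(n)}(E')$, $\pi_{q_1,p}^{(n)}(E')$, and $\pi_{q_2,p}^{(n)}(E')$, respectively. The hypotheses $1 \leq p \leq q_1 < q < q_2 < \infty$ together with the interpolation relation $1/q = (1-\theta)/q_1 + \theta/q_2$ are exactly those demanded by Proposition \ref{interpolating pi sequence}, now applied to the normed space $E'$ in place of $E$ (the fixed lower index $p$ plays the same role in both formulations).

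Applying Proposition \ref{interpolating pi sequence} to $E'$ then gives
\[
\pi_{q,p}^{(n)}(E') \le \left(\pi_{q_1,p}^{(n)}(E')\right)^{1-\theta} \cdot \left(\pi_{q_2,p}^{(n)}(E')\right)^\theta\,,
\]
and substituting back through the identity produces exactly the stated estimate. There is no genuine obstacle: the argument is a direct translation, and the only care required is in matching the multi-norm index pair $(p,q)$ against the summing-constant pair $(q,p)$ so that the interpolation parameter $\theta$ is attached to the correct factors on both sides.
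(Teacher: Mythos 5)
Your argument is correct and is exactly the route the paper takes: the proposition is presented there as an immediate consequence of Proposition \ref{interpolating pi sequence} via the identity $\varphi^{(p,q)}_n(E)=\pi^{(n)}_{q,p}(E')$ from \cite[Theorem 4.4]{DP}, applied to the dual space $E'$. Your care over the index swap $(p,q)\leftrightarrow(q,p)$ is the only subtlety, and you have handled it correctly.
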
\s

The following calculations of some specific $(p,q)$-multi-norms will also be useful. 
 
 \begin{example}\label{a calculation of (p,q)-norm on lr space}
Let $r\geq 1$, set $s =r'$, and take  $(p,q)\in \mathcal T $.  Then 
\begin{align*}
\left\Vert (\delta_1,\dots, \delta_n)\right\Vert^{(p,q)}_n
=\Big\Vert \sum_{i=1}^n e_i\otimes \delta_i\Big\Vert_{\textstyle{\co\otimes \ell^{\,r}}}
 = \Big\Vert \sum_{i=1}^n e_i\otimes \delta_i\Big\Vert_{\textstyle{\Pi_{q,p}(\ell^{\,s},\co)}}=\pi_{q,p}(I_n) 
\end{align*}
for each $n\in\N$,  where $I_n$ is the formal identity map from $\ell_n^{\,s}$ to $\ell_n^{\,\infty}$. Here we are now writing $(\delta_i)$ and $(e_i)$ for
 the standard bases in $\ell^{\,r}$ and $\co$, respectively.\s
 
 The value of $\lV (\delta_1,\dots, \delta_n)\rV^{(p,q)}_n$ based on the Banach space $\ell^{\,r}$ is calculated for certain values of $p$ and $q$ in \cite[Example 4.8]{DP}. 
We now calculate this value for all $(p,q)\in \mathcal T $ by elementary means. 
  
 Fix $n\in\naturals$, and, for $(p,q)\in \mathcal T $, write 
\begin{equation}\label{(2.3)}
\Delta_n(p,q)=\lV (\delta_1,\dots, \delta_n)\rV^{(p,q)}_n \,.
\end{equation}
Set $s=r'$ and $u=p'$. Then
\[
	\Delta_n(p,q)=\sup\set{\left(\sum_{i=1}^n\abs{\lambda_{i,i}}^q\right)^{1/q}\colon\ \lambda_1,\ldots,\lambda_n\in\lspace_n^s,\ \mu_{p,n}(\lambda_1,\ldots,\lambda_n)\le 1}\,,
\]
and so, using \eqref{weak--summing-norm 2a}, we see that 
\begin{align}\label{a calculation of (p,q)-norm on lr space: eq1}
	\Delta_n(p,q)=\sup\set{\left(\sum_{i=1}^n\abs{\lambda_{i,i}}^q\right)^{1/q}\colon\ (\lambda_{i,j})_{i,j=1}^n\in\operators(\lspace_n^u,\lspace_n^s)_{[1]}}\,.
\end{align}

We now use  \cite[Proposition 1.c.8]{LT}, which states the following: Suppose that a  matrix $(\lambda_{i,j})_{i,j=1}^n$ defines a contraction from $\ell^{\,u}_n$ to
$\ell^{\,s}_n$. Then the `diagonal' operator obtained by setting all the off-diagonal terms of our matrix to $0$  also defines a contraction between the same spaces. As 
the sum in \eqref{a calculation of (p,q)-norm on lr space: eq1} involves only the terms $\lambda_{i,j}$ with $j=i$, we see that we can make this change without 
changing the value  of $\Delta_n(p,q)$, and thus we can say that
\[
	\Delta_n(p,q)=\sup\set{\|\alpha\|_q\colon\ D_\alpha\in\operators(\lspace_n^u,\lspace_n^s)_{[1]}}\,,
\]
where $D_\alpha x=(\alpha_1x_1,\ldots,\alpha_n x_n)$ for each $\alpha,x\in \C^n$. 

We \emph{claim} that
\begin{align*}
\Delta_n(p,q)=\begin{cases}
n^{1/q}&\textrm{when}\ u\le s\,,\\
n^{1/q+1/u-1/s}&\textrm{when}\ u>s\ \textrm{and}\  1/q+1/u\ge 1/s\,,\\
1&\textrm{when}\  1/q+1/u< 1/s\,.
\end{cases}
\end{align*}

Indeed, suppose first that $u>s$. Then there exists $t>1$ such that $1/s=1/u+1/t$. Let  $\alpha=(\alpha_1,\ldots,\alpha_n)\in \C^n$. 
A version of H\"{o}lder's inequality implies that
\[
	\left(\sum_{i=1}^n\abs{\alpha_ix_i}^s\right)^{1/s}\le 	\left(\sum_{i=1}^n\abs{\alpha_i}^t\right)^{1/t}	\left(\sum_{i=1}^n\abs{x_i}^u\right)^{1/u}
\]
for every $(x_i)\in\lspace_n^u$. Moreover, equality is attained for a suitable choice of $(x_i)$, and so we see that
\[
	\|D_\alpha:\lspace^u_n\to\lspace^s_n\|=\|\alpha\|_t \qquad(\alpha\in \C^n)\,.
\]
Thus the problem now is to compute
 \[
	\Delta_n(p,q)=\sup\set{\|\alpha\|_q\colon\ \alpha\in(\lspace_n^t)_{[1]}}\,.
\]
If $t>q$, the supremum occurs when $\alpha=(\alpha_i)$ is the constant sequence $(n^{-1/t})$, in which case we obtain
\[
	\left(\sum_{i=1}^n\abs{\alpha_i}^q\right)^{1/q}=(n\,\cdot\,n^{-q/t})^{1/q}=n^{1/q-1/t}=n^{1/q+1/u-1/s}\,.
\]
If $t\le q$, the supremum occurs at a point mass, in which case we obtain $\|\alpha\|_q=1$.

Finally, suppose that $u\le s$. Then we see that
\[
	\left(\sum_{i=1}^n\abs{\alpha_ix_i}^s\right)^{1/s}\le 	\left(\sum_{i=1}^n\abs{\alpha_ix_i}^u\right)^{1/u}\le	\|\alpha\|_\infty\,\left(\sum_{i=1}^n\abs{x_i}^u\right)^{1/u},
\]
and equality occurs when $(x_1,\ldots,x_n)$ is a point mass. Thus 
\[
	\|D_\alpha:\lspace^u_n\to\lspace^s_n\|=\|\alpha\|_\infty \qquad(\alpha\in \C^n)\,.
\]
It follows that
 \[
	\Delta_n(p,q)=\sup\set{\|\alpha\|_q\colon\ \alpha\in(\lspace_n^\infty)_{[1]}}=n^{1/q}\,.
\]

This establishes the claim.

We conclude as follows. Suppose that  $r\ge 1$ and  $(p,q)\in \mathcal T $. Then the $(p,q)$-multi-norm based on $\lspace^{r}$ satisfies the following equation for each $n\in\N$:
\s

\begin{equation}\label{a calculation of (p,q)-norm on lr space: eq2}
 \lV(\delta_1,\dots, \delta_n)\rV^{(p,q)}_n =\left\{\begin{array}{cl}n^{1/r+1/q-1/p}  &\mbox{\rm when $p<r$ and ${1}/{p}-{1}/{q}\leq{1}/{r}$,}\\
 \\1&\mbox{\rm when ${1}/{p}-{1}/{q}>{1}/{r}$,}\\
\\
n^{1/q}&\mbox{\rm when $p\geq r$}\,.\end{array}\right.
 \end{equation}
We can also write the above formula more concisely as follows:
\[
	\lV (\delta_1,\dots, \delta_n)\rV^{(p,q)}_n =n^\alpha\quad(n\in\N)\,,
\]
where 
\[
	\alpha=\left(\frac{1}{q}-\left(\frac{1}{p}-\frac{1}{r}\right)^+\right)^+\,.
\] 
Here, $x^+=\max\set{x,0}$ for each $x\in\R$. \enproof
\end{example}
 \medskip

\begin{example}\label{another calculation of (p,q)-norm on lr space}
Suppose that   $r\geq 1$ and $(p,q)\in \mathcal T $.  Set $s =r'$ and $u=p'$, as before.  

Fix $n\in\N$. For $i\in \N_n$, take 
\[
f_i= \frac{1}{n^{1/r}}\sum_{j=1}^n\zeta^{-ij}\delta_j= \frac{1}{n^{1/r}}(\zeta^{-i},
\zeta^{-2i},\dots,\zeta^{-ni},0,0, \dots)\in  \ell^{\,r}\,,
\]
where $\zeta =\exp(2\pi{\rm i}/n)$, so that $\left\Vert f_i\right\Vert_{\ell^{\,r}}
= 1\,\;(i\in \N_n)$, and then set $\tuple{f}=(f_1,\dots,f_n)$. 
Next take $\tuple{\lambda}=(\lambda_1,\dots,\lambda_n)$, where 
 \[
 \lambda_i= \sum_{j=1}^n\zeta^{ij}\delta_j=
(\zeta^{i},\zeta^{2i},\dots,\zeta^{ni},0,0, \dots)\in \ell^{\,s}\,.
\]

Note that 
\begin{equation}\label{(2.4)}
\left(\sum_{i=1}^n\left\vert \langle
f_i,\,\lambda_i\rangle\right\vert^q\right)^{1/q}  =  n^{1+1/q-1/r}\,.
\end{equation}

We take $\zeta_1,\dots,\zeta_n\in\C$ with $\sum_{i=1}^n\left\vert
\zeta_i\right\vert^u \leq 1$, and set 
$z_i  = \sum_{j=1}^n \zeta_j\zeta^{ij}\;\, (i\in\N_n)$, 
  so that 
\[
\sum_{i=1}^n \left\vert z_i\right\vert^2 = n\sum_{i=1}^n \left\vert
\zeta_i\right\vert^2\quad{\rm and} \quad
\left\Vert \sum_{i=1}^n\zeta_i\lambda_i\right\Vert_{\ell^{\,s}} =
\left(\sum_{i=1}^n\left\vert z_i\right\vert^s\right)^{1/s}\,.
\]

Now suppose that $r\geq 2$, so that $1\le s\leq 2$. 

In the case where $p\geq 2$, so that $u \leq 2$, we have $\sum_{i=1}^n\left\vert
\zeta_i\right\vert^2  \leq \sum_{i=1}^n\left\vert \zeta_i\right\vert^u \leq 1$,
and so 
\[
\mu_{p,n}(\tuple{\lambda}) \leq  \frac{n^{1/s}}{n^{1/2}}\left(\sum_{i=1}^n \left\vert
z_i\right\vert^2\right) \leq n^{1/s}\,.
\]
Hence, by (\ref{(2.4)}), 
\[
\left\Vert \tuple{f} \right\Vert^{(p,q)}_n \geq \frac{n^{1+1/q}}{n^{1/r+1/s}}  = n^{1/q}\,.
\]

In the case where $1\leq p \leq 2$, so that $u \geq 2$, we have
\[
\left(\sum_{i=1}^n\left\vert \zeta_i\right\vert^2\right)^{1/2}\leq 
\frac{n^{1/2}}{n^{1/u}}\left(\sum_{i=1}^n\left\vert
\zeta_i\right\vert^u\right)^{1/u} \leq n^{1/2 - 1/u}\,,
\] 
 and so
 \[
 \mu_{p,n}(\tuple{\lambda}) \leq  \frac{n^{1/s}}{n^{1/2}}\left(\sum_{i=1}^n \left\vert
z_i\right\vert^2\right)^{1/2} \leq n^{1/2 + 1/s -1/u}\,.
 \]
 Hence, again by (\ref{(2.4)}),  
\[
\left\Vert \tuple{f} \right\Vert^{(p,q)}_n \geq \frac{n^{1 + 1/q + 1/u}}{n^{1/2+1/s +1/r}} =
n^{1/2- 1/p+1/q}\,.
\] 
 
 It is always true that $\left\Vert \tuple{f} \right\Vert^{(p,q)}_n \geq 1$.

We conclude that, in the case where $r\ge 2$, we have the following estimates, which hold for each $n\in\N$: 
\begin{equation}\label{(2.5)}
 \left\Vert \tuple{f}\right\Vert^{(p,q)}_n \geq\left\{\begin{array}{cl}n^{1/2-1/p+ 1/q } 
&\mbox{\rm when $1\leq p\leq 2$ and $1/p-1/q\leq 1/2$\,,}\\
 \\1&\mbox{\rm when $1/p-1/q> 1/2$\,,}\\
\\
n^{1/q}&\mbox{\rm when $p\geq 2$}\,.\end{array}\right.
\end{equation}

We shall see from Theorem \ref{values of phi sequence for Lr}, given below, that $ \left\Vert \tuple{f}\right\Vert^{(p,q)}_n $ is always equal
to the term on the right-hand side of (\ref{(2.5)}) to within a constant independent of $n$. \qed
\end{example}\medskip

\section{The standard $t$-multi-norm on $\Lspace^r$-spaces}

\noindent Let $(\Omega,\mu)$ be a measure space, take $r\ge 1$, and suppose that $r\leq t<\infty$. In \cite[\S 4.2]{DP} and \cite[\S 6]{DDPR1}, there is a 
definition and discussion of the   standard $t\,$-multi-norm  on the Banach space $\Lspace^r(\Omega)$. We recall the definition. 

Take $n\in\naturals$.  For each ordered partition ${\bf X}=(X_1,\dots,X_n)$ of $\Omega$ into measurable subsets and each 
$f_1,\ldots,f_n\in \Lspace^r(\Omega)$, we define
\[ 
	r_{\tuple{X}}((f_1,\ldots,f_n)) = \Big( \sum_{i=1}^n \|P_{X_i}f_i\|^t \Big)^{1/t}. 
\]
Here $P_{X_i}:f\mapsto f\mid  X_i $ is the projection of $\Lspace^r(\Omega)$ onto $\Lspace^r(X_i)$, and $\norm$ is the $\Lspace^r$-norm.  Then we define
\[
 	\left\Vert{(f_1,\ldots,f_n)}\right\Vert^{[t]}_n = \sup_{\tuple{X}} r_{\tuple{X}}((f_1,\dots,f_n))\,, 
\]
where the supremum is taken over all such measurable ordered  partitions ${\bf X}$.

As in \cite[\S 4.2.1]{DP}, we see that $(\norm^{[t]}_n :\ n\in\naturals)$ is a multi-norm based on 
$\Lspace^r(\Omega)$; it is the \emph{standard $t\,$-multi-norm} on $\Lspace^r(\Omega)$. 

Clearly the norms $\norm^{[t]}_n$ decrease as a function of $t\in [r,\infty)$, and so the maximum among these norms is $\norm^{[r]}_n$. 

For example, by \cite[(4.9)]{DP}, we have 
\[
 	\left\Vert{(f_1,\ldots,f_n)}\right\Vert^{[t]}_n = \left(\lV f_1\rV^t + \cdots + \lV f_n\rV^t\right)^{1/t}\quad (n\in\N)
\]
whenever $f_1,\dots,f_n$ in $\Lspace^r(\Omega)$ have pairwise disjoint supports, and, in particular,
\[
\left\Vert{(\delta_1,\dots,\delta_n)}\right\Vert^{[t]}_n = n^{1/t}\quad (n\in\N)\,.
\]

As remarked in \cite{DP}, it appears that the definition of the standard $t\,$-multi-norm depends on the concrete representation of the space $\Lspace^r(\Omega)$.
 However, in \cite[\S 4.2.8]{DP}, there is  a definition  of an `abstract $t\,$-multi-norm based on a $\sigma$-Dedekind complete  Banach lattice $E$', and it 
 is shown in \cite[Theorem 4.36]{DP} that the standard $t\,$-multi-norm based on a Banach lattice $\Lspace^r(\Omega)$ depends on only the norm and the lattice 
structure of the space.  For a related result, see \cite[Theorem 4.40]{DP}.

The rate of growth of the standard $t$-multi-norm based on $\Lspace^r(\Omega)$ is denoted by $\varphi^{[t]}_n(\Lspace^r(\Omega))$, as in \cite[Definition 4.21]{DP}. In fact, it is easily seen that 
\begin{align}\label{varphi sequence for standard q-multi-norm}
	\varphi^{[t]}_n(\Lspace^r(\Omega))=n^{1/t}\,
\end{align}
for every infinite-dimensional $\Lspace^r(\Omega)$-space.

In the special case where $t=r$, we have 
\begin{equation}\label{(2.3a)}
\lV (f_1,\dots,f_n)\rV_n^{[r]} = 
 \left(\int_\Omega (\left\vert f_1 \right\vert \vee \dots \vee \left\vert f_n \right\vert)^{\,r}\right)^{1/r} 
\end{equation}
for $n\in\naturals$ and elements $f_1,\dots,f_n \in   \Lspace^r(\Omega)$; this is equation (4.12) in \cite{DP}. 

For a Banach space $E$ and $r\geq 1$, the space $\Lspace^r(\Omega, E)$ consists of  (equiv\-alence classes of)
$\mu$-measurable functions $F: \Omega \to E$ such that  the function $s\mapsto \lV F(s)\rV $ on $\Omega$ belongs to the space
  $\Lspace^p(\Omega)$;   with respect to the norm $\norm$ specified by 
\[
\lV F \rV =\left(\int_\Omega \lV F(s)\rV^r\,{\rm d}\mu(s)\right)^{1/r}\quad (F\in \Lspace^r(\Omega, E))\,,
\]
the  space $\Lspace^r(\Omega, E)$  is a Banach space.  The tensor product  $\Lspace^r(\Omega)\otimes E$ can be identified with a dense subspace of $\Lspace^r(\Omega, E)$.  
Indeed,  $f\otimes x\in \Lspace^r(\Omega)\otimes E$ corresponds to the function $s\mapsto f(s)x$.  See \cite[Chapter 7]{DF} and \cite[$\S2.3$]{Ryan}.

Now take $n\in\naturals$ and $f_1,\dots,f_n \in   \Lspace^r(\Omega)$. Then $(f_1,\dots,f_n) $ corresponds to the element 
$\sum_{i=1}^n\delta_i\otimes f_i$ in $\co \otimes \Lspace^r(\Omega)$, and hence to the function \[
s\mapsto \sum_{i=1}^nf_i(s)\delta_i \in  \Lspace^r(\Omega, \co)\,,
\]
 and its norm in $\Lspace^r(\Omega, \co)$ is exactly $\left\Vert (f_1,\dots,f_n)\right\Vert_n^{[r]}$  by equation (\ref{(2.3a)}). 
 
 Thus, in the case where $t=r$, we can regard the standard $r\,$-multi-norm on $\Lspace^r(\Omega)$ simply as that given by the embedding of 
$\co \otimes \Lspace^r(\Omega)$ in $\Lspace^r(\Omega, \co)$.

There seems to be no similarly useful representation of the standard $t\,$-multi-norm on $\Lspace^r(\Omega)$ in the case where $t >r$.
\medskip

\section{The Hilbert multi-norm}

\noindent We now recall an alternative description of the $(2,2)$-multi-norm based on a Hilbert space. This involves the Hilbert multi-norm that was introduced in \cite[\S 4.1.5]{DP}.

Let $H$ be a Hilbert space, with inner-product denoted by $\inner{\,\cdot\,,\,\cdot\,}$. For $n\in\naturals$, we write 
\[
	H=H_1\oplus_{\perp} \cdots \oplus_\perp H_n
\] 
when $H_1,\ldots, H_n$ are pairwise-orthogonal (closed) subspaces of $H$.

Take $n\in\naturals$. For each family $\tuple{H}=\set{H_1,\ldots,H_n}$ such that $H=H_1\oplus_{\perp} \cdots \oplus_\perp H_n$, we set
\[
	r_{\tuple{H}}((x_1,\ldots, x_n))=\left(\left\Vert{P_1x_1}\right\Vert^2+\cdots+\left\Vert{P_nx_n}\right\Vert^2\right)^{1/2}=\left\Vert{P_1x_1+\cdots+P_nx_n}\right\Vert\,
\]
for $x_1,\ldots, x_n\in H$, where $P_i:H\to H_i$ is the orthogonal projection for $i\in\naturals_n$. Then we set
\[
	\left\Vert{\tuple{x}}\right\Vert^H_n=\sup_{\tuple{H}} r_{\tuple{H}}(\tuple{x})\quad(\tuple{x}\in H^n)\,,
\]
where the supremum is taken over all orthogonal decompositions $\tuple{H}$ of $H$. 
As in \cite[Theorem 4.15]{DP}, $(\norm^H_n:\ n\in\naturals)$ is a multi-norm based on $H$; it is called the \emph{Hilbert multi-norm}.\smallskip

The following result is \cite[Theorem 4.19]{DP}.

\begin{theorem}\label{thm:hilb22}
Let $H$ be a Hilbert space. Then $(\norm^H_n)=(\norm^{(2,2)}_n)$. \qed
\end{theorem}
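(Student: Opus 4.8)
The plan is to recast both multi-norms as suprema of one and the same quantity over two nested families of ``test systems'', and then to show that enlarging the family does not increase the supremum. First I would use the Riesz representation to identify $H'$ with $H$, writing $\lambda_i=\inner{\,\cdot\,,y_i}$ for $y_i\in H$. Under this identification the constraint $\mu_{2,n}(\tuple\lambda)\le 1$ becomes the statement that the synthesis operator $S\colon\ell^{\,2}_n\to H$ given by $Se_i=y_i$ is a contraction (equivalently, the Gram matrix $(\inner{y_j,y_i})$ is $\le I$, i.e. $(y_i)$ is a Bessel system with bound $1$), so that
\[
\left\Vert\tuple x\right\Vert^{(2,2)}_n = \sup\Big\{\big(\textstyle\sum_{i=1}^n|\inner{x_i,y_i}|^2\big)^{1/2} : \|S\|\le 1\Big\}.
\]
In parallel I would rewrite the Hilbert multi-norm: given an orthogonal decomposition $H=H_1\oplus_\perp\cdots\oplus_\perp H_n$, the unit vector $v_i\in H_i$ in the direction of $P_ix_i$ satisfies $\|P_ix_i\|=|\inner{x_i,v_i}|$ with $(v_i)$ orthonormal, and conversely any orthogonal system with $\|v_i\|\le 1$ arises from such a decomposition, whence
\[
\left\Vert\tuple x\right\Vert^{H}_n = \sup\Big\{\big(\textstyle\sum_{i=1}^n|\inner{x_i,v_i}|^2\big)^{1/2} : (v_i)\ \text{orthogonal},\ \|v_i\|\le 1\Big\}.
\]

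In both formulas the objective is the same function of the family $(y_i)$; the only difference is the admissible set, namely whether the columns of $S$ are required to be orthogonal (Hilbert) or merely $\|S\|\le 1$ (the $(2,2)$-norm). Since an orthogonal system with $\|v_i\|\le 1$ is automatically Bessel with bound $1$, the Hilbert class sits inside the $(2,2)$-class, and the inequality $(\norm^H_n)\le(\norm^{(2,2)}_n)$ is immediate. It remains to prove the reverse domination.

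For the reverse inequality I would fix the data operator $T\colon\ell^{\,2}_n\to H$, $Te_i=x_i$, and note that $\sum_i|\inner{x_i,y_i}|^2=\|\mathrm{diag}(S^*T)\|^2_{\ell^{\,2}_n}$, so that the objective is a convex function of the contraction $S$. Over the (weak-operator compact) operator unit ball of $\B(\ell^{\,2}_n,H)$ such a function attains its maximum at an extreme point, by the Bauer maximum principle; this is precisely the setting flagged by the extreme-point criterion recalled in the preliminaries. When $\dim H\ge n$ — in particular whenever $H$ is infinite dimensional, which is the case of interest — Kadison's description of the extreme points of the operator ball identifies them as exactly the isometries $\ell^{\,2}_n\to H$, that is, the orthonormal systems; hence the $(2,2)$-supremum is already attained inside the Hilbert class, giving $(\norm^{(2,2)}_n)\le(\norm^H_n)$. (Equivalently, one can dualise: the inner supremum over contractions is the nuclear norm of the operator $e_i\mapsto b_ix_i$, and a polar decomposition shows that an isometry already achieves it.)

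The main obstacle is exactly this reverse inequality, and within it the low-dimensional case $\dim H<n$: here no isometry $\ell^{\,2}_n\to H$ exists, and the extreme points of the unit ball are co-isometries, i.e. Parseval frames, which need not be orthogonal systems, so the clean extreme-point identification breaks down. I would treat this case separately, lifting a Parseval frame to an orthonormal system in a larger Hilbert space by Naimark's dilation theorem and checking, via the $1$-complementation invariance of the $(p,q)$-multi-norm recalled after its definition, that neither side changes under the passage to the larger space; alternatively one verifies directly that the maximum is still attained at an orthogonal system (as happens, for instance, in the model case $H=\C$). Since $\dim H\ge n$ covers every infinite-dimensional $H$, the extreme-point argument already settles all cases relevant to the remainder of the memoir.
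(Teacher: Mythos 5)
The paper offers no proof of Theorem \ref{thm:hilb22} at all --- it is quoted directly from \cite[Theorem~4.19]{DP} --- so your argument is necessarily a self-contained substitute rather than a variant of the paper's. Most of it is correct. The two reformulations are right: $\mu_{2,n}(\lambda_1,\dots,\lambda_n)\le 1$ is exactly the statement that the synthesis contraction $S\colon\ell^{\,2}_n\to H$ has norm at most $1$, and $\left\Vert\tuple{x}\right\Vert^H_n$ is the same supremum restricted to orthogonal systems in the unit ball, which gives $(\norm^H_n)\le(\norm^{(2,2)}_n)$ at once. For the converse, the objective $S\mapsto\sum_i\lv\inner{x_i,Se_i}\rv^2$ is indeed convex and weakly continuous on the weakly compact unit ball of $\B(\ell^{\,2}_n,H)$, Bauer's maximum principle applies, and when $\dim H\ge n$ the extreme points of that ball are precisely the isometries (co-isometries cannot exist there), so the supremum is attained at an orthonormal system. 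This settles every infinite-dimensional $H$, which is all the memoir ever uses.

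The genuine gap is the case $\dim H<n$, and your proposed repair is circular rather than routine. The $(2,2)$-half of the dilation argument is fine: $H$ is orthogonally, hence $1$-, complemented in any superspace $K$, so $\norm^{(2,2)}_n$ is unchanged. But the other half --- that $\left\Vert\tuple{x}\right\Vert^H_n$ computed in $H$ dominates the Hilbert multi-norm computed in $K\supseteq H$ --- unwinds to the following: for every orthonormal system $(v_i)$ in $K$, the compressed system $(P_Hv_i)$, which is merely Bessel with bound $1$ and in general \emph{not} orthogonal, must satisfy $\sum_i\lv\inner{x_i,P_Hv_i}\rv^2\le\bigl(\left\Vert\tuple{x}\right\Vert^H_n\bigr)^2$. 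That is exactly the inequality $(\norm^{(2,2)}_n)\le(\norm^H_n)$ in $H$ that you are trying to prove in this case. The dual variant fares no better: the partial isometry from the polar decomposition attaining the trace norm has non-orthogonal columns in the standard basis of $\ell^{\,2}_n$, so it does not land in the Hilbert class either. To close the statement as written you would need a direct argument for finite-dimensional $H$ with $n>\dim H$ (for instance a perturbation argument in the spirit of the proof of Theorem \ref{4.3e}), or else restrict the claim to $\dim H\ge n$, or simply cite \cite[Theorem~4.19]{DP} as the paper does.
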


\smallskip

\section{Relations between multi-norms}

\noindent In this subsection, we shall first summarize some results about the relationships between multi-norms that were already established 
in \cite{DP}. \s 

 \begin{theorem}\label{2.10} Let $E$ be a normed space. Then $ (\norm_n^{(1,1)}) = (\norm_n^{\max})$. 
 \end{theorem}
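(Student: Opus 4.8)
The plan is to compare directly the two supremum formulas for the norms involved. By equation \eqref{max-mn}, the maximum multi-norm is
\[
\left\Vert{\tuple{x}}\right\Vert^{\max}_n=\sup \set{ \abs{\sum_{i=1}^n\duality{ x_i}{\lambda_i}}: \lambda_1,\ldots,\lambda_n\in\dual{E},\ \mu_{1,n}(\lambda_1,\ldots, \lambda_n)\leq 1}\,,
\]
while, directly from the definition of the $(p,q)$-multi-norm with $p=q=1$,
\[
\left\Vert{\tuple{x}}\right\Vert^{(1,1)}_n=\sup \set{ \sum_{i=1}^n\abs{\duality{ x_i}{\lambda_i}}: \lambda_1,\ldots,\lambda_n\in\dual{E},\ \mu_{1,n}(\lambda_1,\ldots, \lambda_n)\leq 1}\,.
\]
Both suprema are taken over the same set of tuples $\tuple{\lambda}=(\lambda_1,\ldots,\lambda_n)\in(\dual{E})^n$, and the sole difference is whether the modulus sits inside or outside the sum. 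The inequality $\left\Vert{\tuple{x}}\right\Vert^{\max}_n\le\left\Vert{\tuple{x}}\right\Vert^{(1,1)}_n$ is then immediate from the triangle inequality $\abs{\sum_{i=1}^n\duality{x_i}{\lambda_i}}\le\sum_{i=1}^n\abs{\duality{x_i}{\lambda_i}}$, applied for each admissible $\tuple{\lambda}$.

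For the reverse inequality I would use a phase-rotation argument together with the invariance of $\mu_{1,n}$ under unimodular rescaling of the coordinates. First I would fix a tuple $\tuple{\lambda}$ with $\mu_{1,n}(\tuple{\lambda})\le 1$, and for each $i$ choose a unimodular scalar $\omega_i\in\C$ with $\omega_i\duality{x_i}{\lambda_i}=\abs{\duality{x_i}{\lambda_i}}$; then I set $\lambda_i'=\omega_i\lambda_i$. The key point is that $\mu_{1,n}(\lambda_1',\ldots,\lambda_n')=\mu_{1,n}(\lambda_1,\ldots,\lambda_n)$: by the form \eqref{weak--summing-norm 1} of the weak summing norm (with $p=1$, so $p'=\infty$) we have $\mu_{1,n}(\tuple{\lambda})=\sup\set{\flexiblenorm{\sum_{i=1}^n\zeta_i\lambda_i}:\abs{\zeta_i}\le 1}$, and the substitution $\zeta_i\mapsto\omega_i\zeta_i$ is a bijection of the closed polydisc, so multiplying the $\lambda_i$ by unimodular constants leaves this supremum unchanged. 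Hence $\tuple{\lambda}'$ is again admissible, and
\[
\abs{\sum_{i=1}^n\duality{x_i}{\lambda_i'}}=\abs{\sum_{i=1}^n\omega_i\duality{x_i}{\lambda_i}}=\sum_{i=1}^n\abs{\duality{x_i}{\lambda_i}}\,.
\]
Taking the supremum over all admissible $\tuple{\lambda}$ yields $\left\Vert{\tuple{x}}\right\Vert^{\max}_n\ge\left\Vert{\tuple{x}}\right\Vert^{(1,1)}_n$, and combined with the first inequality this gives equality for every $n\in\N$ and every $\tuple{x}\in E^n$.

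The hard part will be essentially nonexistent: the entire content is the observation that $\mu_{1,n}$ is unaffected by coordinatewise unimodular scaling, which lets one align the phases so that the two suprema agree. The only point requiring any care is confirming this invariance, and for that I would appeal to the explicit description \eqref{weak--summing-norm 1} rather than to the defining supremum over $\lambda\in\dual{E}_{[1]}$, since the former makes the unimodular symmetry manifest.
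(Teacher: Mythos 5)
Your proof is correct. The paper itself gives no argument here --- it simply cites \cite[Theorem 4.6]{DP} --- so your phase-rotation argument supplies a genuine, self-contained proof where the text offers only a reference. The two key points both check out: the triangle inequality gives $\left\Vert{\tuple{x}}\right\Vert^{\max}_n\le\left\Vert{\tuple{x}}\right\Vert^{(1,1)}_n$ once one observes that \eqref{max-mn} and the definition of $\norm^{(1,1)}_n$ range over the identical constraint set $\{\tuple{\lambda}: \mu_{1,n}(\tuple{\lambda})\le 1\}$; and the reverse inequality follows because replacing each $\lambda_i$ by $\omega_i\lambda_i$ with $\left\vert\omega_i\right\vert=1$ preserves $\mu_{1,n}$, so the phases can be aligned to turn $\sum_i\abs{\duality{x_i}{\lambda_i}}$ into $\abs{\sum_i\duality{x_i}{\lambda_i'}}$ for an admissible $\tuple{\lambda}'$. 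One small remark: the unimodular invariance of $\mu_{1,n}$ is in fact already manifest from the defining supremum $\sup\{\sum_i\abs{\duality{\lambda_i}{\Lambda}}:\Lambda\in \bidual{E}_{[1]}\}$, since the modulus is applied termwise there; you do not strictly need to pass to the form \eqref{weak--summing-norm 1}, though that route is equally valid (with the $p'=\infty$ constraint read as $\max_i\abs{\zeta_i}\le 1$, as you do).
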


\begin{proof}
This is \cite[Theorem 4.6]{DP}.
\end{proof}\s

 \begin{theorem}\label{2.13} Take $r,t$ with $1 \leq r\le t<\infty$, and let $\Omega$ be a measure space. Then
\[
	(\norm^{[t]}_n)\le(\norm^{(r,t)}_n)
	\quad {\rm on}\quad  \Lspace^r(\Omega)\,. 
\]
Moreover, when $r=1$, these two multi-norms are equal on $\Lone(\Omega)$ whenever $t\in [1,\infty)$.  Further, $(\norm_n^{[1]})=(\norm^{\max}_n)$ on $\Lone(\Omega)$.
\end{theorem}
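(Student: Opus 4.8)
The plan is to establish the three assertions in order, deriving the equality for $r=1$ from the displayed inequality together with an extreme-point argument, and the identification with the maximum multi-norm from that equality and Theorem~\ref{2.10}.

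For the inequality $(\norm^{[t]}_n)\le(\norm^{(r,t)}_n)$ on $\Lspace^r(\Omega)$, I would fix $f_1,\dots,f_n\in\Lspace^r(\Omega)$ and a measurable ordered partition $\tuple{X}=(X_1,\dots,X_n)$ of $\Omega$, and write $s=r'$. For each $i$ choose a norming functional $g_i\in\Lspace^s(\Omega)$ for $P_{X_i}f_i$; the key point is that $g_i$ may be taken supported on $X_i$, with $\|g_i\|_s\le 1$ and $\langle f_i,g_i\rangle=\|P_{X_i}f_i\|_r$. Since the $X_i$ are pairwise disjoint, the $g_i$ have disjoint supports, so by \eqref{weak--summing-norm 1},
\[
\mu_{r,n}(g_1,\dots,g_n)=\sup\Big\{\big\|\textstyle\sum_i\zeta_i g_i\big\|_s:\sum_i|\zeta_i|^{r'}\le 1\Big\}=\sup\Big\{\big(\textstyle\sum_i|\zeta_i|^s\big)^{1/s}:\sum_i|\zeta_i|^{r'}\le 1\Big\}\le 1\,,
\]
where the last step uses $s=r'$. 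Feeding $(g_1,\dots,g_n)$ into the definition of the $(r,t)$-multi-norm gives $r_{\tuple{X}}((f_1,\dots,f_n))=\big(\sum_i|\langle f_i,g_i\rangle|^t\big)^{1/t}\le\|(f_1,\dots,f_n)\|^{(r,t)}_n$, and taking the supremum over $\tuple{X}$ yields the claimed inequality.

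For the reverse inequality when $r=1$ (so $s=\infty$), I would first translate the constraint: by \eqref{weak--summing-norm 1} with $p'=\infty$, the condition $\mu_{1,n}(\lambda_1,\dots,\lambda_n)\le 1$ for $\lambda_i\in\Linfty(\Omega)$ is equivalent to $\sum_i|\lambda_i|\le 1$ almost everywhere. Bounding $|\langle f_i,\lambda_i\rangle|\le\int_\Omega|f_i|\,|\lambda_i|\dd\mu$ and setting $\phi_i=|\lambda_i|$, it then suffices to maximize the functional $\tuple{\phi}\mapsto\sum_i\big(\int_\Omega|f_i|\phi_i\dd\mu\big)^t$ over the convex set $K=\{\tuple{\phi}=(\phi_1,\dots,\phi_n):\phi_i\ge 0,\ \sum_i\phi_i\le 1\text{ a.e.}\}$. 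Restricting to the ($\sigma$-finite) union of the supports of the $f_i$, so that $\Linfty=(\Lone)'$, the set $K$ is weak-$*$ compact, and the functional is weak-$*$ continuous and convex (as $t\ge 1$); by Bauer's maximum principle its maximum is attained at an extreme point of $K$. The extreme points are the tuples $(\mathbf 1_{X_1},\dots,\mathbf 1_{X_n})$ with the $X_i$ pairwise disjoint, since at almost every point $\tuple{\phi}(\omega)$ must be a vertex of the simplex $\{v\in\R^n:v_i\ge 0,\ \sum_i v_i\le 1\}$; at such a point the functional equals $r_{\tuple{X}}((f_1,\dots,f_n))^t$ for the associated partition. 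Hence the maximum is at most $(\|(f_1,\dots,f_n)\|^{[t]}_n)^t$, giving $\|\cdot\|^{(1,t)}_n\le\|\cdot\|^{[t]}_n$ and, with the first part, equality.

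Finally, taking $t=1$ in the equality just established gives $(\norm^{[1]}_n)=(\norm^{(1,1)}_n)$ on $\Lone(\Omega)$, and Theorem~\ref{2.10} identifies $(\norm^{(1,1)}_n)$ with $(\norm^{\max}_n)$, which is the last assertion. I expect the main obstacle to be the reverse inequality in the second part: namely, justifying the passage to extreme points — arranging weak-$*$ compactness by reducing to a $\sigma$-finite measure space, and verifying that the extreme points of $K$ are exactly the indicator tuples coming from partitions (a Lyapunov/bang-bang type fact).
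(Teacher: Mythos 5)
Your proof is correct, but it is a genuinely self-contained argument, whereas the paper disposes of this theorem by citing \cite[Theorems 4.22, 4.23, and 4.26]{DP} and gives no proof at all. Your first step (testing the $(r,t)$-norm against disjointly supported norming functionals $g_i$ for $P_{X_i}f_i$, and using disjointness of supports plus $s=r'$ to get $\mu_{r,n}(g_1,\dots,g_n)\le 1$) is the natural way to obtain $(\norm^{[t]}_n)\le(\norm^{(r,t)}_n)$ and is sound; note only that the middle equality in your display should be an inequality, since $\lV g_i\rV_s\le 1$ rather than $=1$, which is harmless. The interesting part is the reverse inequality for $r=1$: translating $\mu_{1,n}(\lambda_1,\dots,\lambda_n)\le 1$ into the pointwise simplex constraint $\sum_i|\lambda_i|\le 1$ a.e.\ (valid once you have restricted to the $\sigma$-finite union of the supports of the $f_i$, so that $(\Lone)'=\Linfty$ and the relevant suprema are unchanged), and then using Bauer's maximum principle to push the convex, weak-$*$ continuous functional $\tuple{\phi}\mapsto\sum_i\bigl(\int|f_i|\phi_i\bigr)^t$ to an extreme point of $K$, which must be an indicator tuple. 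The two points you flag as obstacles are exactly the ones that need writing out: (a) the weak-$*$ compactness of $K$ and the identification of $\mu_{1,n}$ with $\bigl\lV\sum_i|\lambda_i|\bigr\rV_\infty$, both standard after the $\sigma$-finite reduction; and (b) the characterisation of $\ex K$ as the a.e.\ vertex-valued tuples, which for a finite-dimensional simplex needs only an elementary measurable perturbation off the set where $\tuple{\phi}(\omega)$ is not a vertex (no Lyapunov-type convexity theorem is actually required). One cosmetic point: the sets $X_i$ arising from an extreme point need not cover $\Omega$, so you should absorb the leftover set into one of them, which only increases the corresponding term, before comparing with $\norm^{[t]}_n$. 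The final deduction $(\norm^{[1]}_n)=(\norm^{(1,1)}_n)=(\norm^{\max}_n)$ via Theorem \ref{2.10} is exactly right. What your route buys is a transparent explanation of \emph{why} equality holds precisely when $r=1$: it is the AM-structure of $(\Lone)'$ that converts the weak summing constraint into a pointwise convex constraint whose extreme points are partitions.
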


\begin{proof}
This combines  \cite[Theorems 4.22, 4.23, and 4.26]{DP}. 
\end{proof}\s

By \eqref{varphi sequence for standard q-multi-norm}, different standard $t$-multi-norms on an infinite-dimensional $\Lspace^r(\Omega)$ 
space are not equivalent to each other, and they are never equivalent to the minimum multi-norm; we shall see in Theorem
 \ref{equivalence of (p,q) and [q] multi-norms} that they are never equivalent to the maximum multi-norm.\s

\begin{theorem}\label{2.12} 
Take $r\ge 1$, and suppose that  $r\leq t< \infty$.  Suppose that either $2\leq r\leq t$ or that $1< r <2$ and $r\leq t < r/(2-r)$. Then  the multi-norms 
$(\norm_n^{[t]}  : n\in \naturals)$ and $(\norm_n^{(r,t)}  : n\in \naturals)$ based on $\ell^{\,r}$ are not equivalent.
\end{theorem}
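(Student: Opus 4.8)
The plan is to exploit Theorem~\ref{2.13}, which gives $(\norm^{[t]}_n)\le(\norm^{(r,t)}_n)$ on $\ell^{\,r}$; thus the $(r,t)$-multi-norm already dominates the standard $t$-multi-norm, and to prove non-equivalence it suffices to show that the reverse domination fails, i.e.\ that there is \emph{no} constant $C$ with $\left\Vert\tuple{f}\right\Vert^{(r,t)}_n\le C\left\Vert\tuple{f}\right\Vert^{[t]}_n$ for all $n\in\N$. I would test this against the `flat' vectors $\tuple{f}=(f_1,\dots,f_n)$ and the functionals $\tuple{\lambda}=(\lambda_1,\dots,\lambda_n)$ built from $n$-th roots of unity in Example~\ref{another calculation of (p,q)-norm on lr space} (so here the relevant parameters are $p=r$ and $q=t$). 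The heuristic is that these vectors overlap maximally, and so should separate the two multi-norms, whereas the disjointly supported basis vectors $\delta_i$ give equal values and detect nothing.

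First I would compute $\left\Vert\tuple{f}\right\Vert^{[t]}_n$ exactly. Each $f_i$ is supported on $\{1,\dots,n\}$, where each of its $n$ nonzero coordinates has modulus $n^{-1/r}$; hence, for any measurable partition $\tuple{X}=(X_1,\dots,X_n)$ of $\N$, one has $\lV P_{X_i}f_i\rV^r=a_i/n$, where $a_i$ is the number of indices of $\{1,\dots,n\}$ lying in $X_i$, so that $\sum_{i=1}^na_i=n$. Writing $b_i=a_i/n$, the standard $t$-multi-norm is the supremum of $\left(\sum_ib_i^{t/r}\right)^{1/t}$ over $b_i\ge 0$ with $\sum_ib_i=1$. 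Since $t\ge r$, the function $x\mapsto x^{t/r}$ is convex and vanishes at $0$, so $b_i^{t/r}\le b_i$ and therefore $\sum_ib_i^{t/r}\le 1$, with equality when the mass is concentrated in a single block. This gives $\left\Vert\tuple{f}\right\Vert^{[t]}_n=1$ for every $n$.

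Next I would bound $\left\Vert\tuple{f}\right\Vert^{(r,t)}_n$ below by testing the definition against $\tuple{\lambda}$, giving $\left\Vert\tuple{f}\right\Vert^{(r,t)}_n\ge\left(\sum_i\abs{\duality{f_i}{\lambda_i}}^t\right)^{1/t}\big/\mu_{r,n}(\tuple{\lambda})$. By~\eqref{(2.4)}, the numerator equals $n^{1+1/t-1/r}$, so the whole matter reduces to estimating $\mu_{r,n}(\tuple{\lambda})$. When $r\ge 2$ this is done in Example~\ref{another calculation of (p,q)-norm on lr space}: the case $p\ge 2$ of~\eqref{(2.5)} yields $\left\Vert\tuple{f}\right\Vert^{(r,t)}_n\ge n^{1/t}$, so the ratio $\left\Vert\tuple{f}\right\Vert^{(r,t)}_n\big/\left\Vert\tuple{f}\right\Vert^{[t]}_n$ is at least $n^{1/t}\to\infty$; this settles the range $2\le r\le t$. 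For $1<r<2$, so that $s:=r'>2$, the estimate of that example no longer applies, and here I would estimate $\mu_{r,n}(\tuple{\lambda})$ by interpolation. With $u=p'=s$, the quantity $\mu_{r,n}(\tuple{\lambda})$ is exactly the operator norm on $\ell^{\,s}_n$ of the (unnormalised) Fourier matrix $V=(\zeta^{ij})_{i,j=1}^n$. Since $V=n^{1/2}U$ for a unitary $U$, we have $\|V:\ell^{\,2}_n\to\ell^{\,2}_n\|=n^{1/2}$, while clearly $\|V:\ell^{\,\infty}_n\to\ell^{\,\infty}_n\|=n$; Riesz--Thorin interpolation between these endpoints gives $\|V:\ell^{\,s}_n\to\ell^{\,s}_n\|\le n^{1-1/s}=n^{1/r}$, whence $\mu_{r,n}(\tuple{\lambda})\le n^{1/r}$.

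Combining these estimates, for $1<r<2$ we obtain $\left\Vert\tuple{f}\right\Vert^{(r,t)}_n\ge n^{1+1/t-1/r}/n^{1/r}=n^{1+1/t-2/r}$, while $\left\Vert\tuple{f}\right\Vert^{[t]}_n=1$. The exponent $1+1/t-2/r$ is strictly positive exactly when $1/t>(2-r)/r$, that is, when $t<r/(2-r)$ --- precisely the hypothesis in this range. Hence the ratio tends to infinity, no constant $C$ can exist, and the multi-norms are not equivalent. I expect the one genuinely non-routine step to be the estimate of $\mu_{r,n}(\tuple{\lambda})$ when $1<r<2$: this is the $\ell^{\,s}\to\ell^{\,s}$ operator norm of the Fourier matrix for $s>2$, which is not controlled by the Parseval/$\ell^{\,2}$ argument available when $s\le 2$ (as used in Example~\ref{another calculation of (p,q)-norm on lr space}), and so needs the interpolation bound above.
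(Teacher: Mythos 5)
Your argument is correct, and it is a genuinely different route from the paper's: the paper disposes of Theorem \ref{2.12} by citing \cite[Theorem 4.27]{DP} and gives no proof, so yours is a self-contained alternative. Each step checks: $\lV\tuple{f}\rV^{[t]}_n=1$ follows as you say (a single block containing $\{1,\dots,n\}$ attains the supremum, and convexity of $x\mapsto x^{t/r}$ shows nothing does better); the pairing value $n^{1+1/t-1/r}$ is exactly equation (\ref{(2.4)}); for $2\le r\le t$ the bound $\lV\tuple{f}\rV^{(r,t)}_n\ge n^{1/t}$ is already in Example \ref{another calculation of (p,q)-norm on lr space}; and for $1<r<2$ your Riesz--Thorin estimate $\lV V:\ell^{\,s}_n\to\ell^{\,s}_n\rV\le n^{1-1/s}=n^{1/r}$ for the Fourier matrix is the one new ingredient, yielding $\lV\tuple{f}\rV^{(r,t)}_n\ge n^{1+1/t-2/r}$ with exponent positive precisely when $t<r/(2-r)$. (You do not actually need Theorem \ref{2.13}: the failure of one direction of domination already precludes equivalence.) It is instructive to compare your witness with the one the paper uses in Theorem \ref{equivalence of (p,q) and [q] multi-norms}, which subsumes Theorem \ref{2.12}: there the Rademacher-type vectors $\tuple{g}$ of Lemma \ref{reduce lr to l2} play the role of your $\tuple{f}$; they too satisfy $\lV\tuple{g}\rV^{[t]}_n\le 1$, but Khintchine's inequality transports the $(r,t)$-computation to $\ell^{\,2}$, where Example \ref{a calculation of (p,q)-norm on lr space} gives growth $n^{1/2+1/t-1/r}$, positive on the strictly wider range $t<2r/(2-r)$. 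Your Fourier-matrix witness is more elementary (only Parseval and interpolation), but in the regime $1<r<2$ it is a strictly weaker separator; for $r\ge 2$ the two approaches give the same growth $n^{1/t}$.
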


\begin{proof} This is \cite[Theorem 4.27]{DP}.
\end{proof}\s

We shall extend and complement the above results in the present memoir.
 
\chapter{Comparing $(p,q)$-multi-norms on $\Lspace^r$ spaces}

\noindent In this section, we aspire to determine when two $(p,q)$-multi-norms based on a space $\Lspace^r(\Omega)$ are equivalent; 
we shall obtain a reasonably complete classification, but cannot give a fully comprehensive account.\s
 
\section{The case where $r=1$}
 
\noindent In this section, we investigate the equivalence of various $(p,q)$-multi-norms on spaces of the form $\Lspace^1(\Omega)$.

By Example \ref{a calculation of (p,q)-norm on lr space}, $(\norm^{(p_1,q_1)}_n)$  is not equivalent to  $(\norm^{(p_2,q_2)}_n)$ on 
$ \Lone(\Omega)$ whenever $\Lone(\Omega)$ is infinite dimensional and $q_1\neq q_2$ because $\Delta_n(p,q)=n^{1/q}\,\;(n\in\N)$ for each $(p,q)\in {\mathcal T}$, 
in the notation of that example; it remains to investigate the case where $q_1=q_2$.  

The following result is \cite[Theorem 5.6]{DDPR1}.  It is also a consequence of
Theorem~\ref{summing}  and the corresponding result 
in \cite[Theorem 10.9]{DJT}.\smallskip

\begin{theorem}
Let $\Omega$ be a measure space, and take $p,q,s\in \R$ with $1\leq p<q<s<\infty$. Then
\[	\vspace{-\baselineskip}
(\norm^{(p,q)}_n)\cong(\norm^{(1,q)}_n)\succcurlyeq (\norm^{(s,s)}_n)\quad{\rm on}\quad \Lone(\Omega)\,. 
\]
\enproof
\end{theorem}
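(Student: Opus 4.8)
The plan is to transfer both assertions to the level of $(q,p)$-summing operators on the dual space $\Linfty(\Omega)=\dual{\Lone(\Omega)}$ and then to invoke the classical coincidence theory for summing maps whose domain is a $C(K)$-space. The structural fact that makes this work is that $\Linfty(\Omega)$, being a commutative unital $C^*$-algebra, is isometrically a $C(K)$-space (equivalently, an $\mathcal{L}_{\infty,1}$-space). First I would record, via Theorem~\ref{summing} together with Corollary~\ref{2.9}, that the equivalence $(\norm^{(p,q)}_n)\cong(\norm^{(1,q)}_n)$ on $\Lone(\Omega)$ is \emph{exactly} the equality $\Pi_{q,p}(\Linfty(\Omega),\co)=\Pi_{q,1}(\Linfty(\Omega),\co)$ of Banach spaces. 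For the domination $(\norm^{(1,q)}_n)\succcurlyeq(\norm^{(s,s)}_n)$ I would use the one-sided form of this correspondence: since the $(p,q)$-multi-norm agrees with the $\pi_{q,p}$-norm on the finite-rank tensors of $\co\otimes\Lone(\Omega)$ (Theorem~\ref{summing}), a set-theoretic inclusion $\Pi_{q,1}(\Linfty(\Omega),\co)\subseteq\Pi_{s,s}(\Linfty(\Omega),\co)$ is automatically a \emph{bounded} inclusion by the closed graph theorem, giving $\pi_{s}(T_\tau)\le C\,\pi_{q,1}(T_\tau)$ on the operators $T_\tau$ attached to finite tensors $\tau$, i.e. $\|\tuple{x}\|^{(s,s)}_n\le C\|\tuple{x}\|^{(1,q)}_n$.

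Thus the theorem reduces to two facts about operators out of $C(K)$: \textbf{(Fact 1)} $\Pi_{q,1}(C(K),\co)=\Pi_{q,p}(C(K),\co)$ whenever $1\le p<q<\infty$, and \textbf{(Fact 2)} $\Pi_{q,1}(C(K),\co)\subseteq\Pi_{s}(C(K),\co)$ whenever $q<s<\infty$. In Fact 1 the inclusion $\Pi_{q,p}\subseteq\Pi_{q,1}$ is automatic, because $\mu_{p,n}\le\mu_{1,n}$ forces the weak-$1$ summing condition to be the weaker one; the real content is the reverse inclusion, which is the coincidence theorem for $(q,p)$-summing maps on $\mathcal{L}_\infty$-spaces recorded in \cite[Theorem~10.9]{DJT}. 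The mechanism I would point to is that on $C(K)$ the weak summing norms have the lattice description $\mu_{p,n}(f_1,\dots,f_n)=\bigl\|(\sum_i|f_i|^p)^{1/p}\bigr\|_\infty$, and a change-of-density argument lets one trade the weak-$p$ hypothesis for the weak-$1$ one; this is the step I expect to import wholesale rather than reprove.

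For Fact 2 I would again lean on the $C(K)$-summing theory together with the inclusion theorem \cite[Theorem~10.4]{DJT}. It is worth stressing that Fact 2 is \emph{not} a consequence of the generic comparison result Theorem~\ref{comparing (p,q) multi-norms}: the points $(1,q)$ and $(s,s)$ lie on the distinct curves $\mathcal{C}_{1-1/q}$ and $\mathcal{C}_0$, and neither ordering of the pair can satisfy both hypotheses ($q_2\le q_1$ and $1/p_2-1/q_2\le 1/p_1-1/q_1$) that the comparison theorem requires. Hence the domination is a genuine feature of $\Lone(\Omega)$, forced by the $C(K)$-structure of its dual and absent for a general Banach space $E$. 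As a consistency check, Proposition~\ref{1.11} gives $\varphi^{(1,q)}_n(\Lone(\Omega))=\varphi^{(p,q)}_n(\Lone(\Omega))=n^{1/q}$ while $\varphi^{(s,s)}_n(\Lone(\Omega))=n^{1/s}\le n^{1/q}$, so the desired inequality is at least compatible on the distinguished vectors $(\delta_1,\dots,\delta_n)$.

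The main obstacle is therefore not the multi-norm bookkeeping, which Theorem~\ref{summing}, Corollary~\ref{2.9} and the closed graph argument handle mechanically, but the two $C(K)$-statements about summing operators. Of these, the coincidence in Fact 1 is the deep input, and I would quote it directly from the classical theory; the delicate point is to confirm that Fact 2's inclusion really is delivered by that same theory and not by the generic comparison theorem (which, as noted, fails here), since its target $\Pi_{s,s}$ sits at the endpoint $p=s$ where the coincidence theorem of \cite[Theorem~10.9]{DJT} no longer applies. Once both $C(K)$-facts are secured, reassembling them yields $(\norm^{(p,q)}_n)\cong(\norm^{(1,q)}_n)\succcurlyeq(\norm^{(s,s)}_n)$ at once, recovering \cite[Theorem~5.6]{DDPR1} and matching the route indicated in the text.
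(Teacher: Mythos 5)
Your proposal is correct and follows essentially the route the paper itself indicates: the paper proves this theorem simply by citing \cite[Theorem 5.6]{DDPR1} and remarking that it also follows from Theorem~\ref{summing} together with \cite[Theorem 10.9]{DJT}, which is exactly the reduction to $(q,p)$-summing operators on the $C(K)$-space $\Lspace^\infty(\Omega)$ that you carry out. On your one residual worry about Fact 2: \cite[Theorem 10.9]{DJT} does contain both conclusions for operators out of an ${\mathcal L}_\infty$-space --- a $(q,p)$-summing operator is $(q,1)$-summing and also $s\,$-summing for every $s>q$ --- so the domination $(\norm^{(1,q)}_n)\succcurlyeq(\norm^{(s,s)}_n)$ is indeed delivered by that same theorem and needs no separate argument.
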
\smallskip

The following result shows that the condition `$p<q$' in the above theorem is sharp.  Note also that
\[
 \left\Vert (\delta_1,\dots, \delta_n)\right\Vert^{(q,q)}_n = \left\Vert (\delta_1,\dots, \delta_n)\right\Vert^{(1,q)}_n \;\;(= n^{1/q}) \quad(n\in\naturals)\,,
\] 
for $q\geq 1$, and so the above equation is not sufficient to enforce the non-equivalence of $(\norm^{(q,q)}_n)$ and $(\norm^{(1,q)}_n)$.\s
 
\begin{theorem}\label{thm:3.2}
Let $\Omega$ be a measure space such that  $\Lspace^1(\Omega)$ is infinite dimen\-sional. 
Take $q >1$. Then $(\norm^{(q,q)}_n)\ge (\norm^{(1,q)}_n)$, but $(\norm^{(q,q)}_n)\not\cong (\norm^{(1,q)}_n)$ on $\Lone(\Omega)$.
\end{theorem}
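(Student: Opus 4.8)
The inequality $(\norm^{(q,q)}_n)\ge (\norm^{(1,q)}_n)$ is immediate, being the case $p_1=1$, $p_2=q$ of the monotonicity $(\norm^{(p_1,q)}_n)\le(\norm^{(p_2,q)}_n)$ (for $p_1\le p_2\le q$) recorded at the start of $\S2.5$; the content of the theorem is the failure of the reverse domination. Since $\Lspace^1(\Omega)$ is infinite dimensional, Proposition~\ref{1.1} supplies an isometric, $1$-complemented copy of $\ell^{\,1}$ inside it, and, by the invariance of $(p,q)$-multi-norms on $1$-complemented subspaces (\cite[Proposition~4.3]{DP}), a family lying in this copy has the same $(p,q)$-multi-norm whether computed in $\ell^{\,1}$ or in $\Lspace^1(\Omega)$. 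Hence it suffices to produce families $\tuple{x}$ in $\ell^{\,1}$ for which $\flexiblenorm{\tuple{x}}^{(q,q)}_n/\flexiblenorm{\tuple{x}}^{(1,q)}_n\to\infty$; equivalently (by Corollary~\ref{2.9}), we are establishing the classical strict inclusion $\Pi_{q,q}(\linfty,\co)\subsetneq\Pi_{q,1}(\linfty,\co)$.

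First I would recast both multi-norms on $\ell^{\,1}$ as a single combinatorial optimisation. Identifying $\tuple{\lambda}=(\lambda_1,\dots,\lambda_n)\in(\linfty)^n$ with the matrix $(\lambda_i(k))$, formula \eqref{weak--summing-norm 1} gives $\mu_{p,n}(\tuple{\lambda})=\sup_k\left(\sum_{i=1}^n|\lambda_i(k)|^p\right)^{1/p}$, the largest $\ell^{\,p}$-norm of a \emph{column}. Aligning the phase of $\lambda_i(k)$ to that of $x_i(k)$ only increases $\sum_i|\duality{x_i}{\lambda_i}|^q$ without changing $\mu_{p,n}$, so $\flexiblenorm{\tuple{x}}^{(p,q)}_n$ depends only on the moduli $|x_i(k)|$ and equals
\[
\sup\Big\{\Big\|\Big(\sum_k |x_i(k)|\,\mu_{ik}\Big)_{i=1}^n\Big\|_{\ell^{\,q}}\ :\ \mu_{ik}\ge0,\ \sum_{i=1}^n\mu_{ik}^{\,p}\le 1\ \text{for every }k\Big\}\,.
\]
Thus $(1,q)$ puts an $\ell^{\,1}$-budget and $(q,q)$ an $\ell^{\,q}$-budget on each column; as $q>1$ the latter is larger, which re-proves the easy inequality and pinpoints the mechanism for a gap: a column shared by $m$ rows may carry total weight $1$ under $(1,q)$ but up to $m^{1-1/q}$ under $(q,q)$.

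The naive candidates fail: as already noted before the statement, the basis vectors $\delta_i$ give equal values $n^{1/q}$, and since the optimisation depends only on the moduli $|x_i(k)|$ no family with unimodular entries can help. The extra column weight must instead be harvested \emph{simultaneously over many scales}. I would take $n=2^J$, index the coordinates by the internal nodes of a binary tree whose leaves are the $n$ rows, and set $|x_i(k)|=2^{(J-d)/q}$ when $k$ is the depth-$d$ ancestor of leaf $i$ (and $0$ otherwise). For $(q,q)$, splitting each node-column uniformly over the $m=2^{\,J-d}$ leaves below it, namely $\mu_{ik}=m^{-1/q}$, is $\ell^{\,q}$-admissible and makes each of the $J$ scales contribute exactly $1$ to every row; hence each $R_i:=\sum_k|x_i(k)|\mu_{ik}$ equals $J$, and $\flexiblenorm{\tuple{x}}^{(q,q)}_n\ge J\,n^{1/q}$. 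For $(1,q)$ the supremum is attained at a vertex of the feasible region, that is, at an \emph{assignment} sending each node-column to a single leaf below it; the depth-$d$ level then feeds weight to at most $2^d$ rows, and a majorisation (`water-filling') estimate bounds $\sum_i R_i^{\,q}$ by its value on the nested stacking, which is $O(nJ)$. Therefore $\flexiblenorm{\tuple{x}}^{(1,q)}_n\le C_q(nJ)^{1/q}$, and
\[
\frac{\flexiblenorm{\tuple{x}}^{(q,q)}_n}{\flexiblenorm{\tuple{x}}^{(1,q)}_n}\ \gtrsim\ \frac{J\,n^{1/q}}{(nJ)^{1/q}}=J^{\,1-1/q}=(\log_2 n)^{1-1/q}\longrightarrow\infty\,.
\]

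The crux, and the only step that is not a routine verification, is the $(1,q)$ upper bound: one must show that the integral assignment to which $(1,q)$ is confined cannot reproduce the fractional $\ell^{\,q}$-splitting available to $(q,q)$. This is exactly where convexity of the objective forces the optimum to a vertex and the majorisation argument applies; the tree weights $2^{(J-d)/q}$ are chosen precisely so that the uniform-split lower bound for $(q,q)$ and the stacking upper bound for $(1,q)$ separate by the factor $J^{1-1/q}$. Everything else—the reduction to $\ell^{\,1}$, the column-budget reformulation, and the uniform-split lower bound—is straightforward.
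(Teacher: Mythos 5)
Your proof is correct, but it takes a genuinely different route from the paper's. The paper reduces to $\ell^{\,1}$ exactly as you do, but then quotes a computation of Montgomery-Smith for the identity map $I_n:\ell^{\,\infty}_n\to\ell^{\,q,1}_n$ into the Lorentz sequence space, namely $\pi_{q,q}(I_n)\sim n^{1/q}(1+\log n)^{1-1/q}$ while $\pi_{q,1}(I_n)\sim n^{1/q}$, and then embeds $\ell^{\,q,1}_n$ almost isometrically into some $\ell^{\,\infty}_{m(n)}\subset\co$ so as to transport the gap into $\Pi_{q,p}(\ell^{\,\infty},\co)$ and hence, via Theorem \ref{summing}, into the multi-norms. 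You replace the citation by a self-contained dyadic-tree example. Your column-budget reformulation of $\norm^{(p,q)}_n$ on $\ell^{\,1}$ is correct (it is \eqref{weak--summing-norm 2a} applied columnwise, plus the harmless phase alignment), the uniform $\ell^{\,q}$-splitting does give $\left\Vert\tuple{x}\right\Vert^{(q,q)}_n\ge J\,n^{1/q}$, and the step you rightly flag as the crux --- the $(1,q)$ upper bound --- goes through: the objective is convex, so the supremum is attained at a vertex of the product of simplices, i.e.\ at an assignment; for any assignment the sum of the $m$ largest $R_i$ is at most $\sum_d\min(m,2^d)\,2^{(J-d)/q}$, which is exactly what the nested stacking achieves, so $(R_i)$ is weakly majorised by the nested vector and Schur-convexity of $t\mapsto\sum_i t_i^{\,q}$ yields $\sum_i R_i^{\,q}=O(nJ)$. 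Both arguments produce the same gap $(\log n)^{1-1/q}$ --- unsurprisingly, since your tuple is in effect an explicit witness for the Montgomery-Smith estimate. What the paper's route buys is brevity and the precise asymptotics of the relevant summing norms of $I_n$; what yours buys is a proof requiring nothing beyond convexity and counting, avoiding Lorentz spaces and the near-isometric embedding step altogether.
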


\begin{proof} First, suppose that our multi-norms are based on $\ell^{\,1}$.

Take $n\in\naturals$, and let $I_n$ be the identity map from $\ell^{\,\infty}_n$ to the Lorentz space $\ell^{\,q,1}_n$. A calculation  of Montgomery-Smith \cite{MS}
(see \cite{CD} for a statement of this example) shows that
\[
 \pi_{q,q}(I_n)  \sim  n^{1/q}(1 + \log n)^{ 1-1/q}\,,\quad  \pi_{q,1}(I_n)  \sim  n^{1/q}\,.
 \]

For each $n\in\naturals$, we can find $m=m(n)\in\mathbb N$, with $m(n)\geq n$,
and an operator $\varphi_n:\ell^{\,q,1}_n  \rightarrow \ell^{\,\infty}_{m(n)}$ with 
\[
	\left(1-\frac{1}{n}\right) \|x\|_{q,1} \leq \|\varphi_n(x)\|_\infty \leq \|x\|_{q,1}\quad(x\in \ell^{\,q,1}_n)\,.
\] 
Let $p_{n}:
\ell^{\,\infty}\rightarrow\ell^{\,\infty}_n$ be the natural projection, and define
\[
T_n = \frac{1}{n^{1/q}} \varphi_n \circ I_n\circ p_{n}:\ell^{\,\infty} \rightarrow \ell^{\,\infty}_{m(n)}\subset\co\,.
\]
 From the definition of the $(q,p)$-summing norm, it follows that 
\[
\left(1-\frac{1}{n}\right)\frac{1}{n^{1/q}} \pi_{q,p}(I_n) \leq \pi_{q,p}(T_n) \leq \frac{1}{n^{1/q}} \pi_{q,p}(I_n)
\] 
whenever $1\le p\le q<\infty$.  In particular,  $\pi_{q,1}(T_n)\sim 1$, but
$\pi_{q,q}(T_n) \sim (1+\log n)^{1-1/q}$.

Since $T_n=T_n\circ p_n$, we see that $T_n$ is the image of 
\[
	\tuple{x}_n:=\sum_{i=1}^nT_n(e_i)\otimes \delta_i
\]
via the natural inclusion $\co\otimes \lone\hookrightarrow \operators(\linfty,\co)$. The previous paragraph and Theorem~\ref{summing} imply that
\[
	\big\|\tuple{x}_n\big\|^{(q,1)}_{\co\otimes \lone}\sim 1\,,\quad\textrm{but}\quad \big\|\tuple{x}_n\big\|^{(q,q)}_{\co\otimes \lone}\sim (1+\log n)^{1-1/q}.
\]
Hence $(\norm^{(q,q)}_n)\not\cong (\norm^{(1,q)}_n)$ on $\lone$.

For a general measure space $\Omega$, the result follows from Theorem~\ref{1.1}.
\end{proof}

We summarize the situation for $(p,q)$-multi-norms based on $\Lspace^1(\Omega)$. In this special case, we have a full solution to the question of equivalences.

\begin{theorem}\label{equivalency for L1}  Let $\Omega$ be a measure space such that $\Lspace^1(\Omega)$ is infinite dim\-ensional, and suppose that
 $(p_1,q_1), (p_2,q_2)\in {\mathcal T}$\s
\begin{enumerate}
\item[{\rm (i)}]  Suppose that $q_2> q_1$. Then $(\norm_n^{(p_1,q_1)})\succcurlyeq  (\norm_n^{(p_2,q_2)})$, and these multi-norms are not
 equivalent on $\Lspace^1(\Omega)$.\s

\item[{\rm (ii)}]  Suppose that $q_2= q_1=q$ and  $p_2> p_1$.  Then $(\norm_n^{(p_2,q)})\geq  (\norm_n^{(p_1,q)})$; these multi-norms are equivalent 
on $\Lspace^1(\Omega)$ when also  $p_2<q$, but they are not equivalent to $(\norm_n^{(q,q)})$.
\enproof
\end{enumerate}
\end{theorem}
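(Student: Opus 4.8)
The plan is to deduce the whole statement from the structural results already in hand for $\Lspace^1(\Omega)$, together with the elementary monotonicity of the family $(\norm_n^{(p,q)})$ recorded at the start of \S2.5. By Proposition \ref{1.1} there is an isometric, $1$-complemented copy of $\ell^{\,1}$ inside any infinite-dimensional $\Lspace^1(\Omega)$, and the value of $\lV(x_1,\dots,x_n)\rV_n^{(p,q)}$ for a finite family lying in a $1$-complemented subspace is independent of whether it is computed in the subspace or in the whole space; hence every (non-)equivalence exhibited on $\ell^{\,1}$ transfers to $\Lspace^1(\Omega)$, and I shall freely pass between the two. The two facts doing the real work are: (a) the collapse $(\norm_n^{(p,q)})\cong(\norm_n^{(1,q)})\succcurlyeq(\norm_n^{(s,s)})$ for $1\le p<q<s<\infty$, from \cite[Theorem 5.6]{DDPR1}; and (b) the genuine separation $(\norm_n^{(q,q)})\not\cong(\norm_n^{(1,q)})$ for $q>1$, which is Theorem \ref{thm:3.2} and whose proof rests on the Montgomery--Smith computation of $\pi_{q,q}(I_n)$ and $\pi_{q,1}(I_n)$.

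For part (i) the non-equivalence is already noted at the head of \S3.1: by Example \ref{a calculation of (p,q)-norm on lr space} one has $\lV(\delta_1,\dots,\delta_n)\rV_n^{(p_i,q_i)}=n^{1/q_i}$ on $\ell^{\,1}$, and since $q_1\neq q_2$ the ratio $n^{1/q_1-1/q_2}$ is unbounded, so no constant can relate the two multi-norms. It remains to prove the domination $(\norm_n^{(p_1,q_1)})\succcurlyeq(\norm_n^{(p_2,q_2)})$. When $q_1=1$ we have $(p_1,q_1)=(1,1)$, which is the maximum multi-norm by Theorem \ref{2.10} and therefore dominates everything. When $q_1>1$ I chain three comparisons: $(\norm_n^{(p_2,q_2)})\le(\norm_n^{(q_2,q_2)})$ by monotonicity in the first index (as $p_2\le q_2$); $(\norm_n^{(q_2,q_2)})\preccurlyeq(\norm_n^{(1,q_1)})$ by applying \cite[Theorem 5.6]{DDPR1} with $q=q_1<s=q_2$; and $(\norm_n^{(1,q_1)})\le(\norm_n^{(p_1,q_1)})$ again by monotonicity in the first index (as $1\le p_1$). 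Since $\le$ implies $\preccurlyeq$ and $\preccurlyeq$ is transitive, composing these yields $(\norm_n^{(p_2,q_2)})\preccurlyeq(\norm_n^{(p_1,q_1)})$, as required.

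For part (ii), with $q_1=q_2=q$ and $p_1<p_2\le q$, the inequality $(\norm_n^{(p_2,q)})\ge(\norm_n^{(p_1,q)})$ is exactly the monotonicity of $(\norm_n^{(p,q)})$ in $p$ for fixed $q$. If moreover $p_2<q$, then $1\le p_1<p_2<q$ forces $q>1$, and fact (a) gives both $(\norm_n^{(p_1,q)})\cong(\norm_n^{(1,q)})$ and $(\norm_n^{(p_2,q)})\cong(\norm_n^{(1,q)})$ (with the trivial equality when an index equals $1$); transitivity of $\cong$ then yields $(\norm_n^{(p_1,q)})\cong(\norm_n^{(p_2,q)})$. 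Finally, fact (b) says $(\norm_n^{(q,q)})\not\cong(\norm_n^{(1,q)})$ for $q>1$; since both $(\norm_n^{(p_1,q)})$ and $(\norm_n^{(p_2,q)})$ are equivalent to $(\norm_n^{(1,q)})$, neither can be equivalent to $(\norm_n^{(q,q)})$.

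No genuinely new obstacle arises: the statement is essentially a repackaging of \cite[Theorem 5.6]{DDPR1} and Theorem \ref{thm:3.2}. The only points needing care are the boundary case $q_1=1$ in part (i), handled separately via the maximum multi-norm, and the bookkeeping of the direction of each domination in the chain, so that one ends with $(\norm_n^{(p_1,q_1)})$ dominating $(\norm_n^{(p_2,q_2)})$ rather than the reverse.
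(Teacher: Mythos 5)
Your argument is correct and follows essentially the same route the paper intends for this summary theorem: reduce to $\ell^{\,1}$ via Proposition \ref{1.1} and the $1$-complementation remark, get the non-equivalence for $q_1\neq q_2$ from the computation $\Delta_n(p,q)=n^{1/q}$ of Example \ref{a calculation of (p,q)-norm on lr space}, obtain the dominations from the monotonicity in $p$ together with \cite[Theorem 5.6]{DDPR1}, and separate $(\norm_n^{(q,q)})$ from $(\norm_n^{(1,q)})$ via Theorem \ref{thm:3.2}. Your explicit handling of the boundary case $q_1=1$ through the maximum multi-norm and the careful direction-checking in the chain of dominations are exactly the details the paper leaves implicit.
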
 \s

\begin{corollary}  Let $\Omega$ be a measure space   such that $\Lspace^1(\Omega)$ is infinite dimensional, and suppose that $(p,q)\in {\mathcal T}$. 
  Then the $(p,q)$-multi-norm on $\Lspace^1(\Omega)$ is not equivalent to the minimum multi-norm, and it is equivalent
 to the maximum multi-norm if and only if $p=q=1$, in which case, it is actually equal to the maximum multi-norm. \enproof
\end{corollary}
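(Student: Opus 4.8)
The plan is to treat the two assertions separately, reducing questions about $\Lspace^1(\Omega)$ to questions about $\ell^{\,1}$ throughout. The key elementary observation, valid for any $(p,q)\in\mathcal{T}$ (so that $1\le p\le q$), is that $(p,q)=(1,1)$ if and only if $q=1$; indeed $q=1$ forces $p=1$. Thus the case analysis ``$q=1$ versus $q>1$'' is exactly the analysis ``$(p,q)=(1,1)$ versus $(p,q)\neq(1,1)$'', and this is what lets the earlier classification cover every non-trivial case at once.

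For the minimum multi-norm I would argue purely by rate of growth. Recall that the rate of growth of the minimum multi-norm is the constant sequence $1$, and that equivalent multi-norms have similar rates of growth; so it suffices to show that $\varphi^{(p,q)}_n(\Lspace^1(\Omega))$ is unbounded. By Proposition~\ref{1.1} there is an isometric copy $J(\ell^{\,1})$ of $\ell^{\,1}$ that is $1$-complemented in $\Lspace^1(\Omega)$, and by the remark on $1$-complemented subspaces following the definition of the $(p,q)$-multi-norm, the vectors $J(\delta_1),\dots,J(\delta_n)$ have the same $(p,q)$-multi-norm whether it is computed in $J(\ell^{\,1})$ or in $\Lspace^1(\Omega)$. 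Since $\lV(\delta_1,\dots,\delta_n)\rV^{(p,q)}_n=n^{1/q}$ on $\ell^{\,1}$ by Example~\ref{a calculation of (p,q)-norm on lr space} (equivalently, $\varphi^{(p,q)}_n(\ell^{\,1})=n^{1/q}$ by Proposition~\ref{1.11}(iv), which applies because $p\ge 1=\min\{1,2\}$), we obtain $\varphi^{(p,q)}_n(\Lspace^1(\Omega))\ge n^{1/q}$. As $q<\infty$, this tends to infinity and so is not similar to the constant sequence $1$; hence the $(p,q)$- and minimum multi-norms are not equivalent.

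For the maximum multi-norm I would first invoke Theorem~\ref{2.10}, which identifies the maximum multi-norm with the $(1,1)$-multi-norm. If $q=1$, then $(p,q)=(1,1)$, and Theorem~\ref{2.10} gives $(\norm^{(p,q)}_n)=(\norm^{\max}_n)$, so the two are actually equal, in particular equivalent. Conversely, suppose $q>1$. Applying Theorem~\ref{equivalency for L1}(i) with $(p_1,q_1)=(1,1)$ and $(p_2,q_2)=(p,q)$ (legitimate since $q_2=q>1=q_1$) shows that $(\norm^{(1,1)}_n)\succcurlyeq(\norm^{(p,q)}_n)$ and that these two multi-norms are not equivalent on $\Lspace^1(\Omega)$. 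Since $(\norm^{(1,1)}_n)=(\norm^{\max}_n)$, this says precisely that the $(p,q)$-multi-norm is not equivalent to the maximum multi-norm whenever $q>1$. Combining the two cases yields equivalence to the maximum multi-norm exactly when $p=q=1$, in which case it is equal to the maximum multi-norm.

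I do not expect a serious obstacle: the corollary is essentially bookkeeping built on results already in hand. The only points needing care are the transfer of the value of the $(p,q)$-multi-norm from $\ell^{\,1}$ to a general infinite-dimensional $\Lspace^1(\Omega)$, which is handled by the $1$-complemented embedding of Proposition~\ref{1.1} together with the invariance of the $(p,q)$-multi-norm on $1$-complemented subspaces, and the trivial but essential reduction $(p,q)=(1,1)\Leftrightarrow q=1$ inside $\mathcal{T}$, which is what allows Theorem~\ref{equivalency for L1}(i) to dispatch every non-trivial case in a single step.
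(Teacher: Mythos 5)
Your proof is correct and is essentially the argument the paper intends: the corollary is stated without proof as an immediate consequence of Theorem \ref{equivalency for L1}(i) together with Theorem \ref{2.10} (for the maximum multi-norm) and of the computation $\varphi^{(p,q)}_n(\ell^{\,1})=n^{1/q}$ transferred to $\Lspace^1(\Omega)$ via the $1$-complemented copy of $\ell^{\,1}$ (for the minimum multi-norm, whose rate of growth is constant). Your bookkeeping, including the reduction $(p,q)=(1,1)\Leftrightarrow q=1$ inside $\mathcal T$, is exactly right.
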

\medskip

\section{The case where $r>1$} In this case, it is more difficult to determine  when the $(p,q)$-multi-norms are equivalent on $\Lspace^r(\Omega)$. 

Throughout we suppose that $\Lspace^r(\Omega)$ is infinite dimensional.

In this section, it is convenient to continue to use the earlier notation  ${\mathcal C}_c$  for the curve
\[
{\mathcal C}_c = \left\{(p,q) \in {\mathcal T} : \frac{1}{p}-  \frac{1}{q}=c\right\}\,,
\]
whenever  $c\in [0,1)$. This curve is contained  in the triangle  ${\mathcal T}$.  

We shall consider points $P_1$ and $P_2$ in $\mathcal T$,  
and shall say `{\it $P_1$ and $P_2$ are  equivalent} (respectively, {\it not equivalent}\,) {\it on} $E$' to mean that the multi-norms 
 $(\norm^{(p_1,q_1)}_n)$ and $(\norm^{(p_2,q_2)}_n)$ 
based on a Banach space $E$ are  equivalent (respectively, not equivalent).\s

The first result, which shows that various pairs of multi-norms are not equivalent, follows directly from Proposition \ref{1.11} and the calculation given in
 Example \ref{a calculation of (p,q)-norm on lr space}. Indeed, (i) follows from Proposition \ref{1.11}(iv) and (ii)--(iv) follow from 
equation \eqref{a calculation of (p,q)-norm on lr space: eq2}.

\begin{proposition}\label{non-equivalent mns on Lr}
Let $\Omega$ be a measure space, and take $r\ge 1$. Then two points $P_1\in\mathcal C_{c_1}$ and $P_2\in\mathcal C_{c_2}$ are not
 equivalent on $\Lspace^r(\Omega)$ in the following cases:
\begin{enumerate}
\item[{\rm (i)}] $p_1,p_2\ge \min\set{r,2}$, and $q_1\neq q_2$;\s
\item[{\rm (ii)}] $p_1,p_2\le r$, $\min\set{c_1,c_2}<{1}/{r}$, and $c_1\neq c_2$;\s
\item[{\rm (iii)}] $p_1\le r\le p_2$, and 
\[
	\frac{1}{r}-\frac{1}{p_1}+\frac{1}{q_1}\neq \frac{1}{q_2}\,;
\]
\item[{\rm (iv)}] $p_1\ge r\ge p_2$, and 

\qquad\qquad\qquad\quad\ \,\;$\displaystyle{\frac{1}{r}-\frac{1}{p_2}+\frac{1}{q_2}\neq \frac{1}{q_1}}\,.$ \enproof
\end{enumerate}\s
\end{proposition}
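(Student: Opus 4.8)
The plan is to reduce everything to the sequence space $\ell^{\,r}$ and then to compare either rates of growth (for~(i)) or the values of the two multi-norms on the fixed sequence $(\delta_1,\dots,\delta_n)$ (for~(ii)--(iv)). By Proposition~\ref{1.1}, $\ell^{\,r}$ embeds as a $1$-complemented subspace of $\Lspace^r(\Omega)$, and the value of a $(p,q)$-multi-norm on a tuple lying in a $1$-complemented subspace is the same whether computed there or in the whole space; hence any witness of non-equivalence found inside $\ell^{\,r}$ transfers to $\Lspace^r(\Omega)$, since an equivalence on $\Lspace^r(\Omega)$ would restrict to one on the embedded copy of $\ell^{\,r}$. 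The guiding principle is that, if the multi-norms are equivalent, then the ratio of the two norms on any fixed sequence of tuples is bounded uniformly in $n$; in particular the two rates of growth are similar. So in each case I would exhibit tuples on which the two values behave like $n^{\beta_1}$ and $n^{\beta_2}$ with $\beta_1\neq\beta_2$, producing the required contradiction.

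For~(i), I would invoke Proposition~\ref{1.11}(iv): since $p_1,p_2\ge\min\{r,2\}$, we have $\varphi^{(p_i,q_i)}_n(\ell^{\,r})=n^{1/q_i}$ for $i=1,2$. Transferring through the $1$-complemented copy of $\ell^{\,r}$ gives the lower bound $\varphi^{(p_i,q_i)}_n(\Lspace^r(\Omega))\ge n^{1/q_i}$, while Proposition~\ref{1.11}(i) supplies the matching upper bound, so $\varphi^{(p_i,q_i)}_n(\Lspace^r(\Omega))=n^{1/q_i}$. As $q_1\neq q_2$, the two rates of growth are not similar, and the multi-norms cannot be equivalent.

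For~(ii)--(iv), I would use only equation~\eqref{a calculation of (p,q)-norm on lr space: eq2}, which gives $\left\Vert(\delta_1,\dots,\delta_n)\right\Vert^{(p,q)}_n=n^{\beta(p,q,r)}$ with
\[
\beta(p,q,r)=\Bigl(\tfrac1q-\bigl(\tfrac1p-\tfrac1r\bigr)^{+}\Bigr)^{+},
\]
that is, $\beta=1/r-c$ when $p\le r$ and $c:=1/p-1/q\le 1/r$, while $\beta=0$ when $c>1/r$, and $\beta=1/q$ when $p\ge r$. In~(ii), after relabelling so that $c_1<1/r$, the point $P_1$ has exponent $1/r-c_1>0$, while $P_2$ has exponent $1/r-c_2$ (if $c_2\le 1/r$) or $0$ (if $c_2>1/r$); since $c_1\neq c_2$ these differ. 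In~(iii), $P_2$ has exponent $1/q_2$ (as $p_2\ge r$), while $P_1$ has exponent $1/r-1/p_1+1/q_1$ (if $c_1\le 1/r$) or $0$ (if $c_1>1/r$, in which case $1/r-1/p_1+1/q_1<0<1/q_2$ anyway); the hypothesis $1/r-1/p_1+1/q_1\neq 1/q_2$ is exactly what forces the exponents apart. Case~(iv) is the mirror image, exchanging the roles of $P_1$ and $P_2$. In every subcase $\beta_1\neq\beta_2$, so the ratio of the two values is unbounded in $n$, contradicting equivalence.

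The only genuinely non-trivial input is the rate-of-growth statement used in~(i). For $r\le 2$ the diagonal tuples $(\delta_1,\dots,\delta_n)$ already realise the exponent $1/q$, so~(i) would equally follow from~\eqref{a calculation of (p,q)-norm on lr space: eq2}; but when $r>2$ and $2\le p<r$ these diagonal tuples yield a strictly smaller exponent, and hence cannot witness $\varphi^{(p,q)}_n=n^{1/q}$. Here one must appeal to the sharper Proposition~\ref{1.11}(iv), whose proof rests on near-isometric copies of $\ell^{\,2}_n$ (equivalently, on the roots-of-unity tuples of Example~\ref{another calculation of (p,q)-norm on lr space}) rather than on the standard basis. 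Everything else is routine bookkeeping with the exponent $\beta(p,q,r)$, so I expect no further obstacle.
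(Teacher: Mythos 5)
Your proposal is correct and follows essentially the same route as the paper: the paper likewise deduces clause (i) from Proposition \ref{1.11}(iv) (the rate-of-growth identity $\varphi^{(p,q)}_n(\ell^{\,r})=n^{1/q}$ for $p\ge\min\{r,2\}$) and clauses (ii)--(iv) from the exponent formula of equation \eqref{a calculation of (p,q)-norm on lr space: eq2}, with the reduction to the $1$-complemented copy of $\ell^{\,r}$ implicit. Your case-by-case bookkeeping with the exponent $\beta(p,q,r)$ and your remark on why the standard basis does not suffice in (i) when $2\le p<r$ are both accurate.
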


\medskip

We now concentrate on the $(p,p)$-multi-norms and the maximum multi-norm  
on $\Lspace^r(\Omega)$.

Let $E$ be a normed space. We recall that it follows from Theorem \ref{connection with Chevet--Saphar norm} that the dual space of 
$(\co\otimes E, \norm^{(p,p)})$ is $ \Pi_{p'}(\co,E')$; the dual of the maximum multi-norm, identified  with $(\co\projectivetensor E, \norm_\pi)$, is 
${\mathcal B}(\co, E')$.\s

\begin{proposition}\label{equivalence of (p,p)-multi-norms on Lr}
Let $\Omega$ be a measure space. Suppose that 
\[
 {\rm either}\quad  1\le p\le 2\le r<\infty \quad {\rm or}\quad  1\le p< r<2 \,. 
\] 
Then $(\norm_n^{(p,p)})$ is  equivalent to $(\norm_n^{\max})$ on $\Lspace^r(\Omega)$. 
\end{proposition}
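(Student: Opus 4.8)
The plan is to dualise. Write $E=\Lspace^r(\Omega)$, so that $E'=\Lspace^{r'}(\Omega)$ (recall $(\Lspace^r(\Omega))'=\Lspace^{r'}(\Omega)$, with $r'<\infty$ under either hypothesis). By Theorem~\ref{2.10} the maximum multi-norm is $(\norm_n^{(1,1)})$, and, as recalled in $\S2.5$, we have $(\norm_n^{(p,p)})\le(\norm_n^{\max})$ for every $p\ge1$; thus it remains only to establish the reverse domination $(\norm_n^{\max})\preccurlyeq(\norm_n^{(p,p)})$. The $(p,p)$- and maximum multi-norms correspond to the $\co$-norms $\norm^{(p,p)}$ and $\norm_\pi$ on $\co\otimes E$, and two $\co$-norms are equivalent as norms on $\co\otimes E$ if and only if the corresponding dual norms on the dual spaces are equivalent. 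By the paragraph preceding the statement, the dual of $(\co\otimes E,\norm^{(p,p)})$ is $\Pi_{p'}(\co,E')$, while the dual of $(\co\projectivetensor E,\norm_\pi)$ is $\mathcal{B}(\co,E')$. Since $\Pi_{p'}(\co,E')\subseteq\mathcal{B}(\co,E')$ always holds, the proposition reduces to the claim that every bounded operator from $\co$ into $\Lspace^{r'}(\Omega)$ is $p'$-summing; the accompanying equivalence of norms is then automatic from the closed graph theorem, both spaces being complete.

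Set $s=r'$. The single classical input I would invoke is that every bounded operator from a $C(K)$-space into $\Lspace^s(\mu)$ is $\max\set{s,2}$-summing (see \cite[Chapter~11]{DJT}); since $\co$ is a $C(K)$-space, this governs operators $\co\to\Lspace^{r'}(\Omega)$.

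Suppose first that $1\le p\le2\le r$, so that $r'\le2$ and $p'\ge2$. Then $\Lspace^{r'}(\Omega)$ has cotype $2$, and the input specialises to the Grothendieck--Maurey statement that every operator from a $C(K)$-space into a cotype-$2$ space is $2$-summing. Since $p'\ge2$, the basic inclusion $\Pi_2(\co,\Lspace^{r'}(\Omega))\subseteq\Pi_{p'}(\co,\Lspace^{r'}(\Omega))$ of \cite[Theorem~2.8]{DJT} shows that every bounded operator $\co\to\Lspace^{r'}(\Omega)$ is $p'$-summing. Now suppose that $1\le p<r<2$; here we may assume $p>1$, since $(\norm_n^{(1,1)})=(\norm_n^{\max})$. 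In this case $r'>2$ and $p'>r'$, and the input gives that every bounded operator $\co\to\Lspace^{r'}(\Omega)$ is $r'$-summing; as $p'>r'$, the same basic inclusion $\Pi_{r'}\subseteq\Pi_{p'}$ again yields that every such operator is $p'$-summing. In either case $\mathcal{B}(\co,\Lspace^{r'}(\Omega))=\Pi_{p'}(\co,\Lspace^{r'}(\Omega))$ with equivalent norms, and the reduction of the first paragraph gives $(\norm_n^{(p,p)})\cong(\norm_n^{\max})$ on $\Lspace^r(\Omega)$.

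The substantive point, and the step I expect to cause the most difficulty, is the case $s>2$ of the classical input. The cotype-$s$ property of $\Lspace^{r'}(\Omega)$ alone yields only that operators from $\co$ are $(s,1)$-summing, and $(s,1)$-summing does not in general imply $s$-summing (the needed inclusion of $(s,1)$-summing into $s$-summing operators fails for $s>2$, by the inclusion theorem). One must instead obtain the full $s$-summing conclusion, for instance by interpolating the Grothendieck (cotype-$2$) estimate for the $\Lspace^2$-valued part of the operator against the trivial bound for the $\Lspace^\infty$-valued part, using $\Lspace^s=[\Lspace^2,\Lspace^\infty]_\theta$. The cotype-$2$ case $s\le2$, by contrast, is entirely standard.
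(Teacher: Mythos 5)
Your proof is correct and follows essentially the same route as the paper: dualise via the identification of $(\co\otimes \Lspace^r(\Omega),\norm^{(p,p)})'$ with $\Pi_{p'}(\co,\Lspace^{r'}(\Omega))$, use the Grothendieck theorem plus the inclusion $\Pi_2\subseteq\Pi_{p'}$ when $r'\le 2$, and use the classical fact that every operator from a $C(K)$-space into $\Lspace^{r'}$ is $r'$-summing (the paper cites \cite[Corollary~10.10]{DJT} for exactly this) plus $\Pi_{r'}\subseteq\Pi_{p'}$ when $r'>2$. Your closing remarks correctly identify where the real content lies, and your explicit handling of $p=1$ is if anything slightly more careful than the paper's.
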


\begin{proof}  
In the case where $1\leq p\leq 2\leq r< \infty$, so that $r' \in (1,2]$, we use \cite[Theorem~3.7]{DJT}, which tells us that every operator 
$T:\co\rightarrow L^{r'}(\Omega)$ is $2$-summing, with $\pi_2(T) \leq K_G \|T\|$, where $K_G$ is the Grothendieck constant.  Since $\Pi_2(\co,E)
\subset \Pi_{p'}(\co,E)$ is a norm-decreasing inclusion (for any Banach space $E$), we conclude that
\begin{eqnarray*} (\co\otimes \Lspace^r(\Omega),\norm^{(p,p)})'
= \Pi_{p'}(\co,L^{r'}(\Omega))= {\mathcal B}(\co,L^{r'}(\Omega))
=(\co\otimes \Lspace^r(\Omega),\norm^{\max})', 
\end{eqnarray*}
which gives the result.

Similarly, in the case where $1\leq p<r<2$, so that $r' >2$, we appeal to \cite[Corollary~10.10]{DJT}, which shows in particular that we have 
$\Pi_{p'}(\co,L^{r'}(\Omega)) = \mathcal B(\co,L^{r'}(\Omega))$. The result follows.
\end{proof}\s

\begin{proposition}\label{equivalence of (r,r)-multi-norms on Lr}
Let $\Omega$ be a measure space  such that $\Lspace^r(\Omega)$ is infinite dimen\-sional. 
Suppose that $1\leq r<2$.  Then $(\norm_n^{(r,r)})\not\cong(\norm_n^{\max})$ on $\Lspace^r(\Omega)$.
\end{proposition}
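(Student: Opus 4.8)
The plan is to pass to the dual formulation and reduce the question to one about classes of summing operators, exactly as in the companion Proposition~\ref{equivalence of (p,p)-multi-norms on Lr}. Write $E=\Lspace^r(\Omega)$, so that $\dual E=\Lspace^{r'}(\Omega)$ with $r'>2$. By Corollary~\ref{2.9}, the multi-norms $(\norm^{(r,r)}_n)$ and $(\norm^{\max}_n)=(\norm^{(1,1)}_n)$ are equivalent on $E$ precisely when $\Pi_{r}(\dual E,\co)=\Pi_{1}(\dual E,\co)$; equivalently, using Theorem~\ref{connection with Chevet--Saphar norm}, when the dual spaces $\Pi_{r'}(\co,\dual E)$ and $\B(\co,\dual E)$ coincide. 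Since $(\norm^{(r,r)}_n)\le(\norm^{\max}_n)$ always, and since $\Pi_{1}\subseteq\Pi_{r}$, proving non-equivalence amounts to producing an operator in $\Pi_{r}(\dual E,\co)\setminus\Pi_{1}(\dual E,\co)$, i.e.\ an operator that is $r$-summing but not $1$-summing. The companion proposition shows that the strict inequality $p<r$ is what makes the classes $\Pi_{p'}(\co,\dual E)$ and $\B(\co,\dual E)$ agree, so the present statement is exactly the assertion that this agreement breaks down at the boundary $p=r$.

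First I would reduce to the sequence space. By Proposition~\ref{1.1} there is an isometry $J\colon\ell^{\,r'}\to\Lspace^{r'}(\Omega)$ and a contractive projection $Q$ of $\Lspace^{r'}(\Omega)$ onto $J(\ell^{\,r'})$, so that $QJ=\id$. Because the $s$-summing operators form an operator ideal, a family $T_n\colon\ell^{\,r'}\to\co$ with $\pi_r(T_n)$ bounded but $\pi_1(T_n)\to\infty$ gives, on setting $\widetilde T_n=T_n\circ Q$, operators $\Lspace^{r'}(\Omega)\to\co$ with $\pi_r(\widetilde T_n)\le\pi_r(T_n)$ bounded while $\pi_1(\widetilde T_n)\ge\pi_1(T_n)\to\infty$ (since $\widetilde T_nJ=T_n$). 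Thus it suffices to separate $\Pi_r(\ell^{\,r'},\co)$ from $\Pi_1(\ell^{\,r'},\co)$.

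The heart of the matter, and the step I expect to be the main obstacle, is the construction of these separating operators. The difficulty is that the non-equivalence is \emph{invisible to the rate of growth}: by Proposition~\ref{1.11}(iv) one has $\varphi^{(r,r)}_n(\ell^{\,r})=n^{1/r}$, and the maximum multi-norm on $\ell^{\,r}$ also satisfies $\varphi^{\max}_n(\ell^{\,r})=n^{1/r}$ for $r\le2$, so the two multi-norms grow at the same rate and any witness to their inequivalence must rest on a finer, sub-polynomial discrepancy. This is precisely the phenomenon encountered for $r=1$ in Theorem~\ref{thm:3.2}, where a calculation of Montgomery-Smith on the Lorentz spaces $\ell^{\,q,1}_n$ (see \cite{MS,CD}) furnished a logarithmic gap between two summing norms. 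I would therefore build $T_n$ from the formal identity maps out of $\ell^{\,r'}_n$ into a suitable Lorentz space, transplanted into $\co$ by the near-isometric embeddings of finite Lorentz spaces into $\ell^{\,\infty}_{m}\subset\co$ employed in Theorem~\ref{thm:3.2}, and then estimate $\pi_r(T_n)$ and $\pi_1(T_n)$ separately; the expectation is that $\pi_r(T_n)$ remains bounded while $\pi_1(T_n)$ grows like a positive power of $\log n$. (Alternatively, the threshold failure of the equality used in Proposition~\ref{equivalence of (p,p)-multi-norms on Lr} may be read off directly from the sharp form of the inclusion results in \cite{DJT}, which would bypass the explicit construction.)

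Finally I would assemble the conclusion: the transplanted operators $\widetilde T_n$ show that $\Pi_{r}(\Lspace^{r'}(\Omega),\co)\neq\Pi_{1}(\Lspace^{r'}(\Omega),\co)$, whence by Corollary~\ref{2.9} we obtain $(\norm^{(r,r)}_n)\not\cong(\norm^{\max}_n)$ on $\Lspace^r(\Omega)$, as required. The only genuinely delicate ingredient is the two-sided estimate of the summing norms of $T_n$ on the sequence-space level; the reduction to $\ell^{\,r'}$ and the passage between the primal and dual descriptions are the by-now-routine duality-and-complementation bookkeeping.
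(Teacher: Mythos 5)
Your duality bookkeeping is sound: by Theorem \ref{2.10} and Corollary \ref{2.9} (or, on the dual side, Theorem \ref{connection with Chevet--Saphar norm}), the proposition is exactly the assertion that $\Pi_{r}(\ell^{\,r'},\co)\neq\Pi_{1}(\ell^{\,r'},\co)$, equivalently that $\Pi_{s}(\co,\ell^{\,s})\neq\B(\co,\ell^{\,s})$ for $s=r'>2$; and the reduction from $\Lspace^{r'}(\Omega)$ to $\ell^{\,r'}$ via Proposition \ref{1.1} is routine. Your observation that the rate-of-growth sequences coincide ($\varphi^{(r,r)}_n(\ell^{\,r})=\varphi^{\max}_n(\ell^{\,r})=n^{1/r}$ for $r\le 2$), so that any witness must live at a sub-polynomial scale, is also correct and worth making. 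But the entire mathematical content of the proposition is precisely the separation you defer: you never actually produce an operator in $\Pi_{r}(\ell^{\,r'},\co)\setminus\Pi_{1}(\ell^{\,r'},\co)$. The proposed route --- formal identities into Lorentz spaces in the style of the Montgomery--Smith computation used for $r=1$ in Theorem \ref{thm:3.2} --- is announced with ``the expectation is that $\pi_r(T_n)$ remains bounded while $\pi_1(T_n)$ grows,'' but no estimate is given, and it is not clear that the $\ell^{\,q,1}_n$ calculation (which compares $\pi_{q,q}$ with $\pi_{q,1}$ for maps out of $\ell^{\,\infty}_n$) transfers to a comparison of $\pi_r$ with $\pi_1$ for maps out of $\ell^{\,r'}_n$. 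The fallback suggestion of reading the separation ``off the sharp form of the inclusion results in \cite{DJT}'' does not work either: inclusion theorems give containments of ideals, not strictness of a containment on a particular pair of spaces.

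The paper closes exactly this gap by citation rather than construction: it invokes \cite[Theorem~7, clause~2]{Kwapien2}, which rests on an example of Schwartz \cite{Schwartz} and shows directly that $\Pi_{s}(\co,\ell^{\,s})\neq\B(\co,\ell^{\,s})$ for every $s>2$. If you want a self-contained argument you would need to reconstruct such an example (an operator $\co\to\ell^{\,s}$ that is not $s$-summing), which is a nontrivial piece of classical Banach space theory; as written, your proof reduces the proposition correctly but leaves its essential step unproved.
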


\begin{proof} Here  we appeal to \cite[Theorem~7, clause~2]{Kwapien2}, which, 
using an example of Schwartz \cite{Schwartz}, shows that $\Pi_{s}(\co,\ell^{\,s}) \neq {\mathcal B}(\co,\ell^{\,s})$ for $s>2$. The required conclusion follows.
\end{proof}\s

Thus we have a complete classification of the $(p,p)$-multi-norms on $\Lspace^r(\Omega)$ into equivalence classes, summarized in the following theorem.\s

\begin{theorem}\label{3.6}
Let $\Omega$ be a measure space such that
$\Lspace^r(\Omega)$ is infinite dimen\-sional, where $r\ge 1$.  Set $\overline{r} = \min\{2,r\}$. Then:\s
\begin{enumerate}
\item[{\rm (i)}]  $(\norm_n^{(q,q)}) \not\cong (\norm_n^{(p,p)})$ on $\Lspace^r(\Omega)$ whenever $p,q\geq \overline{r}$ and $p\neq q$;\s 

\item[{\rm (ii)}] $(\norm_n^{(p,p)})\not\cong (\norm_n^{\max})$ on $\Lspace^r(\Omega)$ whenever  $p>\overline{r}$;\s

\item[{\rm (iii)}] $(\norm_n^{(p,p)}) \cong (\norm_n^{\max})$ on $\Lspace^r(\Omega)$ whenever $1\leq p<\overline{r}$; \s 

\item[{\rm (iv)}]  $(\norm_n^{(1,1)}) = (\norm_n^{\max})$  on $\Lspace^r(\Omega)$;\s

\item[{\rm (v)}] if $1<r<2$, then $\overline{r} = r$  and $(\norm_n^{(r,r)}) \not\cong (\norm_n^{\max})$  on $\Lspace^r(\Omega)$;\s

\item[{\rm (vi)}]  if $r\ge 2$, then $\overline{r} = 2$  and $(\norm_n^{(2,2)}) \cong (\norm_n^{\max})$  on $\Lspace^r(\Omega)$. \s\end{enumerate}
\end{theorem}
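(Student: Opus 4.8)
The plan is to treat this theorem as a consolidation of the two propositions just proved together with the general machinery of Chapter~2; no new idea is needed, and the work is to match each of the six clauses against the correct range of $p$ and $r$, recalling throughout that $\overline{r}=\min\{2,r\}$. I would first record clause~(iv), which is precisely Theorem~\ref{2.10} (valid on any normed space) and which also lets me identify the maximum multi-norm with the $(1,1)$-multi-norm wherever convenient.

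For the equivalences I would invoke Proposition~\ref{equivalence of (p,p)-multi-norms on Lr}. When $r\ge 2$ we have $\overline{r}=2$ and that proposition applies for all $1\le p\le 2$: taking $p=2$ gives clause~(vi) and taking $1\le p<2$ gives clause~(iii). When $1<r<2$ we have $\overline{r}=r$ and the proposition applies for $1\le p<r$, giving clause~(iii); when $r=1$ the range $1\le p<\overline{r}$ of clause~(iii) is empty, so there is nothing to prove. Clause~(v) is then exactly Proposition~\ref{equivalence of (r,r)-multi-norms on Lr}.

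The non-equivalences I would handle by rates of growth, using the fact from Chapter~2 that equivalent multi-norms have similar rates of growth. Clause~(i) is immediate: the points $(p,p)$ and $(q,q)$ have first coordinates $p,q\ge\overline{r}$ and distinct second coordinates $p\ne q$, so Proposition~\ref{non-equivalent mns on Lr}(i) applies directly. (Underlying this is the equality $\varphi_n^{(p,p)}(\Lspace^r(\Omega))=n^{1/p}$ for $p\ge\overline{r}$: the bound $\le n^{1/p}$ is Proposition~\ref{1.11}(i), while the reverse bound comes from Proposition~\ref{1.11}(ii) when $p\ge 2$ and from the value $\lV(\delta_1,\dots,\delta_n)\rV^{(p,p)}_n=n^{1/p}$ of Example~\ref{a calculation of (p,q)-norm on lr space}, transported from the $1$-complemented copy of $\ell^{\,r}$ supplied by Proposition~\ref{1.1}, when $\overline{r}\le p<2$; since $p\ne q$ this forces $n^{1/p}\not\sim n^{1/q}$.) For clause~(ii) I would again compare rates of growth, which covers all $r\ge 1$ at once, including $r=1$ where the squeeze through $(\norm_n^{(\overline{r},\overline{r})})$ would fail. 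On the one side, $\varphi_n^{(p,p)}(\Lspace^r(\Omega))\le n^{1/p}$ by Proposition~\ref{1.11}(i). On the other, applying the contractive projection onto the $1$-complemented copy of $\ell^{\,r}$ given by Proposition~\ref{1.1} shows $\varphi_n^{\max}(\Lspace^r(\Omega))\ge\varphi_n^{\max}(\ell^{\,r})=n^{1/\overline{r}}$, the final value being the computation of $\varphi_n^{\max}(\ell^{\,r})$ recalled in Chapter~2. Since $p>\overline{r}$ gives $1/p<1/\overline{r}$, the two rates are not similar, and so $(\norm_n^{(p,p)})\not\cong(\norm_n^{\max})$.

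The only place that calls for care — and the nearest thing to an obstacle — is the transfer of rates of growth from $\ell^{\,r}$ to $\Lspace^r(\Omega)$. For the $(p,q)$-multi-norm this is the remark after its definition (quoting \cite[Proposition~4.3]{DP}); for the maximum multi-norm it follows instead from the fact that $\norm_n^{\max}$ corresponds to the projective, hence uniform, cross-norm, so the contractive inclusion and projection associated with the $1$-complemented subspace $\ell^{\,r}$ do not increase, respectively decrease, its value. Once this functoriality is noted, every clause reduces to one of the cited results, and the theorem follows.
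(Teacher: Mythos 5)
Your proof is correct and follows essentially the same route as the paper: Theorem \ref{3.6} is there presented as an assembly of Theorem \ref{2.10} and Propositions \ref{non-equivalent mns on Lr}(i), \ref{equivalence of (p,p)-multi-norms on Lr}, and \ref{equivalence of (r,r)-multi-norms on Lr}, and your rate-of-growth argument for clause (ii) is a valid, only cosmetically different, substitute for the paper's one-line deduction of (ii) from (i) with $q=\bar{r}$. One small correction: your parenthetical claim that the squeeze through $(\norm_n^{(\bar{r},\bar{r})})$ would fail when $r=1$ is mistaken --- for $r=1$ one has $\bar{r}=1$ and $(\norm_n^{(1,1)})=(\norm_n^{\max})$ by clause (iv), so clause (i) with $q=\bar{r}=1$ gives clause (ii) at once, and in general the chain $(\norm_n^{\max})=(\norm_n^{(1,1)})\ge(\norm_n^{(\bar{r},\bar{r})})\ge(\norm_n^{(p,p)})$ for $p\ge\bar{r}$ makes the squeeze work for every $r\ge 1$.
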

\begin{proof}
Notice that (ii) follows by applying (i) with $q=\bar{r}$ and (iv) is just a special case of Theorem \ref{2.10}.
\end{proof}

\medskip

\section{The role of Orlicz's theorem}
We shall now determine when the $(p,q)$-multi-norm based on $\Lspace^{\,r}(\Omega)$ is equivalent to the minimum multi-norm. For this, we shall need a form of 
Orlicz's theorem. Indeed, a generalization of Orlicz's theorem given in \cite[Theorem 10.7]{DJT} shows that,  
for each $s\in [1,\infty)$, the identity operator on $\Lspace^s(\Omega)$ is $(\tilde{s},1)$-summing, where $\tilde{s}:=\max\set{s,2}$. 
In the case where $s=2$, so that $\tilde{s}=2$ also, the $(2,1)$-summing norm of the identity operator on $\Lspace^2(\Omega)$ is equal to $1$.

Now suppose that $r>1$, and again set $\bar{r} =\min\set{2,r}$. Set $s =r'$, the conjugate of $r$,  so that 
\[
	\tilde{s}=\max\set{s,2}=\bar{r}\,'\,.
\]
Then, since the identity operator on $\Lspace^{s}(\Omega)$ belongs to $\Pi_{\tilde{s},1}(\Lspace^{s}(\Omega))$, we obtain
\[
	 	\operators(\Lspace^{s}(\Omega),F)=\Pi_{\tilde{s},1}(\Lspace^{s}(\Omega),F)
\] 
for each Banach space $F$; in the case where $r=2$, we have equality of the norms as well.

It follows from Theorem \ref{summing} that the tensor norm on $\co\otimes\Lspace^{r}(\Omega)$ induced from the $(1,\bar{r}\,')$-multi-norm 
is equivalent to the injective tensor norm, which is induced by $\operators(\Lspace^{s}(\Omega),\co)$. That is, the $(1,\bar{r}\,')$- and the 
minimum multi-norms on $\Lspace^{r}(\Omega)$ are equivalent. This and Theorem \ref{comparing (p,q) multi-norms} imply the following.

\begin{theorem}\label{equivalent of (p,q)- and min mns on Lr}
Let $\Omega$ be a measure space, take $r>1$, and set $\bar{r}:=\min\set{r,2}$. 
Suppose that $1\le p\le q<\infty$. 
Then $(\norm^{(p,q)}_n)\cong (\norm^{\min}_n)$ on $\Lspace^{r}(\Omega)$ whenever $1/p-1/q\ge 1/\bar{r}$. 
Moreover $(\norm^{(p,q)}_n)= (\norm^{\min}_n)$ on $\Lspace^{2}(\Omega)$ whenever $1/p-1/q\ge 1/2$.
\enproof
\end{theorem}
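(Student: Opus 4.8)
The plan is to assemble the statement from two ingredients already in hand: the equivalence $(\norm^{(1,\bar{r}')}_n)\cong(\norm^{\min}_n)$ on $\Lspace^r(\Omega)$, which the discussion immediately preceding the statement derives from the generalized Orlicz theorem \cite[Theorem 10.7]{DJT} together with Theorem \ref{summing}; and the monotonicity of the $(p,q)$-multi-norms recorded in Theorem \ref{comparing (p,q) multi-norms}. The work of the proof is then just a reduction and a short inequality check.

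First I would note that, being the minimum multi-norm, $(\norm^{\min}_n)\le(\norm^{(p,q)}_n)$ always holds, so $(\norm^{(p,q)}_n)$ dominates $(\norm^{\min}_n)$ and only the reverse domination needs proof. Since $(\norm^{(1,\bar{r}')}_n)\cong(\norm^{\min}_n)$, it suffices to show $(\norm^{(p,q)}_n)\le(\norm^{(1,\bar{r}')}_n)$, and for this I would apply Theorem \ref{comparing (p,q) multi-norms} with $(p_1,q_1)=(p,q)$ and $(p_2,q_2)=(1,\bar{r}')$. The first hypothesis of that theorem is $1/p_2-1/q_2\le 1/p_1-1/q_1$, i.e.\ $1/\bar{r}\le 1/p-1/q$ (using $1-1/\bar{r}'=1/\bar{r}$), which is precisely the standing assumption. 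The second hypothesis is $q_2\le q_1$, i.e.\ $\bar{r}'\le q$; this follows because $p\ge 1$ forces $1/q\le 1/p-1/\bar{r}\le 1-1/\bar{r}=1/\bar{r}'$. Thus Theorem \ref{comparing (p,q) multi-norms} delivers $(\norm^{(p,q)}_n)\le(\norm^{(1,\bar{r}')}_n)\cong(\norm^{\min}_n)$, which together with the automatic inequality gives the claimed equivalence.

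For the sharper second assertion I would specialize to $r=2$, where $\bar{r}=\bar{r}'=2$ and the preceding discussion records that the $(2,1)$-summing norm of the identity on $\Lspace^2(\Omega)$ equals $1$, so that $(\norm^{(1,2)}_n)=(\norm^{\min}_n)$ holds with genuine equality of norms rather than mere equivalence. Since the bound from Theorem \ref{comparing (p,q) multi-norms} is an honest inequality of norms and $(\norm^{\min}_n)\le(\norm^{(p,q)}_n)$ is exact, chaining $(\norm^{\min}_n)\le(\norm^{(p,q)}_n)\le(\norm^{(1,2)}_n)=(\norm^{\min}_n)$ collapses to equality whenever $1/p-1/q\ge 1/2$. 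Since the heavy analytic content (Orlicz's theorem and the identification of the induced tensor norm) lives in the paragraphs before the statement, the only real points of care here are the derivation of $\bar{r}'\le q$ from the hypothesis and $p\ge 1$, and verifying that the equality case of the $r=2$ statement genuinely propagates through the chain of exact norm comparisons; everything else is direct assembly.
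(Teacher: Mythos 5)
Your proposal is correct and is essentially the paper's own argument: the paper likewise establishes $(\norm^{(1,\bar{r}')}_n)\cong(\norm^{\min}_n)$ via the generalized Orlicz theorem (with equality of norms when $r=2$) and then invokes Theorem \ref{comparing (p,q) multi-norms} to pull every $(p,q)$ with $1/p-1/q\ge 1/\bar{r}$ below $(1,\bar{r}')$. Your verification of the hypothesis $\bar{r}'\le q$ and the exactness of the chain in the $r=2$ case correctly fills in the details the paper leaves implicit.
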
\medskip

\section{Asymptotic estimates} The next stage of our analysis is to give a complete asymptotic estimate for $\varphi^{(p,q)}_n(\lspace^r)$ 
for all relevant values of $p, q$ when $r>1$.
 
 \begin{theorem}\label{values of phi sequence for Lr}
 Let $\Omega$ be a measure space such that $\Lspace^r(\Omega)$ is infinite dimen\-sional, where $r> 1$. Set $\bar{r}=\min\set{r,2}$,
 and suppose that $1\le p\le q <\infty$. Then:
 \begin{enumerate}
 	\item[{\rm (i)}] $\varphi^{(p,q)}_n\left(\Lspace^r(\Omega)\right)\sim 1$  when $1/p-1/q\ge 1/\bar{r}$;\s\s
	\item[{\rm (ii)}] $\varphi^{(p,q)}_n\left(\Lspace^r(\Omega)\right)= n^{1/q}$  when $p\ge\bar{r}$;\s\s
	\item[{\rm (iii)}] $\displaystyle{\varphi^{(p,q)}_n\left(\Lspace^r(\Omega)\right)\sim n^{1/\bar{r}-1/p+1/q}}$  when $1/p-1/q\le 1/\bar{r}$ and $p\le\bar{r}$.
 \end{enumerate}
 In the case where $r=2$, all three estimates are actual equalities.
 \end{theorem}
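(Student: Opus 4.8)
The plan is to combine the identity $\varphi^{(p,q)}_n(E)=\pi_{q,p}^{(n)}(\dual{E})$ (recorded just before Proposition \ref{1.11}) with the lower bounds coming from the isometric, $1$-complemented copy of $\ell^{\,r}$ inside $\Lspace^r(\Omega)$ provided by Proposition \ref{1.1}; recall that the $(p,q)$-multi-norm of vectors lying in a $1$-complemented subspace is intrinsic, so any lower bound computed in $\ell^{\,r}$ transfers to $\Lspace^r(\Omega)$. For the lower bounds I would feed in the concrete vectors of Example \ref{a calculation of (p,q)-norm on lr space} and the estimates of Proposition \ref{1.11}; for the upper bounds the engine will be Orlicz's theorem followed by the interpolation inequality of Proposition \ref{interpolating phi sequence}.

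Cases (i) and (ii) I would dispose of first. For (i), Theorem \ref{equivalent of (p,q)- and min mns on Lr} says that the $(p,q)$- and minimum multi-norms are equivalent on $\Lspace^r(\Omega)$ when $1/p-1/q\ge 1/\bar r$; since equivalent multi-norms have similar rates of growth and the minimum multi-norm has constant rate $1$, this gives $\varphi^{(p,q)}_n\sim 1$, with genuine equality when $r=2$ because Theorem \ref{equivalent of (p,q)- and min mns on Lr} then gives equality of the multi-norms, not merely equivalence. For (ii) the upper bound $\varphi^{(p,q)}_n\le n^{1/q}$ is Proposition \ref{1.11}(i); for the matching lower bound I would invoke Proposition \ref{1.11}(ii) when $p\ge 2$, and when $\bar r\le p<2$ (which forces $r<2$, hence $p\ge r$) use that $\|(\delta_1,\dots,\delta_n)\|^{(p,q)}_n=n^{1/q}$ by \eqref{a calculation of (p,q)-norm on lr space: eq2} read in $\ell^{\,r}$. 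This yields the exact equality $\varphi^{(p,q)}_n=n^{1/q}$ for every $r$.

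The substance is case (iii), where $p\le\bar r$ and $1/p-1/q\le 1/\bar r$. The lower bound $\varphi^{(p,q)}_n\gtrsim n^{1/\bar r-1/p+1/q}$ comes from Proposition \ref{1.11}(iii) when $r\ge 2$ (so $\bar r=2$), and from the value $n^{1/\bar r-1/p+1/q}$ of $\|(\delta_1,\dots,\delta_n)\|^{(p,q)}_n$ in \eqref{a calculation of (p,q)-norm on lr space: eq2} when $r<2$. For the upper bound I would first handle the endpoint $q=p$: since the $(p,p)$-multi-norm is dominated by the maximum multi-norm, $\varphi^{(p,p)}_n\le\varphi^{\max}_n(\Lspace^r(\Omega))=\pi^{(n)}_{1,1}(\Lspace^{s}(\Omega))$ with $s=r'$ (using $\dual{(\Lspace^r)}=\Lspace^s$ and Theorem \ref{2.10}). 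Now the generalized Orlicz theorem quoted in $\S3.3$, namely \cite[Theorem~10.7]{DJT}, makes the identity of $\Lspace^{s}(\Omega)$ a $(\tilde s,1)$-summing operator with $\tilde s=\max\{s,2\}=\bar r\,'$, and one application of Hölder's inequality, $\sum_i\|x_i\|\le n^{1-1/\tilde s}(\sum_i\|x_i\|^{\tilde s})^{1/\tilde s}$, turns this into $\pi^{(n)}_{1,1}(\Lspace^{s}(\Omega))\le C\,n^{1-1/\tilde s}=C\,n^{1/\bar r}$. Defining $q_0$ by $1/q_0=1/p-1/\bar r$ (so $q_0\in(p,\infty)$ and $\varphi^{(p,q_0)}_n\le C'$ by case (i)), Proposition \ref{interpolating phi sequence} applied to the pair $q_1=p$, $q_2=q_0$ gives $\varphi^{(p,q)}_n\le C\,n^{1/\bar r-1/p+1/q}$ for intermediate $q$; the exponent matches exactly because $\tfrac1q=(1-\theta)\tfrac1p+\theta\tfrac1{q_0}=\tfrac1p-\tfrac\theta{\bar r}$, whence $(1-\theta)/\bar r=1/\bar r-1/p+1/q$.

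Finally, the exactness claim for $r=2$ follows by tracking constants: here $s=2$, so Orlicz gives $\pi_{2,1}=1$ on $\Lspace^2(\Omega)$, the Hölder step is the (constant-free) Cauchy--Schwarz inequality, case (i) is an equality, and the interpolation inequality of Proposition \ref{interpolating phi sequence} carries no constant; combined with the exact lower bound of Proposition \ref{1.11}(iii) this pins down $\varphi^{(p,q)}_n=n^{1/\bar r-1/p+1/q}$ exactly. I expect the main obstacle to be precisely the upper bound in (iii): everything hinges on extracting the sharp $n^{1/\bar r}$ growth of the $(p,p)$- (equivalently maximum) multi-norm from Orlicz's theorem and then propagating it correctly along the curve by interpolation, without losing a power of $n$ in either the Hölder step or the choice of interpolation endpoints.
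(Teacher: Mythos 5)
Your proposal is correct and follows essentially the same route as the paper: the same lower bounds (Proposition \ref{1.11} for $r\ge 2$, the computation of $\lV(\delta_1,\dots,\delta_n)\rV^{(p,q)}_n$ in Example \ref{a calculation of (p,q)-norm on lr space} for $r<2$, transferred via the $1$-complemented copy of $\ell^{\,r}$), and the same upper bound obtained by interpolating via Proposition \ref{interpolating phi sequence} between the endpoint $q_1=p$ and the endpoint $q_2$ with $1/p-1/q_2=1/\bar r$ lying in the minimum-multi-norm region. The only (harmless) deviation is at the endpoint $q=p$, where you extract $\varphi^{(p,p)}_n\lesssim n^{1/\bar r}$ directly from the generalized Orlicz theorem plus H\"older, whereas the paper cites Theorem \ref{3.6}(iii) together with the known value $\varphi^{\max}_n(\ell^{\,r})\sim n^{1/\bar r}$ from \cite[Theorem 3.54]{DP}; both give the same constant-free bound when $r=2$.
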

 \begin{proof}
 Statements (i) and (ii) follow from Theorem \ref{equivalent of (p,q)- and min mns on Lr} and Proposition \ref{1.11}(iv), res\-pectively. 
 
 (iii) Suppose now that $1/p-1/q< 1/\bar{r}$ and that $p<\bar{r}$. Again, we need to consider only the space $\lspace^r$. By Proposition \ref{1.11}(iii) (when $r\ge2$) or by Example \ref{a calculation of (p,q)-norm on lr space} (when $r\le 2$), we see that
 \[
 	\varphi^{(p,q)}_n (\lspace^r)\ge n^{1/\bar{r}-1/p+1/q}\quad(n\in\naturals)\,.
 \]
 When $q=p$, we know by Theorem \ref{3.6}(iii) that $(\norm_n^{(p,p)}) \cong (\norm_n^{\max})$, and so
 \[
 	\varphi^{(p,p)}_n (\lspace^r)\sim\varphi^{\max}_n (\lspace^r)\sim n^{1/\bar{r}}
 \]
 by \cite[Theorem 3.54]{DP}.  Thus we need to consider only the case where $q>p$. 
 
 Set $q_1 =p$ and  $q_2=p\bar{r}/(\bar{r}-p)$, so that $1/p-1/q_2=1/\bar{r}$. We also see that $q_1<q<q_2$, and so 
 \[
 	\frac{1}{q}=\frac{1-\theta}{q_1}+\frac{\theta}{q_2},
 \]
 where $\theta=\bar{r}(1/p-1/q)$. Using Proposition \ref{interpolating phi sequence}, we deduce from (i) and the previous paragraph that
 \begin{align*}
 	\varphi^{(p,q)}_{n}(\lspace^r)\le \left(\varphi^{(p,p)}_{n}(\lspace^r)\right)^{1-\theta}\,\cdot\,\left(\varphi^{(p,q_2)}_{n}(\lspace^r)\right)^\theta
	\le C_r\,n^{(1-\theta)/\bar{r}}= C_r\,n^{1/\bar{r}-1/p+1/q}
 \end{align*}
 for all $n\in\N$, where $C_r$ is a constant depending only on $r$; when $r=2$, this constant can be taken to be $1$. 
 
 This completes the proof.
 \end{proof}
 
 We now obtain the following asymptotic estimates,  where $\tuple{f}$ is as in Example \ref{another calculation of (p,q)-norm on lr space} and the multi-norm is calculated with respect to $\ell^{\,r}$,  where $r\ge 2$:
 \begin{equation}\label{(2.5a)}
 \left\Vert \tuple{f}\right\Vert^{(p,q)}_n \sim \left\{\begin{array}{cl}n^{1/2-1/p+ 1/q } 
&\mbox{\rm when $1\leq p\leq 2$ and $1/p-1/q\leq 1/2$\,,}\\
 \\1&\mbox{\rm when $1/p-1/q> 1/2$\,,}\\
\\
n^{1/q}&\mbox{\rm when $p\geq 2$}\,.\end{array}\right.
\end{equation}

It is interesting to see where the maximum rate of growth is attained. Indeed,
suppose that $(p,q)\in {\mathcal T}$ and we are considering the rate of growth 
of the $(p,q)$-multi-norm on $\ell^{\,r}$, where $r\geq 1$. Then it follows from equation  \eqref{a calculation of (p,q)-norm on lr space: eq2}
 in Example  \ref{a calculation of (p,q)-norm on lr space} that
\[
\varphi^{(p,q)}_n(\ell^{\,r}) \sim \left\Vert (\delta_1, \dots,
\delta_n)\right\Vert^{(p,q)}_n\quad {\rm when}\quad r\leq 2
\] 
and from equation  (\ref{(2.5)})  in  Example  \ref{another calculation of (p,q)-norm on lr space} that
\[
\varphi^{(p,q)}_n(\ell^{\,r}) \sim \left\Vert (f_1, \dots,
f_n)\right\Vert^{(p,q)}_n\quad {\rm when}\quad r\geq 2\,,
\] 
where, for $i \in \N_n$, we are setting 
\[
f_i= \frac{1}{n^{1/r}}\sum_{j=1}^n\zeta^{-ij}\delta_j 
\quad\textrm{with}\ \zeta =\exp(2\pi{\rm i}/n)\,.
\]
Thus the maximum rate of growth is attained at either $(\delta_1, \dots, \delta_n)$
or at $(f_1, \dots, f_n)$.
\medskip

\section{Classification theorem} We now give our main classification result obtained in the case where $r>1$. For this, let us modify 
the curves ${\mathcal C}_c$ to obtain curves  ${\mathcal D}_c$ for $0\le c<1$ as follows. Set $\bar{r}=\min\set{2,r}$.
\begin{enumerate}
	\item[{\rm (i)}] The case where $c\in [1/\bar{r},1)$: Set ${\mathcal D}_c={\mathcal C}_c$.\s
	
	\item[{\rm (ii)}] The case where $c\in[0,1/r)$: Set $u_c=r/(1-cr)$, so that ${\mathcal C}_c$ meets the vertical line $p=r$ at $(r,u_c)$. 	Set
	\[
		{\mathcal D}_c=\set{(p,q)\in {\mathcal C}_c\colon p\in[1,r]}\cup\set{(p,u_c)\colon p\in [r,u_c]}\,.
	\]
	Thus ${\mathcal D}_c$ agrees with  ${\mathcal C}_c$  on the interval $[1,r]$ and is the horizontal line $q=u_c$ on the interval
 $[r,u_c]$.   In the case where $r<2$ and $c \in (1/2, 1/r)$, the point at which the line $q=u_c$ meets the curve ${\mathcal C}_{1/2}$ is denoted by 
$x_c$, \label{x_c} so that $r< x_c < 2$.

Note that $\mathcal D_0$ is the diagonal line segment $\set{(p,p)\colon 1\le p\le r}$.\s
 
	\item[{\rm (iii)}] The case where $c\in[1/r,1/2)$ (which only occurs when $r>2$): Set $v_c=2/(1-2c)$, so that ${\mathcal C}_c$ meets the 
vertical line $p=2$ at $(2,v_c)$, and set $w_c:=rv_c/(r-v_c)$, so that the horizontal line $q=v_c$ meets the curve ${\mathcal C}_{1/r}$ at $(w_c,v_c)$. Set
	\[
		{\mathcal D}_c=\set{(p,q)\in {\mathcal C}_c\colon p\in[1,2]}\cup\set{(p,v_c)\colon p\in [2,w_c]}\,.
	\]
	Thus ${\mathcal D}_c$ agrees with the old curve  ${\mathcal C}_c$  on the interval $[1,2]$, and then it becomes the horizontal line $q=v_c$ 
until this line meets the curve ${\mathcal C}_{1/r}$, at which point it terminates. Note that $\mathcal D_{1/r}$ is the curve $\mathcal C_{1/r}$
 restricted to the interval $[1,2]$.\s 
\end{enumerate}

Note that $\bigcup\set{{\mathcal D}_c\colon 0\le c<1}=\mathcal T$. Note also that, unlike the curves $\mathcal C_c$, the curves $\mathcal D_c$ depend 
on the value of $r$. The case where $r>2$ is illustrated in the diagram, in which we present in bold the curves $\mathcal D_c$ when $c\ge 1/2$, 
when $c\in (1/r,1/2)$, when $c=1/r$, when $c\in (0,1/r)$, and when $c=0$.
\smallskip

\unitlength 1pt
\begin{picture}(299.8209,278.1257)( 10.6698,-291.9964)\label{PICTURE2}
%
\special{pn 8}%
\special{sh 1}%
\special{ar 586 3938 10 10 0  6.28318530717959E+0000}%
\special{sh 1}%
\special{ar 586 3938 10 10 0  6.28318530717959E+0000}%
%
\special{pn 8}%
\special{pa 586 3938}%
\special{pa 4297 3938}%
\special{da 0.070}%
%
\special{pn 8}%
\special{pa 586 3938}%
\special{pa 586 394}%
\special{fp}%
%
\special{pn 8}%
\special{pa 2555 394}%
\special{pa 2555 1969}%
\special{da 0.070}%
%
\special{pn 8}%
\special{pa 2555 1969}%
\special{pa 2555 3938}%
\special{da 0.070}%
%
\special{pn 8}%
\special{pa 0590 1440}%
\special{pa 1740 1440}%
\special{da 0.070}%
%
\special{pn 8}%
\special{pa 0590 985}%
\special{pa 2500 985}%
\special{da 0.070}%
\put(34.1433,-295){\makebox(0,0){$(1,1)$}}%
\put(304.0888,-295){\makebox(0,0){$p$}}%
\put(32.2726,-34){\makebox(0,0){$q$}}%
\put(125.4346,-298){\makebox(0,0)[lb]{$2$}}%
\put(180.6288,-298){\makebox(0,0)[lb]{$r$}}%
%
\special{pn 8}%
\special{pa 1767 394}%
\special{pa 1767 2756}%
\special{da 0.070}%
%
\special{pn 8}%
\special{pa 1767 2756}%
\special{pa 1767 3938}%
\special{da 0.070}%
%
\special{pn 13}%
\special{pa 1767 1438}%
\special{pa 2122 1438}%
\special{fp}%
%
\special{pn 13}%
\special{pa 586 3682}%
\special{pa 612 3664}%
\special{pa 637 3645}%
\special{pa 664 3627}%
\special{pa 689 3609}%
\special{pa 715 3591}%
\special{pa 741 3572}%
\special{pa 766 3554}%
\special{pa 792 3535}%
\special{pa 816 3516}%
\special{pa 842 3498}%
\special{pa 868 3479}%
\special{pa 892 3460}%
\special{pa 917 3440}%
\special{pa 942 3421}%
\special{pa 967 3401}%
\special{pa 991 3381}%
\special{pa 1015 3362}%
\special{pa 1040 3341}%
\special{pa 1063 3320}%
\special{pa 1087 3300}%
\special{pa 1111 3279}%
\special{pa 1134 3258}%
\special{pa 1157 3237}%
\special{pa 1181 3215}%
\special{pa 1203 3193}%
\special{pa 1226 3171}%
\special{pa 1249 3149}%
\special{pa 1270 3126}%
\special{pa 1293 3104}%
\special{pa 1314 3081}%
\special{pa 1336 3059}%
\special{pa 1358 3035}%
\special{pa 1379 3011}%
\special{pa 1400 2988}%
\special{pa 1421 2964}%
\special{pa 1442 2940}%
\special{pa 1462 2917}%
\special{pa 1483 2892}%
\special{pa 1503 2869}%
\special{pa 1523 2844}%
\special{pa 1543 2819}%
\special{pa 1563 2795}%
\special{pa 1582 2769}%
\special{pa 1602 2745}%
\special{pa 1621 2720}%
\special{pa 1639 2694}%
\special{pa 1658 2669}%
\special{pa 1677 2643}%
\special{pa 1695 2618}%
\special{pa 1713 2592}%
\special{pa 1731 2566}%
\special{pa 1749 2540}%
\special{pa 1766 2514}%
\special{pa 1784 2488}%
\special{pa 1802 2462}%
\special{pa 1818 2436}%
\special{pa 1835 2409}%
\special{pa 1852 2382}%
\special{pa 1869 2356}%
\special{pa 1885 2329}%
\special{pa 1902 2303}%
\special{pa 1918 2275}%
\special{pa 1934 2249}%
\special{pa 1950 2221}%
\special{pa 1966 2194}%
\special{pa 1982 2167}%
\special{pa 1997 2139}%
\special{pa 2012 2112}%
\special{pa 2027 2084}%
\special{pa 2043 2057}%
\special{pa 2058 2029}%
\special{pa 2072 2001}%
\special{pa 2087 1974}%
\special{pa 2102 1945}%
\special{pa 2117 1918}%
\special{pa 2131 1890}%
\special{pa 2145 1862}%
\special{pa 2160 1834}%
\special{pa 2174 1806}%
\special{pa 2188 1777}%
\special{pa 2202 1750}%
\special{pa 2216 1721}%
\special{pa 2230 1692}%
\special{pa 2244 1664}%
\special{pa 2257 1635}%
\special{pa 2271 1607}%
\special{pa 2285 1578}%
\special{pa 2299 1550}%
\special{pa 2312 1521}%
\special{pa 2325 1493}%
\special{pa 2339 1464}%
\special{pa 2352 1436}%
\special{pa 2365 1406}%
\special{pa 2378 1377}%
\special{pa 2391 1349}%
\special{pa 2405 1319}%
\special{pa 2418 1291}%
\special{pa 2431 1262}%
\special{pa 2443 1233}%
\special{pa 2457 1204}%
\special{fp}%
\special{pa 2457 1204}%
\special{pa 2470 1176}%
\special{pa 2483 1146}%
\special{pa 2496 1118}%
\special{pa 2508 1088}%
\special{pa 2521 1060}%
\special{pa 2534 1030}%
\special{pa 2547 1001}%
\special{pa 2555 985}%
\special{sp}%
%
\special{pn 13}%
\special{pa 2555 985}%
\special{pa 2555 985}%
\special{pa 2555 985}%
\special{pa 2555 985}%
\special{pa 2555 985}%
\special{pa 2555 985}%
\special{pa 2555 985}%
\special{pa 2555 985}%
\special{pa 2555 985}%
\special{pa 2555 985}%
\special{pa 2555 985}%
\special{pa 2555 985}%
\special{pa 2555 985}%
\special{pa 2555 985}%
\special{pa 2555 985}%
\special{pa 2555 985}%
\special{pa 2555 985}%
\special{pa 2555 985}%
\special{pa 2555 985}%
\special{pa 2555 985}%
\special{pa 2555 985}%
\special{pa 2555 985}%
\special{sp}%
%
\special{pn 13}%
\special{pa 2555 985}%
\special{pa 2555 985}%
\special{pa 2555 985}%
\special{pa 2555 985}%
\special{pa 2555 985}%
\special{pa 2555 985}%
\special{pa 2555 985}%
\special{pa 2555 985}%
\special{pa 2555 985}%
\special{pa 2555 985}%
\special{pa 2555 985}%
\special{pa 2555 985}%
\special{pa 2555 985}%
\special{pa 2555 985}%
\special{pa 2555 985}%
\special{pa 2555 985}%
\special{pa 2555 985}%
\special{pa 2555 985}%
\special{pa 2555 985}%
\special{pa 2555 985}%
\special{pa 2555 985}%
\special{pa 2555 985}%
\special{sp}%
%
\special{pn 8}%
\special{pa 2555 985}%
\special{pa 2555 985}%
\special{pa 2555 985}%
\special{pa 2555 985}%
\special{pa 2555 985}%
\special{pa 2555 985}%
\special{pa 2555 985}%
\special{pa 2555 985}%
\special{pa 2555 985}%
\special{pa 2555 985}%
\special{pa 2555 985}%
\special{pa 2555 985}%
\special{pa 2555 985}%
\special{pa 2555 985}%
\special{pa 2555 985}%
\special{pa 2555 985}%
\special{pa 2555 985}%
\special{pa 2555 985}%
\special{pa 2555 985}%
\special{pa 2555 985}%
\special{pa 2555 985}%
\special{pa 2555 985}%
\special{sp}%
%
\special{pn 13}%
\special{pa 2555 985}%
\special{pa 3539 985}%
\special{fp}%
%
\special{pn 8}%
\special{pa 2555 985}%
\special{pa 2567 956}%
\special{pa 2579 927}%
\special{pa 2592 898}%
\special{pa 2605 869}%
\special{pa 2617 839}%
\special{pa 2628 811}%
\special{pa 2639 781}%
\special{pa 2651 751}%
\special{pa 2662 722}%
\special{pa 2672 692}%
\special{pa 2682 663}%
\special{pa 2691 632}%
\special{pa 2700 602}%
\special{pa 2708 572}%
\special{pa 2716 542}%
\special{pa 2724 511}%
\special{pa 2732 480}%
\special{pa 2739 449}%
\special{pa 2746 419}%
\special{pa 2751 394}%
\special{sp 0.020}%
%
\special{pn 8}%
\special{pa 1772 1438}%
\special{pa 1783 1408}%
\special{pa 1794 1378}%
\special{pa 1804 1348}%
\special{pa 1814 1318}%
\special{pa 1825 1289}%
\special{pa 1835 1258}%
\special{pa 1845 1229}%
\special{pa 1855 1199}%
\special{pa 1865 1169}%
\special{pa 1875 1139}%
\special{pa 1883 1109}%
\special{pa 1892 1078}%
\special{pa 1901 1049}%
\special{pa 1909 1018}%
\special{pa 1917 988}%
\special{pa 1924 957}%
\special{pa 1931 927}%
\special{pa 1938 896}%
\special{pa 1943 865}%
\special{pa 1949 834}%
\special{pa 1955 804}%
\special{pa 1960 772}%
\special{pa 1965 741}%
\special{pa 1970 710}%
\special{pa 1975 679}%
\special{pa 1979 647}%
\special{pa 1983 617}%
\special{pa 1988 585}%
\special{pa 1991 554}%
\special{pa 1995 522}%
\special{pa 1999 491}%
\special{pa 2002 459}%
\special{pa 2005 428}%
\special{pa 2008 404}%
\special{sp 0.020}%
%
\special{pn 8}%
\special{pa 591 2756}%
\special{pa 614 2734}%
\special{pa 635 2711}%
\special{pa 657 2688}%
\special{pa 680 2666}%
\special{pa 701 2643}%
\special{pa 723 2621}%
\special{pa 746 2598}%
\special{pa 767 2574}%
\special{pa 789 2552}%
\special{pa 810 2528}%
\special{pa 831 2505}%
\special{pa 852 2482}%
\special{pa 874 2458}%
\special{pa 894 2435}%
\special{pa 914 2410}%
\special{pa 935 2386}%
\special{pa 954 2362}%
\special{pa 974 2337}%
\special{pa 994 2313}%
\special{pa 1012 2288}%
\special{pa 1031 2262}%
\special{pa 1050 2237}%
\special{pa 1067 2211}%
\special{pa 1085 2185}%
\special{pa 1102 2159}%
\special{pa 1120 2132}%
\special{pa 1136 2105}%
\special{pa 1152 2078}%
\special{pa 1168 2051}%
\special{pa 1184 2023}%
\special{pa 1198 1996}%
\special{pa 1214 1968}%
\special{pa 1228 1939}%
\special{pa 1243 1911}%
\special{pa 1256 1882}%
\special{pa 1270 1854}%
\special{pa 1283 1825}%
\special{pa 1296 1797}%
\special{pa 1309 1767}%
\special{pa 1320 1738}%
\special{pa 1332 1708}%
\special{pa 1344 1679}%
\special{pa 1355 1649}%
\special{pa 1366 1620}%
\special{pa 1376 1590}%
\special{pa 1387 1560}%
\special{pa 1397 1530}%
\special{pa 1406 1500}%
\special{pa 1416 1470}%
\special{pa 1425 1439}%
\special{pa 1434 1409}%
\special{pa 1442 1379}%
\special{pa 1450 1349}%
\special{pa 1458 1318}%
\special{pa 1466 1288}%
\special{pa 1473 1257}%
\special{pa 1481 1226}%
\special{pa 1488 1195}%
\special{pa 1495 1165}%
\special{pa 1501 1134}%
\special{pa 1507 1103}%
\special{pa 1513 1072}%
\special{pa 1519 1042}%
\special{pa 1525 1010}%
\special{pa 1530 980}%
\special{pa 1536 948}%
\special{pa 1541 918}%
\special{pa 1546 886}%
\special{pa 1552 855}%
\special{pa 1556 824}%
\special{pa 1561 793}%
\special{pa 1565 761}%
\special{pa 1570 730}%
\special{pa 1574 699}%
\special{pa 1579 668}%
\special{pa 1583 636}%
\special{pa 1587 605}%
\special{pa 1591 573}%
\special{pa 1596 542}%
\special{pa 1600 511}%
\special{pa 1604 480}%
\special{pa 1608 448}%
\special{pa 1612 417}%
\special{pa 1615 394}%
\special{sp 0.020}%
%
\special{pn 13}%
\special{pa 591 1782}%
\special{pa 615 1760}%
\special{pa 638 1739}%
\special{pa 661 1717}%
\special{pa 685 1694}%
\special{pa 707 1673}%
\special{pa 730 1650}%
\special{pa 751 1627}%
\special{pa 774 1605}%
\special{pa 795 1582}%
\special{pa 815 1559}%
\special{pa 836 1535}%
\special{pa 856 1511}%
\special{pa 875 1487}%
\special{pa 892 1462}%
\special{pa 909 1437}%
\special{pa 926 1411}%
\special{pa 941 1384}%
\special{pa 957 1358}%
\special{pa 972 1331}%
\special{pa 986 1304}%
\special{pa 999 1276}%
\special{pa 1011 1248}%
\special{pa 1024 1219}%
\special{pa 1035 1190}%
\special{pa 1047 1161}%
\special{pa 1057 1132}%
\special{pa 1067 1103}%
\special{pa 1077 1072}%
\special{pa 1086 1043}%
\special{pa 1095 1012}%
\special{pa 1103 982}%
\special{pa 1111 950}%
\special{pa 1119 920}%
\special{pa 1126 888}%
\special{pa 1133 857}%
\special{pa 1139 825}%
\special{pa 1146 793}%
\special{pa 1152 761}%
\special{pa 1158 729}%
\special{pa 1164 696}%
\special{pa 1169 665}%
\special{pa 1175 632}%
\special{pa 1180 600}%
\special{pa 1185 566}%
\special{pa 1189 534}%
\special{pa 1194 501}%
\special{pa 1199 469}%
\special{pa 1204 436}%
\special{pa 1208 403}%
\special{pa 1211 384}%
\special{sp}%
%
\special{pn 8}%
\special{pa 1772 2107}%
\special{pa 1788 2079}%
\special{pa 1804 2052}%
\special{pa 1818 2024}%
\special{pa 1834 1997}%
\special{pa 1850 1969}%
\special{pa 1865 1941}%
\special{pa 1880 1914}%
\special{pa 1895 1886}%
\special{pa 1911 1858}%
\special{pa 1926 1830}%
\special{pa 1940 1803}%
\special{pa 1955 1775}%
\special{pa 1970 1747}%
\special{pa 1985 1719}%
\special{pa 1999 1690}%
\special{pa 2012 1663}%
\special{pa 2027 1634}%
\special{pa 2041 1606}%
\special{pa 2054 1578}%
\special{pa 2067 1550}%
\special{pa 2080 1521}%
\special{pa 2093 1493}%
\special{pa 2106 1463}%
\special{pa 2118 1435}%
\special{pa 2129 1406}%
\special{pa 2141 1376}%
\special{pa 2153 1347}%
\special{pa 2164 1318}%
\special{pa 2175 1289}%
\special{pa 2186 1259}%
\special{pa 2196 1230}%
\special{pa 2206 1200}%
\special{pa 2216 1170}%
\special{pa 2226 1140}%
\special{pa 2236 1111}%
\special{pa 2246 1080}%
\special{pa 2254 1051}%
\special{pa 2263 1020}%
\special{pa 2272 990}%
\special{pa 2281 959}%
\special{pa 2290 930}%
\special{pa 2299 899}%
\special{pa 2307 869}%
\special{pa 2314 838}%
\special{pa 2323 808}%
\special{pa 2331 777}%
\special{pa 2339 747}%
\special{pa 2347 715}%
\special{pa 2354 685}%
\special{pa 2362 654}%
\special{pa 2370 624}%
\special{pa 2376 592}%
\special{pa 2384 562}%
\special{pa 2391 531}%
\special{pa 2399 501}%
\special{pa 2406 469}%
\special{pa 2414 438}%
\special{pa 2421 408}%
\special{pa 2422 404}%
\special{sp 0.020}%
%
\special{pn 13}%
\special{pa 591 3938}%
\special{pa 2560 1969}%
\special{fp}%
%
\special{pn 13}%
\special{sh 1}%
\special{ar 3544 985 10 10 0  6.28318530717959E+0000}%
\special{sh 1}%
\special{ar 3544 985 10 10 0  6.28318530717959E+0000}%
%
\special{pn 13}%
\special{sh 1}%
\special{ar 2560 1969 10 10 0  6.28318530717959E+0000}%
\special{sh 1}%
\special{ar 2560 1969 10 10 0  6.28318530717959E+0000}%
%
\special{pn 13}%
\special{sh 1}%
\special{ar 2117 1438 10 10 0  6.28318530717959E+0000}%
\special{sh 1}%
\special{ar 2117 1438 10 10 0  6.28318530717959E+0000}%
%
\special{pn 13}%
\special{sh 1}%
\special{ar 1772 2117 10 10 0  6.28318530717959E+0000}%
\special{sh 1}%
\special{ar 1772 2117 10 10 0  6.28318530717959E+0000}%
%
\special{pn 13}%
\special{pa 1772 2117}%
\special{pa 1755 2143}%
\special{pa 1739 2171}%
\special{pa 1721 2197}%
\special{pa 1704 2224}%
\special{pa 1688 2251}%
\special{pa 1670 2278}%
\special{pa 1653 2305}%
\special{pa 1635 2331}%
\special{pa 1619 2358}%
\special{pa 1601 2384}%
\special{pa 1583 2411}%
\special{pa 1566 2437}%
\special{pa 1549 2463}%
\special{pa 1530 2490}%
\special{pa 1512 2515}%
\special{pa 1495 2541}%
\special{pa 1476 2566}%
\special{pa 1458 2592}%
\special{pa 1439 2618}%
\special{pa 1421 2643}%
\special{pa 1401 2668}%
\special{pa 1382 2692}%
\special{pa 1363 2718}%
\special{pa 1343 2742}%
\special{pa 1323 2766}%
\special{pa 1304 2790}%
\special{pa 1283 2814}%
\special{pa 1262 2838}%
\special{pa 1242 2861}%
\special{pa 1221 2884}%
\special{pa 1199 2907}%
\special{pa 1178 2930}%
\special{pa 1155 2951}%
\special{pa 1133 2974}%
\special{pa 1111 2996}%
\special{pa 1088 3017}%
\special{pa 1065 3039}%
\special{pa 1042 3060}%
\special{pa 1018 3080}%
\special{pa 995 3102}%
\special{pa 971 3123}%
\special{pa 947 3142}%
\special{pa 923 3163}%
\special{pa 899 3184}%
\special{pa 875 3203}%
\special{pa 850 3223}%
\special{pa 825 3243}%
\special{pa 801 3262}%
\special{pa 776 3282}%
\special{pa 751 3302}%
\special{pa 726 3321}%
\special{pa 701 3341}%
\special{pa 676 3361}%
\special{pa 651 3379}%
\special{pa 626 3399}%
\special{pa 601 3419}%
\special{pa 591 3426}%
\special{sp}%
\put(32.5867,-70.1693){\makebox(0,0){$u_c$}}%
\put(33.5867,-105.1693){\makebox(0,0){$v_c$}}%
\put(30.5867,-199.1693){\makebox(0,0){$2$}}%
\put(32.0094,-247.5390){\makebox(0,0){$r'$}}%
\put(113.3676,-21.3396){\makebox(0,0){$\mathcal C_{{1}/{2}}$}}%
\put(172.6958,-22.5396){\makebox(0,0){$\mathcal C_{{1}/{r}}$}}%
\put(302.6958,-23.5396){\makebox(0,0){$\mathcal C_{0}$}}%
%
\special{pn 8}%
\special{pa 2560 1969}%
\special{pa 4134 394}%
\special{da 0.020}%
%
\special{pn 13}%
\special{pa 591 3091}%
\special{pa 616 3071}%
\special{pa 641 3051}%
\special{pa 666 3031}%
\special{pa 691 3010}%
\special{pa 716 2991}%
\special{pa 741 2970}%
\special{pa 765 2950}%
\special{pa 790 2930}%
\special{pa 814 2909}%
\special{pa 839 2889}%
\special{pa 864 2869}%
\special{pa 887 2848}%
\special{pa 912 2826}%
\special{pa 936 2806}%
\special{pa 959 2785}%
\special{pa 982 2763}%
\special{pa 1005 2742}%
\special{pa 1028 2720}%
\special{pa 1051 2698}%
\special{pa 1073 2677}%
\special{pa 1096 2654}%
\special{pa 1118 2632}%
\special{pa 1139 2610}%
\special{pa 1160 2586}%
\special{pa 1182 2563}%
\special{pa 1202 2540}%
\special{pa 1222 2516}%
\special{pa 1242 2493}%
\special{pa 1261 2469}%
\special{pa 1280 2444}%
\special{pa 1299 2420}%
\special{pa 1317 2394}%
\special{pa 1335 2369}%
\special{pa 1353 2343}%
\special{pa 1370 2317}%
\special{pa 1386 2291}%
\special{pa 1403 2265}%
\special{pa 1419 2238}%
\special{pa 1435 2211}%
\special{pa 1449 2185}%
\special{pa 1465 2157}%
\special{pa 1480 2129}%
\special{pa 1494 2101}%
\special{pa 1508 2073}%
\special{pa 1522 2045}%
\special{pa 1536 2017}%
\special{pa 1550 1989}%
\special{pa 1563 1959}%
\special{pa 1575 1931}%
\special{pa 1588 1902}%
\special{pa 1601 1873}%
\special{pa 1614 1843}%
\special{pa 1626 1813}%
\special{pa 1638 1784}%
\special{pa 1650 1754}%
\special{pa 1662 1725}%
\special{pa 1674 1695}%
\special{pa 1686 1666}%
\special{pa 1697 1635}%
\special{pa 1709 1606}%
\special{pa 1720 1575}%
\special{pa 1732 1546}%
\special{pa 1743 1515}%
\special{pa 1754 1486}%
\special{pa 1765 1455}%
\special{pa 1772 1438}%
\special{sp}%
\end{picture}%

\phantom{a}

\medskip

\begin{theorem}\label{non equivalent of (p,q) mns on Lr, on different curves}
Take $r>1$,   let $\Omega$ be a measure space such that $\Lspace^r(\Omega)$ is infinite dimen\-sional, and  set $\bar{r}=\min\set{2,r}$. Take $c_1,c_2\in [0,1)$, and 
consider points $P_1\in {\mathcal D}_{c_1}$ and  $P_2\in {\mathcal D}_{c_2}$, respectively.\s
\begin{enumerate}
\item[{\rm (i)}] Suppose that $c_1,c_2\in [1/\bar{r},1)$. Then $P_1$ and $P_2$ are equivalent $($and the corresponding $(p,q)$-multi-norms are
 equivalent to the minimum multi-norm\,$)$ on $\Lspace^r(\Omega)$.\s
\item[{\rm (ii)}] Suppose that $c_1\in [1/\bar{r},1)$ and $c_2\in [0,1/\bar{r})$. Then $P_1$ and $P_2$ are not equivalent  on $\Lspace^r(\Omega)$.\s
\item[{\rm (iii)}] Suppose that $c_1,c_2\in [0,1/\bar{r})$ and that $c_1\neq c_2$. Then $P_1$ and $P_2$ are not equivalent on $\Lspace^r(\Omega)$. 
\end{enumerate}\s
\end{theorem}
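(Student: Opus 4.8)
The plan is to handle the three parts separately, with (ii) and (iii) driven by a single mechanism: two explicit tuples in $\ell^r$ whose $(p,q)$-multi-norms I can read off from the earlier examples.

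Part (i) is immediate. When $c_1,c_2\in[1/\bar{r},1)$ we have $\mathcal D_{c_i}=\mathcal C_{c_i}$, so each $P_i=(p_i,q_i)$ satisfies $1/p_i-1/q_i=c_i\ge 1/\bar{r}$; Theorem \ref{equivalent of (p,q)- and min mns on Lr} then gives $(\|\cdot\|^{(p_i,q_i)}_n)\cong(\|\cdot\|^{\min}_n)$ for $i=1,2$, and transitivity yields the assertion (including equivalence to the minimum multi-norm).

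For (ii) and (iii) I first reduce to $\ell^r$: by Proposition \ref{1.1}, $\ell^r$ is $1$-complemented in $\Lspace^r(\Omega)$, and since the $(p,q)$-multi-norm of a tuple in a $1$-complemented subspace is computed intrinsically, any equivalence on $\Lspace^r(\Omega)$ restricts to one on $\ell^r$. It therefore suffices to produce, on $\ell^r$, a fixed tuple whose two multi-norms have different growth rates. I use the basis tuple $(\delta_1,\dots,\delta_n)$, with value $\Delta_n(p,q)$ from Example \ref{a calculation of (p,q)-norm on lr space}, and (for $r\ge 2$) the Fourier tuple $\tuple{f}$ from Example \ref{another calculation of (p,q)-norm on lr space}. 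The technical core is the claim that along a whole curve $\mathcal D_c$ the correct one of these quantities is constant, the choice depending on the regime of $c$: substituting the defining relations of $\mathcal D_c$ into \eqref{a calculation of (p,q)-norm on lr space: eq2} and \eqref{(2.5a)} gives $\Delta_n(p,q)=n^{1/r-c}$ at every point of $\mathcal D_c$ when $c\in[0,1/r)$, whereas $\Delta_n(p,q)=1$ and $\|\tuple{f}\|^{(p,q)}_n=n^{1/2-c}$ at every point of $\mathcal D_c$ when $c\in[1/r,1/\bar{r})$ (a range that is non-empty only for $r>2$). On the curved portion these identities hold trivially because there $1/p-1/q=c$; on the horizontal portion $q=u_c$ (respectively $q=v_c$) one must follow $1/p-1/q$ as $p$ varies, and crucially use that the terminal abscissa satisfies $w_c<r$, so the entire $v$-segment remains in the region $p<r$ where $\Delta_n\equiv 1$.

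Granting these invariants, both non-equivalence statements follow by inspection. In (ii), $P_1$ has $c_1\ge 1/\bar{r}$, so $P_1\cong\min$ by part (i) and hence both test quantities at $P_1$ are $\sim 1$; while $P_2$ has $c_2<1/\bar{r}$, so either $\Delta_n=n^{1/r-c_2}\to\infty$ (if $c_2<1/r$) or $\|\tuple{f}\|^{(p_2,q_2)}_n=n^{1/2-c_2}\to\infty$ (if $c_2\in[1/r,1/2)$); the relevant ratio is unbounded, so $P_1\not\cong P_2$. In (iii), with $c_1\ne c_2$ both below $1/\bar{r}$: if both lie in $[0,1/r)$ the values $n^{1/r-c_1}\ne n^{1/r-c_2}$ of $\Delta_n$ separate them; if both lie in $[1/r,1/2)$ then $\Delta_n\equiv 1$ on each and the values $n^{1/2-c_1}\ne n^{1/2-c_2}$ of $\|\tuple{f}\|^{(\cdot,\cdot)}_n$ separate them; and if $c_1<1/r\le c_2$ then $\Delta_n$ is unbounded at $P_1$ but $\equiv 1$ at $P_2$. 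The main obstacle is precisely the constancy claim of the previous paragraph — verifying that the designated invariant does not drift along the horizontal segments, for which the inequality $w_c<r$ (for $c\in[1/r,1/2)$) is indispensable, since otherwise the $v$-segment would enter $p\ge r$ and $\Delta_n$ would no longer vanish there. Once these invariants are pinned down, no appeal to the finer $(q,p)$-summing theory is required here; that machinery is reserved for distinguishing points lying on the \emph{same} curve $\mathcal D_c$.
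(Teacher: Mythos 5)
Your proof is correct, and parts (i) and (ii) coincide with the paper's argument (Theorem \ref{equivalent of (p,q)- and min mns on Lr} for (i); boundedness versus unboundedness of the rate of growth for (ii)). For part (iii) your route is genuinely different in organization, though it rests on the same two test objects. The paper assumes equivalence and runs a case analysis on the positions of $p_1,p_2$ relative to $\bar{r}$ and $r$, deriving algebraic contradictions from Proposition \ref{non-equivalent mns on Lr} and Theorem \ref{values of phi sequence for Lr}; you instead observe that the curves $\mathcal D_c$ are precisely level sets of explicit invariants --- $\Delta_n(p,q)=n^{1/r-c}$ exactly on all of $\mathcal D_c$ for $c\in[0,1/r)$, and ($\Delta_n\equiv 1$ together with) $\lV\tuple{f}\rV^{(p,q)}_n\sim n^{1/2-c}$ on $\mathcal D_c$ for $c\in[1/r,1/2)$ --- and these are strictly monotone in $c$, so distinct curves are separated by a single sequence of tuples with unbounded ratio. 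This buys a shorter, more transparent argument that makes visible \emph{why} the curves $\mathcal D_c$ were drawn as they were. Two small points to tighten: (a) your phrase ``the region $p<r$ where $\Delta_n\equiv1$'' is not literally correct --- $\Delta_n=n^{1/r+1/q-1/p}$ can exceed $1$ when $p<r$; what you actually need is that the horizontal segment $q=v_c$, $p\in[2,w_c]$ stays in the closed region $\{1/p-1/q\ge 1/r,\ p<r\}$, where the formula in \eqref{a calculation of (p,q)-norm on lr space: eq2} does give $\Delta_n=1$ (the exponent vanishes on the boundary curve $\mathcal C_{1/r}$). That containment holds because the segment terminates where $q=v_c$ meets $\mathcal C_{1/r}$, at abscissa $w_c=rv_c/(r+v_c)<r$ (note the paper's displayed formula $w_c=rv_c/(r-v_c)$ is a slip; the defining property ``$(w_c,v_c)\in\mathcal C_{1/r}$'' forces the expression with $r+v_c$). (b) The two-sided asymptotic for $\lV\tuple{f}\rV^{(p,q)}_n$ is equation \eqref{(2.5a)}, whose upper half depends on Theorem \ref{values of phi sequence for Lr}; Example \ref{another calculation of (p,q)-norm on lr space} alone gives only the lower bound, so you are implicitly using that theorem, just as the paper does.
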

\begin{proof}
Clause (i) follows from Theorem \ref{equivalent of (p,q)- and min mns on Lr}, whereas (ii) follows from Theorem \ref{values of phi sequence for Lr}. 

It remains to prove clause  (iii). For this, we suppose that $c_1,c_2\in [0,1/\bar{r})$ and that $c_1\neq c_2$.\s

Assume towards a contradiction that $P_1$ and $P_2$ are equivalent  on $\Lspace^r(\Omega)$.\s

\noindent\emph{Case 1:} $p_1,p_2\le \bar{r}$. In this case, the desired contradiction follows from Theorem
 \ref{values of phi sequence for Lr}(iii), noting that $P_i\in\mathcal C_{c_i}$ for both $i=1$ and $i=2$ in this case. \s

\noindent\emph{Case 2:} $p_1,p_2\ge\bar{r}$. In this case, we must have $q_1=q_2$ by Theorem \ref{values of phi sequence for Lr}(ii).
 From the definition of the curves $\mathcal D_c$, this can happen (with $c_1\neq c_2$) only when  $\min\set{p_1,p_2}<r$, and so $r>2$, and $\min\set{c_1,c_2}<1/r$.  
In particular, we must have $\bar{r}=2$. By Proposition \ref{non-equivalent mns on Lr}(i), we must have $\max\set{p_1,p_2}>r$. 
Thus, without loss of generality, suppose that $p_1> r>p_2\ge 2$. Proposition  \ref{non-equivalent mns on Lr}(iv) then implies that
\[
	\frac{1}{r}-\frac{1}{p_2}+\frac{1}{q_2}=\frac{1}{q_1}\,.
\]
Since $q_1=q_2$, this implies that $p_2=r$, a contradiction.\s

It remains to consider the case where $p_1<\bar{r}<p_2$; the case where $p_1>\bar{r}>p_2$ is dealt with similarly. We divide this case further
 into the following two cases.\s

\noindent\emph{Case 3:} $r\le 2$, so that $\bar{r}=r$, and $p_1<r<p_2$. In this case, it follows from either Theorem \ref{values of phi sequence for Lr} 
or Proposition \ref{non-equivalent mns on Lr}(iii) that
\[
	\frac{1}{r}-\frac{1}{p_1}+\frac{1}{q_1}=\frac{1}{q_2}\,.
\]
This implies first that $(r,q_2)\in\mathcal C_{c_1}\cap\mathcal D_{c_1}$, and then that $(p_2,q_2)\in \mathcal D_{c_1}$ by the definition of
 $\mathcal D_{c_1}$, a contradiction of the assumption that $c_1\neq c_2$.\s

\noindent\emph{Case 4:} $r> 2$, so that $\bar{r}=2$, and $p_1<2<p_2$. In this case, it follows from Theorem \ref{values of phi sequence for Lr} that
\[
	\frac{1}{2}-\frac{1}{p_1}+\frac{1}{q_1}=\frac{1}{q_2}\,.
\]
This implies that $(2,q_2)\in\mathcal C_{c_1}\cap\mathcal D_{c_1}$. So it follows from the definition of $\mathcal D_{c}$ and the assumption
 that $(p_2,q_2)\notin \mathcal D_{c_1}$ that $c_2<1/r$. By Proposition \ref{non-equivalent mns on Lr}(i), we deduce that $p_2>r$. But then 
Proposition \ref{non-equivalent mns on Lr}(iii) implies that 
\[
	\frac{1}{r}-\frac{1}{p_1}+\frac{1}{q_1}=\frac{1}{q_2}\,,
\]
and so $r=2$, again a contradiction.

This concludes the proof of the theorem.
\end{proof}

\medskip
\section{The role of Khintchine's inequalities}
The previous theorem reduces our problem to that of determining the equivalence of two points $P_1$ and $P_2$ lying on the same curve ${\mathcal D}_c$,
 where $c \in [1, 1/\overline{r})$.   For further progress, we shall use Khintchine's inequalities, for which  see \cite[Chapter 12]{Gar3}, for example.

Let $n\in\naturals$. We shall consider $(\varepsilon_{i,j})$ to be a fixed $n\times 2^n$ matrix with entries in $\set{-1,1}$ such that 
its $2^n$ columns range over all possible choices of $n$-tuples of $\pm 1$. The Khintchine inequality 
 tells us that, for each $r>1$, there exist constants $A_r, B_r>0$, depending only on $r$ (but not on $n$), such that
\[
	A_r\left(\sum_{i=1}^n\abs{\alpha_i}^2\right)^{1/2}\le \left(\frac{1}{2^n}\sum_{j=1}^{2^n}\abs{\sum_{i=1}^n\varepsilon_{i,j}\alpha_i}^r\right)^{1/r}
\le B_r\left(\sum_{i=1}^n\abs{\alpha_i}^2\right)^{1/2}
\]
for every $\alpha_1,\ldots,\alpha_n\in\C$ and every $n\in\naturals$.  These constants are those specified in   the next lemma.\s

\begin{lemma}\label{reduce lr to l2}
Let $r>1$,   and take $n\in\N$. Then there exists a linear monomorphism $R_n:\lspace^2_n\to \lspace^r$   such that
\[
	\frac{1}{B_{r'}}\,\lV(x_1,\ldots,x_n)\rV^{(p,q)}_n\le \lV(R_nx_1,\ldots,R_nx_n)\rV^{(p,q)}_n\le B_r\lV(x_1,\ldots,x_n)\rV^{(p,q)}_n
\]
whenever $1\le p\le q <\infty$ and $x_1,\ldots,x_n\in\lspace^2_n$.
\end{lemma}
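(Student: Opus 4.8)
The plan is to realise $R_n$ as a normalised sign (Rademacher) embedding and then to lean on the operator-monotonicity of the $(p,q)$-multi-norm recorded earlier in this chapter, namely that $\lV(Ty_1,\dots,Ty_n)\rV^{(p,q)}_n\le \lV T\rV\,\lV(y_1,\dots,y_n)\rV^{(p,q)}_n$ for every bounded operator $T$. Writing $v_i=(\varepsilon_{i,1},\dots,\varepsilon_{i,2^n})$ for the $i$-th row of the fixed sign matrix, I would define $R_n:\ell^{\,2}_n\to \ell^{\,r}$ by $R_n\delta_i=2^{-n/r}v_i$, regarding each $v_i$ as a vector supported on the first $2^n$ coordinates of $\ell^{\,r}$. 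Then, for $\alpha\in\C^n$, one computes $\lV R_n\alpha\rV_r=\big(2^{-n}\sum_{j}\lv\sum_i\varepsilon_{i,j}\alpha_i\rv^r\big)^{1/r}$, so that Khintchine's inequality gives $A_r\lV\alpha\rV_2\le \lV R_n\alpha\rV_r\le B_r\lV\alpha\rV_2$; in particular $R_n$ is a linear monomorphism with $\lV R_n\rV\le B_r$. The upper estimate of the lemma is then immediate from operator-monotonicity applied to $T=R_n$.

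For the lower estimate the key is to produce a left inverse of $R_n$ whose norm is controlled by $B_{r'}$ and, crucially, is \emph{independent of $n$}; this is the main obstacle, since a crude inversion would lose a factor growing with $n$. The device is duality at the conjugate exponent together with the exact orthogonality of the sign vectors. I would introduce $T:\ell^{\,2}_n\to \ell^{\,r'}$ defined by $T\delta_i=2^{-n/r'}v_i$, for which Khintchine at exponent $r'$ yields $\lV T\rV\le B_{r'}$, and take $S_n$ to be the Banach-space adjoint $T':\ell^{\,r}\to\ell^{\,2}_n$ (precomposed with the contractive restriction of $\ell^{\,r}$ to its first $2^n$ coordinates), so that $\lV S_n\rV\le B_{r'}$. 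A short computation identifies $(T'\mu)_i=2^{-n/r'}\sum_j\varepsilon_{i,j}\mu_j$, whence, using $1/r+1/r'=1$ and the character-orthogonality relation $\sum_{j=1}^{2^n}\varepsilon_{i,j}\varepsilon_{k,j}=2^n\delta_{i,k}$, one obtains $S_nR_n\delta_k=\delta_k$, that is, $S_nR_n=I$ on $\ell^{\,2}_n$.

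With $S_nR_n=I$ in hand, the lower bound follows by a second application of operator-monotonicity, now to $T=S_n$ with $y_i=R_nx_i$: indeed $\lV(x_1,\dots,x_n)\rV^{(p,q)}_n=\lV(S_nR_nx_1,\dots,S_nR_nx_n)\rV^{(p,q)}_n\le \lV S_n\rV\,\lV(R_nx_1,\dots,R_nx_n)\rV^{(p,q)}_n$, and since $\lV S_n\rV\le B_{r'}$ this rearranges to $\tfrac{1}{B_{r'}}\lV(x_1,\dots,x_n)\rV^{(p,q)}_n\le \lV(R_nx_1,\dots,R_nx_n)\rV^{(p,q)}_n$, as required. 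I expect the only genuinely delicate points to be the verification of the orthogonality relation $\sum_j\varepsilon_{i,j}\varepsilon_{k,j}=2^n\delta_{i,k}$ (which holds because, as the columns range over all sign patterns, each pair of values of coordinates $i$ and $k$ occurs equally often) and the bookkeeping of the normalising powers $2^{-n/r}$ and $2^{-n/r'}$; everything else reduces to Khintchine's inequality and the general contraction property of the $(p,q)$-multi-norm.
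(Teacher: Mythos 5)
Your proposal is correct and follows essentially the same route as the paper: the paper defines the same normalised Rademacher embeddings at exponents $r$ and $r'$ (its $R$ and $S$ are your $R_n$ and $T$), obtains the same Khintchine bounds, and its key identity $\duality{Rx}{Sy}=\duality{x}{y}$ is exactly the adjoint form of your relation $S_nR_n=I$. Your phrasing of the lower bound via the contraction property applied to the left inverse $S_n=T'$ is just a repackaging of the paper's step of restricting the supremum to functionals of the form $Sy_i$.
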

\begin{proof}
Set $s=r'$, the conjugate index to $r$, so that $1<s<\infty$. For each $i\in\N_n$, set
\[
	g_i=\frac{1}{2^{n/r}}(\varepsilon_{i,1},\ldots,\varepsilon_{i,2^n},0,0,\ldots)\in\lspace^r\quad\textrm{and}\quad \varphi_i=\frac{1}{2^{n/s}}(\varepsilon_{i,1},\ldots,\varepsilon_{i,2^n},0,0,\ldots)\in\lspace^s\,.
\]
The maps $\delta_i\mapsto g_i$ and $\delta_i\mapsto \varphi_i$ extend linearly to linear operators $R:\lspace^2_n\to\lspace^r$ and $S:\lspace^2_n\to\lspace^s$, respectively. Moreover, by the Khintchine inequality, we see that
\[
	A_r\lV x\rV_{\ell^{\,2}}\le \lV Rx\rV_{\ell^{\,r}}\le B_r\lV x\rV_{\ell^{\,2}}
\quad\textrm{and}\quad 
A_s\lV x\rV_{\ell^{\,2}}\le \lV Sx\rV_{\ell^{\,s}}\le B_s\lV x\rV_{\ell^{\,2}}\quad(x\in\lspace^2_n)\,,
\]
so that, in particular, both $R$ and $S$ are linear monomorphisms. We  see also that
\[
	\duality{Rx}{Sy}=\duality{x}{y}\qquad(x,y\in\lspace^2_n)\,,
\]
where we identify $\dual{(\lspace^2_n)}=\lspace^2_n$ in an obvious manner.

Take $p,q\in {\mathcal T}$ and take $x_1,\ldots,x_n\in\lspace^2_n$. We then see that
\begin{align*}
	&\lV(Rx_1,\ldots,Rx_n)\rV^{(p,q)}_n\\
&=\sup\set{\left(\sum_{i=1}^n\abs{\duality{Rx_i}{\lambda_i}}^q\right)^{1/q}\colon\ \lambda_1,\ldots,\lambda_n\in \lspace^s,\ \mu_{p,n}(\lambda_1,\ldots,\lambda_n)\le 1}\\
&=\sup\set{\left(\sum_{i=1}^n\abs{\duality{x_i}{\dual{R}\lambda_i}}^q\right)^{1/q}\colon\ \lambda_1,\ldots,\lambda_n\in \lspace^s,\
 \mu_{p,n}(\lambda_1,\ldots,\lambda_n)\le 1}\\
&\le \sup\set{\left(\sum_{i=1}^n\abs{\duality{x_i}{y_i}}^q\right)^{1/q}\colon\ y_1,\ldots,y_n\in \lspace^2_n,\ \mu_{p,n}(y_1,\ldots,y_n)\le B_r}\\
& = B_r\lV(x_1,\ldots,x_n)\rV^{(p,q)}_n\,.
\end{align*}
On the other hand, from the first equation above, we also see that
\begin{align*}
	&\lV(Rx_1,\ldots,Rx_n)\rV^{(p,q)}_n\\
&\ge \sup\set{\left(\sum_{i=1}^n\abs{\duality{Rx_i}{Sy_i}}^q\right)^{1/q}\colon\ y_1,\ldots,y_n\in \lspace^2_n,\ \mu_{p,n}(y_1,\ldots,y_n)\le 1/B_s}\\
&=\sup\set{\left(\sum_{i=1}^n\abs{\duality{x_i}{y_i}}^q\right)^{1/q}\colon\ y_1,\ldots,y_n\in \lspace^2_n,\ \mu_{p,n}(y_1,\ldots,y_n)\le 1/B_s}\\
&= \frac{1}{B_s}\,\lV(x_1,\ldots,x_n)\rV^{(p,q)}_n\,.
\end{align*}
Thus, setting $R_n:=R$, we obtain the desired operator.
\end{proof}

\smallskip

\begin{theorem}\label{3.13} Suppose that  $(p_1, q_1),(p_2,q_2) \in {\mathcal T}$. Assume that the $(p_1,q_1)$- and $(p_2,q_2)$-multi-norms 
are not equivalent on $\lspace^2$.  Then, for every $r>1$, the $(p_1,q_1)$- and $(p_2,q_2)$-multi-norms are not equivalent on $\lspace^r$.
\end{theorem}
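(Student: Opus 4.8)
The plan is to transfer, via the single monomorphism furnished by Lemma~\ref{reduce lr to l2}, a configuration of vectors in $\lspace^2$ on which the ratio of the two multi-norms is large to a corresponding configuration in $\lspace^r$. The essential point is that $R_n$ is \emph{one} operator realising the two-sided estimate simultaneously for \emph{every} pair $(p,q)$, with constants $B_r,B_{r'}$ that depend only on $r$ (and not on $p$, $q$, or $n$); this uniformity is exactly what prevents the blow-up from being absorbed on passage to $\lspace^r$.

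First I would unwind the hypothesis. That the two multi-norms are not equivalent on $\lspace^2$ means that at least one of the two dominations fails; since both the hypothesis and the conclusion are symmetric in the two indices, I may relabel so that $(\norm^{(p_2,q_2)}_n)$ does not dominate $(\norm^{(p_1,q_1)}_n)$ on $\lspace^2$. Thus for each $k\in\N$ there are $n_k\in\N$ and $x_1,\dots,x_{n_k}\in\lspace^2$ with
\[
	\lV(x_1,\dots,x_{n_k})\rV^{(p_1,q_1)}_{n_k} > k\,\lV(x_1,\dots,x_{n_k})\rV^{(p_2,q_2)}_{n_k}\,.
\]
These vectors lie in some finite-dimensional coordinate subspace $\lspace^2_d$, which is $1$-complemented in $\lspace^2$. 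By the invariance of the $(p,q)$-multi-norm under passage to a $1$-complemented subspace (\cite[Proposition~4.3]{DP}) and by axiom (A3), which allows me to pad the tuple with zeros without changing its multi-norm, I may assume that the length of the tuple and the ambient dimension equal a common value $N=N(k)\ge n_k$, so that $(x_1,\dots,x_N)\in(\lspace^2_N)^N$ realises the same strict inequality.

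Now I apply Lemma~\ref{reduce lr to l2} with $n=N$ to obtain the monomorphism $R_N:\lspace^2_N\to\lspace^r$. Using its lower estimate for the pair $(p_1,q_1)$ and its upper estimate for the pair $(p_2,q_2)$ — both available because $R_N$ is a single operator valid for all $(p,q)$ — I get
\[
	\lV(R_Nx_1,\dots,R_Nx_N)\rV^{(p_1,q_1)}_N\ge \frac{1}{B_{r'}}\,\lV(x_1,\dots,x_N)\rV^{(p_1,q_1)}_N
\]
together with
\[
	\lV(R_Nx_1,\dots,R_Nx_N)\rV^{(p_2,q_2)}_N\le B_r\,\lV(x_1,\dots,x_N)\rV^{(p_2,q_2)}_N\,.
\]
Dividing, the ratio of the two $\lspace^r$-multi-norms of the transferred tuple exceeds $k/(B_rB_{r'})$. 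Since $B_rB_{r'}$ depends only on $r$ while $k$ is arbitrary, the ratio $\lV\,\cdot\,\rV^{(p_1,q_1)}_N/\lV\,\cdot\,\rV^{(p_2,q_2)}_N$ is unbounded over configurations in $\lspace^r$; hence $(\norm^{(p_2,q_2)}_n)$ fails to dominate $(\norm^{(p_1,q_1)}_n)$ on $\lspace^r$, and the two multi-norms are not equivalent there.

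The only genuinely delicate point is the bookkeeping in the reduction to $(\lspace^2_N)^N$: one must check that embedding the witnessing vectors into a coordinate subspace and then padding the tuple to the required length leaves \emph{both} multi-norm values unchanged, which is precisely what the $1$-complementation invariance and axiom (A3) guarantee. Everything else is the clean observation that a single transfer operator with $r$-dependent constants carries an unbounded family of ratios from $\lspace^2$ to $\lspace^r$.
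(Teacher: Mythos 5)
Your proposal is correct and follows essentially the same route as the paper: take witnessing tuples in $\lspace^2$ with unbounded ratio, reduce to $(\lspace^2_N)^N$, and push them into $\lspace^r$ with the single transfer operator of Lemma~\ref{reduce lr to l2}, using its lower estimate for $(p_1,q_1)$ and its upper estimate for $(p_2,q_2)$ so that only the $r$-dependent factor $B_rB_{r'}$ is lost. The paper dismisses the reduction to $\lspace^2_N$ with ``it is obvious that we may consider $x_{n,1},\ldots,x_{n,n}$ as belonging to $\lspace^2_n$''; your explicit bookkeeping via $1$-complementation and axiom (A3) fills in exactly that step.
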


\begin{proof}
Take  $r>1$. Without loss of generality, by the assumption, we see that there exist a sequence $(\alpha_n)$ in $(0,\infty)$ with
 $\alpha_n\nearrow \infty$ and a sequence $(\tuple{x}_n)$ where, for each $n\in\N$, $\tuple{x}_n=(x_{n,1},\ldots,x_{n,n})\in (\lspace^2)^n$, such that
\[
	\lV(x_{n,1},\ldots,x_{n,n})\rV^{(p_1,q_1)}_n>\alpha_n\lV(x_{n,1},\ldots,x_{n,n})\rV^{(p_2,q_2)}_n\,.
\]
It is obvious that we may consider $x_{n,1},\ldots,x_{n,n}$ as belonging to $\lspace^2_n$. Now set $y_{n,i}=R_nx_{n,i}$ for each $i\in\N_n$,
 where $R_n$ is the map defined in the previous lemma. We then obtain, for each $n\in\N$, a tuple  $(y_{n,1},\ldots,y_{n,n})\in (\lspace^r)^n$ such that
\[
	\lV(y_{n,1},\ldots,y_{n,n})\rV^{(p_1,q_1)}_n>\frac{\alpha_n}{B_rB_{r'}}\,\lV(y_{n,1},\ldots,y_{n,n})\rV^{(p_2,q_2)}_n\,.
\]

Thus $(\norm^{(p_1,q_1)}_n)$ and $(\norm^{(p_1,q_1)}_n)$ are not equivalent on $\lspace^r$. This completes the proof.
\end{proof}

\begin{corollary}\label{tool} Let   $(p_1, q_1),(p_2,q_2) \in {\mathcal T}$. Suppose that  
 $\Pi_{q_1,p_1}(\lspace^2,F) \neq \Pi_{q_2,p_2}(\lspace^2,F)$ for some Banach space $F$. 
Then, for each $r>1$,  the $(p_1,q_1)$- and $(p_2,q_2)$-multi-norms are not equivalent on $\lspace^r$.
\end{corollary}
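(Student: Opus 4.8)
The plan is to reduce the claim to the Hilbert-space case $\lspace^2$ and then invoke Theorem~\ref{3.13}. The key point is that the hypothesis is precisely the failure of the criterion furnished by Theorem~\ref{tool for general spaces}, applied with $E=\lspace^2$.

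First I would record the routine self-duality fact that $\lspace^2$ is a Hilbert space, so that $\dual{(\lspace^2)}$ is isometrically isomorphic to $\lspace^2$ itself. Consequently, for each $(p,q)\in\mathcal T$ and each Banach space $F$, this isometric identification carries $\Pi_{q,p}(\dual{(\lspace^2)},F)$ onto $\Pi_{q,p}(\lspace^2,F)$; in particular the two summing classes coincide or differ together. (In the complex case the natural identification of $\lspace^2$ with its dual is conjugate-linear, but it is still an isometric isomorphism of Banach spaces, under which the $(q,p)$-summing operator classes are preserved.)

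Next I would argue by contraposition. By hypothesis there is a Banach space $F$ with $\Pi_{q_1,p_1}(\lspace^2,F)\neq\Pi_{q_2,p_2}(\lspace^2,F)$, equivalently $\Pi_{q_1,p_1}(\dual{(\lspace^2)},F)\neq\Pi_{q_2,p_2}(\dual{(\lspace^2)},F)$. If the $(p_1,q_1)$- and $(p_2,q_2)$-multi-norms were mutually equivalent on $\lspace^2$, then Theorem~\ref{tool for general spaces}, taken with $E=\lspace^2$, would force $\Pi_{q_1,p_1}(\dual{(\lspace^2)},F)=\Pi_{q_2,p_2}(\dual{(\lspace^2)},F)$ for \emph{every} Banach space $F$, contradicting the displayed inequality. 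Hence these two multi-norms are not equivalent on $\lspace^2$.

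Finally I would apply Theorem~\ref{3.13}: since the $(p_1,q_1)$- and $(p_2,q_2)$-multi-norms are not equivalent on $\lspace^2$, they are not equivalent on $\lspace^r$ for every $r>1$, which is the assertion. There is no genuine obstacle in this corollary: all the substance has already been placed in Theorem~\ref{3.13} (the Khintchine transfer from $\lspace^2$ to $\lspace^r$) and in Theorem~\ref{tool for general spaces}, and the only point requiring a moment's care is the elementary self-duality identification of $\lspace^2$ recorded above.
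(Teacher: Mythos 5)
Your argument is correct and is precisely the paper's proof: the paper deduces the corollary from Theorem~\ref{3.13} together with Theorem~\ref{tool for general spaces}, ``using the Riesz representation theorem'' to identify $\dual{(\lspace^2)}$ with $\lspace^2$, which is exactly the contrapositive route you spell out. Your parenthetical care about the conjugate-linearity of the Riesz identification is a reasonable (and correct) elaboration of a point the paper leaves implicit.
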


\begin{proof}
This follows from  the previous theorem and Theorem \ref{tool for general spaces}, using the Riesz representation theorem.
\end{proof}
\medskip

\section{Final classification}
Theorem \ref{3.13}  suggests that we study more closely the spaces  $\Pi_{q,p}(H)$, where $H$ is a Hilbert space, and this we shall do to obtain
 the final classification that we can achieve. 

We first state some results that identify $\Pi_{q,p}(H)$. 
Clause (i) of the following theorem combines  Corollaries 3.16 and 4.13 of \cite{DJT}, and the remaining clauses are stated on page  207 of \cite{DJT}.
In fact, clauses (ii) and (iii) of the following  theorem originate in \cite[Theorem~2]{Kwapien1} (where this result is attributed to
Mitjagin), and (iv) is from \cite{Ben2} and \cite[Theorem~3]{BGN}. Recall that $\mathcal S_r(H)$ and $\mathcal S_{2q/p,q}(H)$ were defined in \S\ref{Remark on Schatten class}.\s

\begin{theorem}\label{3.8}  Let $H$ be a Hilbert space, and take  $(p, q) \in {\mathcal T}$.\s
\begin{enumerate}
\item[{\rm (i)}] Suppose that $p=q$. Then $\Pi_p(H) = \Pi_2(H)= {\mathcal S}_2(H)$.\s

\item[{\rm (ii)}] Suppose that  $p\leq 2$ and $1/p - 1/q<1/2$. Then $\Pi_{q,p}(H) = {\mathcal S}_r(H)$, where 
\[
	1/r = 1/q - 1/p + 1/2\,. 
\]

\item[{\rm (iii)}]  Suppose that    $1/p - 1/q\geq 1/2$. Then $\Pi_{q,p}(H) = {\B}(H)$.\s

\item[{\rm (iv)}] Suppose that  $2<p<q<\infty$. Then  $\Pi_{q,p}(H) = \mathcal S_{2q/p,q}(H)$.\enproof
\end{enumerate}
\end{theorem}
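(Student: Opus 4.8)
The plan is to reduce the whole statement to a question about diagonal operators on $\ell^{\,2}$ and then to read off each clause from the corresponding classical estimate on singular values. Since $\pi_{q,p}$ is unchanged under composition with isometries on either side (the two ``obvious remarks'' on summing operators recorded above), I would first note that if $T\in\K(H)$ has singular-value decomposition $T=\sum_n s_n(T)\langle\,\cdot\,,e_n\rangle f_n$, then $\pi_{q,p}(T)=\pi_{q,p}(D_\sigma)$, where $D_\sigma$ is the diagonal operator on $\ell^{\,2}$ determined by the sequence $\sigma=(s_n(T))$. One also checks that in the ranges of clauses (ii) and (iv) any $(q,p)$-summing operator is automatically compact, so no generality is lost by assuming $T$ compact there. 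Thus each clause amounts to identifying the sequence space to which $\sigma$ must belong in order that $\pi_{q,p}(D_\sigma)<\infty$, and this is exactly the content of the Schatten-class descriptions collected in $\S\ref{Remark on Schatten class}$.

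For clause (i) I would argue in two steps. The identity $\Pi_2(H)=\mathcal{S}_2(H)$ with equal norms is immediate from the definitions: testing against an orthonormal system gives $\pi_2(T)\ge\|T\|_{\mathcal{S}_2}$, while the Pietsch domination theorem recalled above, applied with the Hilbert-space structure, yields the reverse inequality. It then remains to see that all the classes $\Pi_p(H)$ for $1\le p<\infty$ coincide with this one: the inclusions $\Pi_p(H)\subset\Pi_2(H)$ for $p\le 2$ and $\Pi_2(H)\subset\Pi_p(H)$ for $p\ge 2$ come from the basic inclusion theorem, and the genuinely non-trivial inclusion $\Pi_2(H)\subset\Pi_1(H)$ — that every Hilbert--Schmidt operator is absolutely summing — is a consequence of Grothendieck's theorem. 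This is what is assembled in \cite[Corollaries 3.16 and 4.13]{DJT}.

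Clause (iii) I would derive from the generalization of Orlicz's theorem recalled above (\cite[Theorem~10.7]{DJT}): in the case $s=2$ it says the identity on a Hilbert space is $(2,1)$-summing, so $\Pi_{2,1}(H)=\B(H)$. Since the hypothesis $1/p-1/q\ge 1/2=1/1-1/2$ places $(q,p)$ in the region where the inclusion theorem \cite[Theorem~10.4]{DJT} (the source of Theorem \ref{comparing (p,q) multi-norms}) gives $\Pi_{2,1}(H)\subset\Pi_{q,p}(H)$, we obtain $\B(H)\subset\Pi_{q,p}(H)\subset\B(H)$, as required.

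The substance of the theorem is in clauses (ii) and (iv), where one must produce sharp two-sided estimates $\pi_{q,p}(D_\sigma)\sim\|\sigma\|_{\ell^{\,r}}$, with $1/r=1/q-1/p+1/2$, and $\pi_{q,p}(D_\sigma)\sim\|\sigma\|_{\ell^{\,2q/p,q}}$, respectively. For the lower bounds I would test $D_\sigma$ against the ``flat'' exponential vectors used in Example \ref{another calculation of (p,q)-norm on lr space}, whose weak-$p$ norm is controlled while the diagonal action spreads the mass; for the upper bounds one factors through the Pietsch diagram and applies Khintchine's inequalities together with interpolation between the endpoint cases. The hard part is clause (iv): the Lorentz norm $\|\sigma\|_{\ell^{\,2q/p,q}}$ is not a rearrangement-invariant $\ell^{\,r}$-norm, and matching it requires the delicate block-decomposition and rearrangement arguments of \cite{Ben2} and \cite[Theorem~3]{BGN} (clauses (ii) and (iii) going back to \cite[Theorem~2]{Kwapien1}). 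Rather than reprove these, I would cite them in the forms collected on \cite[p.~207]{DJT}.
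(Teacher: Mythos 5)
The paper offers no proof of this theorem at all: it is stated as a quotation of classical results, with clause (i) attributed to \cite[Corollaries 3.16 and 4.13]{DJT} and clauses (ii)--(iv) to the summary on page 207 of \cite{DJT}, whose originals are \cite{Kwapien1}, \cite{Ben2}, and \cite{BGN}. Your proposal therefore does strictly more than the paper does. The framing reduction to diagonal operators via the singular-value decomposition and the two ``obvious remarks'' on isometries and contractive projections is correct, and the compactness check you allude to follows from Proposition \ref{1.10}(ii)--(iii), since in the ranges of (ii) and (iv) the identity of an infinite-dimensional subspace cannot be $(q,p)$-summing. Your derivation of (iii) is sound: the hypothesis $1/p-1/q\ge 1/2$ forces $q\ge 2$, so the inclusion theorem \cite[Theorem~10.4]{DJT} does apply with $(p_2,q_2)=(1,2)$, and Orlicz's theorem in the form quoted in $\S 3.3$ gives $\Pi_{2,1}(H)=\B(H)$. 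Your identification $\Pi_2(H)={\mathcal S}_2(H)$ by testing against orthonormal systems and invoking Pietsch domination is the standard argument. Two caveats are worth recording. First, in clause (i) the inclusions you list yield $\Pi_p(H)\subset\Pi_2(H)$ only for $p\le 2$; for $p>2$ the containment $\Pi_p(H)\subset\Pi_2(H)$ does \emph{not} follow from the basic inclusion theorem and is exactly the content of \cite[Corollary~3.16]{DJT}, so that clause still rests on a citation rather than on the chain of inclusions you wrote down. Second, for (ii) and (iv) you end precisely where the paper does, deferring to Kwapie\'{n}, Bennett, and Bennett--Goodman--Newman; the strategy you sketch (lower bounds from exponential vectors, upper bounds from Pietsch factorisation, Khintchine, and interpolation) is plausible for the $\ell^{\,r}$ estimate of (ii) but, as you acknowledge, would not by itself produce the two-sided Lorentz-norm estimate of (iv). Given that the paper proves nothing here, relying on the same references for the substantive clauses is entirely consistent with its intent, and your additional self-contained material for (i) and (iii) is a net gain.
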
\s

In connection with clause (i), we note that the exact constants that determine the relations between the $\pi_p\,$-norm and the $\pi_2\,$-norm on 
(real and complex) Hilbert spaces of various dimensions are calculated in  \cite{Gar1}.

Recall that the point $x_c\in (r,u_c)$ was defined on page \pageref{x_c}.\s

\begin{theorem}\label{equivalent of (p,q) mns on Lr when r<2}
Take $r\in (1,2)$,  and  let $\Omega$ be a measure space such that $\Lspace^r(\Omega)$ is infinite dimen\-sional. Suppose that two distinct points 
$P_1=(p_1,q_1)$ and $P_2=(p_2,q_2)$ in $\mathcal T$ are equivalent on  $\Lspace^r(\Omega)$. Then one of the following cases must occur.\s
\begin{enumerate}
\item[{\rm (i)}] The points $P_1$ and $P_2$ both lie in the region 
\[
	\set{(p,q)\in\mathcal T\colon 1/p-1/q\ge 1/r}\,; 
\]
in this case, the $(p,q)$-multi-norms corresponding to points in this region are all equivalent to the minimum multi-norm on $\Lspace^r(\Omega)$.\s

\item[{\rm (ii)}] The points $P_1$ and $P_2$ both lie on the same curve $\set{(p,q)\in\mathcal D_c\colon  1/p-1/q\ge 1/2}$ for some $c\in [1/2,1/r)$.
Further, $p_1,p_2 \in [1,x_c]$. \s

\item[{\rm (iii)}] The points $P_1$ and $P_2$ both lie on the same curve $\set{(p,q)\in\mathcal C_c\colon  1\le p\le r}$ for some $c\in (0,1/2)$.\s

\item[{\rm (iv)}] The points $P_1$ and $P_2$ both lie on the line segment $\set{(p,p)\colon 1\le p<r}$; in this case, the $(p,p)$-multi-norms corresponding
 to points on this line segment are all equivalent to the maximum multi-norm on $\Lspace^r(\Omega)$.
\end{enumerate}\s
\end{theorem}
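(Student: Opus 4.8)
The plan is to combine the reduction to a single curve $\mathcal{D}_c$ with a transfer of the whole problem down to the Hilbert space $\ell^{\,2}$, where the spaces $\Pi_{q,p}$ are completely understood. First I would apply Theorem \ref{non equivalent of (p,q) mns on Lr, on different curves}, noting that $\bar r = r$ since $r<2$. That theorem already settles everything except the situation where $P_1$ and $P_2$ lie on a common curve $\mathcal{D}_c$ with $c\in[0,1/r)$: indeed, if $c_1,c_2\in[1/r,1)$ we land in case (i), where equivalence to the minimum multi-norm holds by Theorem \ref{equivalent of (p,q)- and min mns on Lr}, while the mixed case and the distinct-curve case both give non-equivalence. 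So the remaining work is to analyse two equivalent points on the same $\mathcal{D}_c$ with $c\in[0,1/r)$.

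The key device is to push the question to $\ell^{\,2}$. Since $\Lspace^r(\Omega)$ is infinite-dimensional, Proposition \ref{1.1} gives an isometric, $1$-complemented copy of $\ell^{\,r}$ inside it, and \cite[Proposition 4.3]{DP} shows that the $(p,q)$-multi-norm is unchanged on such a subspace; hence equivalence on $\Lspace^r(\Omega)$ forces equivalence on $\ell^{\,r}$. The contrapositive of Theorem \ref{3.13} then yields equivalence on $\ell^{\,2}$, and Theorem \ref{tool for general spaces} applied with $E=F=\ell^{\,2}$ (so that $\dual{E}=\ell^{\,2}$) gives $\Pi_{q_1,p_1}(\ell^{\,2}) = \Pi_{q_2,p_2}(\ell^{\,2})$. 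Thus equivalence imposes the necessary condition that $P_1$ and $P_2$ determine the same operator ideal on $\ell^{\,2}$.

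Next I would read off these ideals along $\mathcal{D}_c$ using Theorem \ref{3.8}. Writing $\mathcal{D}_c$ as its $\mathcal{C}_c$-part ($p\in[1,r]$) together with its horizontal part ($q=u_c$, $p\in[r,u_c]$), I would track three regimes: where $1/p-1/q\ge 1/2$ one gets $\B(\ell^{\,2})$ by Theorem \ref{3.8}(iii); where $p\le 2$ and $1/p-1/q<1/2$ one gets the pure Schatten class $\mathcal{S}_\rho$ with $1/\rho = 1/2-(1/p-1/q)$ by Theorem \ref{3.8}(ii); and where $2<p<q$ one gets the Lorentz--Schatten class $\mathcal{S}_{2q/p,q}$ by Theorem \ref{3.8}(iv). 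For $c\in(0,1/2)$ the $\mathcal{C}_c$-part is precisely the locus of the single constant class $\mathcal{S}_{2/(1-2c)}$, whereas every horizontal point with $p>r$ produces a strictly different class (using the distinctness facts recorded in \S\ref{Remark on Schatten class}); this forces both points onto the $\mathcal{C}_c$-segment, which is case (iii). For $c\in[1/2,1/r)$ the $\B(\ell^{\,2})$-locus is exactly $\{1/p-1/q\ge 1/2\}$ with $p\le x_c$ — the threshold $x_c$ being where the line $q=u_c$ meets $\mathcal{C}_{1/2}$ — while all remaining points of $\mathcal{D}_c$ give pairwise distinct classes, so equivalence forces both points into this locus, which is case (ii).

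The case $c=0$ is the one genuine gap in this scheme, and is where I expect the main obstacle. On the diagonal $\{(p,p):1\le p\le r\}$ Theorem \ref{3.8}(i) gives $\Pi_p(\ell^{\,2})=\mathcal{S}_2$ for \emph{every} $p$, so the $\ell^{\,2}$ test is completely blind and cannot separate the points. Here I would instead invoke Theorem \ref{3.6}: the points $(p,p)$ with $p<r$ are all equivalent to the maximum multi-norm, whereas $(r,r)$ is not. Consequently two distinct equivalent diagonal points cannot include $(r,r)$ (it would be equivalent to the maximum by transitivity), so both have $p<r$, which is case (iv). Assembling the four regimes, cases (i)--(iv) exhaust all possibilities, completing the argument.
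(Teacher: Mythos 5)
Your proposal is correct and follows essentially the same route as the paper: reduce via Theorem \ref{non equivalent of (p,q) mns on Lr, on different curves} to two points on a common curve $\mathcal D_c$, transfer to $\ell^{\,2}$ through Theorem \ref{3.13} and Theorem \ref{tool for general spaces} (the paper packages this as Corollary \ref{tool}), read off and compare the ideals $\Pi_{q,p}(\ell^{\,2})$ along $\mathcal D_c$ using Theorem \ref{3.8} and the distinctness of the Schatten and Lorentz--Schatten classes, and dispose of the diagonal $c=0$ by Theorem \ref{3.6}. The only cosmetic difference is that the paper argues by cases on the positions of $P_1,P_2$ relative to $x_c$, while you describe the class map along $\mathcal D_c$ and note where it is constant versus injective; just make sure, in the $c\in(0,1/2)$ case, to state explicitly that the classes attached to distinct horizontal points are also pairwise distinct (not merely distinct from the $\mathcal C_c$-class), which is what rules out two equivalent points both lying past $p=r$.
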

\begin{proof}
By Theorems \ref{3.6} and \ref{non equivalent of (p,q) mns on Lr, on different curves}, all that remains to be considered is the case where
 $P_1$ and $P_2$ both lie on the same curve $\mathcal D_c$\,, where $0< c<1/r$. Without loss of generality, we may suppose that $p_1<p_2$, and so $p_1<q_1$ and
\[
	1/p_1-1/q_1\ge 1/p_2-1/q_2\,.
\]

\noindent\emph{Case 1: $c\in [1/2,1/r)$ and $1/p_i-1/q_i<1/2$ for both $i=1,2$.}  
Then, by Theorem  \ref{3.8}(i), (ii), or (iv), we have, for each $i=1,2$,   
\[
	\Pi_{q_i,p_i}(\lspace^2)= \ \textrm{either}\ \mathcal S_{2q_i/p_i,q_i}(\lspace^2)\ \textrm{or}\ \mathcal S_{r_i}(\lspace^2)\,,
\]
where $1/r_i=1/2-1/p_i+1/q_i$. Note that, since $c\ge 1/2$ and $P_1\neq P_2$, we must have $1/p_1-1/q_1\neq 1/p_2-1/q_2$, and so $r_1\neq r_2$.
 Thus we see, by a remark on page \pageref{Remark on Schatten class}, that $\Pi_{q_1,p_1}(\lspace^2) \neq \Pi_{q_2,p_2}(\lspace^2)$, and hence,
 by Corollary \ref{tool}, $P_1$ and $P_2$ are not equivalent on $\lspace^r$. This contradicts the hypothesis, and so this case cannot occur.
 \s

\noindent\emph{Case 2: $c\in [1/2,1/r)$ and $1/p_2-1/q_2<1/2\le 1/p_1-1/q_1$.} Then, by Theorem  \ref{3.8}, we have   
\[
	\Pi_{q_2,p_2}(\lspace^2)= \ \textrm{either}\ \mathcal S_{2q_2/p_2,q_2}(\lspace^2)\ \textrm{or}\ \mathcal S_{r_2}(\lspace^2)\,,
\]
where $1/r_2=1/2-1/p_2+1/q_2$. On the other hand, the same theorem implies that $\Pi_{q_1,p_1}(\lspace^2)=\operators(\lspace^2)$. 
So again we see that $\Pi_{q_1,p_1}(\lspace^2) \neq \Pi_{q_2,p_2}(\lspace^2)$, and hence, by Corollary \ref{tool},  $P_1$ and $P_2$ are not equivalent on $\lspace^r$. This contradicts 
the hypothesis, and so this case cannot occur.\s

We have shown in the above two cases that we cannot have both $1/p_2 -1/q_2 < 1/2$ and $c \in [1/2,1/r)$, and so necessarily $1 \leq p_1\leq p_2 \leq x_c$ when 
$c \in [1/2,1/r)$.\s

\noindent\emph{Case 3: $c\in (0,1/2)$.} Now $1/p_i-1/q_i<1/2$ for each $i=1,2$, and so, by Theorem  \ref{3.8}, we have  
\[
	\Pi_{q_i,p_i}(\lspace^2)= \ \textrm{either}\ \mathcal S_{2q_i/p_i,q_i}(\lspace^2)\ \textrm{or}\ \mathcal S_{r_i}(\lspace^2)\,,
\]
for each $i=1,2$, where $1/r_i=1/2-1/p_i+1/q_i$. Note that $r_1$ and $r_2$ cannot both be equal to  $2$. Thus, since $P_1$ and $P_2$ are equivalent on
 $\lspace^r$, by Corollary \ref{tool}, we must have   $\Pi_{q_1,p_1}(\lspace^2) = \Pi_{q_2,p_2}(\lspace^2)$. The only way this can happen, by the
 remark on page \pageref{Remark on Schatten class}, is when $p_i\le 2$ ($i=1,2$) and $1/p_1-1/q_1=1/p_2-1/q_2$. By the definition of the curve 
$\mathcal D_c$, this can happen only if $p_i\le r$ ($i=1,2$).\s

The three cases above complete the proof.
\end{proof}\s

\begin{remark} \label{remark 3.17}
In \cite{BDP}, it will be shown that $P_1$ and $P_2$ are equivalent whenever $1<r<2$ and both points lie on the same curve $\mathcal C_c$ 
for some $c \in (0,1/r)$.  Thus we know in every case whether $P_1$ and $P_2$ are equivalent, save for the case where both points lie on
 the same horizontal line $q = u_c$  and where $r\leq p_1<p_2 \leq x_c$. In this case,  $\Pi_{q_1,p_1}(\lspace^2) = \Pi_{q_2,p_2}(\lspace^2)$, a necessary 
condition for equivalence of the two multi-norms by Theorems \ref{tool for general spaces} and \ref{3.13}.\qed
\end{remark}\s

\begin{theorem}\label{equivalent of (p,q) mns on Lr when r >=2}
Take $r\geq 2$,  and  let $\Omega$ be a measure space such that $\Lspace^r(\Omega)$ is infinite dimen\-sional. Suppose that two distinct points 
$P_1=(p_1,q_1)$ and $P_2=(p_2,q_2)$ in $\mathcal T$ are equivalent on  $\Lspace^r(\Omega)$. Then one of the following cases must occur.\s

\begin{enumerate}
\item[{\rm (i)}] The points $P_1$ and $P_2$ both lie in the region 
\[
	\set{(p,q)\in\mathcal T\colon 1/p-1/q\ge 1/2}\,;
\]
in this case, the $(p,q)$-multi-norms corresponding to points in this region are all equivalent to the minimum multi-norm on $\Lspace^r(\Omega)$.\s

\item[{\rm (ii)}] The points $P_1$ and $P_2$ both lie on the same curve $\set{(p,q)\in\mathcal C_c\colon  1\le p\le 2}$ for some $c\in (0,1/2)$.\s

\item[{\rm (iii)}] The points $P_1$ and $P_2$ both lie on the line segment $\set{(p,p)\colon 1\le p\le 2}$; in this case,
the $(p,p)$-multi-norms corresponding to points on this line segment are all equivalent to the maximum multi-norm on $\Lspace^r(\Omega)$. 
\end{enumerate}\s
\end{theorem}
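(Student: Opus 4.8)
The plan is to mirror the proof of Theorem \ref{equivalent of (p,q) mns on Lr when r<2}, now specialized to the situation $\bar{r} = \min\{2,r\} = 2$. Since $\ell^{\,r}$ is $1$-complemented in $\Lspace^r(\Omega)$ by Proposition \ref{1.1}, and the $(p,q)$-multi-norm restricts faithfully to $1$-complemented subspaces, equivalence on $\Lspace^r(\Omega)$ forces equivalence on $\ell^{\,r}$; hence it suffices to reach a contradiction by exhibiting non-equivalence on $\ell^{\,r}$. The first step is to dispose of points lying on distinct curves via Theorem \ref{non equivalent of (p,q) mns on Lr, on different curves}: if both $c_1,c_2\in[1/2,1)$, then $P_1$ and $P_2$ are equivalent, each equivalent to the minimum multi-norm, which is precisely case (i); and if $c_1\neq c_2$ with $\min\{c_1,c_2\}<1/2$, they are not equivalent, contrary to hypothesis. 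Thus, outside case (i), both points lie on a single curve $\mathcal D_c$ with $c\in[0,1/2)$, and I would treat $c=0$ and $c\in(0,1/2)$ separately.

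For $c=0$ we have $\mathcal D_0 = \{(p,p) : 1\le p\le r\}$, and here the Hilbert-space reduction is unusable, since $\Pi_p(\ell^{\,2}) = \mathcal S_2(\ell^{\,2})$ for every $p$ by Theorem \ref{3.8}(i). Instead I would appeal to Theorem \ref{3.6}: clauses (iii) and (vi) give $(\norm_n^{(p,p)})\cong(\norm_n^{\max})$ for $1\le p\le 2$, while clause (ii) gives $(\norm_n^{(p,p)})\not\cong(\norm_n^{\max})$ for $p>2$, and clause (i) gives $(\norm_n^{(p,p)})\not\cong(\norm_n^{(q,q)})$ for distinct $p,q\ge 2$. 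Consequently two distinct equivalent diagonal points must both satisfy $p\le 2$, and are then both equivalent to the maximum multi-norm; this is case (iii).

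For $c\in(0,1/2)$ I would invoke the Khintchine reduction of Corollary \ref{tool}: equivalence of $P_1$ and $P_2$ on $\ell^{\,r}$ forces $\Pi_{q_1,p_1}(\ell^{\,2}) = \Pi_{q_2,p_2}(\ell^{\,2})$, and Theorem \ref{3.8} identifies these spaces. A point of $\mathcal D_c$ with $p\le 2$ is forced onto the $\mathcal C_c$-part, where $1/p-1/q=c$, and yields the Schatten class $\mathcal S_s$ with $1/s = 1/2 - c$, the same class for every such point; a point with $2<p<q$ yields $\mathcal S_{2q/p,q}$; and the lone diagonal endpoint of a horizontal part, where $p=q>2$, yields $\mathcal S_2$. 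By the distinctness of Schatten classes recorded on page \pageref{Remark on Schatten class}, one has $\mathcal S_s\neq\mathcal S_{2q/p,q}$ always, $\mathcal S_{2q_1/p_1,q_1}=\mathcal S_{2q_2/p_2,q_2}$ only when $(p_1,q_1)=(p_2,q_2)$, and $\mathcal S_s\neq\mathcal S_2$ because $s\neq 2$. Hence the equality $\Pi_{q_1,p_1}(\ell^{\,2}) = \Pi_{q_2,p_2}(\ell^{\,2})$ can hold for distinct points only when both have $p\le 2$, whence both lie on $\{(p,q)\in\mathcal C_c : 1\le p\le 2\}$; this is case (ii).

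I expect the main obstacle to be the bookkeeping in this final step: one must verify that every point of $\mathcal D_c$ with $p>2$ genuinely falls under clause (iv) of Theorem \ref{3.8} (or clause (i) at the single diagonal endpoint), so that its summing space is an $\mathcal S_{2q/p,q}$ (respectively $\mathcal S_2$), and then confirm that none of these can coincide with the common class $\mathcal S_s$ produced by the points with $p\le 2$. This comparison of Schatten indices is exactly what confines equivalent pairs to the portion $1\le p\le 2$ of $\mathcal C_c$, and completes the three cases.
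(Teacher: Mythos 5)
Your proposal is correct and follows essentially the same route as the paper: reduce to $\ell^{\,r}$ via the $1$-complemented copy, dispose of distinct curves and the diagonal via Theorems \ref{non equivalent of (p,q) mns on Lr, on different curves} and \ref{3.6}, and then, for two points on the same curve $\mathcal D_c$ with $c\in(0,1/2)$, use Corollary \ref{tool} together with the identification of $\Pi_{q,p}(\lspace^2)$ as Schatten(--Lorentz) classes in Theorem \ref{3.8} and their pairwise distinctness to force $p_1,p_2\le 2$. Your explicit bookkeeping of the three sub-cases ($p\le 2$ giving $\mathcal S_s$ with $1/s=1/2-c$, $2<p<q$ giving $\mathcal S_{2q/p,q}$, and the diagonal endpoint giving $\mathcal S_2$) matches the paper's argument, which runs the same comparison under the assumption $p_2>2$ to reach a contradiction.
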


\begin{proof}
As in Theorem \ref{equivalent of (p,q) mns on Lr when r<2}, all that remains to be considered is the case where $P_1$ and $P_2$ both lie on the same curve
 $\mathcal D_c$\,, where $0< c<1/2$. We again need to consider only the space $\lspace^r$. For this, suppose without loss of generality that
$p_1<p_2$, and so $p_1<q_1$. Assume towards a contradiction that $p_2>2$. Then, by Theorem  \ref{3.8}(i) or (iv), we have   
\[
	\Pi_{q_2,p_2}(\lspace^2)= \ \textrm{either}\ \mathcal S_{2q_2/p_2,q_2}(\lspace^2)\ \textrm{or}\ \mathcal S_2(\lspace^2)\,.
\]

First, suppose that $p_1>2$. Then, by Theorem  \ref{3.8}(iv), we have   
\[
	\Pi_{q_1,p_1}(\lspace^2)= \mathcal S_{2q_1/p_1,q_1}\,.
\] 
But we know that $\mathcal S_{2q_1/p_1,q_1}(\lspace^2)$ is never equal to either $\mathcal S_{2q_2/p_2,q_2}(\lspace^2)$ or 
$\mathcal S_2(\lspace^2)$, and so $P_1$ and $P_2$ are not equivalent on $\lspace^r$ by Corollary \ref{tool}.

Second, suppose that $p_1\le 2$. Then $\Pi_{q_1,p_1}(\lspace^2) = \mathcal S_{r_1}(\lspace^2)$  by  Theorem \ref{3.8}(ii), where $1/r_1= 1/2-c$,
 and so $r_1>2$.  Thus again we see that $\Pi_{q_1,p_1}(\lspace^2) \neq \Pi_{q_2,p_2}(\lspace^2)$ by a remark on page \pageref{Remark on Schatten class}.

In both cases, we arrive at a contradiction to the assumption that $P_1$ and $P_2$ are equivalent on $\lspace^r$. Therefore $p_2\le 2$, and the proof is completed.
\end{proof}
 
In the Hilbert spaces case, i.e. when $r=2$, using Theorems~\ref{thm:hilb22}, we see that the $(p,p)$-multi-norms corresponding to points in the 
clause (iii) above are all equivalent to the Hilbert space multi-norm.\s
 
\begin{remark}\label{remark 3.19}
There remains the case where $P_1,P_2$ both lie on a curve $\mathcal C_c$ such that $0\le c<1/2$ and $p_1,p_2\in [1,2]$. Then again
$\Pi_{q_1,p_1}(\lspace^2) = \Pi_{q_2,p_2}(\lspace^2)$, a necessary condition for equivalence by Theorems \ref{tool for general spaces} and \ref{3.13}.
 In \cite{BDP}, it will be shown that $P_1$ and $P_2$ are indeed equivalent whenever $r\geq 2$ and both points lie on the same curve $\mathcal C_c$ for some $c \in (0,1/2)$.  
 Thus we have a complete classification whenever $r\geq 2$.\qed
\end{remark}
 \medskip
 
 \section{The relation with standard $t$-multi-norms}
 
 Let $\Omega$ be a measure space, and take $r\ge 1$. Then we have defined the standard $t$-multi-norm $(\norm^{[t]}_n)$ on $\Lspace^r(\Omega)$ 
whenever $t\ge r$, and we have defined the $(p,q)$-multi-norm $(\norm^{(p,q)}_n)$ on $\Lspace^r(\Omega)$ whenever  $(p, q) \in {\mathcal T}$. 
 We \emph{conjecture} that $(\norm^{[t]}_n)\not\cong (\norm^{(p,q)}_n)$  whenever $r>1$ and $\Lspace^r(\Omega)$ is infinite dimensional.

The first result proves rather more than the conjecture, but only in the special case in which $t=r$.  In the
following theorem, we suppose that $\co\otimes \Lspace^r(\Omega)\subset \Lspace^r(\Omega,\co)$ has the  norm from $\Lspace^r(\Omega,\co)$ corresponding to the 
standard $r\,$-multi-norm based on $\Lspace^r(\Omega)$ in  the manner explained above.

\begin{theorem} \label{nonequivalence}   Let $\Omega$ be a measure space, and take $r>1$. Suppose that  $\Lspace^r(\Omega)$ is an  infinite-dimensional
 space. Then the $\co$-norm on $\co\otimes \Lspace^r(\Omega)$ induced by the standard $r\,$-multi-norm  $(\norm^{[r]}_n:n\in\naturals)$ based 
on $\Lspace^r(\Omega)$ is not equivalent to any uniform $\co$-norm.  
\end{theorem}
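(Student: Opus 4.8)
The plan is to show that the standard $r$-multi-norm \emph{inflates unboundedly under contractions}, a property no uniform $\co$-norm can tolerate, via a self-similar tensor-power construction. First I reduce to $\ell^{\,r}$: by Proposition \ref{1.1} there is an isometric lattice homomorphism $J\colon\ell^{\,r}\to\Lspace^r(\Omega)$ with a positive contractive left inverse $Q$, and since the standard $r$-multi-norm depends only on the norm and the lattice structure (\cite[Theorem 4.36]{DP}), $I\otimes J$ is isometric and $I\otimes Q$ contractive for $(\norm^{[r]}_n)$, while any uniform $\co$-norm is functorial for these maps; so it suffices to work on $\ell^{\,r}$. Suppose then that $(\norm^{[r]}_n)$ on $\co\otimes\ell^{\,r}$ were equivalent to (the restriction of) a uniform $\co$-norm $\alpha$, say $C^{-1}\lV\,\cdot\,\rV^{[r]}\le\alpha\le C\lV\,\cdot\,\rV^{[r]}$. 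For any contraction $T\colon\ell^{\,r}\to\ell^{\,r}$ the functoriality of $\alpha$ gives $\alpha((I\otimes T)z)\le\alpha(z)$, and hence
\[ \lV (I\otimes T)z\rV^{[r]}\le C^2\lV z\rV^{[r]}\qquad(z\in\co\otimes\ell^{\,r})\,. \]
It therefore suffices to produce contractions $T$ for which $I\otimes T$ inflates the standard $r$-multi-norm by an arbitrarily large factor.

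Two ingredients drive this. First, a \emph{single} amplifying contraction: on $\ell^{\,r}_2$ set $f_1=2^{-1/r}(1,1)$ and $f_2=2^{-1/r}(1,-1)$, so that $\lV f_i\rV_r=1$ and, checking the nontrivial partitions of $\{1,2\}$, $\lV (f_1,f_2)\rV^{[r]}_2=1$. The map $T_0$ with $T_0f_1=\delta_1$, $T_0f_2=\delta_2$ is $T_0(x,y)=2^{1/r-1}(x+y,x-y)$; by Clarkson's inequalities $\lV T_0\rV=1$ when $r\ge2$ and $\lV T_0\rV=2^{2/r-1}$ when $1<r\le2$. After normalising $T_0$ to a contraction one obtains
\[ \lV (T_0f_1,T_0f_2)\rV^{[r]}_2=\lambda\,\lV(f_1,f_2)\rV^{[r]}_2\,,\qquad \lambda=2^{1/r}\ (r\ge2)\,,\quad \lambda=2^{\,1-1/r}\ (1<r\le2)\,, \]
and crucially $\lambda>1$ for every $r>1$. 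Second, \emph{multiplicativity}: because for $t=r$ the standard $r$-multi-norm is exactly the norm on $\co\otimes\Lspace^r(\Omega)$ inherited from $\Lspace^r(\Omega,\co)$, and since $\sup_{j,l}|g_j(s)h_l(t)|=(\sup_j|g_j(s)|)(\sup_l|h_l(t)|)$, the standard $r$-multi-norm is multiplicative under the tensor embedding $\Lspace^r(\Omega_1)\otimes\Lspace^r(\Omega_2)\subset\Lspace^r(\Omega_1\times\Omega_2)$.

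Combining these, the tensor power $T_0^{\otimes k}$ acts on $\ell^{\,r}_{2^k}=\Lspace^r(\{1,2\}^k)$; it is again a contraction (contractions of $\Lspace^r$-spaces tensor to a contraction, by Fubini), and by multiplicativity it inflates the $k$-fold tensor of $(f_1,f_2)$, whose standard $r$-multi-norm is $1$, by exactly $\lambda^k$. Transporting $\ell^{\,r}_{2^k}$ into $\ell^{\,r}$ as a $1$-complemented subspace and extending $T_0^{\otimes k}$ by the coordinate projection gives a contraction $T$ of $\ell^{\,r}$ and a tensor $z_k$ with $\lV z_k\rV^{[r]}=1$ and $\lV(I\otimes T)z_k\rV^{[r]}=\lambda^k$. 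The displayed inequality then forces $\lambda^k\le C^2$ for all $k\in\N$, contradicting $\lambda>1$.

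The main obstacle is the multiplicativity step and the self-similarity it exploits: this is precisely the property that fails for $t>r$, where there is no $\Lspace^r(\Omega,\co)$-representation, which is why the statement is special to $t=r$. The remaining technical points are to verify the single amplification \emph{uniformly in} $r>1$ by invoking the correct Clarkson inequality in each range, and to confirm that the reduction to $\ell^{\,r}$ and the extension of $T_0^{\otimes k}$ genuinely preserve the standard $r$-multi-norm, using its dependence on only the lattice structure together with the behaviour of $(\norm^{[r]}_n)$ under isometric lattice homomorphisms and contractive projections.
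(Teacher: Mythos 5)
Your proof is correct, but it takes a genuinely different route from the paper's. The paper quotes the Defant--Floret result that, for $S\in\operators(\Lspace^r(\Omega))$, the operator $I\otimes S$ is bounded on $\co\otimes\Lspace^r(\Omega)\subset\Lspace^r(\Omega,\co)$ if and only if $S$ is regular, and then exhibits a single non-regular operator (the discrete Hilbert transform, transported by duality); the uniformity axiom $\alpha((I\otimes S)z)\le\lV S\rV\,\alpha(z)$ then gives the contradiction at once. You instead build, by hand, a sequence of contractions with unboundedly growing amplification of the $[r]$-norm: the normalised Walsh--Hadamard map $T_0$ on $\ell^{\,r}_2$ sends the `flat' pair $(f_1,f_2)$ of $[r]$-norm $1$ to a disjointly supported pair of $[r]$-norm $2^{1/r}$, and the identification of $\norm^{[r]}$ with the $\Lspace^r(\Omega,\co)$-norm makes the $[r]$-norm multiplicative under tensoring, so the $k$-th tensor powers give amplification $\lambda^k$ with $\lambda>1$ exactly when $r>1$ (and $\lambda=1$ at $r=1$, consistent with Theorem \ref{2.13}). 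In effect you re-prove, quantitatively and from scratch, the key fact that the paper imports: that the normalised Walsh matrices are uniformly bounded on $\ell^{\,r}$ but have unbounded regular norm. Your version is elementary, self-contained, and gives an explicit rate; the paper's is shorter given the cited machinery and ties the result to the structure theory of regular operators. Two small points to tighten: the operator norms of the Hadamard matrix on $\ell^{\,r}_2$ are cleanest via Riesz--Thorin between the endpoints $r=1,2,\infty$ (the Clarkson inequality for $1<r<2$ controls the $\ell^{\,r'}$-norm of the image, so an extra comparison of $\ell^{\,r}$ and $\ell^{\,r'}$ norms is needed); and in the reduction it is safer to pass to the contraction $S=J\circ T\circ Q$ of $\Lspace^r(\Omega)$ and use that $I\otimes J$ is a $[r]$-isometry via equation \eqref{(2.3a)}, rather than asserting functoriality of the $[r]$-norm under $I\otimes Q$ directly (though the latter also holds because $Q$ is a positive contraction).
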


\begin{proof} The following theorem  is  proved in  \cite[Section~7.3]{DF}. Suppose that $S\in {\B}(\Lspace^r(\Omega))$.  Then  the operator 
$I\otimes S:\co\otimes \Lspace^r(\Omega) \rightarrow \co\otimes \Lspace^r(\Omega)$ extends to a bounded operator on $\Lspace^r(\Omega,\co)$ if and only if $S$ is 
\emph{regular}, in the sense that  it is  a linear combination  of positive operators on the Banach lattice $\Lspace^r(\Omega)$.  However, since $\Lspace^r(\Omega)$ 
is an  infinite-dimensional space,  not all the operators  $S\in {\B}(\Lspace^r(\Omega))$ are regular.

Indeed, for a concrete example of an operator in  ${\B}(\Lspace^r(\Omega))$ which is not regular, we follow \cite[Section~7.6]{DF}.  Set $s=r'$, and let
 $S:\ell^{\,s}(\mathbb Z)\rightarrow \ell^{\,s}(\mathbb Z)$ be the discrete Hilbert transform given by
\[
 S(\delta_k) = \sum_{m\not=k} \frac{1}{m-k} \delta_m\quad (k\in\naturals)\,. 
\]
Then $S$  is bounded on $\lspace^s(\mathbb Z)$, but $I\otimes S$ is not bounded on the space
 $\ell^{\,1}\otimes\ell^{\,s}(\mathbb Z) \subset  \ell^{\,s}(\mathbb Z,\ell^{\,1})$. 
By duality, we see that $I\otimes S'$ is not bounded on the space $\co\otimes\,\ell^{\,r}(\mathbb Z) \subset  \ell^{\,r}(\mathbb Z,\co)$. 
 In the case where  $\Lspace^r(\Omega)$ is infinite dimensional, this  latter space contains a $1$-complemented copy of $\ell^{\,r}(\mathbb Z)$, and so we obtain
 an example of an operator on $\Lspace^r(\Omega)$ that is not regular.
 
For a stronger example, it  is shown by Arendt and Voigt \cite{AV} that the subalgebra of regular operators on $\Lspace^r(\Omega)$ is not even dense 
in ${\B}(\Lspace^r(\Omega))$ whenever $r>1$ and $\Lspace^r(\Omega)$  is  infinite dimensional.

We conclude that the standard $r\,$-multi-norm cannot be equivalent to any uniform $\co$-norm on $\co\otimes \Lspace^r(\Omega)$. 
\end{proof}\s

\begin{corollary}\label{nonequivalence with (p,q)}
Let $\Omega$ be a measure space, and take $r>1$. Suppose that  $\Lspace^r(\Omega)$ is an  infinite-dimensional
 space. Then  the standard $r\,$-multi-norm is not equivalent to  the maximum or  minimum multi-norms or to any $(p,q)$-multi-norm   on $\Lspace^r(\Omega)$ 
for $(p, q) \in {\mathcal T}$.\end{corollary}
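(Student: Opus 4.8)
The plan is to deduce the corollary from Theorem~\ref{nonequivalence} by observing that each multi-norm in question corresponds, under the bijection of Theorem~\ref{multi-norm as tensor}, to a \emph{uniform} $\co$-norm on $\co\otimes \Lspace^r(\Omega)$. First I would record the translation between the two notions of equivalence: two multi-norms based on $\Lspace^r(\Omega)$ are mutually equivalent if and only if the $\co$-norms to which they correspond are equivalent on $\co\otimes \Lspace^r(\Omega)$. This is immediate from the defining relation $\lV(x_1,\dots,x_n)\rV_n=\lV\sum_{i=1}^n\delta_i\otimes x_i\rV$, since the elements of this form span $\coo\otimes \Lspace^r(\Omega)$, which is dense in $\co\otimes \Lspace^r(\Omega)$ for any reasonable cross-norm, and every $\co$-norm is a reasonable cross-norm.

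Next I would verify that the maximum multi-norm, the minimum multi-norm, and each $(p,q)$-multi-norm corresponds to a uniform $\co$-norm. For the $(p,q)$-multi-norm with $1\le p\le q<\infty$ this is precisely the final assertion of Theorem~\ref{summing}. For the maximum multi-norm the corresponding $\co$-norm is the restriction of the projective tensor norm $\norm_\pi$, and for the minimum multi-norm it is the restriction of the injective tensor norm $\norm_\varepsilon$; both $\norm_\pi$ and $\norm_\varepsilon$ are uniform cross-norms, so I would check that the restriction of a uniform cross-norm to tensor products of the form $\co\otimes E$ yields a uniform $\co$-norm in the sense of the definition---the cross-norm property gives $\lV\delta_1\otimes x\rV=\lV x\rV$, and uniformity in each slot gives that $I\otimes T$ is bounded with norm exactly $\|T\|$.

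Finally I would combine these observations with Theorem~\ref{nonequivalence}: since the $\co$-norm induced by the standard $r\,$-multi-norm is not equivalent to any uniform $\co$-norm, it is in particular not equivalent to the uniform $\co$-norms corresponding to the maximum, the minimum, or any $(p,q)$-multi-norm, and hence, by the translation in the first step, the standard $r\,$-multi-norm is equivalent to none of them. I expect the only genuine work to lie in the second step---confirming that the restriction of a uniform cross-norm to $\co\otimes E$ meets the precise definition of a uniform $\co$-norm---together with a little care that the concrete $\co$-norm from $\Lspace^r(\Omega,\co)$ appearing in Theorem~\ref{nonequivalence} is indeed the one induced by the standard $r\,$-multi-norm, which is exactly the content of the discussion preceding that theorem.
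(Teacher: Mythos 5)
Your proposal is correct and follows essentially the same route as the paper, which deduces the corollary from Theorem \ref{nonequivalence} by noting that the projective, injective, and Chevet--Saphar norms are uniform $\co$-norms. Your version is in fact slightly more careful, since you invoke the final assertion of Theorem \ref{summing} to cover the general $(p,q)$-multi-norm (not just the $(p,p)$ case handled by the Chevet--Saphar norm) and you explicitly check that restricting a uniform cross-norm to $\co\otimes E$ yields a uniform $\co$-norm.
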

\begin{proof}
This follows from the theorem because the projective, injective, and Chevet--Saphar norms are uniform $\co$-norms.
\end{proof}  

Again suppose that $\Omega$ is a measure space.  
Since $\Lspace^1(\Omega)$ is Dedekind complete as a Banach lattice, 
it follows from a remark on page 13 of \cite{AB} that every order-bounded operator on $\Lspace^1(\Omega)$ is regular. Since $\Lspace^1(\Omega)$ is an $AL$-space, 
and hence a $KB$-space (see \cite{AB}), it follows from \cite[Theorem 15.3]{AB} that every bounded operator on $\Lspace^1(\Omega)$ is order-bounded. Thus, 
in this case, every  $S \in{\B}(\Lspace^1(\Omega))$ is regular.  Thus the argument of the above proof does not apply.  Indeed, the conclusion of 
the preceding paragraph does not hold: by Theorem \ref{2.13}, $(\norm_n^{[q]})=(\norm^{(1,q)}_n)$ on
 $\Lspace^1(\Omega)$ for every $q\ge 1$ ({\it cf.} Theorem \ref{equivalency for L1}).

 The following theorem subsumes Theorem \ref{2.12} and part of Corollary \ref{nonequivalence with (p,q)}. \s

 \begin{theorem} \label{equivalence of (p,q) and [q] multi-norms}
Let $\Omega $ be a measure space, and take $r>1$, where
$\Lspace^r(\Omega)$ is  infinite dimensional.   Suppose that $t\ge r$ and that $(p, q) \in {\mathcal T}$, and assume that 
\[
	(\left\Vert \,\cdot\, \right\Vert^{(p,q)}_n)\cong (\left\Vert \,\cdot\, \right\Vert^{[t]}_n)\quad\textrm{on}\quad \Lspace^r(\Omega)\,.
\]  
Then $r<2$, $t\ge 2r/(2-r)$, and $(p,q)$ lies on the same curve $\mathcal D_c$ as $(r,t)$ with $p\le 2t/(2+t)$, so that $1/p-1/q\ge 1/2$. 
Moreover, we must also have $(\left\Vert \,\cdot\, \right\Vert^{[t]}_n)\cong (\left\Vert \,\cdot\, \right\Vert^{(r,t)}_n)$ on $\Lspace^r(\Omega)$.
\end{theorem}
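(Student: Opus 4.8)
The plan is to reduce everything to the sequence space $\ell^{\,r}$, which is $1$-complemented in $\Lspace^r(\Omega)$ by Proposition \ref{1.1} (so that both the $(p,q)$-multi-norm and, via \eqref{(2.3a)}, the standard multi-norms of tuples in the embedded copy are computed inside $\ell^{\,r}$), and then to play off two facts: equivalent multi-norms have comparable rates of growth, and the Khintchine embedding of Lemma \ref{reduce lr to l2} transports $(p,q)$-multi-norms between $\ell^{\,2}$ and $\ell^{\,r}$ while the standard $t$-multi-norm of the \emph{images} stays almost bounded. First I would record that, since $\varphi_n^{[t]}(\Lspace^r(\Omega))=n^{1/t}$ by \eqref{varphi sequence for standard q-multi-norm}, equivalence forces $\varphi_n^{(p,q)}(\Lspace^r(\Omega))\sim n^{1/t}$. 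Feeding this into Theorem \ref{values of phi sequence for Lr} excludes the region $1/p-1/q\ge 1/\bar{r}$ (there $\varphi_n^{(p,q)}\sim 1$) and shows that the two surviving alternatives both place $(p,q)$ on the curve $\mathcal D_c$ with $c=1/\bar{r}-1/t$: either $p\ge\bar{r}$ and $q=t$, or $p\le\bar{r}$ and $1/p-1/q=1/\bar{r}-1/t$.

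The technical heart is to show that equivalence forces $p<2$ and $1/p-1/q\ge 1/2$. I would take the $\ell^{\,2}$-vectors $f_i=n^{-1/2}\sum_{j}\zeta^{-ij}\delta_j$ of Example \ref{another calculation of (p,q)-norm on lr space} (with $r=2$) and push them into $\ell^{\,r}$ by the monomorphism $R_n$ of Lemma \ref{reduce lr to l2}. On one side, Lemma \ref{reduce lr to l2} together with \eqref{(2.5)} gives $\lV(R_nf_1,\dots,R_nf_n)\rV_n^{(p,q)}\gtrsim\lV(f_1,\dots,f_n)\rV_n^{(p,q)}\gtrsim n^{\gamma}$, where $\gamma=1/q$ if $p\ge 2$ and $\gamma=1/2-1/p+1/q$ if $p\le 2$. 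On the other side, since $\norm_n^{[t]}\le\norm_n^{[r]}$ and, by \eqref{(2.3a)}, $\lV\tuple{g}\rV_n^{[r]}=\bigl\lV\bigvee_i|g_i|\bigr\rV_r$, I would estimate $\lV(R_nf_1,\dots,R_nf_n)\rV_n^{[t]}\le\bigl\lV\bigvee_i|R_nf_i|\bigr\rV_r$ by a sub-Gaussian maximal inequality: the coordinates of $R_nf_i$ have the form $2^{-n/r}n^{-1/2}\sum_k\zeta^{ik}\varepsilon_{k,j}$, and averaging $\max_i\bigl|\sum_k\zeta^{ik}\varepsilon_{k,j}\bigr|^r$ over the $2^n$ sign columns yields $\bigl\lV\bigvee_i|R_nf_i|\bigr\rV_r\lesssim(\log n)^{1/2}$. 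Equivalence then bounds $n^{\gamma}$ by a power of $\log n$, forcing $\gamma\le 0$, that is, $p<2$ and $1/p-1/q\ge 1/2$. \textbf{The hard part is exactly this upper estimate for the standard $t$-multi-norm of the Khintchine images}: it is what breaks the apparent symmetry between the two multi-norms, and without it the $\ell^{\,2}$-vectors would be equally good for both norms and no constraint would emerge.

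Combining $1/p-1/q\ge 1/2$ with the location found above gives $r<2$ and the quantitative conclusions. On the $\mathcal C_c$ alternative one has $c=1/p-1/q\ge 1/2$, whence $1/\bar{r}\ge 1/2+1/t>1/2$, so $\bar{r}<2$, i.e. $r<2$ and $\bar{r}=r$; on the horizontal alternative $p\ge\bar{r}$ together with $1/p-1/q\ge 1/2$ again excludes $\bar{r}=2$, since $p\ge 2$ would give $1/p-1/q<1/2$. Thus $r<2$, $c=1/r-1/t\ge 1/2$ (equivalently $t\ge 2r/(2-r)$), and the inequality $1/p-1/q\ge 1/2$ says precisely that $p\le x_c=2t/(2+t)$; hence $(p,q)$ and $(r,t)$ lie on the same $\mathcal D_c$ inside the regime of Theorem \ref{equivalent of (p,q) mns on Lr when r<2}(ii).

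For the final assertion I would observe that in either alternative $(p,q)$ satisfies $1/p-1/q\le c=1/r-1/t$ and $q\le t$, so Theorem \ref{comparing (p,q) multi-norms} gives $(\norm_n^{(r,t)})\le(\norm_n^{(p,q)})$. Since $(\norm_n^{(p,q)})\cong(\norm_n^{[t]})$ by hypothesis, this yields $(\norm_n^{(r,t)})\preccurlyeq(\norm_n^{[t]})$; combined with $(\norm_n^{[t]})\le(\norm_n^{(r,t)})$ from Theorem \ref{2.13}, we conclude $(\norm_n^{[t]})\cong(\norm_n^{(r,t)})$, as required. I expect the only genuinely delicate computation to be the maximal estimate of the second paragraph; everything else is bookkeeping with the rate of growth, the monotonicity of Theorem \ref{comparing (p,q) multi-norms}, and the reduction to $\ell^{\,r}$.
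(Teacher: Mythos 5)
Your proposal is correct, and its overall architecture coincides with the paper's: reduce to $\ell^{\,r}$, use the rate of growth $\varphi^{[t]}_n=n^{1/t}$ together with Theorem \ref{values of phi sequence for Lr} to pin $(p,q)$ to the two alternatives on $\mathcal D_{c}$ with $c=1/\bar{r}-1/t$, then use the Khintchine embedding of Lemma \ref{reduce lr to l2} to produce a tuple whose standard $t$-multi-norm is small but whose $(p,q)$-multi-norm is of order $n^{(1/q-(1/p-1/2)^+)^+}$, forcing $1/p-1/q\ge 1/2$ and hence $r<2$, $t\ge 2r/(2-r)$, $p\le 2t/(2+t)$; the closing step via Theorem \ref{comparing (p,q) multi-norms} and Theorem \ref{2.13} is exactly the bookkeeping the paper leaves implicit. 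The one genuine divergence is your choice of test vectors, and it is worth noting that it makes the step you single out as ``the hard part'' harder than it needs to be. You transport the Fourier-type vectors $f_i$ of Example \ref{another calculation of (p,q)-norm on lr space} through $R_n$, and then must control $\bigl\Vert\bigvee_i|R_nf_i|\bigr\Vert_r$ by a sub-Gaussian maximal inequality, obtaining $O((\log n)^{1/2})$; this estimate is valid, and the $(\log n)^{1/2}$ loss is harmless since only the exponent of $n$ matters. The paper instead tests against $\tuple{g}=(R_n\delta_1,\dots,R_n\delta_n)$, i.e.\ the normalized Rademacher rows themselves: since every coordinate of every $g_i$ has modulus $2^{-n/r}$, for any measurable partition one gets $r_{\tuple{X}}(\tuple{g})=2^{-n/r}\bigl(\sum_i m_i^{t/r}\bigr)^{1/t}\le 1$ exactly (because $t/r\ge 1$ and $\sum_i m_i=2^n$), while Lemma \ref{reduce lr to l2} and Example \ref{a calculation of (p,q)-norm on lr space} give $\Vert\tuple{g}\Vert^{(p,q)}_n\sim n^{(1/q-(1/p-1/2)^+)^+}$ — the same exponent your $f_i$ produce. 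So the probabilistic maximal estimate, though correct, can be avoided entirely by testing against the basis of $\ell^{\,2}_n$ rather than its Fourier conjugate; both routes deliver the theorem.
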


\begin{proof} 
We need to consider only the space $\lspace^r$.   Set $\bar{r}=\min\set{r,2}$, as before. By \eqref{varphi sequence for 
standard q-multi-norm}, $\varphi^{[t]}_n(\lspace^r)=n^{1/t}$, and so it follows from Theorem \ref{values of phi sequence for Lr}  that one of the following must happen:
\begin{enumerate}
	\item[{\rm (i)}] either $p\ge \bar{r}$ and $q=t$\,;
	\item[{\rm (ii)}] or $p\le \bar{r}$ and $1/t=1/\bar{r}-1/p+1/q$\,.
\end{enumerate}

Let $n\in \N$, and take $\tuple{g}=(g_1, \dots, g_n)\in(\lspace^r)^n$ to be as in the proof of Lemma \ref{reduce lr to l2}. Then we see that
\[
\left\Vert \tuple{g} \right \Vert^{[t]}_n =\frac{1}{2^{n/r}}\sup\left\{\left(m_1^{t/r}  +
\cdots + m_k^{t/r}\right)^{1/t}\colon\ m_1 + \cdots+ m_k =2^n\right\}\,.
\]
Since $t/r\geq 1$, we have $m_1^{t/r}  + \cdots + m_k^{t/r}\leq 2^{nt/r}$, and so
$\left\Vert \tuple{g} \right \Vert^{[t]}_n  \leq 1$.  
On the other hand, Lemma \ref{reduce lr to l2} tells us that
\[
	\left\Vert \tuple{g} \right \Vert^{(p,q)}_n\sim \left\Vert (\delta_1,\ldots,\delta_n) \right \Vert^{(p,q)}_n\,,
\]
where $(\delta_k)$ is the standard basis sequence for $\lspace^2$. These and Example \ref{a calculation of (p,q)-norm on lr space} imply that $1/p-1/q\ge 1/2$. 

The previous two paragraphs now imply the claimed  result.
\end{proof}\s

Thus $(\left\Vert \,\cdot\, \right\Vert^{(p,q)}_n)$ is not equivalent to  $(\left\Vert \,\cdot\, \right\Vert^{[t]}_n)$ on $\Lspace^r(\Omega)$ in each 
of the following cases: 
\begin{enumerate}
\item[(i)] $r \ge 2$; 
\item[(ii)] $1<r< 2$ and $t< 2r/(2-r)$; 
\item[(iii)] $1/p-1/q<1/2$;
\item[(iv)] $(p,q)$ and $(r,t)$ lie on different curves $\mathcal D_c$.
\end{enumerate}
Moreover, our conjecture would be established if we could prove that $\norm^{[t]}_n\not\cong\norm^{(r,t)}_n$ on $\lspace^r$ for any $t\ge r$;
 this is open only when $1<r<2$ and $t \geq 2r/(2-r)$. Some further partial results will be given in \cite{BDP}. \medskip

\chapter{The Hilbert space multi-norm}

\section{Equivalent norms}  Let $H$ be a (complex) Hilbert space with inner product denoted by $\inner{\,\cdot\,,\,\cdot\,}$, and take $p\in [1,2]$. 
We know from Propositions \ref{equivalence of (p,p)-multi-norms on Lr}, \ref{2.10}, and \ref{thm:hilb22} that there is a constant $C_p$ such that 
\[
\left\Vert  \tuple{x}\right\Vert^H_n  = \left\Vert  \tuple{x}\right\Vert^{(2,2)}_n \leq  \left\Vert  \tuple{x}\right\Vert^{(p,p)}_n \le 
 \left\Vert  \tuple{x} \right\Vert^{\max}_n = \left\Vert  \tuple{x} \right\Vert^{(1,1)}_n\le
 C_p\left\Vert  \tuple{x} \right\Vert^{(p,p)}_n\quad(\tuple{x} \in H^n)
\]
for all $n\in\naturals$.  Our first theorem gives the best value of $C_2$.\smallskip

\begin{theorem}\label{litleGroth} 
Let $H$ be an infinite-dimensional, complex Hilbert space. Then 
\[
 \left\Vert  \tuple{x}\right\Vert^H_n = 
 \left\Vert  \tuple{x}\right\Vert^{(2,2)}_n \le \left\Vert  \tuple{x} \right\Vert^{\max}_n \le \frac{2}{\sqrt{\pi}}\left\Vert  \tuple{x} \right\Vert^{(2,2)}_n\quad(\tuple{x} \in H^n,\,n\in\N)\,; 
\] 
the constant $2/\sqrt{\pi}$ is best-possible in this inequality.
\end{theorem}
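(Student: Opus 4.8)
The plan is to translate everything into statements about $2$-summing operators and then invoke the sharp complex form of the Little Grothendieck Theorem. The equality $\left\Vert\tuple{x}\right\Vert^H_n=\left\Vert\tuple{x}\right\Vert^{(2,2)}_n$ is exactly Theorem~\ref{thm:hilb22}, and $\left\Vert\tuple{x}\right\Vert^{(2,2)}_n\le\left\Vert\tuple{x}\right\Vert^{\max}_n$ holds because the maximum multi-norm dominates every multi-norm; so the whole content is the final inequality and the assertion that the constant is optimal.

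First I would record the dual descriptions noted just before the theorem. Identifying $\dual{H}$ with $H$, Theorems~\ref{summing} and~\ref{connection with Chevet--Saphar norm} show that the dual of $(\co\otimes H,\norm^{(2,2)})$ is $(\Pi_2(\co,H),\pi_2)$, whereas the dual of the maximum multi-norm, realised as $(\co\projectivetensor H,\norm_\pi)$, is $(\operators(\co,H),\norm)$. Under these identifications the two dual norms live on the same underlying space, and the always-valid inclusion $\Pi_2(\co,H)\subseteq\operators(\co,H)$ with $\|T\|\le\pi_2(T)$ simply redualises to $\norm^{(2,2)}\le\norm^{\max}$. The essential external input is the sharp complex Little Grothendieck Theorem: every $T\in\operators(\co,H)$ is $2$-summing, with $\pi_2(T)\le\tfrac{2}{\sqrt{\pi}}\|T\|$. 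Dualising this inequality (bipolar theorem) on $\co\otimes H$ yields $\norm^{\max}\le\tfrac{2}{\sqrt{\pi}}\norm^{(2,2)}$, and restricting to the finitely supported tensors $\sum_i\delta_i\otimes x_i$ gives the stated inequality for every $\tuple{x}\in H^n$ and $n\in\N$.

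For best-possibility I would use that this passage to duals is \emph{exact} on best constants: the least $C$ with $\norm^{\max}\le C\norm^{(2,2)}$ on $\co\otimes H$ equals the least $C$ with $\pi_2(T)\le C\|T\|$ on $\operators(\co,H)$, and, by density of the finite-rank maps, this equals $\sup\{\pi_2(T)/\|T\|\}$ over finite-rank $T:\co\to H$. Since $H$ is infinite-dimensional it contains isometric copies of every $\ell^2_m$, so all the finite-dimensional extremisers for the complex Little Grothendieck constant are available inside $H$; and the optimal value of that constant is precisely $2/\sqrt{\pi}$, the reciprocal of $\mathbb{E}\lv g\rv=\sqrt{\pi}/2$ for a standard complex Gaussian $g$. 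Transporting an extremising sequence of such operators back through the duality produces tuples $\tuple{x}\in H^n$ with $\left\Vert\tuple{x}\right\Vert^{\max}_n/\left\Vert\tuple{x}\right\Vert^{(2,2)}_n\to 2/\sqrt{\pi}$, so no smaller constant can work.

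The main obstacle is the optimality half. One must (i) justify that the norm duality transfers best constants without loss — routine via the bipolar theorem, once one checks that the relevant unit balls are suitably closed so that bidualising returns the original norms — and (ii) supply the sharp value $2/\sqrt{\pi}$ of the complex Little Grothendieck constant together with an extremising family realising it. The inequality itself is then a formal dualisation of a cited theorem; the genuine work, and the only place where the complex scalars and the precise numerical constant enter, is the Gaussian computation underpinning the extremisers that show $2/\sqrt{\pi}$ cannot be lowered.
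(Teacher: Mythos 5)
Your proposal is correct and follows essentially the same route as the paper: identify the duals of the $(2,2)$- and maximum multi-norms on $\co\otimes H$ with $(\Pi_2(\co,H'),\pi_2)$ and $(\operators(\co,H'),\norm)$ respectively, and then invoke the sharp complex Little Grothendieck Theorem $\pi_2(T)\le (2/\sqrt{\pi})\lV T\rV$ together with the optimality of that constant. The paper is terser about why the passage to duals preserves the best constant, so your explicit appeal to the bipolar theorem and to finite-rank density is a welcome amplification rather than a deviation.
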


\begin{proof} By Theorem~\ref{connection with Chevet--Saphar norm}, the $(2,2)$-multi-norm on $H$ is the Chevet--Saphar norm $d_2$  on $\co\otimes H$. 
 Thus the dual space of $(\co\,\otimes H, \norm^H)$ is the space $\Pi_2(\co,H') = \Pi_2(\co,\overline H)$, 
where $\overline H$ is the conjugate of $H$.  Thus, by duality, the claim is equivalent to showing that 
\[
 \left\Vert T\right\Vert \leq \pi_2(T) \leq \frac{2}{\sqrt\pi} \left\Vert T\right\Vert\quad (T\in\B(\co,H))\,.
\]

The `Little Grothendieck Theorem'  says that every $T\in\B(\ell_n^{\,\infty},H)$ is $2$-summing, with $\pi_2(T)\leq (2/\sqrt\pi\,) \|T\|$ for each $n\in\N$.  See
\cite[Theorem~11.11]{DF} for the estimate, and \cite[Section~20.19]{DF}, where it is shown that this constant is the best possible 
(when the scalars are the complex numbers). 
 In particular, we see   that each operator $T\in\B(\co,H)$ is such that  $\pi_2(T)\leq (2/\sqrt\pi\,)\|T\|$; it follows that  
\[\sup\{ \pi_2(T) / \|T\| : T\in\B(\co,H) \} = 2/\sqrt\pi\,,
\] and this gives the required estimate.
\end{proof}\smallskip

The function $p\mapsto C_p,\,\;[1,2] \to [1, 2/\sqrt\pi]\,$, is increasing, with $C_1 =1 $ and $C_2 = 2/\sqrt\pi$; we do not have a formula for 
$C_p$. \medskip

\section{Equivalence at level $n$}

We now consider the best constant $c_n$, defined for each $n\in\naturals$, such that 
\[
	\left\Vert{\tuple{x}}\right\Vert^{\max}_n\le c_n\left\Vert{\tuple{x}}\right\Vert^{(2,2)}_n\quad (\tuple{x}\in H^n)\,.
\]
We know that $(c_n)$ is an increasing sequence in $[1,{2}/{\sqrt{\pi}}\,]$ with $c_1=1$ and that 
\[
	\lim_{n\to\infty} c_n=2/\sqrt{\pi}\,.
\]
We wonder which is the smallest value of $n$ such that $c_n>1$? The first fact that we can offer is that $c_2=c_3=1$, so that 
\[
\left\Vert{\tuple{x}}\right\Vert^{\max}_n = \left\Vert{\tuple{x}}\right\Vert^{(2,2)}_n= \left\Vert{\tuple{x}}\right\Vert^H_n\quad (\tuple{x}\in H^n) 
\]
for $n = 1,2,3$.

We start with some preliminary results. The following is a slight generalization of \cite[Proposition~2.8]{Jameson}.   In the result, we define $r$ by 
\[
\frac{1}{r} = \frac{1}{p}-\frac{1}{2} = \frac{1}{2} - \frac{1}{p'}
\]
in the case where $1<p<2$, so that $p=2r/(r+2)$.\s

\begin{proposition}\label{prop:2.1}
Let $1\leq p < \infty$, and let $(x_1,\ldots, x_n)$ be an orthogonal $n$-tuple   in a Hilbert space. Then
\[ \mu_{p,n}(x_1,\ldots, x_n) =
\begin{cases}
\max{\{ \|x_i\| : i\in\N_n\}} & (p\geq 2), \\
\Big( \sum_{i=1}^n \|x_i\|^r \Big)^{1/r} & (1< p < 2), \\
\Big( \sum_{i=1}^n\|x_i\|^2 \Big)^{1/2} & (p = 1).
\end{cases} \]
\end{proposition}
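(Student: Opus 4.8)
The plan is to combine the dual description of the weak $p$-summing norm in \eqref{weak--summing-norm 1} with orthogonality, so as to reduce everything to a single scalar optimisation, and then to treat the three ranges of $p$ separately. First I would invoke \eqref{weak--summing-norm 1} to write
\[
\mu_{p,n}(x_1,\ldots,x_n)=\sup\Bigl\{\Bigl\Vert\sum_{i=1}^n\zeta_ix_i\Bigr\Vert:\ \sum_{i=1}^n\abs{\zeta_i}^{p'}\le 1\Bigr\}\,,
\]
reading the constraint as $\max_i\abs{\zeta_i}\le 1$ when $p=1$ (so that $p'=\infty$). Since the $x_i$ are pairwise orthogonal, the Pythagorean identity gives $\bigl\Vert\sum_i\zeta_ix_i\bigr\Vert^2=\sum_i\abs{\zeta_i}^2\left\Vert x_i\right\Vert^2$, so that, on setting $a_i=\left\Vert x_i\right\Vert^2$,
\[
\mu_{p,n}(x_1,\ldots,x_n)^2=\sup\Bigl\{\sum_{i=1}^na_i\abs{\zeta_i}^2:\ \sum_{i=1}^n\abs{\zeta_i}^{p'}\le 1\Bigr\}\,.
\]
Each of the three asserted formulae then follows by evaluating this supremum.

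The two boundary cases are immediate. When $p\ge 2$, we have $p'\le 2$, whence $\sum_i\abs{\zeta_i}^2\le\bigl(\sum_i\abs{\zeta_i}^{p'}\bigr)^{2/p'}\le 1$, and so $\sum_ia_i\abs{\zeta_i}^2\le\max_ia_i$; equality holds for the point mass $\zeta_j=1$ (and $\zeta_i=0$ otherwise) at an index $j$ maximising $\left\Vert x_j\right\Vert$, giving $\mu_{p,n}=\max_i\left\Vert x_i\right\Vert$. When $p=1$, the constraint is $\max_i\abs{\zeta_i}\le 1$, so $\sum_ia_i\abs{\zeta_i}^2\le\sum_ia_i$, with equality when each $\abs{\zeta_i}=1$; hence $\mu_{1,n}=\bigl(\sum_i\left\Vert x_i\right\Vert^2\bigr)^{1/2}$.

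The substantive case, and the one I expect to be the only real work, is $1<p<2$, where $p'>2$ and the supremum is a genuine concave maximisation. Here I would apply Hölder's inequality to the sum $\sum_ia_i\abs{\zeta_i}^2$ with the conjugate exponents $\alpha=p'/(p'-2)$ and $\beta=p'/2$; since $2\beta=p'$, this yields
\[
\sum_{i=1}^na_i\abs{\zeta_i}^2\le\Bigl(\sum_{i=1}^na_i^{\alpha}\Bigr)^{1/\alpha}\Bigl(\sum_{i=1}^n\abs{\zeta_i}^{p'}\Bigr)^{2/p'}\le\Bigl(\sum_{i=1}^na_i^{\alpha}\Bigr)^{1/\alpha}\,.
\]
The remaining step is the exponent bookkeeping: using $1/r=1/p-1/2=1/2-1/p'$, a short calculation gives $2\alpha=2p'/(p'-2)=r$ and $1/\alpha=(p'-2)/p'=2/r$, so that $a_i^{\alpha}=\left\Vert x_i\right\Vert^{r}$ and the right-hand side equals $\bigl(\sum_i\left\Vert x_i\right\Vert^r\bigr)^{2/r}$. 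Taking square roots gives the upper bound $\mu_{p,n}\le\bigl(\sum_i\left\Vert x_i\right\Vert^r\bigr)^{1/r}$. For sharpness I would record the equality case of Hölder, choosing $\abs{\zeta_i}^{p'}$ proportional to $a_i^{\alpha}$ and normalising so that $\sum_i\abs{\zeta_i}^{p'}=1$; this attains the bound and completes the proof. The main obstacle is thus not conceptual but the care needed to verify that the Hölder exponents translate exactly into the index $r$ defined before the statement, and that the extremal $\zeta$ is admissible.
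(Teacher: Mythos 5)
Your proposal is correct and follows essentially the same route as the paper: both reduce $\mu_{p,n}$ via the dual formula and orthogonality to maximising $\sum_i \|x_i\|^2|\zeta_i|^2$ over the $\ell^{\,p'}$-ball, handle $p\geq 2$ and $p=1$ by the obvious extremal choices, and for $1<p<2$ apply H\"older (equivalently the $\ell^{R}$--$\ell^{R'}$ duality with $R=r/2$, $R'=p'/2$ used in the paper) together with its equality case. Your exponent bookkeeping ($2\alpha=r$, $|\zeta_i|^{p'}$ proportional to $\|x_i\|^{r}$) checks out.
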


\begin{proof} We calculate simply  that
\begin{align*}
\mu_{p,n}(x_1,\ldots, x_n) &=\sup\Big\{ \Big\| \sum_{i=1}^n \alpha_ix_i \Big\| : \sum_{i=1}^n |\alpha_i|^{p'}\leq 1 \Big\} \\
&= \sup\Big\{ \Big( \sum_{i=1}^n |\alpha_i|^2 \|x_i\|^2 \Big)^{1/2}  : \sum_{i=1}^n|\alpha_i|^{p'} \leq 1 \Big\}\,.
\end{align*}

Suppose that $p\geq 2$. Then $p'\leq 2$, and we see that the supremum is attained when
$(\alpha_i) = (\delta_{i,i_0})$ for some $ i_0\in\N_n$.  

Next, suppose that $1<p<2$, so that $2<p'<\infty$. We set $R=r/2$, so that 
 $R=p'/(p'-2)$ and $R' = p'/2>1$.  Then, by $\ell^R$--$\ell^{R'}$-duality, we see that 
\begin{eqnarray*} \Big( \sum_{i=1}^n \|x_i\|^{2R} \Big)^{1/R}= \sup\Big\{ \sum_{i=1}^n |\alpha_i|^2 \|x_i\|^2 : \sum_{i=1}^n |\alpha_i|^{2R'} \leq 1 \Big\}
= \mu_{p,n}(x_1,\ldots, x_n)^2\,
\end{eqnarray*}
because  $2R'=p'$, and hence  
\[
	\left(\sum_{i=1}^n \|x_i\|^r\right)^{1/r} = \mu_{p,n}(x_1,\ldots, x_n)\,.
\] 

Suppose that $p=1$. Then we are really taking  the supremum over the collection of sequence $(\alpha_i)$ such that
$|\alpha_i|\leq 1$ ($i\in\N_n$), and the result follows immediately.

 The result follows in each case.
\end{proof}\s

Let $H$ be a Hilbert space, and take $r$ with  $2\leq r\leq \infty$.  For $n\in\N$, we denote by $S^r_n \subset  H^n$ the set of all 
orthogonal $n$-tuples $(x_1,\cdots,x_n)\in H^n$ with $\sum_{i=1}^n \|x_i\|^r = 1$.  In particular, we have 
\[ 
	S^2_n = \big\{ (x_i)\in H^n : (x_i)\text{ orthogonal and }\|x_1+\cdots+x_n\|=1 \big\}\,. 
\]

By Proposition~\ref{prop:2.1}, with $r$ as defined  for some $p\in(1,2)$, we have 
\[ \overline{\langle S^r_n \rangle} \subset  (H^n, \mu_{p,n})_{[1]}\,. \]
That is, the closed convex hull of $S^r_n$ is a subset of the closed unit ball of $H^n$ equipped with the norm $\mu_{p,n}$.  By Proposition~\ref{prop:2.1},
this result also holds when $p=1$ and $r=2$, and it holds for $r=\infty$ and any $p\geq 2$. For us, it is actually these cases which are of most interest:
\[ 
	\overline{ \langle S^\infty_n \rangle }\subset  (H^n,\mu_{2,n})_{[1]}\,, \qquad
\overline{ \langle S^2_n \rangle }\subset  (H^n,\mu_{1,n})_{[1]}\,. 
\]

The Russo--Dye theorem \cite[Theorem 3.2.18(iii)]{HGD} can be used  to show that the closed unit ball $(H^n,\mu_{2,n})_{[1]}$ is precisely 
$\overline{\langle S^\infty_n \rangle}$.  Thus we could ask for which $n\in \N$ is it true that $\overline{ \langle S^2_n \rangle } = (H^n,\mu_{1,n})_{[1]}$?
We shall show shortly that this is equivalent to asking if the Hilbert multi-norm
and the maximum multi-norm agree at level $n$.\s

\begin{lemma}\label{Srn as set of extreme points}
Let $H$ be a Hilbert space, and suppose that $2\leq r<\infty$  and $n\in\mathbb N$.  Then 
\[ 
	S^r_n\subset \ex \overline{ \langle S^r_n \rangle } \,.
\]
In the   case where $H$ is a finite dimensional,   $S^r_n= \ex \overline{ \langle S^r_n \rangle }$.
\end{lemma}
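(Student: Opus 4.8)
The plan is to fix a point $\tuple{x}=(x_1,\dots,x_n)\in S^r_n$ and to exhibit a continuous linear functional on $H^n$ that attains its maximum over $C:=\overline{\langle S^r_n\rangle}$ at the single point $\tuple{x}$; this makes $\tuple{x}$ an exposed point of $C$, and exposed points are extreme, which gives the inclusion $S^r_n\subseteq \ex C$. The equality in the finite-dimensional case will then follow from the reverse inclusion $\ex C\subseteq S^r_n$, proved separately by a compactness/Carath\'eodory argument. Note that the orthogonality built into $S^r_n$ plays no role in the inclusion; only the constraint $\sum_i\|x_i\|^r=1$ is used.

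First I would set $\xi_i=\|x_i\|^{r-2}x_i$ for $i\in\N_n$ (with $\xi_i=0$ when $x_i=0$) and define $\Phi(\tuple{w})=\mathrm{Re}\sum_{i=1}^n\langle w_i,\xi_i\rangle$ for $\tuple{w}\in H^n$. For $\tuple{w}\in S^r_n$, the Cauchy--Schwarz inequality followed by H\"older's inequality with exponents $r$ and $r'$ gives
\[
\Phi(\tuple{w})\le \sum_{i=1}^n\|w_i\|\,\|x_i\|^{r-1}\le\Big(\sum_{i=1}^n\|w_i\|^r\Big)^{1/r}\Big(\sum_{i=1}^n\|x_i\|^r\Big)^{1/r'}=1\,,
\]
since $(r-1)r'=r$, while $\Phi(\tuple{x})=\sum_i\|x_i\|^r=1$. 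Thus $\Phi\le 1$ on $S^r_n$, hence by linearity and continuity on all of $C$, with $\Phi(\tuple{x})=1$. Tracing the two equality cases shows more: if $\tuple{s}^{(m)}\in S^r_n$ with $\Phi(\tuple{s}^{(m)})\to 1$, then H\"older equality forces $\|s^{(m)}_i\|\to\|x_i\|$ (the map $a\mapsto\sum_i a_i\|x_i\|^{r-1}$ has $(\|x_i\|)_i$ as its unique maximiser on the compact set $\{a\ge0:\sum_i a_i^r=1\}$), and then Cauchy--Schwarz equality together with the identity $\|s^{(m)}_i-x_i\|^2=\|s^{(m)}_i\|^2-2\,\mathrm{Re}\langle s^{(m)}_i,x_i\rangle+\|x_i\|^2$ forces $s^{(m)}_i\to x_i$. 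Hence approximate maximisers of $\Phi$ in $S^r_n$ converge to $\tuple{x}$: for every $\varepsilon>0$ there is $\delta>0$ with $\|\tuple{s}-\tuple{x}\|<\varepsilon$ whenever $\tuple{s}\in S^r_n$ and $\Phi(\tuple{s})>1-\delta$.

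The main obstacle is to promote this uniqueness among $S^r_n$ to uniqueness over the closed convex hull $C$ in the possibly infinite-dimensional setting. Given $\tuple{w}\in C$ with $\Phi(\tuple{w})=1$, I would write $\tuple{w}=\lim_k\tuple{w}_k$ with $\tuple{w}_k=\sum_j t_{k,j}\tuple{s}_{k,j}$ a convex combination of points of $S^r_n$, so that $\Phi(\tuple{w}_k)\to1$. Fixing $\varepsilon$ and the corresponding $\delta$, I split each combination into a ``good'' part (those $\tuple{s}_{k,j}$ with $\Phi(\tuple{s}_{k,j})>1-\delta$, of total mass $1-\tau_k$) and a ``bad'' part; the estimate $\Phi(\tuple{w}_k)\le 1-\tau_k\delta$ forces the bad mass $\tau_k\to0$. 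Using $\|\tuple{s}-\tuple{x}\|<\varepsilon$ on the good part and the uniform bound $\|\tuple{s}\|_{H^n}\le n^{1/2-1/r}$ for $\tuple{s}\in S^r_n$ (valid since $r\ge 2$) on the bad part, I obtain $\|\tuple{w}_k-(1-\tau_k)\tuple{x}\|\le\varepsilon+\tau_k\, n^{1/2-1/r}$; letting $k\to\infty$ gives $\|\tuple{w}-\tuple{x}\|\le\varepsilon$, and then $\varepsilon\to0$ gives $\tuple{w}=\tuple{x}$. Hence $\tuple{x}$ is the unique maximiser of $\Phi$ on $C$, so $\tuple{x}\in\ex C$, which proves $S^r_n\subseteq\ex C$.

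For the finite-dimensional equality, note that $S^r_n$ is closed (orthogonality and the condition $\sum_i\|x_i\|^r=1$ are closed conditions) and bounded, hence compact; therefore $\langle S^r_n\rangle$ is compact by Carath\'eodory's theorem and equals $C$. It then remains to show $\ex C\subseteq S^r_n$: if $\tuple{x}\in\ex C$, I would write $\tuple{x}=\sum_{j} t_j\tuple{s}_j$ as a finite convex combination of points of $S^r_n$ (Carath\'eodory), whereupon the defining property of an extreme point forces $\tuple{s}_j=\tuple{x}$ for every $j$ with $t_j>0$, so $\tuple{x}\in S^r_n$. Combining this with the first part yields $S^r_n=\ex C$ when $H$ is finite-dimensional.
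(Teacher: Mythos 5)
Your proposal is correct, but it takes a genuinely different route from the paper. The paper's proof is much shorter: it regards $H^n$ as the Banach space $X=\bigl(H^n,\lV\,\cdot\,\rV_X\bigr)$ with $\lV(x_i)\rV_X=\bigl(\sum_{i=1}^n\lV x_i\rV^r\bigr)^{1/r}$, observes that $\overline{\langle S^r_n\rangle}\subset X_{[1]}$ while every point of $S^r_n$ lies on the unit sphere of $X$, and then invokes the strict convexity of $X$ (an $\ell^{\,r}$-sum of Hilbert spaces, citing Boas): if $2y=x+z$ with $x,z\in X_{[1]}$ and $\lV y\rV_X=1$, then $\lV x\rV_X=\lV z\rV_X=1$ and strict convexity forces $x=z$. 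Your argument replaces this citation by an explicit computation: the functional $\Phi(\tuple{w})=\Re\sum_i\langle w_i,\lV x_i\rV^{r-2}x_i\rangle$ is exactly the norming functional of $\tuple{x}$ for $\norm_X$, and your two equality analyses (H\"older on the norms, Cauchy--Schwarz coordinatewise) are in effect a hands-on proof that $\tuple{x}$ is a strongly exposed point of $X_{[1]}$; the good/bad mass-splitting then transports the conclusion through the closed convex hull. What your approach buys is self-containedness and a strictly stronger conclusion (exposedness, with a quantitative modulus); what it costs is length, since the approximate-maximizer bookkeeping in the infinite-dimensional case is doing by hand what strict convexity gives for free. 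For the finite-dimensional equality the two arguments essentially coincide: you use Carath\'eodory plus the elementary fact that an extreme point of a convex hull must lie in the generating set, whereas the paper cites Mil'man's converse to the Krein--Mil'man theorem applied to the closed set $S^r_n$; both correctly reduce to the closedness of $S^r_n$. You are also right, as is implicit in the paper, that the orthogonality condition in the definition of $S^r_n$ plays no role in the inclusion $S^r_n\subset\ex\overline{\langle S^r_n\rangle}$ and is needed only so that $S^r_n$ is closed and so that the lemma meshes with Proposition \ref{prop:2.1}.
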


\begin{proof}
Let $X$ be the space $H^n$ with the norm $\norm$ given by 
\[\|(x_i)\|_X = \left(\sum_{i=1}^n \|x_i\|^r\right)^{1/r}\quad ((x_i) \in X)\,.
\]
Then $S^r_n$ is a subset of the closed unit ball of $X$, and hence also
$\overline{ \langle S^r_n \rangle }$ is a subset of the closed unit ball of $X$.
The space $X$ is strictly convex (see, for example \cite{boas}).

Assume towards a contradiction that $(y_i)\in S^r_n$, but that $(y_i)$ is not an extreme point of $\overline{ \langle S^r_n \rangle }$, so that we can find
$x,z\in \overline{ \langle S^r_n \rangle }$ with $x\not=z$ and $2y=x+z$. We  then have 
\[
1 = \|y\|_X \le  \frac{1}{2}(\|x\|_X + \|z\|_X) \leq 1\,,
\]
 and so $\|x\|_X = \|z\|_X=1$.  By the strict convexity of $X$, we have   $\|  (x+  z)/2\| < 1$ because $x\not=z$, a contradiction, as required.
 
 Now suppose that $H$ is finite dimensional. Then  the set  $S^r_n$ is closed, and so, by Mil'man's converse to the Krein--Mil'man theorem, 
$S^r_n= \ex \overline{ \langle S^r_n \rangle }$.
\end{proof}\s

Finally, we show the link with the Hilbert multi-norm.  In the result, we identify (anti-linearly) the dual space of $H^n$ with $H^n$;
 a  sequence $(x_n)$ in a Hilbert space is {\it orthogonal\/} if $[x_i,\,x_j] =0$ whenever $i\neq j$.\s

\begin{theorem}\label{S2n and unit ball of mu1n}
Let $H$ be a Hilbert space, and let $n\in\mathbb N$.   Then:\s

{\rm (i)}  the unit ball of the dual of $(H^n,\norm^H_n)$ is
$\overline{ \langle S^2_n \rangle }$;\s

{\rm (ii)}  the unit ball of the dual of $(H^n,\norm^{\max}_n)$ is the unit ball of $(H^n,\mu_{1,n})$.\s

\noindent In particular, $\norm^H_n = \norm^{\max}_n$ on $H^n$ whenever $S^2_n =  \ex \left((H^n,\mu_{1,n})_{[1]}\right)$. 
\end{theorem}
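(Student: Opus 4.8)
The plan is to carry everything over to dual norms, using the anti-linear identification of the dual of $H^n$ with $H^n$ and the pairing $\langle\tuple{x},\tuple{\lambda}\rangle=\sum_{i=1}^n[x_i,\lambda_i]$. Two norms on $H^n$ coincide exactly when their dual unit balls coincide, so once (i) and (ii) are established the final assertion reduces to the equality $\overline{\langle S^2_n\rangle}=(H^n,\mu_{1,n})_{[1]}$. I would first record the standing remark that, for fixed $n$, the norms $\mu_{1,n}$, $\norm^{H}_n$, and $\norm^{\max}_n$ are all equivalent to the Hilbert-space norm on $H^n$; in particular $(H^n,\mu_{1,n})$ is reflexive, and this is what allows Hahn--Banach separation, the bipolar theorem, and Krein--Milman to be used below even when $H$ is infinite-dimensional.

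For (ii), I would simply read off equation \eqref{max-mn}: it states precisely that
\[\lV\tuple{x}\rV^{\max}_n=\sup\{|\langle\tuple{x},\tuple{\lambda}\rangle|:\tuple{\lambda}\in(H^n,\mu_{1,n})_{[1]}\}\,,\]
so that $\norm^{\max}_n$ is the dual norm of $\mu_{1,n}$. Since $(H^n,\mu_{1,n})$ is reflexive, the dual norm of $\norm^{\max}_n$ is then $\mu_{1,n}$ again, and hence the dual unit ball of $(H^n,\norm^{\max}_n)$ is exactly $(H^n,\mu_{1,n})_{[1]}$.

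For (i), the heart of the matter is the identity
\[\lV\tuple{x}\rV^{H}_n=\sup\{|\langle\tuple{x},\tuple{\lambda}\rangle|:\tuple{\lambda}\in S^2_n\}\qquad(\tuple{x}\in H^n)\,,\]
which I would prove by matching orthogonal decompositions with orthogonal tuples. Given a decomposition $\tuple{H}=\{H_1,\dots,H_n\}$ with projections $P_i$, set $y_i=P_ix_i$ and $r=r_{\tuple{H}}(\tuple{x})=(\sum_i\|y_i\|^2)^{1/2}$; then $\tuple{\lambda}=(y_1/r,\dots,y_n/r)\in S^2_n$ and $\langle\tuple{x},\tuple{\lambda}\rangle=r^{-1}\sum_i[x_i,P_ix_i]=r^{-1}\sum_i\|y_i\|^2=r$, which yields the inequality $\le$. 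Conversely, given $\tuple{\lambda}\in S^2_n$, the lines $\mathbb C\lambda_i$ are pairwise orthogonal, and, writing $Q_i$ for the projection onto $\mathbb C\lambda_i$, Cauchy--Schwarz gives $|\langle\tuple{x},\tuple{\lambda}\rangle|\le\sum_i\|Q_ix_i\|\,\|\lambda_i\|\le(\sum_i\|Q_ix_i\|^2)^{1/2}$; extending $\{\mathbb C\lambda_i\}$ to a genuine orthogonal decomposition of $H$ (attaching the orthogonal complement to one summand) shows the right-hand side is at most $\lV\tuple{x}\rV^{H}_n$, giving $\ge$. Because $\tuple{\lambda}\mapsto|\langle\tuple{x},\tuple{\lambda}\rangle|$ is convex and continuous, the supremum over $S^2_n$ equals the supremum over $\overline{\langle S^2_n\rangle}$. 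Thus $\norm^{H}_n$ is the support functional of the closed, convex, balanced set $\overline{\langle S^2_n\rangle}$ (balanced since $S^2_n$ is invariant under unimodular scalars and its convex hull contains $0$), and a Hahn--Banach separation argument, valid by reflexivity, identifies the dual unit ball of $(H^n,\norm^{H}_n)$ as exactly $\overline{\langle S^2_n\rangle}$.

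Finally, combining (i) and (ii), the norms $\norm^{H}_n$ and $\norm^{\max}_n$ agree iff their dual balls agree, that is, iff $\overline{\langle S^2_n\rangle}=(H^n,\mu_{1,n})_{[1]}$; the inclusion $\subseteq$ is already noted in the text. Under the hypothesis $S^2_n=\ex\big((H^n,\mu_{1,n})_{[1]}\big)$, I would obtain the reverse inclusion from the Krein--Milman theorem: since $(H^n,\mu_{1,n})$ is reflexive its unit ball is weakly compact, so it equals the closed convex hull of its extreme points, namely $\overline{\langle S^2_n\rangle}$. I expect this last, infinite-dimensional, step to be the main obstacle, since for infinite-dimensional $H$ the ball $(H^n,\mu_{1,n})_{[1]}$ is not norm-compact; the point to get right is that reflexivity supplies weak compactness, and that the norm- and weak-closures of a convex set coincide, so that Krein--Milman still applies and delivers the required equality.
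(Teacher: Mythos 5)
Your argument is correct and is in substance the paper's own proof: the identity $\lV\tuple{x}\rV^H_n=\sup\{|\langle\tuple{x},\tuple{\lambda}\rangle|:\tuple{\lambda}\in S^2_n\}$ that you establish via orthogonal decompositions is exactly what the paper extracts inside its Hahn--Banach separation argument for (i), and your bipolar/Krein--Milman packaging (legitimised by the reflexivity of $(H^n,\mu_{1,n})$, which you rightly flag) is the same mechanism the paper uses, if more implicitly. The only cosmetic divergence is in (ii), where the paper identifies the dual through $(\ell^{\,\infty}_n\proten H)'\cong\mathcal B(H,\ell^{\,1}_n)$ rather than reading the duality directly off equation \eqref{max-mn} as you do; both routes are valid.
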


\begin{proof}
For (i), let $y=(y_i)\in H^n$ be an orthogonal family with $\sum_{i=1}^n \|y_i\|^2\leq 1$.  Let $x=(x_i)\in H^n$ satisfy $\|x\|^H_n\leq 1$, and 
then choose a family $(P_i)_{i=1}^n$ of mutually orthogonal projections summing to $I_H$ with $P_i(y_i)=y_i\,\;(i\in\N_n)$.  Then
\[ [x,y] = \Big|\sum_{i=1}^n [x_i, P_i(y_i)] \Big|
\leq \Big( \sum_{i=1}^n \|P_i(x_i)\|^2 \Big)^{1/2}\,\cdot\, \Big( \sum_{i=1}^n \|y_i\|^2 \Big)^{1/2}
\leq \|x\|^H_n \leq 1\,. \]
Thus the norm of $y$ as a functional on $(H^n,\norm^H_n)$ is at most $1$.
We conclude that $S^2_n \subset  (H^n,\norm^H_n)'_{[1]}$, and hence $\overline{\langle S^2_n \rangle} \subset (H^n,\norm^H_n)'_{[1]}$. 

Conversely, assume towards a contradiction that $\overline{\langle S^2_n \rangle}
\subsetneq (H^n,\norm^H_n)'_{[1]}$.  Then there exists  $x\in (H^n,\norm^H_n)'_{[1]}$
such that a small open ball about $x$ is disjoint from $\overline{\langle S^2_n \rangle}$.
By the Hahn--Banach theorem, there exists $z=(z_i)\in H^n$ and $\gamma\in\mathbb R$ such that
\[ \Re \left(\sum_{i=1}^n [z_i,x_i]\right) < \gamma \leq \Re\left(\sum_{i=1}^n [z_i,y_i]\right)
\quad \left( (y_i)\in \overline{\langle S^2_n \rangle} \right)\,. \]
Since $\overline{\langle S^2_n \rangle}$ is absolutely convex, we see that $\gamma<0$, and so actually
\[ -\Re\left(\sum_{i=1}^n [z_i,x_i]\right) > |\gamma| \geq \Big| \sum_{i=1}^n [z_i,y_i] \Big|
\quad \left( (y_i)\in \overline{\langle S^2_n \rangle} \right)\,. \]
Now observe that
\[
\sup\left\{ \left\vert \sum_{i=1}^n [z_i,y_i] \right\vert :
  (y_i)\in \overline{\langle S^2_n \rangle} \right\}
\]
is greater than or equal to 
\[
\sup  \left\vert \sum_{i=1}^n [z_i,y_i] \right\vert
\]
with the supremum taken over all orthogonal sequences  in $H$ with $\sum_{i=1}^n \|y_i\|^2\leq 1$, and that this supremum is equal to  
\[
\sup  \left\vert \sum_{i=1}^n \alpha_i[z_i,e_i] \right\vert
\]
taken over all orthonormal sequences $(e_i)$ in $H$ and all sequences $(\alpha_i)$ with $ \sum_{i=1}^n |\alpha_i|^2\leq 1$.  In its turn, this supremum is equal to 
\[
\sup  \left( \sum_{i=1}^n |[z_i,e_i]|^2 \right)^{1/2}
\]
taken over all orthonormal sequences $(e_i)$ in $H$, and hence, finally, to $\| (z_i) \|^H_n$. Thus 
\[
-\Re\left(\sum_{i=1}^n [z_i,x_i] \right)> \|(z_i)\|^H_n\,.
\] 
But this contradicts the fact that $(x_i) \in (H^n,\norm^H_n)'_{[1]}$.  Thus (i) holds.\s

For (ii), we know that $(H^n,\norm^{\max}_n) \cong \ell^{\,\infty}_n \proten H$, and that the dual of the  latter space is
 $\ell^{\,1}_n \injectivetensor H' = \mathcal B(H,\ell^{\,1}_n)$.  By definition, the space  $(H^n,\mu_{1,n})$ can be identified with 
$\mathcal B(H,\ell^{\,1}_n)$, and so (ii) follows.

In conclusion, it follows that $\norm^H_n = \norm^{\max}_n$ if and only if $\overline{ \langle S^2_n \rangle } = (H^n,\mu_{1,n})_{[1]}$.
By the previous lemma, this equality holds whenever $S^2_n =  \ex (H^n,\mu_{1,n})_{[1]}$.
\end{proof}\s

We shall show that indeed $S^2_n =\ex (H^n,\mu_{1,n})_{[1]}$ when $n=2$ or $n=3$; thus, in these cases, we have a description of the dual
space of $(H^n,\mu_{1,n})$, which may be of independent interest.\medskip

\section{Calculation of $c_2$}
We begin with an elementary result that shows that $c_2=1$.  

\begin{theorem}\label{4.3e}   Let $H$ be a complex Hilbert space. Then $\norm^H_2=\norm^{\max}_2$ on $H^2$.
\end{theorem}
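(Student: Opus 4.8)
The plan is to read the statement off the duality established in Theorem~\ref{S2n and unit ball of mu1n}. By the final clause of that theorem it is enough to prove that $S^2_2 = \ex\!\big((H^2,\mu_{1,2})_{[1]}\big)$. Writing $K = (H^2,\mu_{1,2})_{[1]}$, I note that $K$ is the unit ball of the dual space $(H^2,\norm^{\max}_2)'$, so it is weak$^*$-compact and convex and Krein--Mil'man applies. Lemma~\ref{Srn as set of extreme points} and the inclusion $\overline{\langle S^2_2\rangle}\subset K$ recorded after Proposition~\ref{prop:2.1} give $S^2_2\subset\ex\overline{\langle S^2_2\rangle}$ and $S^2_2\subset K$. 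Hence the whole equality reduces to the single inclusion $\ex K\subset S^2_2$: granted this, Krein--Mil'man yields $K=\overline{\langle \ex K\rangle}\subset\overline{\langle S^2_2\rangle}\subset K$, so $K=\overline{\langle S^2_2\rangle}$ and therefore $\ex K=\ex\overline{\langle S^2_2\rangle}\supset S^2_2$, forcing equality. Thus everything comes down to showing that each extreme point of $K$ is an orthogonal pair.

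The first concrete step is to compute $\mu_{1,2}$ explicitly. Taking $p=1$ (so $p'=\infty$) in \eqref{weak--summing-norm 1} and rotating the coefficients reduces the supremum to the torus, giving
\[
\mu_{1,2}(x_1,x_2)=\sup_{\abs{\omega}=1}\|x_1+\omega x_2\|\,,
\]
and expanding the inner product then yields
\[
\mu_{1,2}(x_1,x_2)^2=\|x_1\|^2+\|x_2\|^2+2\abs{[x_1,x_2]}\,.
\]
So $K$ is the sublevel set $\set{\mu_{1,2}\le 1}$, the set $S^2_2$ is exactly its boundary points with $[x_1,x_2]=0$, and, since any point with $\mu_{1,2}<1$ is visibly a midpoint, every extreme point lies on $\set{\mu_{1,2}=1}$. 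It remains to prove that a boundary point with $[x_1,x_2]\ne 0$ is \emph{not} extreme.

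This last step is the heart of the matter. Since $K$ and $S^2_2$ are invariant under the isometry $(x_1,x_2)\mapsto(x_1,\omega x_2)$ of $(H^2,\mu_{1,2})$, which permutes extreme points, I would first rotate $x_2$ so that $[x_1,x_2]=\rho>0$; then the formula collapses to $\mu_{1,2}(x_1,x_2)^2=\|x_1+x_2\|^2=1$. The strategy is to move along a segment that fixes $x_1+x_2$ and keeps the cross term real and positive, so that $\mu_{1,2}$ stays exactly $1$. Concretely, the map $u\mapsto [u,x_2]-[x_1,u]$ is real-linear from $H$ to $\C$, so its imaginary part is a real-linear functional and vanishes on some nonzero $u\in H$; for such $u$ and real $t$ the cross term
\[
[x_1+tu,\,x_2-tu]=\rho+t\big([u,x_2]-[x_1,u]\big)-t^2\|u\|^2
\]
is real, and positive for $\abs{t}$ small, whence $\mu_{1,2}(x_1+tu,x_2-tu)^2=\|(x_1+tu)+(x_2-tu)\|^2=\|x_1+x_2\|^2=1$. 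Thus both $(x_1\pm tu,x_2\mp tu)$ lie in $K$, and, as $u\ne 0$, the point $(x_1,x_2)$ is the midpoint of a nondegenerate segment in $K$ and is not extreme.

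I expect this perturbation to be the main obstacle. The naive segment $(x_1+tu,x_2-tu)$ with an arbitrary direction $u$ does \emph{not} stay in $K$: at a boundary point with $[x_1,x_2]=\rho>0$ only the single constraint $\|x_1+\omega x_2\|\le 1$ at $\omega=1$ is active, but the nearby constraints at $\omega\approx 1$ can be violated to first order in $t$. The decisive observation that makes the argument go through is that forcing the cross term to remain \emph{exactly} real (rather than merely real to first order) is precisely what keeps $\mu_{1,2}$ equal to $1$ along the whole segment. With $\ex K\subset S^2_2$ in hand, the reduction in the first paragraph completes the proof.
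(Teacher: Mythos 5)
Your proof is correct and follows essentially the same route as the paper's: reduce via Theorem~\ref{S2n and unit ball of mu1n} to showing $\ex\bigl((H^2,\mu_{1,2})_{[1]}\bigr)\subset S^2_2$, rotate so that the cross term $[x_1,x_2]$ is a positive real, and then exhibit a nontrivial perturbation $(x_1\pm tu,\,x_2\mp tu)$ that fixes $x_1+x_2$ and keeps $[x_1+tu,\,x_2-tu]$ real and positive, so that $\mu_{1,2}$ is unchanged along the segment. The only (harmless) difference is the choice of $u$: the paper imposes the stronger condition $[x_1,u]=[u,x_2]$, which kills the first-order term of the cross product entirely, whereas you only require that term to be real --- a single real-linear condition, which incidentally makes the existence of $u$ automatic without the paper's preliminary reduction to $\dim H\ge 2$.
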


\begin{proof}   
It is sufficient to prove the result in  the case where the dimension of $H$ is at least $2$. 

Set $L := (H^2, \mu_{1,2})_{[1]}$,  and recall that
\[
\mu_{1,2}(y_1,y_2) = \sup\{\lV \zeta_1y_1+\zeta_2y_2\rV : \zeta_1,\zeta_2 \in \T\}\quad (y_1,y_2\in H)\,.
\]
Let $ (y_1,y_2) \in \extremepoints L$. By replacing $y_1$ and $y_2$ by $\eta_1y_1$ and $\eta_2y_2$, respect\-ively, for suitable 
$ \eta_1,\eta_2 \in \T$, we may suppose that $\lV y_1+y_2\rV =1$, and so
\[
\lV y_1\rV^2 + \lV y_2\rV^2 + 2\Re\/(\zeta_1\overline{\zeta}_2[y_1,y_2])\leq \lV y_1\rV^2 + \lV y_2\rV^2 +2\Re\/[y_1,y_2] =1
\]
for each $\zeta_1,\zeta_2 \in \T$. We have $\Re\/(\zeta[y_1,\,y_2]) \leq \Re\/ [y_1,\,y_2]\;\,(\zeta\in \T)$, and so $ [y_1,\,y_2]\geq 0$.

Assume towards a contradiction that $[y_1,y_2]> 0$.  

Choose $u\in H$ with $\lV u\rV =1$ such that $[y_1,u] = [u,y_2]$,
 and then choose $\varepsilon >0$ with $\varepsilon^2 <  [y_1,y_2]$.  Set $w_1 = y_1+ \varepsilon u$ and $w_2 = y_2- \varepsilon u$. Then we have
\[
\lV w_1\rV^2+ \lV w_2\rV^2  = \lV y_1\rV^2+ \lV y_2\rV^2 +2\varepsilon^2
  \]
because  $[y_1,u] = [u,y_2]$, and
\[
\Re\ {(\zeta_1\overline{\zeta}_2}([y_1,y_2]-\varepsilon^2))\leq [y_1,y_2]- \varepsilon^2=[w_1,w_2]
\]
for each $\zeta_1,\zeta_2 \in \T$  because  $[y_1,y_2]- \varepsilon^2>0$. Hence 
\[
\lV \zeta_1w_1  + \zeta_2w_2\rV\leq \lV w_1  + w_2\rV=1\qquad (\zeta_1,\zeta_2 \in \T)\,,
\]  
and so $(y_1 +\varepsilon u,\,y_2-\varepsilon u) \in L$. Similarly, $(y_1 -\varepsilon u,\,y_2+\varepsilon u) \in L$. However
\[
2(y_1,y_2) = (y_1 +\varepsilon u,\,y_2-\varepsilon u) + (y_1 -\varepsilon u,\,y_2+\varepsilon u)\,.
\]
It follows that $(y_1,y_2)$ is not an extreme point of $L$, the required contradiction.

We have shown that  $(y_1,y_2) \in S^2_2$.  
Thus $\ex L\subset S^2_2$, and so $L\subset \closure{\langle S^2_2\rangle}$. This implies that $L= \closure{\langle S^2_2\rangle}$ 
(and, by Lemma \ref{Srn as set of extreme points}, we must also have $\ex L= S^2_2$.)
\end{proof}\medskip

\section{Calculation of $c_3$} Next we consider the case where $n=3$. In fact, there is now a difference between real  and complex   Hilbert spaces.\s

\begin{proposition}\label{4.3f} Let $H$ be a real Hilbert space of dimension at least $3$. Then $\norm^H_3$ and $\norm^{\max}_3$ are not equal on $H^3$.
\end{proposition}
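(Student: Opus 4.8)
The plan is to produce a single explicit triple $\tuple{y}=(y_1,y_2,y_3)\in H^3$ with $\|\tuple{y}\|^{\max}_3>\|\tuple{y}\|^H_3$; since $\|\cdot\|^H_n=\|\cdot\|^{(2,2)}_n\le\|\cdot\|^{(1,1)}_n=\|\cdot\|^{\max}_n$ by Theorems~\ref{thm:hilb22} and~\ref{2.10}, one strict inequality suffices. By Theorem~\ref{S2n and unit ball of mu1n} the unit ball of the dual of $(H^3,\|\cdot\|^H_3)$ is $\overline{\langle S^2_3\rangle}$ and that of $(H^3,\|\cdot\|^{\max}_3)$ is $(H^3,\mu_{1,3})_{[1]}$, so such a triple is precisely a witness that these two balls differ. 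I shall take the $y_i$ in a fixed two-dimensional subspace $P$ of $H$ (available since $\dim H\ge 3$) with orthonormal basis $\{e,f\}$.

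Because the scalars are real, the norm $\mu_{1,3}$ is computed over the finitely many sign choices: the map $(\zeta_i)\mapsto\|\sum_i\zeta_iy_i\|$ is convex on the cube $[-1,1]^3$, so $\mu_{1,3}(\tuple{y})=\max_{\varepsilon\in\{-1,1\}^3}\|\varepsilon_1y_1+\varepsilon_2y_2+\varepsilon_3y_3\|$. The four distinct sign-sums (up to a global sign) are $v_0=y_1+y_2+y_3$, $v_1=-y_1+y_2+y_3$, $v_2=y_1-y_2+y_3$, and $v_3=y_1+y_2-y_3$; they satisfy $v_0=v_1+v_2+v_3$ and recover the $y_i$ by $y_k=(v_0-v_k)/2$ for $k=1,2,3$. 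I choose $v_0=e$ and $v_1,v_2=\tfrac12 e\pm\tfrac{\sqrt3}{2}f$, all of norm $1$, which forces $v_3=0$ and gives
\[
 y_1=\tfrac14 e-\tfrac{\sqrt3}{4}f,\qquad y_2=\tfrac14 e+\tfrac{\sqrt3}{4}f,\qquad y_3=\tfrac12 e.
\]
Then $\|y_1\|=\|y_2\|=\|y_3\|=\tfrac12$ and $\mu_{1,3}(\tuple{y})=\max(1,1,1,0)=1$.

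Now I estimate the two norms. As $\mu_{1,3}(\tuple{y})\le 1$, putting $\lambda_i=y_i$ in the formula~\eqref{max-mn} for the maximum multi-norm yields
\[
 \|\tuple{y}\|^{\max}_3\ \ge\ \sum_{i=1}^3[y_i,y_i]\ =\ \sum_{i=1}^3\|y_i\|^2\ =\ \tfrac34.
\]
On the other hand, by the computation in the proof of Theorem~\ref{S2n and unit ball of mu1n}, $\|\tuple{y}\|^H_3=\sup\{(\sum_{i=1}^3|[y_i,g_i]|^2)^{1/2}:(g_i)\text{ orthonormal in }H\}$. Writing $Q$ for the orthogonal projection of $H$ onto $P$ and using $y_i\in P$, Cauchy--Schwarz gives $|[y_i,g_i]|^2=|[y_i,Qg_i]|^2\le\|y_i\|^2\|Qg_i\|^2=\tfrac14\|Qg_i\|^2$. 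Extending $(g_i)$ to an orthonormal basis of $H$ shows $\sum_{i=1}^3\|Qg_i\|^2\le\dim P=2$, so $\|\tuple{y}\|^H_3\le(\tfrac14\cdot2)^{1/2}=1/\sqrt2$. Since $1/\sqrt2<3/4$, the two norms are not equal, which proves the proposition.

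The substantive point, and the place where real scalars are indispensable, is the choice of a \emph{non-orthogonal} configuration lying in the $\mu_{1,3}$-ball. In fact $\tuple{y}$ is an extreme point of $(H^3,\mu_{1,3})_{[1]}$: if $\tuple{y}\pm\tuple{u}$ both lie in the ball then, for each active sign $\varepsilon$ (those giving $v_0,v_1,v_2$, of norm $1$), the parallelogram law forces $\varepsilon_1u_1+\varepsilon_2u_2+\varepsilon_3u_3=0$, and these three equations give $u_1=u_2=u_3=0$; yet $y_1,y_2,y_3$ are three nonzero vectors of the plane $P$ and so are not mutually orthogonal, whence $\tuple{y}\notin S^2_3$. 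Thus, passing to a finite-dimensional subspace and using Lemma~\ref{Srn as set of extreme points} (so that $\ex\overline{\langle S^2_3\rangle}=S^2_3$), one sees directly that $\overline{\langle S^2_3\rangle}\neq(H^3,\mu_{1,3})_{[1]}$. Over $\C$ the whole construction collapses, since then $\mu_{1,3}(\tuple{y})=\sup_{|\zeta_i|\le1}\|\sum_i\zeta_iy_i\|=\sqrt{9/8}>1$ (for instance at $\zeta=(e^{-{\rm i}\pi/3},e^{{\rm i}\pi/3},1)$), so $\tuple{y}$ leaves the unit ball and the lower bound on $\|\tuple{y}\|^{\max}_3$ is lost --- in agreement with the stated fact that $c_3=1$ in the complex case.
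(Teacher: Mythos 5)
Your argument is correct, and its main line is genuinely different from the paper's. The paper also starts from an explicit non-orthogonal triple that saturates the $\mu_{1,3}$-ball (in $\ell^{\,2}_3(\R)$, chosen so that $\|y_1+y_2+y_3\|=1$ and exactly three sign patterns attain the maximum), but it concludes \emph{dually}: strict convexity at the three maximizing sign patterns shows the triple is an extreme point of $(H^3,\mu_{1,3})_{[1]}$ that does not lie in $S^2_3$, and Theorem~\ref{S2n and unit ball of mu1n} together with Lemma~\ref{Srn as set of extreme points} then forces the two dual unit balls, hence the two norms, to differ. Your main argument is instead primal and quantitative: at the explicit witness $\tuple{y}$ you bound $\|\tuple{y}\|^{\max}_3\ge 3/4$ directly from \eqref{max-mn} and $\|\tuple{y}\|^H_3\le 1/\sqrt{2}$ from the orthonormal-vector description of the Hilbert multi-norm, so you obtain not merely non-equality but the explicit estimate $c_3\ge 3/(2\sqrt{2})>1$ in the real case, which the paper's route does not yield. (Your closing paragraph then essentially reproduces the paper's extreme-point argument for your own example: the three active sign vectors $(1,1,1)$, $(-1,1,1)$, $(1,-1,1)$ are linearly independent, so $u=0$; so in effect you have both proofs.) The one point worth tightening is the appeal to ``the computation in the proof of Theorem~\ref{S2n and unit ball of mu1n}'' for the identity $\|\tuple{y}\|^H_3=\sup\bigl\{\bigl(\sum_{i=1}^3|[y_i,g_i]|^2\bigr)^{1/2}\colon (g_i)\ \text{orthonormal}\bigr\}$, since that theorem is stated in the paper's default complex setting; however, the identity follows directly from the definition of $\norm^H_n$ over either scalar field (take $H_i$ spanned by $g_i$ for one inequality, and $g_i=P_ix_i/\|P_ix_i\|$ for the other), so this is a cosmetic rather than a substantive gap.
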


\begin{proof}  It is sufficient to consider $H$ to be the real 3-dimensional Hilbert space $\ell_3^{\,2}(\R)$. Set $L = (H^3, \mu_{1,3})_{[1]}$.

For  $y_1,y_2,y_3 \in H$, we  now have
\[
\mu_{1,3}(y_1,y_2,y_3) = \sup\{\lV t_1y_1+t_2y_2+t_3y_3\rV : t_1,t_2,t_3 \in\{\pm1\}\}\,.
\]
Consider the vectors
\[
y_1=  \frac{1}{\sqrt{11}}(1,0,0 )\,,\quad y_2=  \frac{1}{\sqrt{11}}(1,1,0 )\,,\quad y_3= \frac{1}{\sqrt{11}}(-1,2,1 )\,.
\]
We see that   $[y_1,y_2] =[y_2,y_3] =1/11 $ and $[y_1,y_3]=-1/11 $, and so
\begin{eqnarray*}\lV y_1+y_2+ y_3\rV^2  = \sum_{j=1}^3\lV y_j\rV^2 + 2\sum_{i<j}[y_i,y_j] =
\frac{1}{11}(9+2\,\cdot\, 1)  =1\,.
\end{eqnarray*}
For each $t_1,t_2,t_3 \in\{\pm1\}$, we have $t_1t_2-t_1t_3+t_2t_3 \leq 1$,  and so it follows immediately that $\mu_{1,3}(y_1,y_2,y_3)=1$, showing that $(y_1,y_2,y_3) \in L$.
Note that the  expression  $t_1t_2-t_1t_3+t_2t_3$ takes its maximum value of $1$  when  $t_1=t_2=t_3 =1$, when $t_1=t_2  =1$ and $t_3=-1$, and when 
$t_1=1$ and $t_2=t_3 =-1$.

We {\it claim\/} that $\tuple{y} := (y_1,y_2,y_3)$ is an extreme point of $L$.

Assume towards a contradiction that there exists $\tuple{u}  \in H^3$ with $\tuple{u}\neq 0$ such that  $\tuple{y}\pm \tuple{u} \in L$, say 
$\tuple{u}= (u_1,u_2,u_3)$, with $u_1,u_2,u_3\in H$.

Take $t_1,t_2,t_3 \in \{\pm1\}$ with $t_1t_2-t_1t_3+t_2t_3=1$.  Then clearly $\lV t_1y_1+t_2y_2+ t_3y_3\rV =1$. However
\[
\lV t_1(y_1+u_1)+t_2(y_2+u_2)+ t_3(y_3+u_3)\rV \leq 1
\] 
and 
\[
\lV t_1(y_1-u_1)+t_2(y_2-u_2)+ t_3(y_3-u_3)\rV \leq 1\,.
\]
Since  $H$  is strictly convex, it follows that $ t_1u_1+t_2u_2+ t_3u_3=0$.   By taking the various possibilities for $t_1,t_2,t_3$
 such that $t_1t_2-t_1t_3+t_2t_3=1$ specified above, we see that   $ u_1+u_2+ u_3=0$, that $ u_1+u_2- u_3=0$,  and that  $ u_1-u_2- u_3=0$. 
Thus $u_1=u_2= u_3=0$, a contradiction.  Hence  $(y_1,y_2,y_3)\in \extremepoints L$.

Since $\{y_1,y_2,y_3\} $ is manifestly not an orthogonal set in $H$, it follows that $\tuple{y}$ is not in the   set $S^2_3$, and so the two multi-norms are not equal.
\end{proof}\smallskip

We shall now show that we obtain a different result from the above in the case where $H$ is a complex Hilbert space. Indeed $\norm^H_3=\norm^{\max}_3$
 on each complex Hilbert space $H$. But now the (elementary) calculations seem to be much more challenging.  \s

\begin{lemma}
    \label{whenismaximum}
    Take $(y_1,y_2,y_3)\in H^3$. Suppose that   $(\xi_1,\xi_2,\xi_3)\in\T^{\,3}$ is  such that
    \[
        \lV{\xi_1y_1+\xi_2y_2+\xi_3y_3}\rV=\max\set{\lV{\eta_1y_1+\eta_2y_2+\eta_3y_3}\rV:\
        (\eta_1,\eta_2,\eta_3)\in\T^{\,3}}.
    \]
    Then
    \[
        \Im \inner{\xi_1y_1,\xi_2y_2}=\Im \inner{\xi_2y_2,\xi_3y_3}=\Im \inner{\xi_3y_3,\xi_1y_1}\,.
    \]
\end{lemma}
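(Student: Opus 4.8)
The plan is to treat the maximization over $\T^{\,3}$ as an unconstrained smooth optimization and to read off the conclusion from first-order stationarity, perturbing one phase at a time. Write $z_j := \xi_j y_j$ for $j\in\set{1,2,3}$, so that the hypothesis says that $\lV z_1+z_2+z_3\rV$ equals the maximum of $\lV \eta_1y_1+\eta_2y_2+\eta_3y_3\rV$ over $(\eta_1,\eta_2,\eta_3)\in\T^{\,3}$. Since squaring is increasing on $[0,\infty)$, the same triple maximizes the square of the norm, and, because $\T^{\,3}$ is a manifold without boundary, the maximizer is an interior critical point of that smooth function; in particular every one-parameter phase-rotation through the maximizer is stationary at the maximizer.

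First I would vary only the first phase. Fixing $z_2,z_3$, consider the real-analytic function
\[
	g(s)=\lV e^{\Imath s}z_1+z_2+z_3\rV^2 = \lV z_1\rV^2+\lV z_2+z_3\rV^2 + 2\Re\!\left(e^{\Imath s}\inner{z_1,\,z_2+z_3}\right)\quad(s\in\R)\,.
\]
This attains its maximum at $s=0$, so $g'(0)=0$. As $g'(0)=2\Re\!\left(\Imath\,\inner{z_1,\,z_2+z_3}\right)=-2\Im\inner{z_1,\,z_2+z_3}$, we obtain $\Im\inner{z_1,z_2}+\Im\inner{z_1,z_3}=0$. Running the identical argument with the second and then the third phase varied gives $\Im\inner{z_2,z_1}+\Im\inner{z_2,z_3}=0$ and $\Im\inner{z_3,z_1}+\Im\inner{z_3,z_2}=0$.

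Finally I would solve this linear system, using the skew-symmetry $\Im\inner{z_j,z_k}=-\Im\inner{z_k,z_j}$. Setting $a=\Im\inner{z_1,z_2}$, $b=\Im\inner{z_2,z_3}$, and $c=\Im\inner{z_3,z_1}$, the three relations become $a-c=0$, $-a+b=0$, and $c-b=0$, whence $a=b=c$; this is precisely the assertion that $\Im\inner{\xi_1y_1,\xi_2y_2}=\Im\inner{\xi_2y_2,\xi_3y_3}=\Im\inner{\xi_3y_3,\xi_1y_1}$. There is essentially no obstacle here beyond careful bookkeeping of the conjugations. The one point worth flagging is that the three stationarity equations have rank only two: their sum vanishes identically, reflecting the invariance of the norm under a common phase rotation $\eta_j\mapsto e^{\Imath s}\eta_j$. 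This is harmless, since any two of the equations already force the three cyclic imaginary parts to coincide.
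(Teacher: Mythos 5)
Your proof is correct and follows essentially the same route as the paper's: both arguments amount to first-order stationarity of the squared norm under phase rotations on $\T^{\,3}$, the paper writing the cross terms trigonometrically as $a\cos(t_1-t_2+\alpha)+b\cos(t_2-t_3+\beta)+c\cos(t_3-t_1+\gamma)$ and setting all three partial derivatives to zero, while you perturb one phase at a time and solve the resulting rank-two linear system via the skew-symmetry of $\Im\inner{\,\cdot\,}{\,\cdot\,}$. The computations match, and your observation that the three equations sum to zero (reflecting the common-phase invariance) is accurate and harmless.
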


\begin{proof}
 We see that $\lV{\eta_1y_1+\eta_2y_2+\eta_3y_3}\rV$ for $(\eta_1,\eta_2,\eta_3)\in\T^{\,3}$ attains its  maximum at the point $(\eta_1,\eta_2,\eta_3)=(\xi_1,\xi_2,\xi_3)$ 
 whenever
\[    \Re\left(\eta_1\conjugate{\eta_2}\inner{y_1,y_2}\right)+\Re\left(\eta_2\conjugate{\eta_3}\inner{y_2,y_3}\right)+
\Re\left(\eta_3\conjugate{\eta_1}\inner{y_3,y_1}\right)
 \]
 attains its maximum  at $(\eta_1,\eta_2,\eta_3)=(\xi_1,\xi_2,\xi_3)$. 

Next set  $\inner{y_1,y_2}=a\exp(\Imath \alpha)$, 
$\inner{y_2,y_3}=b\exp(\Imath \beta)$, and $\inner{y_3,y_1}=c\exp(\Imath \gamma)$, where $a,b,c\ge 0$ and $\alpha,\beta, \gamma \in \R$.  Also,
take $t_1,t_2,t_3 \in \R$ with   $\eta_i=\exp(\Imath t_i)$ for   $i=1,2,3$.  Then the fact that the real-valued function
 \[
    F: (t_1,t_2,t_3)\mapsto a\cos(t_1-t_2+\alpha)+b\cos(t_2-t_3+\beta)+c\cos(t_3-t_1+\gamma)
 \]
 attains its maximum at $(t_1,t_2,t_3)$ implies that  
 \[
    0=\frac{\partial F}{\partial t_1}(t_1,t_2,t_3)=\frac{\partial F}{\partial
    t_2}(t_1,t_2,t_3)=\frac{\partial F}{\partial t_3}(t_1,t_2,t_3)\,,
 \]
and hence that 
 \[
    a\sin(t_1-t_2+\alpha)=b\sin(t_2-t_3+\beta)=c\sin(t_3-t_1+\gamma)\,.
 \]
 This gives the specified equations.
\end{proof}\s

In the following lemmas, $A$ is the angle at the vertex $A$ of the triangle $ABC$, and $BC$ is the length of the side from $B$ to $C$, etc.   In the first two   lemmas, $ABC$ is a triangle (if such a triangle exists) with  $BC=1/a$, $CA=1/b$, and $AB=1/c$, where $a,b,c>0$.  Further, we shall consider the function
\[
 F: (r,s,t)\mapsto a\cos r+b\cos s+c\cos t\,,\quad \R^{\,3} \to \R\,.
\]
 \s

\begin{lemma}  \label{anglepi}  Consider $F_{\pi}$ to be the restriction of $F$ to the set 
\[
	\{(r,s,t)\in \R^{\,3} : r+s+t\equiv \pi\,\;{\rm(mod} \;2\pi{\rm)}\}\,.
\]
\begin{enumerate}
\item[{\rm (i)}] Suppose that  the triangle ABC exists. Then 
$F_{\pi}$ attains its maximum at exactly two points  $(r,s,t)=(A,B,C)$ or  $(r,s,t)=(-A,-B,-C)$ 
{\rm(mod} $2\pi${\rm)}.

\item[{\rm (ii)}] Suppose that  the triangle ABC does not exist and that $a\le b\le c$. Then $F_{\pi}$ attains its maximum at exactly the point $(r,s,t)=(\pi,0,0)$ 
{\rm(mod} $2\pi${\rm)}.
\end{enumerate}
\end{lemma}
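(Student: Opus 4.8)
The plan is to eliminate variables successively and reduce the constrained maximisation to a one-variable problem that is transparently a downward parabola. \emph{First} I would fix $t$ and maximise over $r,s$ subject to $r+s\equiv\pi-t$. For a fixed sum $r+s=\sigma$, writing $s=\sigma-r$ and collecting terms gives $a\cos r+b\cos s=(a+b\cos\sigma)\cos r+(b\sin\sigma)\sin r$, whose maximum over $r$ is the amplitude $\sqrt{a^2+b^2+2ab\cos\sigma}$, attained at the unique $r$ (modulo $2\pi$) for which $(\cos r,\sin r)$ points along $(a+b\cos\sigma,\,b\sin\sigma)$; this maximiser is characterised by $a\sin r=b\sin s$ together with being a maximum rather than a minimum. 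Hence, with $\sigma=\pi-t$ and $\cos(\pi-t)=-\cos t$,
\[
\max F_\pi=\max_{t}\,h(t),\qquad h(t):=\sqrt{a^2+b^2-2ab\cos t}+c\cos t .
\]

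\emph{Second}, since $h$ is even and $2\pi$-periodic it suffices to maximise over $t\in[0,\pi]$. On this range the substitution $w=\sqrt{a^2+b^2-2ab\cos t}$ is a monotone bijection onto $[\,|a-b|,a+b\,]$, with $\cos t=(a^2+b^2-w^2)/2ab$, and converts $h$ into
\[
H(w)=w+\frac{c\,(a^2+b^2-w^2)}{2ab},
\]
a strictly concave (downward) parabola with vertex at $w^\ast=ab/c$. Everything then hinges on the position of $w^\ast$ relative to the interval $[\,|a-b|,a+b\,]$. A short computation shows that $w^\ast\le a+b$ is equivalent to $1/c\le 1/a+1/b$ (automatic when $a\le b\le c$), while $w^\ast\ge|a-b|$ is equivalent to $1/a\le 1/b+1/c$ --- \emph{exactly} the triangle inequality for the side lengths $1/a,1/b,1/c$. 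Thus the vertex lies inside the feasible interval precisely when the triangle $ABC$ exists.

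The two cases then fall out from the position of the vertex. In part~(i) the triangle exists, so $w^\ast$ is interior and the maximum of $H$ is $H(w^\ast)=(a^2b^2+b^2c^2+c^2a^2)/2abc$; I would identify this with $a\cos A+b\cos B+c\cos C$ via the law of cosines (a symmetric algebraic identity) and check that the maximising value is $t=C$, since $\cos t=(a^2+b^2-(ab/c)^2)/2ab=\cos C$. This forces the inner maximiser $r=A,\ s=B$, using $a\sin A=b\sin B$ from the law of sines and $a\cos A+b\cos B=ab/c>0$ being the positive amplitude. The even symmetry of $h$ supplies a second maximiser on the branch $t=-C$, namely $(-A,-B,-C)\pmod{2\pi}$, distinct from $(A,B,C)$ because $A,B,C\in(0,\pi)$. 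In part~(ii) the triangle fails and $a\le b\le c$ gives $1/a>1/b+1/c$, so $w^\ast<b-a$ and $H$ is strictly decreasing on $[\,b-a,a+b\,]$; its maximum sits at the endpoint $w=b-a$, i.e. $t=0$, with value $(b-a)+c=b+c-a$, and the inner maximiser is $r=\pi,\ s=0$ (the other stationary choice $r=0,\,s=\pi$ gives the smaller value $a-b$), yielding the single point $(\pi,0,0)$.

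The \textbf{main obstacle} I anticipate is not the reduction itself but the bookkeeping needed to justify the ``exactly two points'' and ``exactly the point'' assertions. One must verify that the inner maximiser is genuinely unique (from the amplitude form, valid since the amplitude $ab/c$ is nonzero) and correctly identified with $(A,B)$ via the law of sines rather than being a stationary non-maximum; track the even-symmetry branch that supplies $(-A,-B,-C)$; deduce global uniqueness from strict concavity of $H$ (unique $w$, hence unique $t\in[0,\pi]$) combined with uniqueness of the inner maximiser; and treat the borderline degenerate case $1/a=1/b+1/c$, where $w^\ast=|a-b|$ sits exactly at the endpoint, consistently with the stated dichotomy.
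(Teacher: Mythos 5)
Your argument is correct, and it is worth noting that the paper offers no proof at all here --- its ``proof'' of Lemma \ref{anglepi} is the single sentence ``This is elementary'' --- so there is nothing to compare against; what you have written is a genuine, complete elementary proof. The reduction is sound: eliminating $r$ by the amplitude formula gives the inner maximum $\sqrt{a^2+b^2-2ab\cos t}$ with a unique inner maximiser whenever the amplitude is nonzero; the substitution $w=\sqrt{a^2+b^2-2ab\cos t}$ turns the outer problem into a strictly concave parabola $H$ on $[\,|a-b|,a+b\,]$; the identifications $\cos C=(a^2+b^2-(ab/c)^2)/2ab$ and $H(w^\ast)=(a^2b^2+b^2c^2+c^2a^2)/2abc=a\cos A+b\cos B+c\cos C$ check out against the law of cosines, and $a\sin A=b\sin B$, $a\cos A+b\cos B=ab/c>0$ pin the inner maximiser to $(A,B)$; strict concavity plus evenness of $h$ then yields exactly the two points $(A,B,C)$ and $(-A,-B,-C)$. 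In case (ii) your observation that $a<b$ is forced (if $a=b\le c$ the triangle inequalities hold automatically) is exactly what rules out the degenerate inner problem at $t=0$, so the endpoint maximiser $(\pi,0,0)$ is genuinely unique. The only loose end is the one you flag yourself: the borderline case $1/a=1/b+1/c$, where $(A,B,C)=(\pi,0,0)=(-A,-B,-C)\pmod{2\pi}$ and the ``exactly two points'' count collapses to one; the lemma as stated in the paper is silent on whether a degenerate triangle ``exists,'' so you should simply declare that convention when writing this up. With that caveat recorded, nothing further is missing.
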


\begin{proof} This is elementary. \end{proof}\s

\begin{lemma} \label{anglenotpi}
Suppose that  $M\not\equiv \pi$ $($mod $2\pi)$, and consider $F_M$ to be the restriction of $F$ to the set 
\[
	\{(r,s,t)\in \R^{\,3} : r+s+t\equiv M\,\;{\rm(mod} \;2\pi{\rm)}\}\,.
\]
    Then $F_M$  attains its  maximum at exactly one tuple $(r,s,t)\,\;  {\rm(mod} \;2\pi{\rm)}$.
\end{lemma}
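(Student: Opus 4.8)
The plan is to combine the first-order (Lagrange) conditions with a careful analysis of the sign pattern of the cosines at the optimum, using the strict concavity of $\cos$ on $(-\pi/2,\pi/2)$ to force uniqueness once that sign pattern is pinned down.

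First I would record the easy structural points. The constraint set $\{(r,s,t)\in\R^3: r+s+t\equiv M\}$ descends to a compact $2$-torus, so $F_M$ attains its maximum; writing $\mu$ for the common Lagrange value, any maximiser satisfies
\[ a\sin r=b\sin s=c\sin t=\mu. \]
Working modulo $2\pi$ I may assume $M\in(-\pi,\pi)$, and the reflection $(r,s,t)\mapsto(-r,-s,-t)$, which carries the constraint level $M$ to $-M$ and fixes $F$, lets me further assume $M\in[0,\pi)$; the case $M\equiv 0$ is immediate, since then $(0,0,0)$ is the only point of the constraint set on which every summand $a\cos r,\, b\cos s,\, c\cos t$ attains its individual maximum. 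Geometrically the Lagrange relation says that the three points $ae^{ir},be^{is},ce^{it}$ share the imaginary part $\mu$, that is, lie on a common horizontal line, and we are maximising the sum of their real parts; for a given $\mu$ each angle has only two admissible values, differing by the sign of its cosine, namely $\arcsin(\mu/a)\in[-\pi/2,\pi/2]$ and $\pi-\arcsin(\mu/a)$, and similarly for the other two.

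The heart of the argument is to show that at the maximiser \emph{at most one} of the three cosines is negative, and that the choice of which one (if any) is forced. I would do this by comparing the candidate configurations branch by branch: for a fixed $\mu$ and a fixed set $\mathcal F$ of ``flipped'' indices the constraint becomes $\sum_i\arcsin(\mu/a_i)+\sum_{i\in\mathcal F}\bigl(\pi-2\arcsin(\mu/a_i)\bigr)\equiv M$, while the value is $\sum_i\sqrt{a_i^2-\mu^2}-2\sum_{i\in\mathcal F}\sqrt{a_i^2-\mu^2}$, so flipping index $i$ always costs $2\sqrt{a_i^2-\mu^2}$ in $F$. Since $\mu\mapsto\sum_i\arcsin(\mu/a_i)$ is strictly increasing with range $[-S,S]$, where $m=\min\{a,b,c\}$ and $S=\tfrac\pi2+\arcsin(m/a')+\arcsin(m/a'')\ge\tfrac\pi2$ (with $a',a''$ the other two coefficients), the unflipped branch realises exactly the levels $|M|<S$; for such $M$ the maximiser has all cosines positive, hence all angles in $(-\pi/2,\pi/2)$, and then $F_M$ is a positive combination of functions strictly concave on the convex slice $(-\pi/2,\pi/2)^3\cap\{r+s+t=M\}$, so its maximiser there is unique. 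When $S\le M<\pi$ (which can occur only for sufficiently spread-out coefficients) no unflipped configuration is feasible, and I would show that the optimal flip set is the singleton consisting of the index of the smallest coefficient, again recovering uniqueness from strict concavity in the two remaining principal variables after fixing that angle on its $(\pi/2,\pi)$ branch.

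The hypothesis $M\not\equiv\pi$ enters precisely here: the reflection symmetry produces a \emph{second} maximiser only when the constraint level is reflection-invariant, that is when $M\equiv-M$, i.e.\ $M\equiv0$ or $M\equiv\pi$; at $M\equiv0$ the fixed point $(0,0,0)$ is the unique optimum anyway, whereas at $M\equiv\pi$ the two branches coincide in value and genuine doubling occurs, which is exactly the content of Lemma~\ref{anglepi}. I expect the main obstacle to be the sign-pattern step, namely proving that the distinguished (unflipped, or minimally flipped) configuration strictly dominates every competing configuration of critical points, and in particular that two distinct global maximisers cannot share the same $\mu$. This is the one place where the bookkeeping over branches and the comparison of the values $\sum_i\sqrt{a_i^2-\mu^2}-2\sum_{i\in\mathcal F}\sqrt{a_i^2-\mu^2}$ must be done with care, and it is what pins the exceptional behaviour to $M\equiv\pi$.
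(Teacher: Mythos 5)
There is a genuine gap, and you have in fact flagged it yourself: the step you call ``the main obstacle'' --- proving that the distinguished (unflipped, or minimally flipped) branch strictly dominates every competing critical configuration --- is the actual content of the lemma, and your proposal does not supply it. The difficulty is that your cost estimate ``flipping index $i$ costs $2\sqrt{a_i^2-\mu^2}$'' only compares configurations with the \emph{same} Lagrange value $\mu$, whereas two critical points on different branches that both satisfy $r+s+t\equiv M$ will in general have \emph{different} values of $\mu$; so the branch-by-branch bookkeeping of values $\sum_i\sqrt{a_i^2-\mu^2}-2\sum_{i\in\mathcal F}\sqrt{a_i^2-\mu^2}$ does not by itself yield the needed comparison. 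A second, related gap is the mod-$2\pi$ ambiguity: the ``all cosines positive'' region $(-\pi/2,\pi/2)^3\cap\{r+s+t\equiv M\}$ is a disjoint union of convex slices with angle sums $M$, $M-2\pi$, $M+2\pi$ (all of which can be nonempty when $S>\pi$), and strict concavity gives uniqueness only within each slice. Ruling out an all-acute critical point with angle sum $M-2\pi$ (all three angles in $[-\pi/2,0)$) requires a separate comparison, which is exactly the ``$h<0$, $\cos r\ge 0$'' subcase in the paper and is handled there by an explicit inequality invoking Lemma \ref{anglepi}.

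The paper closes both gaps with one a priori device that your plan lacks: after ordering $a\le b\le c$ and reducing to $0<M<\pi$, it observes that $\max F_M\ge a\cos M+b+c>b+c-a\ge c$. Since a maximizer with $\cos s\le 0$ would satisfy $F\le\sqrt{a^2-h^2}-\sqrt{b^2-h^2}+\sqrt{c^2-h^2}\le\sqrt{c^2-h^2}\le c$ (where $h$ is the common value $a\sin r=b\sin s=c\sin t$), this lower bound immediately forces $\cos s>0$ and $\cos t>0$ at any maximizer, collapsing the flip-set bookkeeping to the single binary choice of the sign of $\cos r$; the sign of $h$ and the remaining dichotomy are then resolved by explicit comparisons and by the concavity/monotonicity analysis of $g(k)=\pi-\arcsin(k/a)+\arcsin(k/b)+\arcsin(k/c)$, which shows the flipped branch has a critical point only when $M>\pi/2+p+q$ and that it is then unique. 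To complete your proof you would need either this lower bound (or an equivalent substitute) to prune the branches, plus an explicit treatment of the $M-2\pi$ slice; without these the argument is a correct outline of the answer but not a proof.
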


\begin{proof}  Without loss of generality, we may suppose that $a\le b\le c$. The case where $M=0$ (mod $2\pi$) is obvious.
    Replacing $M$ by $M+2k\pi$ or $2k\pi-M$, if necessary, we may suppose that
    $0<M<\pi$. Note that the maximum of $F_M$ is at least
    \[
    	a\cos M+b+c> b+c-a\ge c\,.
    \]
     Set
    \[
    	p=\arcsin(a/b)\quad\textrm{and}\quad q=\arcsin(a/c)\,,
    \]
    so that we have the picture below.

\medskip\medskip

\unitlength 1pt
\begin{picture}(367.0406,114.5223)( 42.6791,-170.7165)
%
\special{pn 8}%
\special{pa 591 788}%
\special{pa 591 2363}%
\special{fp}%
%
\special{pn 8}%
\special{pa 591 2363}%
\special{pa 2756 2363}%
\special{fp}%
%
\special{pn 8}%
\special{pa 591 788}%
\special{pa 2756 2363}%
\special{fp}%
%
\special{pn 8}%
\special{pa 591 788}%
\special{pa 1378 2363}%
\special{fp}%
\special{pa 1378 2363}%
\special{pa 1378 2363}%
\special{fp}%
%
\special{pn 8}%
\special{pa 3347 778}%
\special{pa 3347 2156}%
\special{fp}%
%
\special{pn 8}%
\special{pa 3347 778}%
\special{pa 3938 2166}%
\special{fp}%
%
\special{pn 8}%
\special{pa 3347 778}%
\special{pa 4420 2156}%
\special{fp}%
%
\special{pn 8}%
\special{pa 3347 778}%
\special{pa 5670 2156}%
\special{fp}%
%
\special{pn 8}%
\special{pa 5670 2156}%
\special{pa 3347 2156}%
\special{fp}%
%
\special{pn 8}%
\special{pa 3347 2077}%
\special{pa 3436 2077}%
\special{pa 3436 2156}%
\special{pa 3347 2156}%
\special{pa 3347 2156}%
\special{pa 3347 2077}%
\special{fp}%
%
\special{pn 8}%
\special{pa 591 2284}%
\special{pa 689 2284}%
\special{pa 689 2363}%
\special{pa 591 2363}%
\special{pa 591 2363}%
\special{pa 591 2284}%
\special{fp}%
\put(50.3489,-128.0374){\makebox(0,0){$a$}}%
\put(85.0488,-130.8827){\makebox(0,0){$b$}}%
\put(130.7280,-111.6771){\makebox(0,0){$c$}}%
\put(275.9917,-123.7695){\makebox(0,0){$a$}}%
\put(300.1558,-119.5016){\makebox(0,0){$b$}}%
\put(335.8765,-103.8526){\makebox(0,0){$c$}}%
\put(234.7352,-112.3884){\makebox(0,0){$k$}}%
\put(90.4922,-166.0260){\makebox(0,0){$p$}}%
\put(180.4071,-166.0260){\makebox(0,0){$q$}}%
\put(275.0125,-150.7996){\makebox(0,0){$\alpha$}}%
\put(308.5784,-150.7996){\makebox(0,0){$\beta$}}%
\put(390.2462,-150.3770){\makebox(0,0){$\gamma$}}%
%
\special{pn 8}%
\special{pa 3347 2156}%
\special{pa 3347 2353}%
\special{dt 0.030}%
\special{pa 3347 2353}%
\special{pa 3347 2352}%
\special{dt 0.030}%
\end{picture}
\medskip\medskip

Suppose that $(r,s,t)$ is any point where $F_M$ attains its maximum; say $r,s,t\in (-\pi,\pi]$. We have seen that  $(r,s,t)$ must satisfy 
    \begin{align}\label{equation for maximum of F}
    	a\sin r=b\sin s =c\sin t\quad\textrm{as well as\quad $r+s+t\equiv M$ (mod $2\pi$)}.
 \end{align}
 Set $h=a\sin r$. Then $h\neq 0$ and 
\[
	\cos r=\pm\sqrt{1-\frac{h^2}{a^2}}\,,\quad\cos s=\pm\sqrt{1-\frac{h^2}{b^2}}\,,\quad\textrm{and}\quad \cos t=\pm\sqrt{1-\frac{h^2}{c^2}}\,.
\]
Since $a\le b\le c$ and $F_M(r,s,t)>c$, we deduce that
\[
	\cos s=\sqrt{1-\frac{h^2}{b^2}}\quad\textrm{and}\quad \cos t=\sqrt{1-\frac{h^2}{c^2}}\,,
\]
so that $s=\arcsin(h/b)$ and $t=\arcsin(h/c)$. In particular, we must have 
\[
	\abs{s}\le p\quad\textrm{and}\quad\abs{t}\le q\,.
\]

Assume toward a contradiction that $h<0$. In the case where $\cos r\ge 0$, we see that  $r,s,t\in [-\pi/2,0)$ and so $r+s+t=M-2\pi$. This implies that
\[
	p+q\ge \frac{3\pi}{2}-M>\frac{\pi}{2}\,.
\]
In particular, we must have ${1}/{b^2}+{1}/{c^2}>{1}/{a^2}$, so that $ABC$ is an (acute) triangle. Since $3\pi/2\le 2\pi+r<\pi-s-t\le 2\pi$, we see that
\[
	F_M(r,s,t)=F_M(2\pi+r,s,t)<F(\pi-s-t,s,t)\le F(A,B,C)<F(A',B',C')\,,
\]
where the second inequality follows from Lemma \ref{anglepi} and where $A'\in (0,A),B'\in (0,B)$, and $C'\in (0,C)$ are such that
 $A'+B'+C'=M$ (this is possible since $0<M<\pi$). This contradicts the assumption that $F_M$ attains its maximum at $(r,s,t)$.

In the case where $\cos r< 0$, we see that  $r\in (-\pi,-\pi/2)$, whereas $s,t\in [-\pi/2,0)$, and so $r+s+t=M-2\pi$. It follows that $\pi>-r>M-\pi-r=\pi+s+t>0$, and so
\[
	F_M(r,s,t)<a\cos(\pi+s+t)+b\cos(-s)+c\cos(-t)=F(\pi+s+t,-s,-t)\,.
\]
Choosing $u\in (0,\pi+s+t)$, $v\in(0,-s)$, and $w\in(0,-t)$ such that $u+v+w=M$, which is possible since $0<M<\pi$, the above implies that
\[
	F_M(r,s,t)<F_M(u,v,w)\,.
\]
This again contradicts the assumption that $F_M$ attains its maximum at $(r,s,t)$.

Thus we must have $h>0$, so that $r\in (0,\pi)$, $s\in (0,p]$, and $t\in (0,q]$. We consider the following two cases.
\medskip

\textbf{Case 1:} $M\le  {\pi}/{2}+p+q$. 
Assume toward a contradiction that $\cos r< 0$. Then  $r\in (\pi/2,\pi)$, whereas $s,t\in (0,\pi/2]$, and so $r+s+t=M$. Consider the function $g$ defined by 
    \[
        g(k)=\pi-\arcsin\left(\frac{k}{a}\right)+\arcsin\left(\frac{k}{b}\right)+\arcsin\left(\frac{k}{c}\right)\quad (0\le k\le a)\,.
    \]
Then $g(h)=M$ and $g(a)=\pi/2+p+q$. If $p+q\ge\pi/2$, then $g(h)<\pi\le g(a)$, and so there exists $k\in (h,a]$ such that $g(k)=\pi$. 
But this means that $\pi-\arcsin(k/a)$, $\arcsin (k/b)$, and $\arcsin (k/c)$ are three angles of a triangle whose sides are $1/a$, $1/b$ and $1/c$.
 In particular, this implies that $ABC$ is a triangle with $A\ge \pi/2$, so that $1/a^2\ge 1/b^2+1/c^2$, which means that $p+q\le \pi/2$.
 Thus  we must have $p+q\le \pi/2$ anyways, so that $1/a^2\ge 1/b^2+1/c^2$.  

We see that
    \[
        g'(k)=-\frac{1}{\sqrt{a^2-k^2}}+\frac{1}{\sqrt{b^2-k^2}}+\frac{1}{\sqrt{c^2-k^2}}\,,
    \]
    and, for $k\in (0,a)$, since $ 1/a^2 \ge 1/b^2 + 1/c^2$, we have
    \begin{align*}
        g''(k)&=-\frac{k}{(a^2-k^2)^{3/2}}+\frac{k}{(b^2-k^2)^{3/2}}+\frac{k}{(c^2-k^2)^{3/2}}\\
                &< -\frac{\displaystyle{{k}/{a^3}}}{\left(1-\displaystyle{\frac{k^2}{a^2}}\right)^{3/2}}+
                \frac{\displaystyle{{k}/{b^3}}}{\left(1-\displaystyle{\frac{k^2}{a^2}}\right)^{3/2}}+
                \frac{\displaystyle{{k}/{c^3}}}{\left(1-\displaystyle{\frac{k^2}{a^2}}\right)^{3/2}}<  0\,.
    \end{align*}
    Note that $g'(0)>0$ and $g'(a)=-\infty$. So we see that there exists a unique $k_0\in (0,a)$ such that
    $g'(k_0)=0$, $g$ is strictly increasing on $(0,k_0)$, and $g$ is strictly decreasing on $(k_0,a)$. In particular, since $h\in (0,a)$, we must have $M=g(h)>\min\set{g(0),g(a)}= {\pi}/{2}+p+q$; a contradiction of the assumption of Case 1.
    
Thus we must have $\cos r\ge 0$, and so $r,s,t\in (0,\pi/2]$. Hence $(r,s,t)$ must be the unique triple $(\alpha,\beta,\gamma)$ that satisfies \eqref{equation for maximum of F} and such that $\alpha,\beta,\gamma\in (0,\pi/2]$ (see the picture).

    \medskip
    
\textbf{Case 2:} $M>  {\pi}/{2}+p+q$. In this case, there exists no triple $(\alpha,\beta,\gamma)$ that satisfies \eqref{equation for maximum of F} 
and such that $\alpha,\beta,\gamma\in (0,\pi/2]$, and so $r\in(\pi/2,\pi]$. It follows that $r+s+t=M$. We also see from the assumption that 
$p+q<\pi/2$, so that $1/a^2>1/b^2+1/c^2$. Consider the function $g$ defined as in Case 1. Then $g(h)=M$. We again find a unique $k_0\in (0,a)$ 
such that $g$ is strictly increasing on $(0,k_0)$, and $g$ is strictly decreasing on $(k_0,a)$. Since $g(a)=\pi/2+p+q<M<g(0)=\pi$, $h$ is the
 unique point $l \in (k_0,a)$ such that  $g(l)=M$. This shows that $(r,s,t)$ is the unique triple (mod $2\pi$) at which $F_M$ attains its maximum.
\end{proof}

\medskip

 We summarize the above lemmas in the setting of our problem as follows. 
 
Let $(y_1,y_2,y_3)\in L$, where $L= (H^3, \mu_{1,3})_{[1]}$. For
$(\eta_1,\eta_2,\eta_3)\in\T^{\,3}$, set
\[
    N(\eta_1,\eta_2,\eta_3)=\lV{\eta_1y_1+\eta_2y_2+\eta_3y_3}\rV
\]
and  
\[
    F(\eta_1,\eta_2,\eta_3)=\Re\left(\eta_1\conjugate{\eta_2}\inner{y_1,y_2}\right)+\Re\left(\eta_2\conjugate{\eta_3}\inner{y_2,y_3}\right)
+\Re\left(\eta_3\conjugate{\eta_1}\inner{y_3,y_1}\right)\,,
\]
so that $N$ and $F$ attain their maxima at the same tuple(s) $(\eta_1,\eta_2,\eta_3)$.

 We shall now use square bracket-notation  $[\eta_1,\eta_2,\eta_3]$ to denote the class of all tuples $(\zeta\eta_1,\zeta\eta_2,\zeta\eta_3)$\,\; ($\zeta\in \T$);
we shall also call $[\eta_1,\eta_2,\eta_3]$ a `tuple', with the understanding that we are identifying all those 
$[\zeta\eta_1,\zeta\eta_2,\zeta\eta_3]$ for which $\zeta\in \T$.
 
Set
\[
  a=   \left\vert\, \inner{y_1,y_2}\,\right\vert\,,\,\;  b=   \left\vert\, \inner{y_2,y_3}\,\right\vert\,,\,\;   c=   \left\vert\, \inner{y_1,y_2}\,\right\vert\,,
\]
and then set
$M=\arg\inner{y_1,y_2}+\arg\inner{y_2,y_3}+\arg\inner{y_3,y_1}$.

Suppose that we have $a,b,c>0$. Then, by the previous three lemmas (and inspecting their proofs as well), we have $\max F(\eta_1,\eta_2,\eta_3)>\max\set{a,b,c}$,
 and there are the  following cases:\s

{\bf I) $M\equiv 0$ (mod $2\pi$):} $N$ attains its maximum at the unique $[\xi_1,\xi_2,\xi_3]$ in $\T^{\,3}$ satisfying the conditions that 
$\xi_1\conjugate{\xi_2}\inner{y_1,y_2}>0$, that $\xi_2\conjugate{\xi_3}\inner{y_2,y_3}>0$, and that
$\xi_3\conjugate{\xi_1}\inner{y_3,y_1}>0$. 
(Actually, if any  two of these inequalities hold, then the third must also hold.)

{\bf II) $M\equiv\pi$ (mod $2\pi$) and $ {1}/{a}$, ${1}/{b}$, and ${1}/{c}$ are the sides of a  triangle:} $N$ attains its maximum
at those $[\xi_1,\xi_2,\xi_3]$ in $\T^{\,3}$ such that 
\[
  \Im(\xi_1\conjugate{\xi_2}\inner{y_1,y_2})= \Im(\xi_2\conjugate{\xi_3}\inner{y_2,y_3})= \Im(\xi_3\conjugate{\xi_1}\inner{y_3,y_1})=:k\neq 0\,.
\]
There are exactly 2 such tuples $[\xi_1,\xi_2,\xi_3]$, and, moreover, for one such tuple, $k>0$ and, for the other, $k<0$.

{\bf III) $M\equiv\pi$ (mod $2\pi$) and $ {1}/{a}$, ${1}/{b}$, ${1}/{c}$ cannot be the sides of any triangle:} $N$ attains its 
maximum at the unique $[\xi_1,\xi_2,\xi_3]$ in $\T^{\,3}$ such  that  
\[
  \Im(\xi_1\conjugate{\xi_2}\inner{y_1,y_2})= \Im(\xi_2\conjugate{\xi_3}\inner{y_2,y_3})= \Im(\xi_3\conjugate{\xi_1}\inner{y_3,y_1})=0\,.
\]

{\bf IV) $0<M<\pi$ (mod $2\pi$):} $N$ attains its maximum at the unique $[\xi_1,\xi_2,\xi_3]$ in $\T^{\,3}$ such  that  
\[
  \Im(\xi_1\conjugate{\xi_2}\inner{y_1,y_2})= \Im(\xi_2\conjugate{\xi_3}\inner{y_2,y_3})= \Im(\xi_3\conjugate{\xi_1}\inner{y_3,y_1})=:k> 0\,.
\]

{\bf V) $\pi<M<2\pi$ (mod $2\pi$):} $N$ attains maximum at the unique $[\xi_1,\xi_2,\xi_3]$ in $\T^{\,3}$ such  that 
\[
  \Im(\xi_1\conjugate{\xi_2}\inner{y_1,y_2})= \Im(\xi_2\conjugate{\xi_3}\inner{y_2,y_3})= \Im(\xi_3\conjugate{\xi_1}\inner{y_3,y_1})=:k<0\,.
\]  
\medskip

Now  take  $(y_1,y_2,y_3)\in L$,  and suppose that  $N$
attains its maximum on $\T^{\,3}$ at the point $(\xi_1,\xi_2,\xi_3)\in \T^{\,3}$. Consider the elements  $u= (u_1,u_2,u_3)\in H^3$ with $u\neq 0$, if any,  such that
\[
        (y_1+\varepsilon u_1,y_2+\varepsilon u_2,y_3+\varepsilon u_3)\in L
\]
for $\varepsilon =-1$ and $\varepsilon =1$, and hence, by convexity, for all $\varepsilon\in [-1, 1]$. Since
\[
	\xi_1y_1+\xi_2y_2+\xi_3y_3\in \ex H_{[1]}\,,
\]
it follows that $\xi_1u_1+\xi_2u_2+\xi_3u_3=0$. So, for each $\varepsilon\in [-1, 1]$,  the function
\[
        (\eta_1,\eta_2,\eta_3)\mapsto\lV{\eta_1(y_1+\varepsilon u_1)+\eta_2(y_2+\varepsilon u_2)+\eta_3(y_3+\varepsilon  u_3)}\rV\,,\quad \T^{\,3}\to \R^+\,,
\]
also attains its maximum at $(\xi_1,\xi_2,\xi_3)$. Lemma \ref{whenismaximum} then implies that
\begin{align*}
        \Im (\xi_1\conjugate{\xi_2}\inner{y_1+\varepsilon u_1,y_2+\varepsilon u_2})=
        \Im (\xi_2\conjugate{\xi_3}\inner{y_2+\varepsilon u_2,y_3+\varepsilon u_3})=\Im  (\xi_3\conjugate{\xi_1}\inner{y_3+\varepsilon u_3,y_1+\varepsilon u_1})\,.
\end{align*}
Since $\xi_1u_1+\xi_2u_2+\xi_3u_3=0$, the coefficients of $\varepsilon^2$ are equal. Comparing the coefficients of
$\varepsilon$, we see that the above equalities are equivalent to
\[
    \inner{u_i,\xi_1y_1+\xi_2y_2+\xi_3y_3}=0\qquad (i=1,2,3)\,.
\]

\s

\begin{theorem}
Let $H$ be a complex Hilbert space. Then $\ex (H^3,\mu_{1,3})_{[1]}=S^2_3$, and $\norm^H_3=\norm^{\max}_3$ on $H^3$.
\end{theorem}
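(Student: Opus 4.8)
The plan is to deduce everything from Theorem~\ref{S2n and unit ball of mu1n}, which already tells us that $\norm^H_3=\norm^{\max}_3$ follows once we know $\ex L=S^2_3$, where $L:=(H^3,\mu_{1,3})_{[1]}$; so the whole theorem reduces to identifying the extreme points of $L$. First I would reduce to finite dimensions: for a fixed $\tuple x\in H^3$ the values $\norm^H_3(\tuple x)$ and $\norm^{\max}_3(\tuple x)$ are unchanged if $H$ is replaced by the subspace $V=\mathrm{span}\{x_1,x_2,x_3\}$ (of dimension at most $3$), since $V$ is $1$-complemented by the orthogonal projection, whence the $(p,q)$-multi-norm value is intrinsic by \cite[Proposition~4.3]{DP} and Theorem~\ref{thm:hilb22}, and the maximum multi-norm \eqref{max-mn} is computed by functionals that may be projected onto $V$ without increasing $\mu_{1,3}$. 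Thus we may assume $H$ finite-dimensional, so that $L$ is a compact convex body and Krein--Mil'man applies. Granting the inclusion $\ex L\subseteq S^2_3$, Krein--Mil'man gives $L=\closure{\langle\ex L\rangle}\subseteq\closure{\langle S^2_3\rangle}\subseteq L$, so $L=\closure{\langle S^2_3\rangle}$, and then Lemma~\ref{Srn as set of extreme points} yields $\ex L=\ex\closure{\langle S^2_3\rangle}=S^2_3$.

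The heart of the proof is therefore the single inclusion $\ex L\subseteq S^2_3$. I would take $\tuple y=(y_1,y_2,y_3)\in\ex L$, so that $\mu_{1,3}(\tuple y)=1$, and show that $y_1,y_2,y_3$ are pairwise orthogonal; by Proposition~\ref{prop:2.1} this places $\tuple y$ in $S^2_3$. Set $a=\abs{\inner{y_1,y_2}}$, $b=\abs{\inner{y_2,y_3}}$, $c=\abs{\inner{y_3,y_1}}$. If two of $a,b,c$ vanish the configuration decouples and the remaining non-orthogonal pair is handled by the two-variable result Theorem~\ref{4.3e}, which produces a non-zero admissible perturbation supported on two coordinates; so the essential situation is $a,b,c>0$. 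Here I would invoke the classification of the maximizing phases assembled in the discussion preceding the theorem (Cases~I--V, based on Lemmas~\ref{whenismaximum}, \ref{anglepi}, and~\ref{anglenotpi}): when the $y_i$ are not orthogonal the function $N(\eta)=\lV\eta_1y_1+\eta_2y_2+\eta_3y_3\rV$ attains its maximum value $1$ on $\T^{\,3}$ at only finitely many phase-tuples $\xi$ (just one, except in the triangle case $M\equiv\pi$, where there are two). This is in sharp contrast with the orthogonal case, where $N\equiv1$ on all of $\T^{\,3}$, which is precisely what forces the admissible directions down to $\{0\}$ there.

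With the maximizers in hand I would construct a non-zero direction $u=(u_1,u_2,u_3)$ and show that $\tuple y\pm tu\in L$ for small $t>0$, contradicting extremality. The computation carried out immediately before the theorem shows that every admissible direction must satisfy $\sum_i\xi_iu_i=0$ and $\inner{u_i,w}=0$ $(i=1,2,3)$ at each maximizing phase $\xi$, where $w=\xi_1y_1+\xi_2y_2+\xi_3y_3$; conversely, I would pick $u\ne0$ obeying exactly these finitely many linear conditions, which is possible because each $u_i$ need only lie in the orthogonal complement of the span of the (at most two) vectors $w$, a space of positive dimension, subject to one or two scalar relations. The point of these conditions is that they make the perturbation tangent to $L$ at every maximizer: since $\sum_i\xi_iu_i=0$ one has $N(\xi;\tuple y+tu)=\lV w\rV=1$ identically in $t$, and the conditions $\inner{u_i,w}=0$ force the phase-gradient of the first $t$-variation of $N^2$ to vanish as well, so that $N(\eta;\tuple y+tu)^2=1-Q(s)+O(|s|^2)(t+t^2)$ for phases $\eta=\xi e^{\Imath s}$ near $\xi$, where $Q$ is a positive semi-definite quadratic form (minus the phase-Hessian of $N(\cdot;\tuple y)^2$ at $\xi$).

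The main obstacle is precisely the final estimate $\mu_{1,3}(\tuple y+tu)\le1$: away from the finitely many maximizers $N(\cdot;\tuple y)$ is bounded below $1$, so continuity disposes of those phases, but near each maximizer one must know that $Q$ is strictly positive definite transverse to the harmless global-phase direction $s_1=s_2=s_3$, so that the $O(|s|^2)(t+t^2)$ error is absorbed by $-Q(s)$. Establishing this non-degeneracy is where the explicit trigonometric analysis of $F(t_1,t_2,t_3)=a\cos(t_1-t_2+\alpha)+b\cos(t_2-t_3+\beta)+c\cos(t_3-t_1+\gamma)$ from Lemmas~\ref{anglepi} and~\ref{anglenotpi} is needed, and the various sub-cases (notably the two-maximizer triangle case $M\equiv\pi$, the transitional values of $M$, and the configurations in which exactly one of $a,b,c$ vanishes) have to be treated separately; this is what makes the argument long. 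Once the perturbation is shown to remain in $L$, the existence of a non-zero admissible $u$ contradicts $\tuple y\in\ex L$, forcing $a=b=c=0$ and hence $\tuple y\in S^2_3$, which completes the proof.
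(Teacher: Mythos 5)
Your overall architecture matches the paper's: reduce everything to the single inclusion $\ex L\subseteq S^2_3$ via Theorem~\ref{S2n and unit ball of mu1n} and Lemma~\ref{Srn as set of extreme points}, take an extreme point with $a,b,c>0$, build a non-zero $\tuple{u}$ satisfying the linear constraints $\sum_i\xi_iu_i=0$ and $\inner{u_i,\xi_1y_1+\xi_2y_2+\xi_3y_3}=0$ at each maximizing phase, and derive a contradiction by showing $\tuple{y}\pm\varepsilon\tuple{u}\in L$. The gap is in the one step you yourself flag as ``the main obstacle'': your proposed proof of $\mu_{1,3}(\tuple{y}+\varepsilon\tuple{u})\le 1$ is a local second-order Taylor argument near each maximizer, which requires the phase-Hessian $Q$ of $N(\,\cdot\,;\tuple{y})^2$ to be \emph{strictly} positive definite transverse to the global-phase direction so that the $O(\abs{s}^2)\varepsilon$ error can be absorbed. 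You do not establish this non-degeneracy, and it genuinely fails in borderline configurations: for instance, at the transition between Cases II and III (where $1/a$, $1/b$, $1/c$ form a degenerate triangle and $M\equiv\pi$), the restricted Hessian of $F_\pi$ at the maximizer $(\pi,0,0)$ acquires a kernel beyond the global phase, and then $-Q(s)$ cannot dominate the $O(\abs{s}^2)\varepsilon$ term. Deferring this to ``sub-cases treated separately'' leaves the proof incomplete precisely where it is hardest.

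The paper closes this step by a global, non-perturbative argument that avoids the Hessian altogether. It applies the classification {\bf I}--{\bf V} not to $\tuple{y}$ but to the \emph{perturbed} tuple $(y_{1,\varepsilon},y_{2,\varepsilon},y_{3,\varepsilon})$: a sign-tracking claim (relating the sign of $I_\varepsilon$ to the position of $M_\varepsilon$ modulo $2\pi$) shows that for small $\abs{\varepsilon}$ the perturbed tuple lands in a class whose \emph{unique} maximizer (or one of the two, in the triangle case) is exactly $[\xi_1,\xi_2,\xi_3]$ or $[\zeta_1,\zeta_2,\zeta_3]$. Since $N_\varepsilon(\xi_1,\xi_2,\xi_3)=1$ identically in $\varepsilon$ by the constraint $\sum_i\xi_iu_i=0$, the maximum of $N_\varepsilon$ over $\T^{\,3}$ equals $1$ exactly, with no estimate to make. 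This is why Lemmas~\ref{anglepi} and~\ref{anglenotpi} are stated as uniqueness-of-maximizer results rather than non-degeneracy results. To repair your argument you would either need to prove the transverse positive-definiteness of $Q$ in every case (which is false at the boundary cases), or switch to the paper's strategy of identifying the maximizer of the perturbed functional directly. The remaining pieces of your outline (finite-dimensional reduction, the decoupled cases where two of $a,b,c$ vanish, the Krein--Mil'man endgame) are fine.
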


\begin{proof} 
It is sufficient to consider only the case where $H$ has dimension at least $3$. 

Let $(y_1,y_2,y_3)\in \ex L$, where $L=(H^3,\mu_{1,3})_{[1]}$ as before. For $(\eta_1,\eta_2,\eta_3)\in\T^{\,3}$, we define $N(\eta_1,\eta_2,\eta_3)$ and $F(\eta_1,\eta_2,\eta_3)$, and then $a,b,c,M$, as before. 
 
Suppose that $N$  attains its maximum, which is $1$, at $[\xi_1,\xi_2,\xi_3]$ in $\T^{\,3}$.  Let $(u_1,u_2,u_3)$ in $H^3$ be non-zero and such that
\begin{align*}
    \xi_1u_1+\xi_2u_2+\xi_3u_3&=0\\
    \inner{u_i,\xi_1y_1+\xi_2y_2+\xi_3y_3}&=0\quad (i\in\naturals_3)\,.
\end{align*}
In the case where $N$ attains maximum   at another (different) tuple $[\zeta_1,\zeta_2,\zeta_3]$ in $\T^{\,3}$, we require, further, that $(u_1,u_2,u_3)$ also satisfies
\begin{align*}
    \zeta_1u_1+\zeta_2u_2+\zeta_3u_3&=0\\
    \inner{u_i,\zeta_1y_1+\zeta_2y_2+\zeta_3y_3}&=0\quad (i\in\naturals_3)\,.
\end{align*}
It is easy to see that such $(u_1,u_2,u_3)$ always exists.

For each $\varepsilon\in\R$, set $y_{i,\varepsilon}=y_i+\varepsilon u_i$. For
$(\eta_1,\eta_2,\eta_3)\in\T^{\,3}$, set
\[
    N_\varepsilon(\eta_1,\eta_2,\eta_3)=\lV{\eta_1y_{1,\varepsilon}+\eta_2y_{2,\varepsilon}+\eta_3y_{3,\varepsilon}}\rV
\]
and  
\[
    F_\varepsilon(\eta_1,\eta_2,\eta_3)=\Re\left(\eta_1\conjugate{\eta_2}\inner{y_{1,\varepsilon},y_{2,\varepsilon}}\right)
    +\Re\left(\eta_2\conjugate{\eta_3}\inner{y_{2,\varepsilon},y_{3,\varepsilon}}\right)+
    \Re\left(\eta_3\conjugate{\eta_1}\inner{y_{3,\varepsilon},y_{1,\varepsilon}}\right)\,.
\]
Finally, set
\[
 a_\varepsilon =  \left\vert\,[y_{1,\varepsilon},y_{2,\varepsilon}]\,\right\vert\,,\quad
 b_\varepsilon=\left\vert\,[y_{2,\varepsilon},y_{3,\varepsilon}]\,\right\vert\,,\quad
  c_\varepsilon =  \left\vert\,[y_{3,\varepsilon},y_{1,\varepsilon}]\,\right\vert\,, 
\]
and set
$M=\arg\inner{y_{1,\varepsilon},y_{2,\varepsilon}}+\arg\inner{y_{2,\varepsilon},y_{3,\varepsilon}}+\arg\inner{y_{3,\varepsilon},y_{1,\varepsilon}}$.
Then we see that
\begin{align*}
    N_\varepsilon(\xi_1,\xi_2,\xi_3)=N_\varepsilon(\zeta_1,\zeta_2,\zeta_3)=1\,,
\end{align*}
and, from the discussion preceding this theorem, we have 
\begin{align*}
    \Im (\xi_1\conjugate{\xi_2}\inner{y_{1,\varepsilon},y_{2,\varepsilon}})=
        \Im (\xi_2\conjugate{\xi_3}\inner{y_{2,\varepsilon},y_{3,\varepsilon}})=\Im
        (\xi_3\conjugate{\xi_1}\inner{y_{3,\varepsilon},y_{1,\varepsilon}})=:I_\varepsilon\\
    \Im (\zeta_1\conjugate{\zeta_2}\inner{y_{1,\varepsilon},y_{2,\varepsilon}})=
        \Im (\zeta_2\conjugate{\zeta_3}\inner{y_{2,\varepsilon},y_{3,\varepsilon}})=\Im
        (\zeta_3\conjugate{\zeta_1}\inner{y_{3,\varepsilon},y_{1,\varepsilon}})=:J_\varepsilon\,.
\end{align*}
(The above equalities about $[\zeta_1,\zeta_2,\zeta_3]$ are considered only when the relevant tuple exists.)

First, we \emph{claim} that, in the case where both $I_0=0$ and
$F(\xi_1,\xi_2,\xi_3)>0$, for $\abs{\varepsilon}$ sufficiently
small, the sign of $I_\varepsilon$ and the sign of
\begin{align*}
   \Im \bigg(\inner{y_{1,\varepsilon},y_{2,\varepsilon}}
    \inner{y_{2,\varepsilon},y_{3,\varepsilon}}
           \inner{y_{3,\varepsilon},y_{1,\varepsilon}} \bigg)=\Im \bigg(\xi_1\conjugate{\xi_2}\inner{y_{1,\varepsilon},y_{2,\varepsilon}}
    \xi_2\conjugate{\xi_3}\inner{y_{2,\varepsilon},y_{3,\varepsilon}}
           \xi_3\conjugate{\xi_1}\inner{y_{3,\varepsilon},y_{1,\varepsilon}} \bigg)
\end{align*}
are the same. Indeed, since $I_0=0$, this can be verified by
considering the cases where the coefficients of $\varepsilon$ or $\varepsilon^2$ in $I_\varepsilon$ are non-zero. This claim implies
that, in the case where both $I_0=0$ and $F(\xi_1,\xi_2,\xi_3)>0$:\s

(i) $0<M_\varepsilon<\pi$ (mod $2\pi$) implies that $I_\varepsilon>0\,$;\s

 (ii) $\pi<M_\varepsilon<2\pi$ (mod $2\pi$) implies that $I_\varepsilon<0\,$;\s

 (iii) $M_\varepsilon\equiv 0$ or $\pi$ (mod $2\pi$) implies that $I_\varepsilon =0\,$.\s

Assume toward a contradiction that $a,b,c>0$. Then, for $\abs{\varepsilon}$ sufficiently small, we have
$a_\varepsilon,b_\varepsilon,c_\varepsilon>0$. As discussed above, there are five  cases:\s

{\bf Case 1:} $(y_1,y_2,y_3)$ falls in class {\bf I}. Then, for sufficiently small $\abs{\varepsilon}$, we also have
\begin{align*}
&\Re
(\xi_1\conjugate{\xi_2}\inner{y_{1,\varepsilon},y_{2,\varepsilon}})>0\,,\quad  \Re (\xi_2\conjugate{\xi_3}\inner{y_{2,\varepsilon},y_{3,\varepsilon}})>0\,,\quad  \Re
        (\xi_3\conjugate{\xi_1}\inner{y_{3,\varepsilon},y_{1,\varepsilon}})>0\,,\\
    &F_\varepsilon(\xi_1,\xi_2,\xi_3)>\max\set{a_\varepsilon,b_\varepsilon,c_\varepsilon},\quad\textrm{and}\quad M_\varepsilon\
 \textrm{is `close' to}\ 0\ \textrm{mod}\ 2\pi.
\end{align*}
By the claim, if $0<M_\varepsilon <\pi$ (mod $2\pi$), then $I_\varepsilon >0$, so that $(y_{1,\varepsilon},y_{2,\varepsilon},y_{3,\varepsilon})$ belongs
to class {\bf IV}, and $N_\varepsilon$ attains its maximum at  $[\xi_1,\xi_2,\xi_3]$. If $\pi<M_\varepsilon <2\pi$ (mod $2\pi$), then
$I_\varepsilon <0$, so that  $(y_{1,\varepsilon},y_{2,\varepsilon},y_{3,\varepsilon})$ belongs to class {\bf V}, and $N_\varepsilon$ attains its maximum at
 $[\xi_1,\xi_2,\xi_3]$. Finally, if $M_\varepsilon =0$ (mod $2\pi$), then $I_\varepsilon =0$, so that  $(y_{1,\varepsilon},y_{2,\varepsilon},y_{3,\varepsilon})$ belongs
to class {\bf I}, and $N_\varepsilon$ again attains its maximum at  $[\xi_1,\xi_2,\xi_3]$. Thus we always have 
$(y_{1,\varepsilon},y_{2,\varepsilon},y_{3,\varepsilon})\in  L$ for $\abs{\varepsilon}$ sufficiently small, and so  $(y_1,y_2,y_3)$ cannot 
be an extreme point of $L$.

{\bf Case 2:} $(y_1,y_2,y_3)$ falls into class {\bf II}. Suppose that $I_0>0$ and $J_0<0$. Then, for sufficiently small $\abs{\varepsilon}$, we also have
\begin{align*}
& {1}/{a_\varepsilon},  {1}/{b_\varepsilon},
 {1}/{c_\varepsilon}\  \textrm{are the sides of a triangle}\,,\quad I_\varepsilon>0\,,\quad J_\varepsilon<0\,,\\
  &  F_\varepsilon(\xi_1,\xi_2,\xi_3)>\max\set{a_\varepsilon,b_\varepsilon,c_\varepsilon}\,,\quad
    F_\varepsilon(\zeta_1,\zeta_2,\zeta_3)>\max\set{a_\varepsilon,b_\varepsilon,c_\varepsilon}\,,\\
  &\textrm{and}\quad  M_\varepsilon\ \textrm{is `close' to}\ \pi\ \textrm{mod}\ 2\pi\,.
\end{align*}
If $0<M_\varepsilon <\pi$ (mod $2\pi$), then $(y_{1,\varepsilon},y_{2,\varepsilon},y_{3,\varepsilon})$ belongs
to class {\bf IV}, and $N_\varepsilon$ attains its maximum at  $[\xi_1,\xi_2,\xi_3]$. If $\pi<M_\varepsilon <2\pi$ (mod $2\pi$), then
 $(y_{1,\varepsilon},y_{2,\varepsilon},y_{3,\varepsilon})$ belongs to class {\bf V}, and $N_\varepsilon$ attains its maximum at
 $[\zeta_1,\zeta_2,\zeta_3]$. Finally, if $M_\varepsilon =\pi$ (mod $2\pi$), then $(y_{1,\varepsilon},y_{2,\varepsilon},y_{3,\varepsilon})$ belongs
to class {\bf II}, and $N_\varepsilon$ attains its maximum at both $[\xi_1,\xi_2,\xi_3]$ and $[\zeta_1,\zeta_2,\zeta_3]$. Thus we always have 
$(y_{1,\varepsilon},y_{2,\varepsilon},y_{3,\varepsilon})\in  L$ for $\abs{\varepsilon}$ sufficiently small, and so  $(y_1,y_2,y_3)$ cannot be an extreme point of $L$.  

The other cases where $(y_1,y_2,y_3)$ falls into classes {\bf III, IV, V} can be covered by  similar arguments  to obtain  contradictions.

Thus we have proved that one of $a,b,c$ must be $0$. Say $a=0$. Assume toward a contradiction that $b,c>0$. Then we see that $N$
attains its maximum at the unique $[\xi_1,\xi_2,\xi_3]$ in $\T^{\,3}$ such that $\xi_2\conjugate{\xi_3}\inner{y_2,y_3}>0$  and
$\xi_3\conjugate{\xi_1}\inner{y_3,y_1}>0$. We also see easily that $F(\xi_1,\xi_2,\xi_3)>\max\set{a=0,b,c}$. If $a_\varepsilon\equiv 0$, then obviously, when $\abs{\varepsilon}$ is sufficiently small $N_\varepsilon$ again attains its maximum at  $[\xi_1,\xi_2,\xi_3]$, and so  
$(y_{1,\varepsilon},y_{2,\varepsilon},y_{3,\varepsilon})\in  L$, hence  $(y_1,y_2,y_3)$ cannot 
be an extreme point of $L$. So $a_\varepsilon\neq 0$ for sufficiently small $\abs{\varepsilon}$. Again, we can argue as
above, checking  
$(y_{1,\varepsilon},y_{2,\varepsilon},y_{3,\varepsilon})$ against each of the classes {\bf I} and {\bf III-V} (we can avoid class {\bf II}) and
the case where  $\inner{y_{1,\varepsilon},y_{2,\varepsilon}}=0$ to arrive at a contradiction.

Now we have proved that two of $a$, $b$, or $c$ must be $0$. We
can now argue as above    
to show that all $a$, $b$, $c$ are $0$. Hence $(y_1,y_2,y_3)\in S^2_3$. 

Thus we have proved that $\ex L\subset S^2_3$. This implies that $L\subset\langle S^2_3\rangle\subset L$. Hence $\ex L=S^2_3$ and the proof is complete.
\end{proof}\medskip
 
\section{Calculation of $c_4$} 

We can give some information about the constant $c_4$.\s

\begin{theorem}
 Let $H$ be a complex Hilbert space of dimension at least $3$. Then $\norm^H_n$ is not equal to $\norm^{\max}_n$ on $H^n$ for every $n\ge 4$.
\end{theorem}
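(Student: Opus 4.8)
The plan is to reduce the entire statement to the single inequality $c_4>1$ and then to exhibit one explicit non-orthogonal extreme point of the ball $L:=(H^4,\mu_{1,4})_{[1]}$. Since $(c_n)$ is an increasing sequence (as recorded in the previous section) and since $\norm^{(2,2)}_n\le\norm^{\max}_n$ always holds, we have $\norm^H_n=\norm^{\max}_n$ precisely when $c_n=1$; thus $c_4>1$ forces $c_n>1$, and hence $\norm^H_n\neq\norm^{\max}_n$, for every $n\ge 4$. The example will be produced inside a space of dimension at most three, which embeds as an orthogonally ($1$-)complemented subspace of any $H$ with $\dim H\ge 3$, and both of the multi-norms in question are unchanged on passing to such a subspace; so I may work inside a fixed finite-dimensional $H$. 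By Theorem \ref{S2n and unit ball of mu1n} one has $\norm^H_4=\norm^{\max}_4$ if and only if $\overline{\langle S^2_4\rangle}=L$; combining this with Lemma \ref{Srn as set of extreme points} and Milman's theorem (so that $\ex\,\overline{\langle S^2_4\rangle}\subset S^2_4$ in finite dimensions), it suffices to produce an extreme point of $L$ that is not an orthogonal tuple, hence does not lie in $S^2_4$.

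The tool for verifying extremality is the first-order argument already used in Proposition \ref{4.3f} and in the complex case $n=3$. Fix $\tuple{y}=(y_1,y_2,y_3,y_4)\in L$ with $\mu_{1,4}(\tuple{y})=1$, and suppose that $\tuple{y}\pm\tuple{u}\in L$ for some $\tuple{u}=(u_1,\dots,u_4)$. If $\xi=(\xi_1,\dots,\xi_4)\in\T^{\,4}$ is any point at which $\zeta\mapsto\lV{\sum_i\zeta_iy_i}\rV$ attains its maximum, then $\sum_i\xi_iy_i$ is a unit vector equal to the average of the two vectors $\sum_i\xi_i(y_i\pm u_i)$, each of norm at most $\mu_{1,4}(\tuple{y}\pm\tuple{u})\le 1$; strict convexity of $H$ then forces $\sum_{i=1}^4\xi_iu_i=0$. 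Consequently, if the maximum is attained at four points $\xi^{(1)},\dots,\xi^{(4)}$ whose phase matrix $(\xi^{(m)}_i)_{m,i=1}^4$ is invertible, then $u_1=\cdots=u_4=0$, so that $\tuple{y}$ is an extreme point of $L$.

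The real example of Proposition \ref{4.3f} succeeds precisely because three sign-patterns realise the maximum and the corresponding $\set{\pm1}$-matrix is invertible. The difficulty in the complex case — and the reason the phenomenon appears only for $n\ge 4$ — is that, as the analysis preceding the $n=3$ theorem shows (Lemmas \ref{whenismaximum}, \ref{anglepi}, and \ref{anglenotpi}), a \emph{generic} non-orthogonal tuple has an essentially unique maximiser (unique modulo a global phase), yielding only one constraint $\sum_i\xi_iu_i=0$, which cannot pin down $\tuple{u}$. Thus the main obstacle is to construct four non-orthogonal vectors $y_1,\dots,y_4$ for which $\zeta\mapsto\lV{\sum_i\zeta_iy_i}\rV$ attains its maximum over $\T^{\,4}$ at (at least) four projectively distinct tuples with invertible phase matrix. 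This requires a delicate tuning of the Gram matrix $(\inner{y_i,y_j})$ so that several phase-patterns \emph{tie} for the maximum of the associated functional $\sum_{i<j}\Re(\zeta_i\conjugate{\zeta_j}\inner{y_i,y_j})$, without collapsing to the orthogonal case (in which this functional is constant and the extreme points are exactly $S^2_n$). I would search for such a configuration in $\mathbb C^2$ or $\mathbb C^3$, guided by the classification of maximisers in those lemmas; once a suitable tuple is written down, checking that the four chosen $\xi^{(m)}$ are maximisers and that their phase matrix is invertible is a finite, routine verification, after which the criterion of the previous paragraph yields extremality. Normalising so that $\mu_{1,4}(\tuple{y})=1$ places $\tuple{y}$ in $L$, and its non-orthogonality gives $\tuple{y}\notin S^2_4$, completing the reduction and hence the theorem.
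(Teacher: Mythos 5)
Your reduction is sound: the monotonicity of $(c_n)$ reduces everything to $n=4$; by Theorem \ref{S2n and unit ball of mu1n}, Lemma \ref{Srn as set of extreme points}, and Milman's theorem it suffices to exhibit a non-orthogonal extreme point of $L=(H^4,\mu_{1,4})_{[1]}$; and the first-order observation that each maximiser $\xi\in\T^{\,4}$ of $\zeta\mapsto\lV\sum_i\zeta_iy_i\rV$ forces $\sum_i\xi_iu_i=0$ is exactly the mechanism the paper uses. The gap is that everything then hinges on producing a non-orthogonal $4$-tuple whose maximisers contain four tuples with invertible phase matrix, and this --- the entire content of the proof --- is left as a search you ``would'' perform. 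Worse, there is a concrete reason to doubt that any such tuple exists. Writing $\lV\sum_i\xi_iy_i\rV^2=\sum_i\lV y_i\rV^2+\langle B\xi,\xi\rangle$, where $B$ is the Hermitian, zero-diagonal (hence trace-zero) off-diagonal part of the Gram matrix, and noting that $\lV\xi\rV_2^2=4$ on $\T^{\,4}$, one has $\langle B\xi,\xi\rangle\le 4\lambda_{\max}(B)$ with equality only for $\xi$ in the top eigenspace of $B$; for $B\ne0$ that eigenspace is proper, so whenever the torus maximum attains this spectral bound, every maximiser lies in a fixed proper subspace of $\C^4$ and any four of them have singular phase matrix. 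This is exactly what happens for the natural symmetric candidates: the tuple the paper actually uses (with $[x_i,x_j]=-1$ for all $i\ne j$) has maximiser set $\{\xi\in\T^{\,4}:\xi_1+\cdots+\xi_4=0\}$, which spans only the hyperplane $\sum_i v_i=0$. So your criterion could be met only by a configuration whose torus maximum is strictly below the spectral bound, and you give no evidence that one exists; if none does, the plan cannot be completed.

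The paper's proof shows what is actually needed when the maximisers span only a hyperplane: the first-order constraints $\sum_i\xi_iu_i=0$, taken over the whole two-parameter family of maximisers, force only $u_1=u_2=u_3=u_4=:u$ for some $u\in H$, and killing this residual direction requires a genuinely second-order argument --- one perturbs to $y_i+\varepsilon u$, observes that the perturbed norm function must still attain its maximum on the same two-parameter family, differentiates the resulting trigonometric identity, and concludes that all the inner products $[y_i+\varepsilon u,y_j+\varepsilon u]$ are equal negative real numbers, whence $[u,x_1]=\cdots=[u,x_4]$ and so $u=0$. Your proposal contains no substitute for this step. Either supply an explicit tuple meeting your (stronger) criterion together with the verification, or adopt the weaker first-order conclusion and add the second-order analysis; as written, the argument is incomplete at its central point.
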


\begin{proof}
It is sufficient to consider the case where $n=4$ and $H=\lspace^2_3$. Set $L:=(H^4,\mu_{1,4})_{[1]}$.  
Set $x_1=(1,0,0)$, $x_2=(-1,2,0)$, $x_3=(-1,-1,3)$, and $x_4=(-1,-1,-1)$. Then we have $[x_i,x_j]=-1$ for every $i,j \in \N_4$ with $i\neq j$. 
For each $(\xi_1,\xi_2,\xi_3,\xi_4)\in \T^4$, we have 
\[
	\Re\sum_{i< j} \xi_i\xi_j\ge -2\,,
\]
with the minimum attained  at those $(\xi_1,\xi_2,\xi_3,\xi_4)\in \T^4$ for which  $\xi_1+\cdots+\xi_4=0$, and so it follows that the function 
\[
	(\xi_1,\xi_2,\xi_3,\xi_4)\mapsto\lV\xi_1x_1+\cdots+\xi_4x_4\rV\,,\quad \T^4\to \R\,,
\]
attains its maximum at each $(\xi_1,\xi_2,\xi_3,\xi_4)\in S$,  where we set
\begin{align*}
	S:&=\set{(\xi_1,\xi_2,\xi_3,\xi_4)\in \T^4\colon \xi_1+\cdots+\xi_4=0}\\
	&=\set{(\xi_1,\xi_2,-\xi_1,-\xi_2)\ \textrm{and}\ (\xi_1,\xi_2,-\xi_2,-\xi_1)\colon\ \xi_1,\xi_2\in\T}\,.
\end{align*}

Let $\tuple{y}=(y_1,\ldots,y_4)$ be a scaling of $(x_1,\ldots,x_4)$ such  that $\mu_{1,4}((y_1,\ldots,y_4))=1$. In particular, $\tuple{y}\in L\setminus S^2_4$. 
We also have
\[
	\lV\xi_1y_1+\cdots+\xi_4y_4\rV\le 1
\]
for every $(\xi_1,\xi_2,\xi_3,\xi_4)\in \T^4$, and the equality is attained whenever  $(\xi_1,\xi_2,\xi_3,\xi_4)\in S$. 

Suppose that $\tuple{u}=(u_1,\ldots,u_4)\in H^4$ is  such that $\tuple{y}\pm\tuple{u}\in L$. Then, for every $(\xi_1,\xi_2,\xi_3,\xi_4)\in \T^4$ 
and every $\varepsilon\in[-1,1]$, we have
\[
	\lV\xi_1(y_1+\varepsilon u_1)+\cdots+\xi_4(y_4+\varepsilon u_4)\rV\le 1\,.
\]
In particular, for each $(\xi_1,\xi_2,\xi_3,\xi_4)\in S$, since $\xi_1y_1+\cdots+\xi_4y_4$, being of norm $1$, is an extreme point of $H_{[1]}$, we obtain
\[
	\xi_1u_1+\cdots+\xi_4u_4=0\,.
\]
This implies that $u_1=\cdots=u_4=:u$.

Fix an $\varepsilon\in\R$ with $\abs{\varepsilon}$ sufficiently small so that $a_i,b_i>0$ and  $A_i,B_i\in (\pi/2,3\pi/2)$ ($i\in\N_3$) 
can be chosen to satisfy the following equations:
\begin{align*}
	a_i\exp(\Imath A_i)&=[y_i+\varepsilon u,y_4+\varepsilon u]\quad (i=1,2,3)\,\qquad b_1\exp(\Imath B_1)=[y_2+\varepsilon u,y_3+\varepsilon u]\,,\\
	b_2\exp(\Imath B_2)&=[y_3+\varepsilon u,y_1+\varepsilon u]\,,\quad\textrm{and}\quad
	b_3\exp(\Imath B_3)=[y_1+\varepsilon u,y_2+\varepsilon u]\,;
\end{align*}
this can be done since $[y_i,y_j]<0$ for every $i, j\in\N_4$ with $i\neq j$. Using 
\[
	\xi_i=\exp(\Imath \alpha_i)\quad(i\in\N_3),\quad\textrm{and}\quad \xi_4=1\,,
\]
the previous paragraph then implies that the function $f : \R^3 \to \R $ defined by 
\begin{align*}
	f(\alpha_1,\alpha_2,\alpha_3):=&a_1\cos(\alpha_1+A_1)+a_2\cos(\alpha_2+A_2)+a_3\cos(\alpha_3+A_3)\\
	&\ +b_1\cos(\alpha_2-\alpha_3+B_1)+b_2\cos(\alpha_3-\alpha_1+B_2)+b_3\cos(\alpha_1-\alpha_2+B_3)\,,
\end{align*}
 attains its maximum at $(\alpha,\pi,\alpha+\pi)$ and $(\pi,\alpha,\alpha+\pi)$ for every $\alpha\in\R$. In particular,
these triples must be solutions of the equations
\[
	0=\frac{\partial f}{\partial \alpha_1}(\alpha_1,\alpha_2,\alpha_3)=\frac{\partial f}{\partial
    \alpha_2}(\alpha_1,\alpha_2,\alpha_3)=\frac{\partial f}{\partial \alpha_3}(\alpha_1,\alpha_2,\alpha_3)\,.
\]
This implies that $A_i=B_i=\pi$ $(i\in\N_3)$ and $a_1=a_2=a_3=b_1=b_2=b_3$. 

Thus we have shown that, for each $\varepsilon\in\R$ with sufficiently small $\abs{\varepsilon}$, all the numbers
\[
	[y_i+\varepsilon u,y_j+\varepsilon u]\quad(i,j\in \N_4,\,i\neq j)
\]
are equal to the same negative real number. Thus, the numbers
\[
	[y_i,u]+[u,y_j]\quad(i,j\in \N_4,\,i\neq j)
\]
are all equal, and since $\tuple{y}=(y_1,\ldots,y_4)$ is a scaling of $(x_1,\ldots,x_4)$, we deduce that
\[
	[u,x_1]=[u,x_2]=[u,x_3]=[u,x_4]\,.
\]
Solving these linear equations, we obtain $u=0$. This implies that $\tuple{y}$ is an extreme point of $L$. Hence $S^2_4\subsetneq\ex L$,
 and so $\norm^H_4\neq\norm^{\max}_4$ on $H^4$. 
\end{proof}

The above calculation shows that $1< c_4 \leq c_n \leq 2/\sqrt{\pi}$ for all $n \geq 4$. 
However, we have not calculated the actual value of $c_4$, or of any $c_n$ for $n \geq 4$.\medskip 

\end{document}